\newcommand{\seq}{{\rm Seq}}
\newcommand{\seqd}{{\rm Seqd}}
\newcommand{\sseq}{{\rm SSeq}}
\newcommand{\sseqd}{{\rm SSeqd}}
\newcommand{\Pol}{\cal{P}o\ell}
\newcommand{\ii}{ \textbf{\textit{i}}}
\newcommand{\jj}{ \textbf{\textit{j}}}
\newcommand{\onel}{{\mathbf 1}_{\lambda}}
\newcommand{\onelp}{{\mathbf 1}_{\lambda'}}
\newcommand{\Eol}{\cal{E}_i{\mathbf 1}_{\lambda}}
\newcommand{\GrG}{\cat{EqFlag}_{N}^{*}}
\newcommand{\HG}{H^G}
\newcommand{\uk}{\underline{k}}
\newcommand{\ukep}{{}_{+i}\uk}
\newcommand{\ukem}{{}_{-i}\uk}
\newcommand{\ukp}{\uk^{+i}}
\newcommand{\ukm}{\uk^{-i}}
\newcommand{\Gr}{\cat{Flag}_{N}^{*}}
\newcommand{\refequal}[1]{\xy {\ar@{=}^{#1}
(-1,0)*{};(1,0)*{}};
\endxy}
\newcommand{\Uq}{{\bf U}_q(\mathfrak{sl}_n)}
\newcommand{\U}{\dot{{\bf U}}}
\newcommand{\UA}{{_{\cal{A}}\dot{{\bf U}}}}
\newcommand{\Upr}{{}'{\bf U}}
\newcommand{\Ucat}{\cal{U}}
\newcommand{\Ucats}{\Ucat_{\scriptscriptstyle \rightarrow}}
\newcommand{\Ucatq}{\cal{U}^{\ast}}
\newcommand{\UcatD}{\dot{\cal{U}}}
\newcommand{\qbin}[2]{
\left[
 \begin{array}{c}
 #1 \\
 #2 \\
 \end{array}
 \right]
}
\newcommand{\xsum}[2]{
  \vcenter{\xy
  (0,.4)*{\sum};
  (0,3.8)*{\scs #2};
  (0,-3.2)*{\scs #1};
  \endxy}
}
\newcommand{\Uup}{\xy {\ar (0,-3)*{};(0,3)*{} };(0,0)*{\bullet};(2,0)*{};(-2,0)*{};\endxy}
\newcommand{\Udown}{\xy {\ar (0,3)*{};(0,-3)*{} };(0,0)*{\bullet};(2,0)*{};(-2,0)*{};\endxy}
\newcommand{\Ucupr}{\xy (-2,1)*{}; (2,1)*{} **\crv{(-2,-3) & (2,-3)} ?(1)*\dir{>}; \endxy}
\newcommand{\Ucupl}{\xy (2,1)*{}; (-2,1)*{} **\crv{(2,-3) & (-2,-3)}?(1)*\dir{>};\endxy}
\newcommand{\Ucapr}{\xy (-2,-1)*{}; (2,-1)*{} **\crv{(-2,3) & (2,3)}?(1)*\dir{>};\endxy\;\;}
\newcommand{\Ucapl}{\xy (2,-1)*{}; (-2,-1)*{} **\crv{(2,3) &(-2,3) }?(1)*\dir{>};\endxy\;}
\newcommand{\Ucross}{\xy {\ar (2.5,-2.5)*{};(-2.5,2.5)*{}}; {\ar (-2.5,-2.5)*{};(2.5,2.5)*{} };
(4,0)*{};(-4,0)*{};\endxy}
\newcommand{\Ucrossd}{\xy {\ar (2.5,2.5)*{};(-2.5,-2.5)*{}}; {\ar (-2.5,2.5)*{};(2.5,-2.5)*{} };
(4,0)*{};(-4,0)*{};\endxy}
\newcommand{\cat}[1]{\ensuremath{\mbox{\bfseries {\upshape {#1}}}}}
\newcommand{\BOX}{\hbox {$\sqcap$ \kern -1em $\sqcup$}}
\newcommand{\To}{\Rightarrow}
\newcommand{\Hom}{{\rm Hom}}
\newcommand{\HOM}{{\rm HOM}}
\newcommand{\HOMU}{{\rm HOM_{\Ucat}}}
\renewcommand{\to}{\rightarrow}
\newcommand{\maps}{\colon}
\newcommand{\op}{{\rm op}}
\newcommand{\co}{{\rm co}}
\newcommand{\End}{{\rm End}}
\newcommand{\ENDU}{{\rm END_{\Ucat}}}
\newcommand{\la}{\langle}
\newcommand{\ra}{\rangle}
\newcommand{\sla}{\langle}
\newcommand{\sra}{\rangle}
\newcommand{\scs}{\scriptstyle}
\theoremstyle{definition}
\newtheorem{thm}{Theorem}[section]
\newtheorem{cor}[thm]{Corollary}
\newtheorem{lem}[thm]{Lemma}
\newtheorem{rem}[thm]{Remark}
\newtheorem{prop}[thm]{Proposition}
\newtheorem{defn}[thm]{Definition}
        \newcommand{\be}{\begin{equation}}
        \newcommand{\ee}{\end{equation}}
        \newcommand{\ba}{\begin{eqnarray}}
        \newcommand{\ea}{\end{eqnarray}}
        \newcommand{\ban}{\begin{eqnarray*}}
        \newcommand{\ean}{\end{eqnarray*}}
        \newcommand{\barr}{\begin{array}}
        \newcommand{\earr}{\end{array}}
\numberwithin{equation}{section}
\def\emph#1{{\sl #1\/}}
\def\ie{{\sl i.e.\/}}
\let\hat=\widehat
\let\tilde=\widetilde
\let\phi=\varphi
\let\epsilon=\varepsilon
\def\C{{\mathbbm C}}
\def\N{{\mathbbm N}}
\def\R{{\mathbbm R}}
\def\Z{{\mathbbm Z}}
\def\Q{{\mathbbm Q}}
\def\cal#1{\mathcal{#1}}%
\def\1{\mathbbm{1}}%
\def\nn{\notag}
\def\lra{{\longrightarrow}}
\def\gdim{{\mathrm{gdim}}}
\def\Id{\mathrm{Id}}
\def\mc{\mathcal}
\def\mf{\mathfrak}
\def\Af{{_{\mc{A}}\mathbf{f}}}    
\def\shuffle{\,\raise 1pt\hbox{$\scriptscriptstyle\cup{\mskip
               -4mu}\cup$}\,}
\renewcommand{\sup}[1]{\xybox{
   (-3,-7)*{};
  (3,6)*{};
 (0,0)*{\includegraphics[scale=0.5]{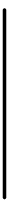}};
 (-.1,-7)*{\scs #1};
 }}
 \newcommand{\supdot}[1]{\xybox{
   (-3,-7)*{};
  (3,6)*{};
 (0,0)*{\includegraphics[scale=0.5]{short_up.eps}};
 (-.1,-7)*{\scs #1}; (0,0)*{\bullet};
 }}
\newcommand{\dcross}[2]{\xybox{
 (-6,-7)*{};
 (6,6)*{};
 (0,0)*{\includegraphics[scale=0.5]{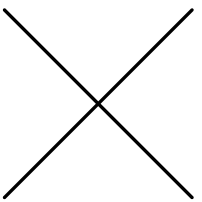}};
 (-5.1,-7)*{\scs #1};
 (4.7,-7)*{\scs #2};
}}
\newcommand{\lineu}[1]{\xybox{%
  (-2,0)*{};
  (2,0)*{};
  (0,0)*{}; (0,-18)*{} **\dir{-}; ?(.5)*\dir{<}+(1.7,-7)*{\scs #1};
}}
\newcommand{\lined}[1]{\xybox{%
  (-2,0)*{};
  (2,0)*{};
  (0,0)*{}; (0,-18)*{} **\dir{-}; ?(.5)*\dir{>}+(1.7,-7)*{\scs #1};
}}
\newcommand{\twoIu}[1]{\xybox{%
  (-6,0)*{};
  (6,0)*{};
  (0,6)*{}="f'";
  (0,12)*{}="f";
  (-3,18)*{}="t1";
  (3,18)*{}="t2";
    (-3,0)*{}="t1'";
  (3,0)*{}="t2'";
  (0,0)*{}="b";
  "t1";"f" **\crv{(-3,14)};?(.1)*\dir{<};
  "t2";"f" **\crv{(3,14)};?(.1)*\dir{<};
  "f"+(.5,0);"f'"+(.5,0) **\dir{-};
  "f"+(-.5,0);"f'"+(-.5,0) **\dir{-};
  "t1'";"f'" **\crv{(-3,4)};
  "t2'";"f'" **\crv{(3,4)} ?(.15)*\dir{ }+(2,0)*{\scs #1};
}}
\newcommand{\lowrru}[1]{\xybox{%
  (-8,0)*{};
  (8,0)*{};
  (-6,-18)*{};(6,-9)*{} **\crv{(-6,-13) & (6,-15)} ?(1)*\dir{>};
  (6,-9)*{};(6,0)*{}  **\dir{-} ?(.3)*\dir{ }+(2,0)*{\scs {\bf j}};
}}
\newcommand{\lowllu}[1]{\xybox{%
  (-8,0)*{};
  (8,0)*{};
  (6,-18)*{};(-6,-9)*{} **\crv{(6,-13) & (-6,-15)} ?(1)*\dir{>};
  (-6,-9)*{};(-6,0)*{}  **\dir{-} ?(.3)*\dir{ }+(-2,0)*{\scs {\bf j}};
}}
\newcommand{\bbe}[1]{\xybox{%
  (-2,0)*{};
  (2,0)*{};
  (0,0);(0,-18) **\dir{-}; ?(.5)*\dir{<}+(2.3,0)*{\scriptstyle{#1}};
}}
\newcommand{\bbsid}{\xybox{%
  (-2,0)*{};
  (2,0)*{};
  (0,10);(0,4) **\dir{-};
}}
\newcommand{\bbpef}[1]{\xybox{%
  (-6,0)*{};
  (6,0)*{};
  (-4,0)*{}="t1";
  (4,0)*{}="t2";
  "t1";"t2" **\crv{(-4,-6) & (4,-6)}; ?(.15)*\dir{>} ?(.9)*\dir{>}
   ?(.5)*\dir{}+(0,-2)*{\scriptstyle{#1}};
}}
\newcommand{\bbpfe}[1]{\xybox{%
  (-6,0)*{};
  (6,0)*{};
  (-4,0)*{}="t1";
  (4,0)*{}="t2";
  "t2";"t1" **\crv{(4,-6) & (-4,-6)}; ?(.15)*\dir{>} ?(.9)*\dir{>}
  ?(.5)*\dir{}+(0,-2)*{\scriptstyle{#1}};
}}
\newcommand{\bbcfe}[1]{\xybox{%
  (-6,0)*{};
  (6,0)*{};
  (-4,0)*{}="t1";
  (4,0)*{}="t2";
  "t1";"t2" **\crv{(-4,6) & (4,6)}; ?(.15)*\dir{>} ?(.9)*\dir{>}
  ?(.5)*\dir{}+(0,2)*{\scriptstyle{#1}};
}}
\newcommand{\bbcef}[1]{\xybox{%
  (-6,0)*{};
  (6,0)*{};
  (-4,0)*{}="t1";
  (4,0)*{}="t2";
  "t2";"t1" **\crv{(4,6) & (-4,6)}; ?(.15)*\dir{>}
  ?(.9)*\dir{>} ?(.5)*\dir{}+(0,2)*{\scriptstyle{#1}};
}}
\newcommand{\ccbub}[2]{
\xybox{%
 (-6,0)*{};
  (6,0)*{};
  (-4,0)*{}="t1";
  (4,0)*{}="t2";
  "t2";"t1" **\crv{(4,6) & (-4,6)}; ?(.7)*\dir{}+(-2,0)*{\scs #2}
  ?(.05)*\dir{>} ?(1)*\dir{>};
  "t2";"t1" **\crv{(4,-6) & (-4,-6)};
   ?(.3)*\dir{}+(0,0)*{\bullet}+(0,-3)*{\scs {#1}};
}}
\newcommand{\cbub}[2]{
\xybox{%
 (-6,0)*{};
  (6,0)*{};
  (-4,0)*{}="t1";
  (4,0)*{}="t2";
  "t2";"t1" **\crv{(4,6) & (-4,6)};?(.7)*\dir{}+(-2,0)*{\scs #2};
   ?(0)*\dir{<} ?(.95)*\dir{<};
  "t2";"t1" **\crv{(4,-6) & (-4,-6)};
   ?(.3)*\dir{}+(0,0)*{\bullet}+(0,-3)*{\scs {#1}};
}}
\newcommand{\bbdl}[1]{\xybox{%
  (2,0);(0,-8) **\crv{(2,-2)&(0,-6)}; ?(.5)*\dir{>}
}}
\newcommand{\bbdlu}[1]{\xybox{%
  (2,0);(0,-8) **\crv{(2,-2)&(0,-6)}; ?(.5)*\dir{<}
}}
\newcommand{\bbdr}[1]{\xybox{%
  (-2,0);(0,-8) **\crv{(-2,-2)&(0,-6)}; ?(.5)*\dir{>}
}}
\newcommand{\bbdru}[1]{\xybox{%
  (-2,0);(0,-8) **\crv{(-2,-2)&(0,-6)}; ?(.5)*\dir{<}
}}
\newcommand{\chern}[1]{
    \rput(0,0){\psframebox[framearc=.4,fillstyle=solid, linewidth=.8pt]{\small $\scriptstyle #1$}}
}
\newcommand{\dchern}[1]{
\rput(0,0){\psframebox[framearc=.4,fillstyle=solid,
fillcolor=whitegray,linewidth=2pt]{\small $\scriptstyle #1$}}
}
\newcommand{\Eline}[1]{
  \psline[linewidth=1pt](0,.3)(0,1.8)
  \psline[linewidth=1pt]{->}(0,.3)(0,1.05)
  \rput(0,0){$\scs #1$}
}
\newcommand{\Elinedot}[2]{
  \psline[linewidth=1pt](0,.3)(0,1.8)
  \psline[linewidth=1pt]{->}(0,.3)(0,1.05)
  \rput(0,0){$\scs #1$}
  \psdot[linewidth=1.5pt](0,1.3)
   \rput(0.4,1.4){$\scriptstyle #2$}
}
\newcommand{\Fline}[1]{
\psline[linewidth=1pt](0,.3)(0,1.8)
  \psline[linewidth=1pt]{->}(0,1.8)(0,1.05)
  \rput(0,0){$\scs #1$}
}
\newcommand{\Flinedot}[2]{
  \psline[linewidth=1pt](0,.3)(0,1.8)
  \psline[linewidth=1pt]{->}(0,1.8)(0,.95)
  \rput(0,0){$\scs #1$}
  \psdot[linewidth=1.5pt](0,1.3)
  \rput(0.4,1.4){$\scriptstyle #2$}
}
\title{A diagrammatic approach to categorification of quantum groups III}
      \author{ Mikhail Khovanov and Aaron D. Lauda}
\begin{document}
%
\date{July 22, 2008}
\maketitle

\begin{abstract}
\begin{center}
We categorify the idempotented form of quantum sl(n).
\end{center}
\end{abstract}

\setcounter{tocdepth}{2} \tableofcontents

\section{Introduction}
%

In this paper we categorify the Beilinson-Lusztig-Macpherson idempotented
modification $\U(\mf{sl}_n)$ of ${\bf U}_q(\mf{sl}_n)$ for any $n$,  generalizing
\cite{Lau1,Lau2} where such categorification was described for $n=2$ and using
constructions and results of~\cite{KL,KL2} which contain a categorification of
${\bf U}^-$ for any Cartan datum.

In~\cite{Lus4} Lusztig associates a quantum group $\mathbf{U}$ to any root datum;
the latter consists of a perfect pairing $\langle,\rangle$ between two free
abelian groups $X$ and $Y$, embeddings of the set $I$ of simple roots into $X,
Y$, and a bilinear form on $\Z[I]$ subject to certain compatibility and
integrality conditions. Lusztig's definition is slightly different from the
original ones due to Drinfeld~\cite{Drinfeld} and Jimbo~\cite{Jimbo}. Lusztig
then modifies $\mathbf{U}$ to the nonunital ring $\U$ which contains a system of
idempotents $\{1_{\lambda}\}$ over all weights $\lambda\in X$ as a substitute for
the unit element,
\[\U = \bigoplus_{\lambda,\mu\in X} 1_{\mu}\U 1_{\lambda}.
\]
In the $\mf{sl}_n$ case $\U$ was originally defined by Beilinson, Lusztig, and
Macpherson~\cite{BLM} and then appeared in \cite{Lus7,Kas3} in greater
generality. It is clear from Lusztig's work that the $\Q(q)$-algebra $\U$ is
natural for at least the following reasons:
\begin{enumerate}
\item A $\U$-module is the same as  a $\mathbf{U}$-module which has an  integral
weight decomposition. These modules are of prime importance in the representation
theory of $\mathbf{U}$.
\item $\U$ has analogues of the comultiplication, the antipode, and other standard
 symmetries of $\mathbf{U}$.
\item $\U$ is a $\mathbf{U}$-bimodule.
\item The Peter-Weyl theorem and the theory of cells can be intrinsically stated
in terms of the algebra $\U$.
\item $\U$ has an integral form $\UA$, a $\Z[q,q^{-1}]$-lattice closed
under multiplication and comultiplication. The integral form comes with a
canonical basis $\dot{\bf B}$. Conjecturally, multiplication and comultiplication
in this basis have coefficients in $\N[q,q^{-1}]$ when the Cartan datum is
symmetric.
\item The braid group associated to the Cartan datum acts on $\U$.
\end{enumerate}

Moreover, $\U$ appears throughout the categorification program for quantum
groups. Representations of quantum groups that are known to have
categorifications all have integral weight decompositions, and thus automatically
extend to representations of $\U$. In most or all of these examples,
see~\cite{BFK,CR,FKS,Sussan,Zheng2}, the weight decomposition of representations
lifts to a direct sum decomposition of categories, so one obtains a
categorification of the idempotent $1_{\lambda}$ as the functor of projection
onto the corresponding direct summand (the only possible exception is the
categorification of tensor products via the affine Grassmannian \cite{CK1,CK2}).
 In the categorification of tensor
powers of the fundamental ${\bf U}_q(\mf{sl}_2)$-representation~\cite{BFK,FKS},
each canonical basis element of $\U$ acts as an indecomposable projective functor
or as the zero functor.  The idea that $\U$ rather than $\mathbf{U}$ should be
categorified goes back to Crane and Frenkel~\cite{CF}.

$\U$ is generated by elements $E_i1_{\lambda}$, $F_i1_{\lambda}$, and
$1_{\lambda}$, where $\lambda \in X$ is an element of the weight lattice and $i$
is a simple root. We will often write $E_{+i}$ instead of $E_i$ and $E_{-i}$
instead of $F_i$. We have
\[
 E_{\pm i}1_{\lambda} = 1_{\mu}E_{\pm i}1_{\lambda},
\]
where, in our notations, explained in Section~\ref{subsec_quantumgroups}, $\mu =
\lambda \pm i_X$, and $i_X$ is the element of $X$ associated to the simple root
$i$.  Algebra $\U$ is spanned by products
\[ E_{\ii} 1_{\lambda} := E_{\pm i_1} E_{\pm i_2} \dots  E_{\pm i_m}
 1_{\lambda}=1_{\mu}E_{\pm i_1} E_{\pm i_2} \dots E_{\pm i_m} 1_{\lambda},
 \]
where $\ii=(\pm i_1, \dots, \pm i_m)$ is a signed sequence of simple roots, and
$\mu=\lambda+\ii_X$.

The integral form $\UA \subset \U$ is the $\Z[q,q^{-1}]$-algebra generated by
divided powers \[ E_{i^{(a)}1_{\lambda}} = \frac{1}{[a]_i!}E_i^{a}1_{\lambda}.
\]

\vspace{0.1in}

Note that $\U$ can, alternatively, be viewed as a pre-additive category with
objects $\lambda\in X$ and morphisms from $\lambda$ to $\mu$ being
$1_{\mu}\U1_{\lambda}$. Of course, any ring with a collection of
mutually-orthogonal idempotents as a substitute for the unit element can be
viewed as a pre-additive category and vice versa. From this perspective, though,
we can expect the categorification of $\U$ to be a 2-category.

\vspace{0.1in}

In Section~\ref{subsec_Uprime} we associate a 2-category $\Ucat$ to a root datum.
The objects of this 2-category are integral weights $\lambda\in X$, the morphisms
from $\lambda$ to $\mu$ are finite formal sums of symbols
$\mc{E}_{\ii}\onel\{t\}$, where $\ii=(\pm i_1, \dots, \pm i_m)$ is a signed
sequence of simple roots such that the left weight of the symbol is $\mu$
($E_{\ii}1_{\lambda}=1_{\mu}E_{\ii}1_{\lambda}$), and $t \in \Z$ is a grading
shift. When $\ii$ consists of a single term, we get 1-morphisms
$\mc{E}_{+i}\onel$ and $\mc{E}_{-i}\onel$, which should be thought of as
categorifying elements $E_i1_{\lambda}$ and $F_i1_{\lambda}$ of $\U$,
respectively. Grading shift $\{t\}$ categorifies multiplication by $q^t$. The
one-morphism $ \mc{E}_{\ii} {\mathbf 1}_{\lambda}: \lambda\lra \mu $ should be
thought of as a categorification of the element $E_{\ii}1_{\lambda}$. When the
sequence $\ii$ is empty, we get the identity morphism $\mathbf{1}_{\lambda}:
\lambda \lra \lambda$, a categorification of the element $1_{\lambda}$.

Two-morphisms between $\mc{E}_{\ii}{\mathbf 1}_{\lambda}\{t\}$ and
$\mc{E}_{\jj}{\mathbf 1}_{\lambda}\{t'\}$ are given by linear combinations of
degree $t-t'$ diagrams drawn on the strip $\R\times [0,1]$ of the plane. The
diagrams
 consist of immersed oriented one-manifolds,
with every component labelled by a simple root, and dots placed on the
components. Labels and orientations at the lower and upper endpoints of the
one-manifold must match the sequences $\ii$ and $\jj$, respectively. Integral
weights label regions of the plane cut out by the one-manifold, with the
rightmost region labelled $\lambda$. Each diagram has an integer degree assigned
to it. We work over a ground field $\Bbbk$, and define a two-morphism between
$\mc{E}_{\ii}{\mathbf 1}_{\lambda}\{t\}$ and $\mc{E}_{\jj} {\mathbf
1}_{\lambda}\{t'\}$ as a linear combination of such diagrams of degree $t-t'$,
with coefficients in $\Bbbk$, modulo isotopies and a collection of very carefully
chosen local relations. The set of 2-morphisms $\Ucat( \mc{E}_{\ii}\onel\{t\},
\mc{E}_{\jj} \onel \{t'\})$ is a $\Bbbk$-vector space.  We also form graded
vector space
\begin{equation}
  \HOM_{\Ucat}(\mc{E}_{\ii}\onel, \mc{E}_{\jj} \onel) := \bigoplus_{t\in \Z}
  \Ucat(\mc{E}_{\ii}\onel\{t\},\mc{E}_{\jj} \onel ).
\end{equation}
Vertical composition of 2-morphisms is given by concatenation of diagrams,
horizontal composition consists of placing diagrams next to each other.

In each graded $\Bbbk$-vector space $\HOM_{\Ucat}( \mc{E}_{\ii}{\mathbf
1}_{\lambda}, \mc{E}_{\jj} \onel)$ we construct a homogeneous spanning set
$B_{\ii,\jj,\lambda}$ which depends on extra choices. The Laurent power series in
$q$, with the coefficient at $q^r$ equal to the number of spanning set elements
of degree $r$, is proportional to suitably normalized inner product $ \langle
E_{\ii}1_{\lambda}, E_{\jj}1_{\lambda}\rangle$, where the semilinear form
$\langle,\rangle $ is a mild modification of the Lusztig bilinear form on $\U$.
The proportionality coefficient $\pi$ depends only on the root datum.

We say that our graphical calculus is \emph{nondegenerate} for a given root datum
and field $\Bbbk$ if for each $\ii, \jj$ and $\lambda$ the homogeneous spanning
set $B_{\ii,\jj,\lambda}$ is a basis of the $\Bbbk$-vector space $\HOMU(
\mc{E}_{\ii}{\mathbf 1}_{\lambda}, \mc{E}_{\jj} {\mathbf 1}_{\lambda})$.
Nondegeneracy will be crucial for our categorification constructions.

\vspace{0.1in}

The 2-category $\Ucat$ is $\Bbbk$-additive, and we form its Karoubian envelope
$\UcatD$, the smallest 2-category which contains $\Ucat$ and has splitting
idempotents. Namely, for each $\lambda, \mu \in X$, the category
$\UcatD(\lambda,\mu)$ of morphisms $\lambda\lra \mu$ is defined as the Karoubian
envelope of the additive $\Bbbk$-linear category $\Ucat(\lambda,\mu)$. The split
Grothendieck category $K_0(\UcatD)$ is a pre-additive category with objects
$\lambda$, and the abelian group of morphisms from $\lambda$ to $\mu$ is the
split Grothendieck group $K_0(\UcatD(\lambda,\mu))$ of the additive category
$\UcatD(\lambda,\mu))$. The grading shift functor on $\UcatD(\lambda,\mu)$ turns
$K_0(\UcatD(\lambda,\mu))$ into a $\Z[q,q^{-1}]$-module. This module is free with
the basis given by isomorphism classes of indecomposable objects of $
\UcatD(\lambda,\mu)$, up to grading shifts. The split Grothendieck category
$K_0(\UcatD)$ can also be viewed as a nonunital $\Z[q,q^{-1}]$-algebra with a
collection of idempotents $[{\mathbf 1}_{\lambda}]$ as a substitute for the unit
element.

In Section~\ref{subsec_KzeroU} we set up a $\Z[q,q^{-1}]$-algebra homomorphism
$$ \gamma \ : \  \  \UA \lra K_0(\UcatD)$$
which takes $1_{\lambda}$ to $[{\mathbf 1}_{\lambda}]$ and $E_{\ii}{\mathbf
1}_{\lambda}$ to $[\mc{E}_{\ii}{\mathbf 1}_{\lambda}]$, for any ``divided power''
signed sequence $\ii$.

The main results of this paper are the following theorems.

\begin{thm} \label{thm-surjective} The map $\gamma$ is surjective for any
root datum and field $\Bbbk$.
\end{thm}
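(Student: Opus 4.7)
The plan is to exploit the Karoubian structure of $\UcatD$ together with the nilHecke algebras embedded in the 2-endomorphism rings of the $\mc{E}_i^a\onel$ to show that every indecomposable class in $K_0(\UcatD(\lambda,\mu))$ lies in the image of $\gamma$. Every 1-morphism of $\Ucat(\lambda,\mu)$ is by construction a finite direct sum of grading shifts of $\mc{E}_\ii\onel$ for signed sequences $\ii$, so every indecomposable object of $\UcatD(\lambda,\mu)$ arises as a direct summand of some $\mc{E}_\ii\onel$.

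First I would lift divided powers. The 2-endomorphism algebra $\End_{\Ucat}(\mc{E}_i^a\onel)$ contains the nilHecke algebra $NH_a$, generated by the dots and the $ii$-crossings on $a$ upward strands labelled $i$, as established in \cite{KL,KL2}. Inside $NH_a$ there is a minimal idempotent $e_a$ (an explicit product of divided-difference diagrams) whose image, when split in the Karoubian envelope, produces a 1-morphism that we identify with $\mc{E}_{i^{(a)}}\onel$. The graded module decomposition $NH_a \iso (NH_a\cdot e_a)^{\oplus [a]_i!}$ over its center then lifts to a 1-morphism isomorphism
\[
\mc{E}_i^{a}\onel \;\iso\; \bigoplus \mc{E}_{i^{(a)}}\onel\{\text{shifts}\},
\]
with the multiplicities (with shifts) constituting the quantum factorial $[a]_i!$; the analogous statement holds for $\mc{E}_{-i}$. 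By the very definition of $\gamma$ on divided power signed sequences, the class $[\mc{E}_{i^{(a)}}\onel] = \gamma(E_{i^{(a)}}\onel)$ is in $\im\gamma$.

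Second, for an arbitrary signed sequence $\ii = (\pm i_1,\dots,\pm i_m)$ I would group consecutive identical signed simple roots to form a divided-power sequence $\ii^{(\mathbf{a})} = ((\pm i_1)^{(a_1)},\dots,(\pm i_r)^{(a_r)})$, and apply the preceding step factor-by-factor to obtain
\[
\mc{E}_\ii\onel \;\iso\; \bigoplus \mc{E}_{\ii^{(\mathbf{a})}}\onel\{\text{shifts}\},
\]
with multiplicity equal to a product of quantum factorials. Hence every indecomposable summand of $\mc{E}_\ii\onel$ already appears as a summand of some $\mc{E}_{\ii^{(\mathbf{a})}}\onel$, whose class is $\gamma(E_{\ii^{(\mathbf{a})}}\onel)$ and lies in $\im\gamma$. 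A Krull-Schmidt / triangularity argument on the finite family of $\mc{E}_{\ii^{(\mathbf{a})}}\onel$'s in which a given indecomposable $X$ appears then shows $[X] \in \im\gamma$, yielding surjectivity.

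The main obstacle is the first step: constructing the idempotent $e_a\in NH_a$ and establishing the divided-power splitting using only the graphical relations defining $\Ucat$, without invoking nondegeneracy. This is precisely the content that rests on the detailed computations of \cite{KL,KL2}, which show both that all of $NH_a$ (and not merely a quotient) embeds into $\End_{\Ucat}(\mc{E}_i^a\onel)$, and that the polynomial representation of $NH_a$ supplies enough structure to force the graded multiplicities in the splitting to be exactly $[a]_i!$. Granted that input, the passage through signed sequences and the Krull-Schmidt bookkeeping in the third step are essentially formal.
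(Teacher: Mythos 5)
Your reduction of $\mc{E}_\ii\onel$ to a direct sum of $\mc{E}_{\jj}\onel\{t\}$ for divided-power signed sequences $\jj$ is correct (it is Section~\ref{subsec_dirsumdecs} of the paper), and it gives $[\mc{E}_\jj\onel]=\gamma(E_\jj 1_\lambda)\in\im\gamma$ for every dpss $\jj$. But the final step — the phrase ``a Krull--Schmidt / triangularity argument\ldots then shows $[X]\in\im\gamma$'' — is where the entire difficulty lies, and you have not supplied the mechanism. Knowing that $[\mc{E}_\jj\onel]=\sum_k[P_k]\in\im\gamma$ with $P_k$ indecomposable says nothing about any individual $[P_k]$. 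There is no obvious partial order making the transition matrix from $\{[\mc{E}_\jj\onel]\}$ to $\{[P]\}$ triangular; you'd need a strictly decreasing complexity measure on the ``extraneous'' summands and you haven't identified one. Your divided-power splittings don't shrink the collection of indecomposable summands at all — they merely reparametrize the source objects — so no induction gets off the ground.

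The paper's proof supplies exactly what is missing. It introduces the \emph{width} of an indecomposable $P$ (the minimal $m$ so that $P$ is a summand of $\mc{E}_\ii\onel\{t\}$ with $\|\ii\|=m$) and shows a width-$m$ indecomposable is a summand of $\mc{E}_{\nu,-\nu'}\onel\{t\}$ with $\|\nu\|+\|\nu'\|=m$, i.e.\ with all positive entries to the left — this reordering step, which your proposal omits in favor of merely grouping consecutive equal entries, is essential. One then considers the ideal $\mc{I}_{\nu,-\nu',\lambda}\subset\ENDU(\mc{E}_{\nu,-\nu'}\onel)$ spanned by diagrams with a U-turn. The crucial observation is that any minimal idempotent lying in this ideal factors through some $\mc{E}_{\textbf{\textit{i}}}\onel$ with $\|\textbf{\textit{i}}\|\le m-2$, so the corresponding summand has strictly smaller width — that is the triangularity, and it has nothing to do with divided powers. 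With this in hand, a primitive idempotent $e_r\otimes e'_{r'}\otimes 1$ of $R(\nu)\otimes R(\nu')\otimes\Pi_\lambda$ maps under $\alpha$ to an idempotent $e_{r,r'}=\sum_{r''}e_{r,r',r''}$, exactly one $e_{r,r',r''}$ survives modulo the U-turn ideal, and the others contribute classes of width $\le m-2$ handled by induction. Finally, $[\mc{E}_{\nu,-\nu'}\onel,e_{r,r'}]\in\im\gamma$ is deduced from the full-strength theorem $K_0(R(\nu))\cong{}_{\mc A}{\bf f}_\nu$ of \cite{KL,KL2}; your proposal cites only the nilHecke subalgebra $NH_a\subset R(ai)$, which produces the divided-power idempotents but not the general primitive idempotents of $R(\nu)$ for non-multiples $\nu$, so it is strictly too weak for this step. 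In short: the divided-power bookkeeping you describe is the easy part; the actual content is the width induction, the U-turn ideal factoring through shorter sequences, and the identification $K_0(R(\nu))\cong{}_{\mc A}{\bf f}_\nu$, none of which appears in the proposal.
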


\begin{thm} \label{thm-injective} The map $\gamma$ is injective  if the graphical
calculus for the root datum and field $\Bbbk$ is nondegenerate.
\end{thm}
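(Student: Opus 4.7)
The plan is to deduce injectivity of $\gamma$ from the nondegeneracy of Lusztig's (modified) semilinear form $\langle,\rangle$ on $\UA$. The bridge is a natural sesquilinear form on $K_0(\UcatD)$ built from graded dimensions of 2-morphism spaces, which under the nondegeneracy hypothesis pulls back along $\gamma$ to $\pi\cdot\langle,\rangle$.

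For fixed weights $\lambda,\mu\in X$ define a pairing on $K_0(\UcatD(\lambda,\mu))$ by
\[
([X],[Y]) \;:=\; \gdim \HOM_{\UcatD}(X,Y),
\]
valued in formal Laurent series in $q$. This is well-defined on isomorphism classes, and is sesquilinear with respect to the bar involution on $\Z[q,q^{-1}]$ since a grading shift $\{t\}$ on the source shifts the dimension by $q^{-t}$ and on the target by $q^t$. The inclusion $\Ucat\hookrightarrow\UcatD$ is fully faithful, so on classes of generators
\[
([\mc{E}_{\ii}\onel],[\mc{E}_{\jj}\onel]) \;=\; \gdim \HOM_{\Ucat}(\mc{E}_{\ii}\onel,\mc{E}_{\jj}\onel).
\]
The nondegeneracy hypothesis asserts precisely that the homogeneous spanning set $B_{\ii,\jj,\lambda}$ is a $\Bbbk$-basis, so by the construction of $B_{\ii,\jj,\lambda}$ recalled in the introduction this graded dimension equals $\pi\cdot\langle E_{\ii}1_\lambda,E_{\jj}1_\lambda\rangle$. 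Extending sesquilinearly in $q$ yields the key identity
\[
(\gamma(a),\gamma(b)) \;=\; \pi\cdot\langle a,b\rangle, \qquad a,b\in 1_\mu\UA 1_\lambda.
\]

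Given this, injectivity follows at once. Suppose $\gamma(x)=0$. Decomposing $x$ into pieces lying in fixed weight components $1_\mu\UA 1_\lambda$ and pairing each piece against $\gamma(b)$ for arbitrary $b$ of matching weight, we obtain $\pi\langle x,b\rangle=0$ for all $b\in\UA$. Since $\pi$ depends only on the root datum and is a nonzero element of the coefficient ring, we may cancel it to get $\langle x,b\rangle=0$ for all $b\in\UA$, and nondegeneracy of Lusztig's form on $\UA$ then forces $x=0$.

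The principal technical obstacle is compressed into the nondegeneracy hypothesis; granting it, the remaining bookkeeping is essentially formal: checking that $\Ucat\to\UcatD$ is fully faithful on generators (automatic for Karoubi envelopes), verifying the sesquilinearity of the $\HOM$-pairing under the grading-shift action of $q$, confirming that $\pi$ is a nonzero scalar (independent of $\ii,\jj,\lambda$ by its construction), and invoking the classical nondegeneracy of Lusztig's bilinear form on $\UA$. Verifying the nondegeneracy hypothesis itself in concrete cases (e.g.\ for symmetric Cartan data) lies beyond the scope of this theorem and would require independent representation-theoretic input.
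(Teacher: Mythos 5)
Your argument is correct and is essentially the paper's own proof: both define a semilinear (``sesquilinear'') pairing on $K_0(\UcatD)$ via graded dimensions of $\HOM$-spaces, use the nondegeneracy hypothesis to identify its pullback along $\gamma$ with $\pi\langle\,,\,\rangle$, and conclude from the nondegeneracy of Lusztig's form (Proposition~\ref{prop_nondeg}) that $\gamma$ is injective.
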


\begin{thm} \label{thm-nondegenerate}
The graphical calculus is nondegenerate for the root datum of $\mf{sl}_n$ and any
field $\Bbbk$.
\end{thm}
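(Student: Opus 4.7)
The plan is to prove nondegeneracy by constructing a faithful 2-representation of $\Ucat$ on a 2-category built from $GL$-equivariant cohomology of partial flag varieties and then matching graded dimensions. This extends the $n=2$ construction of \cite{Lau1,Lau2} to arbitrary $n$, with the nilHecke algebra and KLR machinery of \cite{KL,KL2} appearing inside the construction along each strand color.

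For the target, to each integral weight $\lambda$ of $\mf{sl}_n$, parameterized by a composition $\uk = (k_1,\dots,k_n)$ of some $N$, I would assign the category of graded modules over the equivariant cohomology ring $\HG(\cat{Flag}_{\uk})$ of the corresponding partial flag variety of $\C^N$. The generating 1-morphism $\mc{E}_{+i}\onel$, respectively $\mc{E}_{-i}\onel$, is sent to the bimodule $\HG(\cat{Flag}_{\ukp})$ of the two-step correspondence that refines flags at the $i$-th step by inserting a line, respectively a hyperplane. Generating 2-morphisms are realized by standard ingredients: a dot is multiplication by the first Chern class of the new tautological line bundle; upward crossings are BGG--Demazure divided-difference operators; caps and cups come from the adjunction units and counits of the two correspondence projections, normalized by Euler classes of the relevant normal bundles.

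Next I would verify that this assignment extends to a 2-functor, i.e.\ that all defining local relations of $\Ucat$ hold. Relations among strands of a single color $i$ reduce to nilHecke identities and to infinite-Grassmannian-type bubble relations, all of which follow from classical formulas for $\HG$ of projective spaces and from push-pull/BGG calculus. Relations mixing colors $i \neq j$ are essentially the KLR relations established in \cite{KL,KL2} (only the $a_{ij}\in\{0,-1\}$ cases occur for $\mf{sl}_n$), while the $\mf{sl}_2$-style biadjunctions and the mixed $EF$--$FE$ relations follow from fiber-product computations on the nested flag correspondences.

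Finally I would deduce nondegeneracy by a graded dimension count. By construction the spanning set $B_{\ii,\jj,\lambda}$ already provides an upper bound on the graded rank of $\HOMU(\mc{E}_{\ii}\onel, \mc{E}_{\jj}\onel)$, proportional to the Lusztig-type inner product $\langle E_{\ii}1_{\lambda}, E_{\jj}1_{\lambda}\rangle$. The image of this Hom space under the 2-representation lands in the bimodule-Hom space of an iterated fiber product of partial flag varieties; the Bia{\l}ynicki-Birula/Schubert decomposition computes its $\HG$ as a free module whose graded rank realizes exactly the same inner product, with the same proportionality constant $\pi$. Matching upper and lower bounds forces $B_{\ii,\jj,\lambda}$ to be linearly independent, hence a basis. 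The main obstacle is this dimension matching: one has to identify the equivariant cohomology of iterated correspondences with the Lusztig inner product on the nose, and carefully handle boundary weights where some $k_i = 0$ or $k_{i+1} = 0$ cause degenerations of the partial flag varieties. A secondary obstacle is the sheer volume of local relations to verify, though most can be imported from \cite{Lau1,Lau2} and \cite{KL,KL2}, leaving only new fiber-product computations for the general-rank adjunction and bubble relations.
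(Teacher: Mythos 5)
Your overall architecture — constructing a 2-representation of $\Ucat$ on (equivariant) cohomology of iterated partial flag varieties, with dots as Chern classes, crossings as divided-difference operators, and caps/cups from push-pull along the correspondences — is exactly the route the paper takes (Sections 4--6), down to the need to switch to a sign-modified version $\Ucats$ of the 2-category so that the flag-variety calculus matches the signed $R(\nu)$ relations.

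Where you diverge is the final deduction. You propose to close the argument by matching graded dimensions: compute the graded rank of the bimodule Hom space in the flag 2-category and show it equals $\pi\langle E_\ii 1_\lambda, E_\jj 1_\lambda\rangle$, then squeeze. This has a gap. For any fixed $N$, the 2-functor $\Gamma^G_N$ is far from faithful (it annihilates every weight $\lambda$ that does not come from a composition of $N$, and even on the surviving weights it factors through quotients), so a dimension count in the target gives neither a lower bound on $\HOMU(\cal{E}_\ii\onel,\cal{E}_\jj\onel)$ without a faithfulness argument, nor an upper bound without a fullness argument. Moreover you would need the graded rank of $\HOM$ of an iterated correspondence bimodule to equal the inner product on the nose, which is an additional nontrivial computation that depends on $N$. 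The paper sidesteps all of this by \emph{not} computing the target Hom spaces at all. Instead, for each graded degree it takes $N$ large and checks directly that the images of the spanning set $B_{\ii,\jj,\lambda}$ act by linearly independent operators. The key mechanism is a factorization: the image of $R(\nu)$-type diagrams lands in the polynomial subspace $\Pol_\nu(\xi)\subset \HG_{\uk^{\ii}}$, where the $R(\nu)$-action is already known to be faithful from \cite{KL,KL2}, while the bubble monomials multiply by the Chern-class generators $x(\uk)_{j,\alpha}$, which for large $N$ are algebraically independent from the $\xi$'s. The two factors therefore act on algebraically independent sets of generators, forcing linear independence of the spanning set without any global dimension count. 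This is more elementary, avoids the faithfulness/fullness issues entirely, and naturally handles the boundary degenerations you flagged — they disappear once $N$ is chosen large relative to the degree under consideration. If you want to keep the dimension-matching strategy, you would at minimum need to replace the target by a limit $N\to\infty$ and prove an equality of graded ranks for the iterated correspondence Hom; the direct linear-independence check is the cleaner way out.
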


The three theorems together immediately imply

\begin{prop} \label{prop-iso} The map $\gamma$ is an isomorphism for
the root datum of $\mf{sl}_n$ and any field $\Bbbk$.
\end{prop}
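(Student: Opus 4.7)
The plan is essentially bookkeeping, since Proposition~\ref{prop-iso} is designed to be an immediate formal consequence of the three preceding theorems. The task is simply to verify that the hypotheses line up so that all three can be invoked in the case at hand.

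First I would apply Theorem~\ref{thm-surjective}, which asserts surjectivity of the map $\gamma \colon \UA \to K_0(\UcatD)$ for \emph{any} root datum and \emph{any} field $\Bbbk$. In particular this holds for the root datum of $\mf{sl}_n$ over $\Bbbk$, with no further hypothesis required. Next I would invoke Theorem~\ref{thm-nondegenerate}, which says that the graphical calculus is nondegenerate precisely for the root datum of $\mf{sl}_n$ and any field $\Bbbk$, so the homogeneous spanning sets $B_{\ii,\jj,\lambda}$ are bases of the $\HOM$-spaces in $\Ucat$. With nondegeneracy in hand, Theorem~\ref{thm-injective} then applies and yields injectivity of $\gamma$.

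Combining surjectivity and injectivity gives that $\gamma$ is a bijective $\Z[q,q^{-1}]$-algebra homomorphism $\UA \to K_0(\UcatD)$, that is, an isomorphism. There is no genuine obstacle in this proposition itself; all of the mathematical content is absorbed into the three theorems that precede it. The only thing to be careful about is that both ``surjective'' and ``injective'' are stated at the level of the $\Z[q,q^{-1}]$-algebra homomorphism sending $1_\lambda \mapsto [\onel]$ and $E_{\ii}\onel \mapsto [\mc{E}_{\ii}\onel]$ on divided-power signed sequences, so they indeed combine to yield an isomorphism of nonunital $\Z[q,q^{-1}]$-algebras (equivalently, an equivalence of the corresponding pre-additive categories with object set $X$).
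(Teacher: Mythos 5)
Your proposal is correct and matches the paper exactly: the paper itself states that Proposition~\ref{prop-iso} is an immediate consequence of Theorems~\ref{thm-surjective}, \ref{thm-injective}, and \ref{thm-nondegenerate}, which is precisely the chain of implications you spell out. Nothing is missing.
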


The last result establishes a canonical isomorphism
\[
\UA(\mf{sl}_n) \cong K_0\big(\UcatD(\mf{sl}_n)\big)
\]
and allows us to view $\UcatD(\mf{sl}_n)$ as a categorification of
$\U(\mf{sl}_n)$.

\vspace{0.2in}

Theorem~\ref{thm-surjective}, proved in Section~\ref{subsec_surjectivity},
follows from the results of~\cite{KL,KL2,Lau1} and basic properties of
Grothendieck groups and idempotents. Theorem~\ref{thm-injective}, proved in
Section~\ref{subsec_injectivity}, follows from the nondegeneracy of the
semilinear form on $\U$ and its pictorial interpretation explained in
Section~\ref{subsec_geometric}. To prove theorem~\ref{thm-nondegenerate} we
construct a family of 2-representations of $\UcatD$ and check that the elements
of each spanning set $B_{\ii,\jj,\lambda}$ act linearly independently on vector
spaces in these 2-representations, implying nondegeneracy of the graphical
calculus. Sections~\ref{sec_Usln}--\ref{sec_rep} are devoted to these
constructions.

\vspace{0.2in}

Indecomposable one-morphisms, up to isomorphism and grading shifts, constitute a
basis of $K_0(\UcatD(\mf{sl}_n))\cong \UA(\mf{sl}_n)$, which might potentially
depend on the ground field $\Bbbk$. The multiplication in this basis has
coefficients in $\N[q,q^{-1}]$. It is an open problem whether this basis
coincides with the Lusztig canonical basis of $\UA(\mf{sl}_n)$. The answer is
positive when $n=2$, see~\cite{Lau1}.

Another major problem is to determine for which root data the graphical calculus
is nondegenerate. Nondegeneracy immediately implies, via
theorems~\ref{thm-surjective} and~\ref{thm-injective}, that $\UcatD$ categorifies
$\U$ for a given root datum.

We believe that $\UcatD$ will prove ubiquitous in representation theory. This
2-category or its mild modifications is expected to act on parabolic-singular
blocks of highest weight categories for $\mf{sl}_N$ in the context of
categorification of $\mf{sl}_n$ representations~\cite{CR,FKS,Sussan}, on derived
categories of coherent sheaves on Kronheimer-Nakajima~\cite{KN} and
Nakajima~\cite{Na} quiver varieties and on their Fukaya-Floer counterparts, on
categories of modules over cyclotomic Hecke and degenerate Hecke
algebras~\cite{Ari1,KleBook}, on categories of perverse sheaves in Zheng's
categorifications of tensor products~\cite{Zheng2}, on categories of modules over
cyclotomic quotients of rings $R(\nu)$ in~\cite{KL,KL2}, on categories of matrix
factorizations that appear in~\cite[Section 11]{KR}, et cetera. A possible
approach to proving that the calculus is nondegenerate for other root systems is
to show that $\UcatD$ acts on a sufficiently large 2-category and verify  that
the spanning set elements act linearly independently.  It would also be
interesting to relate our constructions with those of Rouquier~\cite{Rou1,Rou2}.

Categories of projective modules over rings $R(\nu)$, defined in~\cite{KL,KL2},
categorify ${\bf U}^-$ weight spaces. A subset of our defining local relations on
two-morphisms gives the relations for rings $R(\nu)$.  This subset consists
exactly of the relations whose diagrams have no critical points (U-turns) on
strands and have all strand orientations going in the same direction. In other
words, the relations on braid-like diagrams allow us to categorify ${\bf U}^-$,
while the relations without these restrictions lead to a categorification of the
entire $\U$, at least in the $\mf{sl}_n$ case. Informally, the passage from a
categorification of ${\bf U}^-$ to a categorification of $\U$ is analogous to
generalizing from braids to tangles.

\vspace{0.2in}

{\bf Acknowledgments.} M.K. was partially supported by the NSF grant DMS-0706924
and, during the early stages of this work, by the Institute for Advanced Study.

%
\section{Graphical Interpretation of the bilinear form}
%

%
\subsection{Quantum groups} \label{subsec_quantumgroups}
%

%
\subsubsection{Algebras ${\bf f}$ and ${\bf U}$} \label{subsubsec_algebras_f}
%

We recall several definitions, following~\cite{Lus4}. A \emph{Cartan datum}
\index{I@$(I,\cdot)$ Cartan datum}$(I,\cdot)$ consists of a finite set $I$ and a
symmetric $\Z$-valued bilinear form on $\Z[I]$, subject to conditions
\begin{itemize}
\item  $i\cdot i \in \{2, 4, 6, \dots \}$ for  $i\in I$,
\item $d_{ij}:=-2\frac{i\cdot j}{i\cdot i}\in \{0, 1, 2, \dots \}$ for any $i\not= j$ in $I$.
\end{itemize}
Let $q_i=q^{\frac{i\cdot i}{2}}$, $[a]_i=q_i^{a-1}+q_i^{a-3}+\dots + q_i^{1-a}$,
$[a]_i!=[a]_i[a-1]_i\dots [1]_i$. Denote by $'\mathbf{f}$\index{f@$'\mathbf{f}$}
the free associative algebra over $\Q(q)$ with generators
\index{t@$\theta_i$}$\theta_i$, $i\in I$, and introduce q-divided powers
$\theta_i^{(a)}= \theta_i^a/[a]_i!$. The algebra $'\mathbf{f}$ is $\N[I]$-graded,
with $\theta_i$ in degree $i$. The tensor square $'\mathbf{f}\otimes
{}'\mathbf{f}$ is an associative algebra with twisted multiplication
$$ (x_1\otimes x_2) (x_1'\otimes x_2') =q^{- |x_2|\cdot |x_1'|} x_1 x_1' \otimes x_2 x_2'$$
for homogeneous $x_1, x_2, x_1', x_2'$. The assignment $r(\theta_i) = \theta_i
\otimes 1 + 1\otimes \theta_i$ extends to a unique algebra homomorphism $r:
{}'\mathbf{f}\lra {}'\mathbf{f}\otimes {}'\mathbf{f}$.

Algebra $'\mathbf{f}$ carries a $\Q(q)$-bilinear form determined by the
conditions\footnote{Our bilinear form $(,)$ corresponds to Lusztig's bilinear
form $\{,\}$, see Lusztig~\cite[1.2.10]{Lus4}.}
\begin{itemize}
\item $(1,1)=1$,
\item $(\theta_i, \theta_j) = \delta_{i,j} (1-q_i^2)^{-1} $ for $i,j\in I$,
\item $(x, y y') = (r(x), y \otimes y')$ for $x,y, y' \in {}'\mathbf{f}$,
\item $(x x', y) = (x \otimes x', r(y))$ for $x, x', y\in {}'\mathbf{f}$.
\end{itemize}
The bilinear form $(,)$ is symmetric. Its radical $\mf{I}$ is a two-sided ideal
of $'\mathbf{f}$. The form $(,)$ descends to a nondegenerate form on the
associative $\Q(q)$-algebra $\mathbf{f} = {}'\mathbf{f}/\mf{I}$.

\begin{thm} The ideal $\mf{I}$ is generated by the elements
$$ \sum_{a+b=d_{ij}+1} (-1)^a \theta_i^{(a)} \theta_j \theta_i^{(b)} $$
over all $i,j\in I, $ $i\not= j$.
\end{thm}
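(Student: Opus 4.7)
The plan is to prove the two inclusions separately: first that each element $S_{ij} := \sum_{a+b=d_{ij}+1} (-1)^a \theta_i^{(a)} \theta_j \theta_i^{(b)}$ lies in the radical $\mf{I}$, and then that these elements actually generate all of $\mf{I}$. The first direction is a direct bilinear-form computation, while the second requires extra input (either from representation theory of $\mathbf{U}$ or from a PBW-type basis argument on the Serre quotient).

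For the first inclusion, I would show $(S_{ij}, x) = 0$ for every homogeneous $x \in {}'\mathbf{f}$. By the grading (the form pairs elements of different multidegrees to zero), we may assume $x$ has multidegree $N i + j$ with $N = d_{ij}+1$. The identity $(x_1 x_2, y) = (x_1 \otimes x_2, r(y))$, iterated, reduces everything to computing $r(S_{ij})$ and evaluating on tensor monomials in $\theta_i$'s and $\theta_j$. Using $r(\theta_i) = \theta_i \otimes 1 + 1 \otimes \theta_i$ together with the twisted product on $'\mathbf{f}\otimes {}'\mathbf{f}$, one derives the standard quantum-binomial expansion
\begin{equation*}
r(\theta_i^{(a)}) \;=\; \sum_{b+c=a} q_i^{-bc}\,\theta_i^{(b)} \otimes \theta_i^{(c)}.
\end{equation*}
Substituting into $r(S_{ij})$ and extracting the coefficient of any tensor factor, the vanishing of $(S_{ij},x)$ collapses to the classical $q$-binomial identity $\sum_{a=0}^{N}(-1)^{a} q_i^{a(N-a)}\qbin{N}{a}_{q_i} = 0$, valid because $N \ge 1$. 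This places $S_{ij}$, and hence the entire two-sided ideal $J$ it generates, inside $\mf{I}$.

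For the reverse inclusion $\mf{I} \subseteq J$, the goal is to show that the bilinear form descends to a \emph{nondegenerate} form on $'\mathbf{f}/J$. The cleanest route is via representation theory: construct, for each dominant weight $\lambda$, the irreducible highest weight module $V_\lambda$ over the quotient quantum group obtained from $'\mathbf{f}/J$, and observe (using the Shapovalov-form interpretation of $(,)$) that any element of $'\mathbf{f}/J$ pairing trivially with everything would act by zero on $V_\lambda$ for all $\lambda$; letting $\lambda \to \infty$ and comparing with a PBW-type spanning set of $'\mathbf{f}/J$ forces such an element to be zero. Alternatively one can argue by dimension: the image of a PBW basis (monomials in quantum root vectors) spans $'\mathbf{f}/J$ in each multidegree with at most the dimension of the corresponding graded piece of $\mathbf{f}$, so injectivity of the composite $'\mathbf{f}/J \twoheadrightarrow \mathbf{f}$ (which amounts to $\mf{I} \subseteq J$) follows.

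The computational half (first inclusion) is routine bookkeeping with $q$-binomial coefficients and should be formalized by introducing the skew derivations $_ir, r_i\colon {}'\mathbf{f}\to{}'\mathbf{f}$ that extract the components of $r$ along $\theta_i$; this makes the induction on monomials transparent. The main obstacle is the second inclusion: establishing nondegeneracy on the Serre quotient requires knowing that $'\mathbf{f}/J$ has no ``extra'' elements beyond those measured by the pairing, which cannot be seen by manipulating $r$ alone and must be imported from the existence of enough representations of $\mathbf{U}$ (equivalently, from the classification of highest weight modules or from an explicit PBW construction for $'\mathbf{f}/J$).
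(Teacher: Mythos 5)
The paper does not actually give a proof of this theorem. Immediately after the statement it remarks that the only known proof for a \emph{general} Cartan datum uses Lusztig's geometric realization of $\mathbf{f}$ via perverse sheaves (Lusztig's book, Theorem 33.1.3), and that less sophisticated arguments exist only when the Cartan datum is finite. So you are not being asked to match an argument in the paper, but your proposal should still be judged on whether it would actually close the gap, and it does not.

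Your first inclusion, that each Serre element $S_{ij}$ lies in the radical $\mf{I}$, is correct and standard: it reduces, via the skew derivations and the formula $r(\theta_i^{(a)}) = \sum_{b+c=a} q_i^{-bc}\,\theta_i^{(b)}\otimes\theta_i^{(c)}$, to the vanishing of a $q$-binomial alternating sum. No objection there.

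The genuine gap is in the reverse inclusion $\mf{I}\subseteq J$, which is precisely the hard direction. Both routes you sketch silently assume the Cartan datum is of finite type. A PBW-type basis of $'\mathbf{f}/J$ built from quantum root vectors (Lusztig's braid group symmetries $T_i$) exists only in finite type; for affine and indefinite types the braid group action does not produce root vectors for imaginary roots, so there is no spanning set whose size you can compare with $\dim\mathbf{f}_\nu$ in each multidegree. Your representation-theoretic alternative has the same hidden dependency: the ``let $\lambda\to\infty$ and compare with a PBW spanning set'' step again requires a controlled spanning set of $'\mathbf{f}/J$, which is exactly what is missing outside finite type. Without that, nondegeneracy of the form on $'\mathbf{f}/J$ does not follow. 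This is why the paper, for a general Cartan datum, appeals to the geometric construction (the form is identified with a geometrically defined pairing on a space of perverse sheaves, and nondegeneracy is proved there), rather than to any PBW or Shapovalov argument. If you restrict to finite type, your sketch can be completed along the lines you indicate; as a proof of the theorem as stated, it cannot.
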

It seems that the only known proof of this theorem, for a general Cartan datum,
requires Lusztig's geometric realization of $\mathbf{f}$ via perverse sheaves.
This proof is given in his book~\cite[Theorem~33.1.3]{Lus4}. Less sophisticated
proofs exist when the Cartan datum is finite.

We see that $\mathbf{f}$ is the quotient of $'\mathbf{f}$ by the quantum Serre
relations
\begin{equation}\label{rels-serre}
 \sum_{a+b=d_{ij}+1} (-1)^a \theta_i^{(a)} \theta_j \theta_i^{(b)} =0.
\end{equation}

Let $\Af$ be the $\Z[q,q^{-1}]$-subalgebra of $\mathbf{f}$ generated by the
divided powers $\theta_i^{(a)}$, over all $i\in I$ and $a\in \N$.

A root datum of type $(I,\cdot)$ consists of
\begin{itemize}
  \item free finitely generated abelian groups $X$,$Y$ and a perfect
  pairing $\langle,\rangle \maps Y \times X \to \Z$;
  \item inclusions $I \subset X $ $\;(i \mapsto i_X)$ and $I \subset
  Y$ $\; (i \mapsto i)$ such that
  \item $\langle i, j_X \rangle = 2\frac{i \cdot j}{i \cdot i}= -d_{ij}$ for all $i,j \in
  I$.
\end{itemize}
This implies $\la i,i_X \ra =2$ for all $i$.  We write $i_X$, rather than
Lusztig's $i'$, to denote the image of $i$ in $X$.

The quantum group ${\bf U}$ associated to a root datum as above is the unital
associative $\Q(q)$-algebra given by generators $E_i$, $F_i$, $K_{\mu}$ for $i
\in I$ and $\mu \in Y$, subject to the relations:
\begin{center}
\begin{enumerate}[i)]
 \item $K_0=1$, $K_{\mu}K_{\mu'}=K_{\mu+\mu'}$ for all $\mu,\mu' \in Y$,
 \item $K_{\mu}E_i = q^{\la \mu,i_X\ra}E_iK_{\mu}$ for all $i \in I$, $\mu \in
 Y$,
 \item $K_{\mu}F_i = q^{-\la \mu, i_X\ra}F_iK_{\mu}$ for all $i \in I$, $\mu \in
 Y$,
 \item $E_iF_j - F_jE_i = \delta_{ij}
 \frac{\tilde{K}_i-\tilde{K}_{-i}}{q_i-q_i^{-1}}$, where
 $\tilde{K}_{\pm i}=K_{\pm (i\cdot i/2) i}$,
 \item For all $i\neq j$ $$\sum_{a+b=-\la i, j_X \ra +1}(-1)^{a} E_i^{(a)}E_jE_i^{(b)} = 0
 \qquad {\rm and} \qquad
 \sum_{a+b=-\la i, j_X \ra +1}(-1)^{a} F_i^{(a)}F_jF_i^{(b)} = 0 .$$
\end{enumerate} \end{center}

If  $f(\theta_i) \in \mf{I}$ for a polynomial $f$ in noncommutative variables
$\theta_i$, $i \in I$, then $f(E_i)=0$ and $f(F_i)=0$ in ${\bf U}$. This gives a
pair of injective algebra homomorphisms $\mathbf{f} \to {\bf U}$.

%
\subsubsection{Some automorphisms of ${\bf U}$}
%

Let $\;\bar{}\;$ be the $\Q$-linear involution of $\Q(q)$ which maps $q$ to
$q^{-1}$.  ${\bf U}$ has the following standard algebra (anti)automorphisms.
\begin{itemize} \index{$\psi$}
  \item The $\Q(q)$-antilinear algebra involution  $\psi \maps {\bf U} \to {\bf U}$
given by
\[
 \psi(E_i)=E_i, \;\; \psi(F_i)=F_i,\;\;  \psi(K_{\mu}) = K_{-\mu}, \;\;
 \psi(fx)=\bar{f}\psi(x) \quad \text{for $f \in \Q(q)$ and $x \in {\bf U}$}.\]

 \item The $\Q(q)$-linear algebra involution $\omega\maps {\bf U} \to {\bf U}$
 given by \index{$\omega$}
\begin{eqnarray*}
\omega(E_i)=F_i, && \omega(F_i)=E_i, \qquad \omega(K_{\mu}) = K_{-\mu}.
\end{eqnarray*}

 \item  The $\Q(q)$-linear algebra antiinvolution $\sigma \maps {\bf U} \to {\bf U}^{\op}$
 given by \index{$\sigma$}
\begin{eqnarray*}
\sigma(E_i)=E_i, && \sigma(F_i)=F_i, \qquad \sigma(K_{\mu}) = K_{\mu}.
\end{eqnarray*}

 \item \index{$\rho$}The $\Q(q)$-linear algebra antiinvolution $\rho \maps {\bf U} \to {\bf U}^{\op}$ given by
\begin{eqnarray*}
\rho(E_i)=q_i\tilde{K}_iF_i, && \rho(F_i)=q_i\tilde{K}_{-i}E_i, \qquad \rho(K_{\mu}) = K_{\mu}.
\end{eqnarray*}
We denote by $\overline{\rho}$ the $\Q(q)$-linear antiinvolution $\psi \rho \psi
\maps {\bf U} \to {\bf U}^{\op}$. \index{$\overline{\rho}$}

 \item The $\Q(q)$-antilinear antiautomorphism $\tau \maps {\bf U} \to
{\bf U}^{\op}$ defined as the composite $\tau = \psi\rho$:\index{$\tau$}
\begin{eqnarray*}
\label{eq_tau_def}
\tau(E_i)=q_i^{-1}\tilde{K}_{-i}F_i, && \tau(F_i)=q_i^{-1}\tilde{K}_{i}E_i, \quad \tau(K_{\mu})=K_{-\mu}.
\end{eqnarray*}
\end{itemize}

%
\subsubsection{$\U$ and the bilinear form}\index{U@$\U$}
%

The $\Q(q)$-algebra $\U$ is obtained from ${\bf U}$ by adjoining a collection of
orthogonal idempotents $1_{\lambda}$ for each $\lambda \in X$,
\begin{equation}
  1_{\lambda}1_{\lambda'} = \delta_{\lambda,\lambda'}1_{\lambda'},
\end{equation}
such that
\begin{equation}
  K_{\mu}1_{\lambda} = 1_{\lambda}K_{\mu} = q^{\la \mu, \lambda\ra}1_{\lambda}, \quad
 E_i1_{\lambda} = 1_{\lambda+i_X}E_i, \qquad F_i1_{\lambda} = 1_{\lambda-i_X}F_i.
\end{equation}
The algebra $\U$ decomposes as direct sum of weight spaces
\[
 \U = \bigoplus_{\lambda,\lambda' \in X}1_{\lambda'}\U1_{\lambda} .
\]
We say that $\lambda$, respectively $\lambda'$, is the right, respectively left,
weight of $x \in 1_{\lambda'}\U1_{\lambda}$. The algebra $\UA$ is the
$\Z[q,q^{-1}]$-subalgebra of $\U$ generated by products of divided powers
$E_i^{(a)}1_{\lambda}$ and $F_i^{(a)}1_{\lambda}$, and has a similar weight
decomposition
\[
 \UA = \bigoplus_{\lambda,\lambda' \in X}1_{\lambda'}(\UA)1_{\lambda} .
\]

The following identities hold in $\U$ and $\UA$:
\begin{equation}
    (E_iF_j-F_jE_i)1_{\lambda} = \delta_{i,j}[\la i, \lambda \ra]_i 1_{\lambda},
\end{equation}
\begin{equation}
  E_i^{(a)}1_{\lambda} =  1_{\lambda+ai_X}E_i^{(a)}, \qquad F_i^{(a)}1_{\lambda} =
   1_{\lambda-ai_X}F_i^{(a)},
\end{equation}
\begin{equation}
  E_i^{(a)}F_i^{(b)}1_{\lambda} = \sum_{t = 0}^{\min(a,b)}\qbin{a-b+\la
  i,\lambda\ra}{t}_i F_i^{(b-t)}E_i^{(a-t)}1_{\lambda},
\end{equation}
\begin{equation}
  F_i^{(b)}E_i^{(a)}1_{\lambda} = \sum_{t =0}^{\min(a,b)}\qbin{-a+b-\la
  i,\lambda\ra}{t}_i E_i^{(a-t)}F_i^{(b-t)}1_{\lambda}.
\end{equation}

The (anti) automorphisms $\psi$, $\omega$, $\sigma$, $\rho$ and $\tau$ all
naturally extend to $\U$ and $\UA$ if we set
\begin{equation}
    \psi(1_{\lambda}) = 1_{\lambda}, \quad \omega(1_{\lambda}) = 1_{-\lambda},
    \quad \sigma(1_{\lambda})=1_{-\lambda}, \quad \rho(1_{\lambda}) = 1_{\lambda},
    \quad \tau(1_{\lambda}) = 1_{\lambda}.\nn
\end{equation}
Taking direct sums of the induced maps on each summand
$1_{\lambda'}\U1_{\lambda}$ allows these maps to be extended to $\U$.  For
example, the antiautomorphism $\tau$ induces for each $\lambda$ and $\lambda'$ in
$X$ an isomorphism $1_{\lambda'}\U1_{\lambda} \to 1_{\lambda}\U1_{\lambda'}$.
Restricting to the $\Z[q,q^{-1}]$-subalgebra $\UA$ and taking direct sums, we
obtain an algebra antiautomorphism $\tau\maps \UA\to\UA$ such that
$\tau(1_{\lambda})=1_{\lambda}$, $\tau(1_{\lambda+i_X}E_i
1_{\lambda})=q_i^{-1-\la i,\lambda\ra}1_{\lambda}F_i1_{\lambda+i_X}$, and
$\tau(1_{\lambda}F_i1_{\lambda+i_X})=q_i^{1+\la i,\lambda\ra
}1_{\lambda+i_X}E_i1_{\lambda}$ for all $\lambda \in X$.

The following result is taken from Lusztig~\cite[26.1.1]{Lus4}, but our bilinear
form is normalized slightly differently from his.

\begin{prop}  There exists a unique pairing $( ,) \maps \U \times\U \to \Q(q)$ with the
properties:
\begin{enumerate}[(i)]
  \item $( ,)$ is bilinear, \ie,
$ ( fx,y)=f( x,y)$, $( x,fy ) =f ( x,y)$, for $f \in \Q(q)$ and $x,y \in \U$,
 \item
$ ( 1_{\lambda_1}x1_{\lambda_2}, 1_{\lambda_1'}y1_{\lambda_2'} ) =0 \quad
\text{for all $x,y \in \U$ unless $\lambda_1=\lambda_1'$ and
$\lambda_2=\lambda_2'$}$,
 \item $( ux,y)= ( x,\overline{\rho}(u)y )$ for $u\in {\bf U}$ and $x,y \in
 \U$,
 \item $(x 1_{\lambda}, x'1_{\lambda}) = (x,x')$ for all $x, x' \in \mathbf{f}\cong{\bf U}^+$ and all $\lambda$
(here $(x,x')$ is as in Section~\ref{subsubsec_algebras_f}),
  \item $( x,y ) = ( y,x)$ for all $x,y \in \U$.
\end{enumerate}
\end{prop}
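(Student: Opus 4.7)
The plan is to adapt Lusztig's construction of the bilinear form on $\U$ from [Lus4, 26.1.1] to the normalization dictated by property (iv). The argument splits into uniqueness, which is a clean reduction to the bilinear form on $\mathbf{f}$ already discussed in Section~2.1.1, and existence, which requires checking compatibility of that reduction with all five axioms, of which symmetry is the delicate one.

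First I would establish uniqueness using the triangular decomposition of $\U$. Every element of $1_{\lambda'} \U 1_{\lambda}$ is a linear combination of monomials of the form $F_{\mathbf{j}} E_{\mathbf{i}} 1_{\lambda}$, where $\mathbf{i}, \mathbf{j}$ are tuples in $I$ whose weights satisfy $|\mathbf{i}| - |\mathbf{j}| = \lambda' - \lambda$ modulo the root lattice. Property (iii) rewrites $(F_{\mathbf{j}} E_{\mathbf{i}} 1_{\lambda}, y) = (E_{\mathbf{i}} 1_{\lambda}, \overline{\rho}(F_{\mathbf{j}}) y)$, and since $\overline{\rho}(F_i) = q_i^{-1} \tilde{K}_i E_i$, an induction on the length of $\mathbf{j}$ strips the $F$'s off the left argument at the cost of inserting $E$'s and $K$-factors on the right, with coefficients pinned down by the defining relations of $\U$ from Section~2.1. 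Combined with property (ii), this reduces any pairing to one of the form $(x 1_{\lambda}, x' 1_{\lambda})$ with $x, x' \in \mathbf{f}$, which is then determined by (iv) and the form on $\mathbf{f}$ recalled in Section~2.1.1. Hence at most one such $(,)$ can exist.

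For existence I would take the reduction above as the definition of the pairing on the spanning set $\{F_{\mathbf{j}} E_{\mathbf{i}} 1_{\lambda}\}$ and extend bilinearly, or equivalently take Lusztig's form from [Lus4, 26.1.1] and rescale weight-block by weight-block to match our normalization in (iv). Properties (i) and (ii) are tautological from the construction, (iii) is built in by design, and (iv) holds by construction. One must then check that the definition is independent of the choice of reduction order and respects all the defining relations of $\U$ (the Serre relations, the $[E_i, F_j]$ relation, and the $K$-commutation relations); this is a routine but bookkeeping-heavy verification that parallels Lusztig's.

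The main obstacle is property (v), symmetry. Our reduction is manifestly asymmetric between the two arguments, so symmetry does not come for free from the construction. The key input is the symmetry of the form on $\mathbf{f}$ together with the compatibility of $\overline{\rho}$ with the pairing, which together force the two possible orders of reduction (strip $F$'s from the left argument, versus strip $F$'s from the right argument) to yield the same answer. Concretely, I would prove (v) by induction on $|\mathbf{j}| + |\mathbf{j}'|$ for pairings of the form $(F_{\mathbf{j}} E_{\mathbf{i}} 1_{\lambda}, F_{\mathbf{j}'} E_{\mathbf{i}'} 1_{\lambda})$: the base case $|\mathbf{j}| = |\mathbf{j}'| = 0$ reduces via (iv) to the already-symmetric form on $\mathbf{f}$, and the inductive step uses (iii) applied alternately on the left and the right, with the commutation of $E$ and $F$ allowing one to swap the role of the two sides modulo lower-order terms that are symmetric by the induction hypothesis. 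This is precisely the content that Lusztig carries out in [Lus4, 26.1.1], and the only adjustment needed here is to propagate the scalar that distinguishes our normalization from his.
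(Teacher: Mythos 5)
Your proposal reconstructs Lusztig's argument from \cite[26.1.1]{Lus4}, which is exactly what the paper does: it states the proposition as ``taken from Lusztig,'' noting only the altered normalization in (iv), and gives no independent proof. Your sketch of uniqueness via the triangular decomposition and property (iii), and of existence and symmetry by rescaling Lusztig's form and propagating the normalization constant through his induction, fills in precisely the details that the paper leaves to the citation.
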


\begin{defn} \label{defn_semilinear}
Define a semilinear form $\sla,\sra \maps \U \times \U \to \Q(q)$ by
\[
 \sla x,y\sra :=( \psi(x),y ) \qquad \text{for all $x,y \in \U$}.
\]
\end{defn}

\begin{prop} \label{prop_H}
The map $\sla ,\sra \maps \U \times \U \to \Q(q)$ has the following
properties
\begin{enumerate}[(i)]
 \item $\sla ,\sra$ is semilinear, \ie,
$ \sla fx,y\sra =\bar{f}\sla x,y\sra$, $\sla x,fy \sra =f\sla x,y\sra$, for $f
\in \Q(q)$ and $x,y \in \U$,
 \item
$ \big\sla 1_{\lambda_1}x1_{\lambda_2}, 1_{\lambda_1'}y1_{\lambda_2'} \big\sra=0
\quad \text{for all $x,y \in \U$ unless $\lambda_1=\lambda_1'$ and
$\lambda_2=\lambda_2'$}$,
 \item $\sla ux,y\sra=\sla x,\tau(u)y\sra$ for $u\in {\bf U}$ and $x,y \in
 \U$,
 \item $\sla x1_{\lambda}, x'1_{\lambda}\sra =  ( \psi(x),x')$
for $x, x' \in \mathbf{f}$ and $\lambda\in X$,
 \item $\sla x,y\sra  = \sla \psi(y),\psi(x)\sra$ for all $x,y \in \U$.
\end{enumerate}
\end{prop}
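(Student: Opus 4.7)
The plan is to derive each of the five properties of $\langle,\rangle$ directly from the defining relation $\langle x,y\rangle=(\psi(x),y)$, using the corresponding property of the bilinear form $(\,,\,)$ from the preceding proposition together with the identities relating $\psi$, $\rho$, $\tau$, and $\overline{\rho}$. The key facts I would rely on are that $\psi$ is a $\Q$-antilinear algebra involution fixing each idempotent $1_\lambda$ and fixing $E_i, F_i$ (so in particular $\psi$ preserves the subalgebra $\mathbf{f}\cong \mathbf{U}^+$), that $\tau=\psi\rho$, and that $\overline{\rho}=\psi\rho\psi$.

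For (i), since $\psi(fx)=\bar f\,\psi(x)$, we have $\langle fx,y\rangle=(\bar f\,\psi(x),y)=\bar f\langle x,y\rangle$ by bilinearity of $(\,,\,)$ in the first slot, and $\langle x,fy\rangle=(\psi(x),fy)=f\langle x,y\rangle$ by bilinearity in the second. For (ii), $\psi$ being an algebra homomorphism that fixes $1_{\lambda_i}$ gives $\psi(1_{\lambda_1}x1_{\lambda_2})=1_{\lambda_1}\psi(x)1_{\lambda_2}$, so property (ii) of $(\,,\,)$ yields vanishing unless the weights agree. For (iv), $\psi(x1_\lambda)=\psi(x)1_\lambda$ with $\psi(x)\in\mathbf{f}$, and property (iv) of $(\,,\,)$ gives $(\psi(x)1_\lambda, x'1_\lambda)=(\psi(x),x')$ as required. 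For (v), combine the symmetry of $(\,,\,)$ with $\psi^2=\id$:
\[
\langle x,y\rangle=(\psi(x),y)=(y,\psi(x))=(\psi^2(y),\psi(x))=\langle\psi(y),\psi(x)\rangle.
\]

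The one step requiring more than a substitution is (iii). Here I would first use that $\psi$ is an algebra homomorphism to write $\langle ux,y\rangle=(\psi(u)\psi(x),y)$, then apply property (iii) of $(\,,\,)$ with $\psi(u)\in\mathbf{U}$ to obtain $(\psi(x),\overline{\rho}(\psi(u))y)$, and finally use $\overline{\rho}\circ\psi=\psi\rho\psi\circ\psi=\psi\rho=\tau$ to rewrite the right‑hand side as $(\psi(x),\tau(u)y)=\langle x,\tau(u)y\rangle$.

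I do not anticipate any real obstacle: the proposition is a formal consequence of the previous one once the (anti)automorphism identities are assembled. The only point to be careful about is bookkeeping — tracking which maps are linear versus antilinear and which are algebra homomorphisms versus antihomomorphisms ($\psi$ is an antilinear homomorphism, while $\rho,\tau,\overline{\rho}$ are antihomomorphisms) — but no new input beyond the data already in the excerpt is needed.
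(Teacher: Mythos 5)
Your proposal is correct and matches the paper's approach: the paper's proof is simply ``Immediate,'' meaning exactly the kind of formal unwinding you carry out, deriving each property from the corresponding property of $(\,,\,)$ together with the facts that $\psi$ is an antilinear algebra involution fixing $1_\lambda$, $E_i$, $F_i$, and that $\overline{\rho}\circ\psi=\psi\rho=\tau$. Your verification of (iii) via $\overline{\rho}\circ\psi=\tau$ is the only nontrivial step, and you handle it correctly.
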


\begin{proof}
Immediate.
\end{proof}

\begin{prop} \label{prop_nondeg}
The bilinear form $(,)$ and the semilinear form $\sla,\sra$ are both
nondegenerate on $\U$.
\end{prop}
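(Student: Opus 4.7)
The plan is to reduce nondegeneracy of $(,)$ to the nondegeneracy of the bilinear form on $\mathbf{f}$ (Lusztig, \cite[33.1]{Lus4}), essentially following the argument of Lusztig~\cite[26.1.2]{Lus4}, and then to transfer nondegeneracy to $\sla,\sra$ via the involution $\psi$.

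By property (ii) in Proposition~\ref{prop_H} and its analogue for $(,)$, both forms vanish between distinct weight components, so it suffices to prove nondegeneracy on each summand $1_{\lambda_1}\U 1_{\lambda_2}$. The triangular decomposition ${\bf U}={\bf U}^{-}\otimes{\bf U}^{0}\otimes{\bf U}^{+}$, together with the fact that each $K_\mu$ acts on $1_{\lambda_2}$ as the scalar $q^{\la\mu,\lambda_2\ra}$, yields a $\Q(q)$-linear isomorphism
\[
1_{\lambda_1}\U 1_{\lambda_2} \;\cong\; \bigoplus_{\mu-\nu\,=\,\lambda_1-\lambda_2}\mathbf{f}_\nu\otimes\mathbf{f}_\mu,
\qquad x\otimes y \;\longmapsto\; x^{-}y^{+}1_{\lambda_2},
\]
where $x\mapsto x^{-}$ and $y\mapsto y^{+}$ denote the standard identifications $\mathbf{f}\cong{\bf U}^{-}$ and $\mathbf{f}\cong{\bf U}^{+}$.

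Next, one computes the pairing on the right-hand side. Applying (iii) with $u=x^{-}$ to $(x^{-}y^{+}1_{\lambda_2},(x')^{-}(y')^{+}1_{\lambda_2})$ moves $x^{-}$ out of the first slot at the cost of producing $\overline{\rho}(x^{-})\in{\bf U}^{+}{\bf U}^{0}$ on the left of the second slot. Commuting the resulting $E$'s past the $F$'s in $(x')^{-}$ via the relation $E_iF_j-F_jE_i=\delta_{ij}\tfrac{\tilde K_i-\tilde K_{-i}}{q_i-q_i^{-1}}$, and then using (iv) to evaluate the remaining pairing of pure ${\bf U}^{+}$-elements, one finds that, with respect to any ordering of summands refining the partial order by $|\nu|+|\mu|$, the Gram matrix of $(,)$ is block-upper-triangular. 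The diagonal block indexed by $(\nu,\mu)$ equals a nonzero scalar multiple of the tensor product of Lusztig's forms on $\mathbf{f}_\nu$ and $\mathbf{f}_\mu$, while the off-diagonal contributions, arising from the commutator corrections, strictly lower the total weight. Since each $\mathbf{f}_\xi$-form is nondegenerate, every diagonal block is nondegenerate, and hence so is $(,)$ on $1_{\lambda_1}\U 1_{\lambda_2}$, and therefore on all of $\U$.

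Finally, for the semilinear form, $\psi$ is a $\Q$-linear involutive bijection of $\U$ satisfying $\psi(1_\lambda)=1_\lambda$, so from $\sla x,y\sra=(\psi(x),y)$ nondegeneracy of $\sla,\sra$ is equivalent to nondegeneracy of $(,)$. The main obstacle will be verifying the block-triangular structure: one must choose the ordering carefully so that every commutator correction produced in moving $E$'s past $F$'s lands in a strictly lower-weight block, and then identify the diagonal contribution precisely as a scalar multiple of the tensor product of two $\mathbf{f}$-forms. The essential nontrivial input, nondegeneracy of $(,)$ on $\mathbf{f}$, is already supplied by~\cite{Lus4}.
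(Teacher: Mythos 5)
Your strategy — using the triangular decomposition to reduce nondegeneracy on each weight component $1_{\lambda_1}\U1_{\lambda_2}$ to nondegeneracy of the form on $\mathbf{f}$, via a block-triangular Gram matrix — does not work as stated, because the block-triangularity fails. Since $(,)$ is symmetric (property (v) of the Proposition preceding Definition~\ref{defn_semilinear}), a Gram matrix that is block-upper-triangular with respect to some ordering of the summands $\mathbf{f}_\nu\otimes\mathbf{f}_\mu$ would necessarily be block-diagonal; i.e.\ distinct $(\nu,\mu)$-blocks would have to be mutually orthogonal. They are not: writing $x^-y^+1_\lambda$ for the image of $x\otimes y$, the elements $E_i1_\lambda = 1^-\theta_i^+1_\lambda$ (block $(\nu,\mu)=(0,i)$, with $|\nu|+|\mu|=1$) and $F_iE_i^2 1_\lambda = \theta_i^-(\theta_i^2)^+1_\lambda$ (block $(\nu,\mu)=(i,2i)$, with $|\nu|+|\mu|=3$) both lie in $1_{\lambda+i_X}\U1_\lambda$, yet
\[
\bigl(E_i1_\lambda,\; F_iE_i^21_\lambda\bigr)\;=\;\bigl(E_i^21_\lambda,\; \overline{\rho}(F_i)\,E_i1_\lambda\bigr)\;=\;q_i^{\la i,\lambda\ra+3}\,(\theta_i^2,\theta_i^2)\;\neq\;0 ,
\]
and by symmetry the entry in the transposed position is nonzero as well, so the Gram matrix cannot be block-triangular under any ordering refining $|\nu|+|\mu|$. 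The commutator corrections you single out do drop $|\nu|+|\mu|$, but after applying (iii) and (iv) the surviving pairing inside $\mathbf{f}$ already mixes distinct $(\nu,\mu)$-blocks, so those corrections are not the only cross-block contributions. (The final step, transferring nondegeneracy from $(,)$ to $\sla,\sra$ through $\psi$, is fine.)

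The paper's own proof takes a different route entirely: it cites Lusztig \cite[Theorem~26.3.1]{Lus4}, which shows $(,)$ is almost orthonormal (diagonal modulo $q^{-1}\Z[[q^{-1}]]$) with respect to the canonical basis of $\U$, so nondegeneracy is immediate; this relies on the full canonical-basis machinery. An elementary argument in the spirit of yours does exist, but it proceeds by a limiting procedure (cf.\ \cite[26.2]{Lus4}): one lets $\lambda$ become very dominant so that the form on $1_{\lambda'}\U1_\lambda$ approaches the tensor-product form on $\mathbf{f}\otimes\mathbf{f}$ in a controlled way, rather than by a block-triangular Gram matrix at fixed $\lambda$.
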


\begin{proof}
Nondegeneracy of $(,)$ is implicit throughout \cite[Chapter 26]{Lus4} and follows
for instance from Theorem 26.3.1 of \cite{Lus4}.
\end{proof}

The bilinear and semilinear forms restrict to pairings
\begin{eqnarray}
 (,) \maps \UA \times \UA \to \Z[q,q^{-1}], \qquad \quad
 \sla, \sra \maps \UA \times \UA \to \Z[q,q^{-1}].
\end{eqnarray}

%
\subsubsection{Signed sequences}
%

Throughout the paper we write $E_{+i}$ for $E_i$ and $E_{-i}$ for $F_i$ and need
notation for products of these elements. By a signed sequence $\ii =
(\epsilon_1i_1,\epsilon_2i_2, \dots, \epsilon_mi_m)$, where $\epsilon_1, \dots,
\epsilon_m \in \{ +,-\}$ and $i_1, \dots, i_m \in I$, we mean a finite sequence
of elements of $I$ with signs.   We may also write $\ii$ as
$\epsilon_1i_1,\epsilon_2i_2, \dots, \epsilon_mi_m$ or even
$\epsilon_1i_1\epsilon_2i_2\dots\epsilon_mi_m$.  Let $|i|= \sum_{k=1}^m
\epsilon_ki_k$, viewed as an element of $\Z[I]$. The length of $\ii$, denoted
$\parallel \ii
\parallel$, is the number of elements in the sequence $\ii$ ($m$ in the above
notations).  Define $\sseq$ to be the set of signed sequences.  Let
\begin{equation}
  E_{\ii} := E_{\epsilon_1i_1}E_{\epsilon_2i_2}\dots E_{\epsilon_mi_m} \in {\bf
  U},
\end{equation}
\begin{equation}
E_{\ii}1_{\lambda} := E_{\epsilon_1i_1}E_{\epsilon_2i_2}\dots E_{\epsilon_mi_m}
1_{\lambda} \in \U.
\end{equation}

Recall that $i_X \in X$ denotes the image of $i$ under the embedding $I \to X$.
Let
\begin{equation}
  \ii_X := (\epsilon_1i_1)_X+\dots+(\epsilon_mi_m)_X \in X.
\end{equation}
Then $E_{\ii}1_{\lambda} = 1_{\lambda+\ii_X} E_{\ii}1_{\lambda}$, so that
$\lambda$, respectively $\lambda+\ii_X$, is the right, respectively left, weight
of $E_{\ii}1_{\lambda}$.

A sequence $\ii$ is {\em positive} if all signs $\epsilon_1, \dots,
\epsilon_m=+$.  We may write a positive sequence as $(i_1, \dots, i_m)$ or even
$i_1\dots i_m$.  Denote by $\sseq^+$ the set of all positive sequences.  As in
\cite{KL}, for $\nu \in \N[I]$, $\nu = \sum \nu_i \cdot i$, denote by $\seq(\nu)$
the set of all sequences $\ii = i_1 \cdots i_m$ such that $i$ appears $\nu_i$
times in the sequence. There is an obvious bijection
\begin{equation}
  \sseq^+ \cong \coprod_{\nu \in \N[I]}\seq(\nu).
\end{equation}
A sequence is {\em negative} if all signs $\epsilon_1, \dots, \epsilon_m = -$.
For a positive sequence $\ii$ denote by $-\ii$ the corresponding negative
sequence.  For any signed sequence $\ii$ denote by $-\ii$ the sequence given by
reversing all signs in $\ii$.  We write concatenation of sequences $\ii$ and
$\jj$ as $\ii\jj$, attaching $\pm i$ to the left of $\ii$ as $\pm i \ii$, etc.

By a divided powers signed sequence (dpss for short) we mean a sequence
\begin{equation}
  \ii = (\epsilon_1 i_1^{(a_1)},\epsilon_2 i_2^{(a_2)},\dots,\epsilon_m i_m^{(a_m)})
\end{equation}
where $\epsilon$'s and $i$'s are as before and $a_1, \dots, a_m \in \{
1,2,\dots\}$. Let $|\ii|= \sum_{k=1}^m \epsilon_ka_ki_k \in \Z[I]$ and define
$\parallel \ii \parallel$, the length of $\ii$, as $\sum_{k=1}^{m}a_k \in \N$.
For a dpss $\ii$ let $\hat{\ii}$ be a signed sequence given by expanding $\ii$
\begin{equation}
  \hat{\ii} =(\epsilon_1 i_1, \dots,\epsilon_1 i_1, \epsilon_2 i_2, \dots, \epsilon_2 i_2,
  \dots , \epsilon_m i_m, \dots, \epsilon_m i_m) = ((\epsilon_1 i_1)^{a_1} (\epsilon_2
  i_2)^{a_2} \dots (\epsilon_m i_m)^{a_m}), \nn
\end{equation}
with term $\epsilon_1i_1$ repeating $a_1$ times, term $\epsilon_2i_2$ repeating
$a_2$ times, etc.

Define
\begin{equation}
  E_{\epsilon i^{(a)}} :=E_{\epsilon i}^{(a)} = \frac{E_{\epsilon i}^a}{[a]_i!} \in
  {\bf U},
\end{equation}
the quantum divided powers of generators $E_{\epsilon i}$. Then
\begin{equation}E_{\epsilon i^{(a)}}1_{\lambda} = E_{\epsilon
i}^{(a)}1_{\lambda} \in \U
\end{equation}
and has left weight $\lambda+\epsilon a i_X$.  More generally, for a dpss $\ii$
\begin{equation}
  E_{\ii} := E_{\epsilon_1i_1}^{(a_1)}E_{\epsilon_2i_2}^{(a_2)} \dots
  E_{\epsilon_mi_m}^{(a_m)} \in {\bf U}
\end{equation}
and
\begin{equation}
  E_{\ii}1_{\lambda} := E_{\epsilon_1i_1}^{(a_1)}E_{\epsilon_2i_2}^{(a_2)} \dots
  E_{\epsilon_mi_m}^{(a_m)}1_{\lambda} \in \U,
\end{equation}
with left weight $\lambda + \sum_{r=1}^m\epsilon_r a_r(i_r)_X$.

Let $\sseqd$ be the set of all dpss.  Elements $E_{\ii}1_{\lambda}$, over all
$\ii \in \sseqd$ and $\lambda \in X$, span $\UA$ as a $\Z[q,q^{-1}]$-module.  For
$\nu \in \N[I]$, ${\rm Seqd}(\nu)$ was defined in \cite{KL} as the set of all
expressions $i_1^{(a_1)} \dots i_m^{(a_m)}$ such that $\sum_{r=1}^m a_r i_r =
\nu$.  Each element of ${\rm Seqd}(\nu)$ gives a positive divided power sequence,
and
\begin{equation}
  \sseqd^+ \cong \coprod_{\nu \in \N[I]} {\rm Seqd}(\nu).
\end{equation}
The length $\parallel \ii \parallel$ of a dpss is defined as the sum $a_1+a_2 +
\dots + a_m$ in the above notation.

For the reader's convenience our notations for sequences are collected below:

\[
 \xy
 (90,-15)*++{\txt{ $\sseqd$\\divided powers \\ signed sequences }}="1";
  (90,-50)*++{\txt{$\sseq$\\signed sequences  }}="2";
  (30,-50)*++{\txt{$\seq^+$ \\positive sequences }}="4";
 (30,-15)*++{\txt{$\sseqd^+$ \\positive divided power \\signed sequences }}="6";
  (-30,-15)*++{\txt{$\seqd(\nu)$ \\positive divided \\ powers sequences \\ of weight $\nu$ }}="3";
 (-30,-50)*++{\txt{$\seq(\nu)$ \\positive sequences \\ of weight $\nu$ }}="5";
 {\ar@{^{(}->} "2";"1"};
  {\ar@{^{(}->} "4";"2"};
 {\ar@{^{(}->} "3";"6"}; {\ar@{^{(}->} "6";"1"};
 {\ar@{^{(}->} "5";"4"};
 {\ar@{^{(}->} "5";"3"};
  {\ar@{^{(}->} "4";"6"};
 \endxy
\]

%
\subsection{Geometric interpretation of the bilinear form}
\label{subsec_geometric}
%

Mark $m$ points $1 \times \{0\},2 \times \{0\}, \ldots, m \times \{0\}$ on the
lower boundary $\R \times \{0\}$ of the strip $\R \times [0,1]$ and $k$ points $1
\times \{1\},2 \times \{1\}, \ldots, k \times \{1\}$ on the upper boundary $\R
\times \{1\}$. Assuming $m+k$ is even, choose an immersion of $\frac{m+k}{2}$
strands into $\R\times [0,1]$ with these $m+k$ points as the endpoints.  Orient
each strand and label it by an element of $I$.  Then endpoints inherit
orientations and labels from the strands:
\[
 \xy
  (-20,12)*{}; (12,12) **\dir{-};(-20,-12)*{}; (12,-12) **\dir{-};
  (-16,12)*{};(0,12)*{} **\crv{(-12,2) & (-1,2)}?(.22)*\dir{<};
  (-16,-12)*{};(8,-12)*{} **\crv{(-7,-4) & (7,-4)}?(.8)*\dir{>};
  (-8,-12); (-8,12) **\crv{ (-5,-4) & (-9,4)}?(.5)*\dir{<};
  (0,-12); (8,12) **\crv{ (5,-4) & (6,6)}?(.5)*\dir{>};
  (-16,8)*{ \scs i};(-9,-2)*{\scs j};(8,2)*{\scs i};(9,-9)*{\scs \ell};
 \endxy
 \qquad
 \longrightarrow \qquad
  \xy
  (-20,12); (12,12) **\dir{-};(-20,-12)*{}; (12,-12) **\dir{-};
  (-16,12)*{\bullet};(-8,12)*{\bullet};(0,12)*{\bullet}; (8,12)*{\bullet};
  (-16,-12)*{\bullet};(-8,-12)*{\bullet};(0,-12)*{\bullet};(8,-12)*{\bullet};
  (-16,15)*{+};(-8,15)*{-};(0,15)*{-}; (8,15)*{+};
   (-16,9)*{\scs i};(-8,9)*{\scs j};(0,9)*{\scs i}; (8,9)*{\scs i};
    (-16,-15)*{+};(-8,-15)*{-};(0,-15)*{+}; (8,-15)*{-};
   (-16,-9)*{\scs \ell};(-8,-9)*{\scs j};(0,-9)*{\scs i}; (8,-9)*{\scs \ell};
   (23,12)*{k=4};(23,-12)*{m=4};
 \endxy
\]
Orientation and labels at lower and upper endpoints define signed sequences
$\ii,\jj \in \sseq$. In the above example, $\ii=(+\ell,-j,+i,-\ell)$ and
$\jj=(+i,-j,-i,+i)$. We consider immersions modulo boundary--preserving
homotopies and call them pairings between sequences $\ii$ and $\jj$, or simply
$(\ii,\jj)$-pairings.

Clearly, $(\ii,\jj)$-pairings are in a bijection with complete matchings of $m+k$
points such that the two points in each matching pair share the same label and
their orientations are compatible.  Denote by $p'(\ii,\jj)$ the set of all
$(\ii,\jj)$-pairings.

A minimal diagram $D$ of a $(\ii,\jj)$-pairing is a generic immersion that
realizes the pairing such that strands have no self--intersections and any two
strands intersect at most once.  We consider minimal diagrams up to
boundary--preserving isotopies.  A $(\ii,\jj)$-pairing has at least one minimal
diagram, in the example below it has two:
\[
 \xy
  (-20,12)*{}; (12,12) **\dir{-};(-20,-12)*{}; (12,-12) **\dir{-};
  (-16,12)*{};(8,12)*{} **\crv{(-12,2) & (7,2)}?(.22)*\dir{<};
  (0,-12)*{};(8,-12)*{} **\crv{(0,-6) & (7,-6)}?(.8)*\dir{>};
  (-16,-12); (0,12) **\crv{ (-15,-6) & (0,4)}?(.2)*\dir{<};
  (-8,-12); (-8,12) **\crv{ (-4,-4) & (-8,4)}?(.2)*\dir{<};
  (-16,8)*{ \scs i};(-14,-4)*{\scs j};(-4,-4)*{\scs i};(9,-9)*{\scs \ell};
 \endxy
 \qquad \qquad
  \xy
  (-20,12)*{}; (12,12) **\dir{-};(-20,-12)*{}; (12,-12) **\dir{-};
  (-16,12)*{};(8,12)*{} **\crv{(-16,-8) & (7,-8)}?(.22)*\dir{<};
  (0,-12)*{};(8,-12)*{} **\crv{(0,-6) & (7,-6)}?(.8)*\dir{>};
  (-16,-12); (0,12) **\crv{ (-15,-6) & (0,4)}?(.2)*\dir{<};
  (-8,-12); (-8,12) **\crv{ (-4,-4) & (-8,4)}?(.2)*\dir{<};
  (-18,8)*{ \scs i};(-10,7)*{\scs i};(1,7)*{\scs j};(9,-9)*{\scs \ell};
 \endxy
 \]
Here is an immersion which is not a minimal diagram
\[
  \xy
  (-20,12)*{}; (12,12) **\dir{-};(-20,-12)*{}; (12,-12) **\dir{-};
  (-16,12)*{};(8,12)*{} **\crv{(-12,2) & (7,2)}?(.22)*\dir{<};
  (-16,-12)*{};(-8,-12)*{} **\crv{(-12,8) & (4,8)}?(.2)*\dir{>};
  (8,-12); (-8,12) **\crv{(8,-10) & (-8,-4) & (-9,4)}?(.2)*\dir{<};
  (-16,8)*{ \scs i};(-17,-8)*{\scs j};(9,2)*{\scs i};(9,-9)*{\scs \ell};
  (0,12)*{}="T";
(0,-12)*{}="B"; (0,5)*{}="T'"; (0,-5)*{}="B'"; "T";"T'" **\dir{-}?(.6)*\dir{<};
"B";"B'" **\dir{-}; (3,0)*{}="MB"; (7,0)*{}="LB"; "T'";"LB" **\crv{(1,-4) &
(7,-4)}; \POS?(.25)*{}="2z"; "LB"; "2z" **\crv{(8,6) & (2,6)}; "2z"; "B'"
**\crv{(0,-3)};
 \endxy
\]

Given $\lambda \in X$, color the regions of a minimal diagram $D$ by elements of
$X$ such that the rightmost region is colored $\lambda$, and the two regions
between the two sides on an $i$-labelled strand differ by $i_X$
\[
 \xy
 (0,-8)*{};(0,8)*{} **\dir{-}?(.8)*\dir{>};
 (0,-11)*{\scs i}; (6,0)*{\lambda}; (-8,0)*{\lambda+i_X}
 \endxy
\]
Isotope $D$ so that the strands at each crossing are either both oriented up or
down, and define the degree $\deg(D,\lambda)$ of $D$ relative to $\lambda$ as the
integer which is the sum of contributions from all crossings, local maxima and
minima of $D$:
\[\begin{tabular}{|l|c|c|c|c|c|c|}
  \hline
 &
\xy (0,0)*{\xybox{
  (-7,-10)*{}; (5,3)*{};
  (3,0)*{}; (-3,0)*{} **\crv{(3,-6) & (-3,-6)}; ?(.05)*\dir{<}  ?(.9)*\dir{<}
  ?(.5)*\dir{}+(0,-2)*{\scriptstyle i};
    (8,-5)*{ \lambda};
    }}; \endxy
  & \xy (0,0)*{\xybox{
  (-7,-10)*{}; (5,3)*{};
  (3,0)*{}; (-3,0)*{} **\crv{(3,-6) & (-3,-6)}; ?(.1)*\dir{>}  ?(.95)*\dir{>}
  ?(.5)*\dir{}+(0,-2)*{\scriptstyle i};
    (8,-5)*{ \lambda};
    }}; \endxy
  & \xy (0,0)*{\xybox{
  (-7,10)*{}; (5,-3)*{};
  (3,0)*{}; (-3,0)*{} **\crv{(3,6) & (-3,6)}; ?(.15)*\dir{>}  ?(.9)*\dir{>}
  ?(.5)*\dir{}+(0,2)*{\scriptstyle i};
    (8,5)*{ \lambda};
    }}; \endxy
  & \xy (0,0)*{\xybox{
  (-7,10)*{}; (5,-3)*{};
  (3,0)*{}; (-3,0)*{} **\crv{(3,6) & (-3,6)}; ?(.05)*\dir{<}  ?(.9)*\dir{<}
  ?(.5)*\dir{}+(0,2)*{\scriptstyle i};
    (8,5)*{ \lambda};
    }}; \endxy
 &
   \xy
  (0,0)*{\xybox{
    (-3,-4)*{};(3,4)*{} **\crv{(-3,-1) & (3,1)}?(1)*\dir{>} ;
    (3,-4)*{};(-3,4)*{} **\crv{(3,-1) & (-3,1)}?(1)*\dir{>};
    (-4.5,-3)*{\scs i};
     (4.5,-3)*{\scs j};
     (8,1)*{ \lambda};
     (-10,0)*{};(12,0)*{};
     }};
  \endxy
 &
   \xy
  (0,0)*{\xybox{
    (-3,4)*{};(3,-4)*{} **\crv{(-3,1) & (3,-1)}?(1)*\dir{>} ;
    (3,4)*{};(-3,-4)*{} **\crv{(3,1) & (-3,-1)}?(1)*\dir{>};
    (-4.5,3)*{\scs i};
     (5.1,3)*{\scs j};
     (8,1)*{ \lambda};
     (-12,0)*{};(12,0)*{};
     }};
  \endxy \\ \hline
  {\rm deg} &\xy (0,-3)*{};(0,6)*{}; \endxy
  $c_{+i,\lambda}$ &
 $c_{-i,\lambda}$
 &$c_{+i,\lambda}$& $c_{-i,\lambda}$
 & $-i\cdot j$ & $-i\cdot j$\\
  \hline
\end{tabular}
\]
where
\begin{equation} \label{eq_cpm_lambda}
  c_{\pm i,\lambda} := \frac{i\cdot i}{2}(1\pm \la i,\lambda\ra).
\end{equation}
Notice that
\begin{equation}
 q^{c_{\pm i,\lambda}} = q_i^{1 \pm \la i, \lambda \ra}.
\end{equation}
In the simply-laced case $q_i=q$ and $c_{\pm i,\lambda} = 1 \pm \la i, \lambda
\ra$.

The degrees of the other crossings are determined from the rules above
\begin{eqnarray}
 \deg\left( \; \xy
  (0,0)*{\xybox{
    (-4,-4)*{};(4,4)*{} **\crv{(-4,-1) & (4,1)}?(1)*\dir{>} ;
    (4,-4)*{};(-4,4)*{} **\crv{(4,-1) & (-4,1)}?(0)*\dir{<};
    (-5,-3)*{\scs i};
     (-5,3)*{\scs j};
     (8,2)*{ \lambda}; }};
  \endxy \right)
&:=&
 \deg\left( \; \xy 0;/r.19pc/:
  (0,0)*{\xybox{
    (4,-4)*{};(-4,4)*{} **\crv{(4,-1) & (-4,1)}?(1)*\dir{>};
    (-4,-4)*{};(4,4)*{} **\crv{(-4,-1) & (4,1)};
     (-4,4);(-4,12) **\dir{-};
     (-12,-4);(-12,12) **\dir{-};
     (4,-4);(4,-12) **\dir{-};(12,4);(12,-12) **\dir{-};
     (16,1)*{\lambda};
     (-10,0)*{};(10,0)*{};
     (-4,-4)*{};(-12,-4)*{} **\crv{(-4,-10) & (-12,-10)}?(1)*\dir{<}?(0)*\dir{<};
      (4,4)*{};(12,4)*{} **\crv{(4,10) & (12,10)}?(1)*\dir{>}?(0)*\dir{>};
      (-14,11)*{\scs j};(-2,11)*{\scs i};
      (14,-11)*{\scs j};(2,-11)*{\scs i};
     }};
  \endxy \right)
  \quad \;= \quad
 \deg\left( \;  \xy 0;/r.19pc/:
  (0,0)*{\xybox{
    (-4,-4)*{};(4,4)*{} **\crv{(-4,-1) & (4,1)}?(1)*\dir{<};
    (4,-4)*{};(-4,4)*{} **\crv{(4,-1) & (-4,1)};
     (4,4);(4,12) **\dir{-};
     (12,-4);(12,12) **\dir{-};
     (-4,-4);(-4,-12) **\dir{-};(-12,4);(-12,-12) **\dir{-};
     (16,1)*{\lambda};
     (10,0)*{};(-10,0)*{};
     (4,-4)*{};(12,-4)*{} **\crv{(4,-10) & (12,-10)}?(1)*\dir{>}?(0)*\dir{>};
      (-4,4)*{};(-12,4)*{} **\crv{(-4,10) & (-12,10)}?(1)*\dir{<}?(0)*\dir{<};
      (14,11)*{\scs i};(2,11)*{\scs j};
      (-14,-11)*{\scs i};(-2,-11)*{\scs j};
     }};
  \endxy \; \right) \quad = 0 \nn \\
 \deg\left( \;  \xy
  (0,0)*{\xybox{
    (-4,-4)*{};(4,4)*{} **\crv{(-4,-1) & (4,1)}?(0)*\dir{<} ;
    (4,-4)*{};(-4,4)*{} **\crv{(4,-1) & (-4,1)}?(1)*\dir{>};
    (5.1,-3)*{\scs i};
     (5.1,3)*{\scs j};
     (-8,2)*{ \lambda};
     }};
  \endxy  \; \right)
&:=&
  \deg\left( \;\xy 0;/r.19pc/:
  (0,0)*{\xybox{
    (-4,-4)*{};(4,4)*{} **\crv{(-4,-1) & (4,1)}?(1)*\dir{>};
    (4,-4)*{};(-4,4)*{} **\crv{(4,-1) & (-4,1)};
     (4,4);(4,12) **\dir{-};
     (12,-4);(12,12) **\dir{-};
     (-4,-4);(-4,-12) **\dir{-};(-12,4);(-12,-12) **\dir{-};
     (-16,1)*{\lambda};
     (10,0)*{};(-10,0)*{};
     (4,-4)*{};(12,-4)*{} **\crv{(4,-10) & (12,-10)}?(1)*\dir{<}?(0)*\dir{<};
      (-4,4)*{};(-12,4)*{} **\crv{(-4,10) & (-12,10)}?(1)*\dir{>}?(0)*\dir{>};
      (14,11)*{\scs j};(2,11)*{\scs i};
      (-14,-11)*{\scs j};(-2,-11)*{\scs i};
     }};
  \endxy  \; \right)
  \quad = \quad
  \deg\left( \; \xy 0;/r.19pc/:
  (0,0)*{\xybox{
    (4,-4)*{};(-4,4)*{} **\crv{(4,-1) & (-4,1)}?(1)*\dir{<};
    (-4,-4)*{};(4,4)*{} **\crv{(-4,-1) & (4,1)};
     (-4,4);(-4,12) **\dir{-};
     (-12,-4);(-12,12) **\dir{-};
     (4,-4);(4,-12) **\dir{-};(12,4);(12,-12) **\dir{-};
     (-16,1)*{\lambda};
     (-10,0)*{};(10,0)*{};
     (-4,-4)*{};(-12,-4)*{} **\crv{(-4,-10) & (-12,-10)}?(1)*\dir{>}?(0)*\dir{>};
      (4,4)*{};(12,4)*{} **\crv{(4,10) & (12,10)}?(1)*\dir{<}?(0)*\dir{<};
      (-14,11)*{\scs i};(-2,11)*{\scs j};
      (14,-11)*{\scs i};(2,-11)*{\scs j};
     }};
  \endxy  \; \right) \quad = 0 \nn
\end{eqnarray}
Since both of these crossings have degree zero, we will refer to them as {\em
balanced} crossings.

\begin{prop}
 $\deg(D,\lambda)$ depends only on $\lambda$ and on the $(\ii,\jj)$-pairing
 realized by $D$.  Thus, $\deg(D,\lambda)$ is independent of the choice of a
 minimal diagram for a pairing.
\end{prop}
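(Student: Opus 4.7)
The plan is to verify that $\deg(D,\lambda)$ is invariant under a complete set of local moves that relate any two minimal diagrams of the same $(\ii,\jj)$-pairing. Because a minimal diagram has no self-intersections and at most one crossing per pair of strands, Reidemeister I and II type moves are excluded, and the relevant moves reduce to: (a) boundary-preserving planar isotopies, (b) triple-point (Reidemeister III) moves on three pairwise-crossing strands, and (c) slides that rearrange the position of a local extremum of one strand relative to a second strand it does not cross.

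Invariance under (a) is immediate once each crossing is first brought into the same-direction form, since isotopy alters neither the number and type of the crossings and extrema nor the $X$-labels of the regions at which they occur. For (b), on three strands carrying labels $i$, $j$, $k$, the three crossings contribute $-i\cdot j-j\cdot k-i\cdot k$ on both sides of the move, no extrema are affected, and the labels of the two exterior regions and of the interior triangle agree on both sides (the interior label is determined by crossing any one of the three strands, and all three give the same answer). Hence $\deg$ is preserved.

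The substantive case is (c). The key computation is the identity
\[
 c_{\pm i,\mu+j_X}-c_{\pm i,\mu}\;=\;\pm\tfrac{i\cdot i}{2}\la i,j_X\ra\;=\;\pm\, i\cdot j,
\]
which follows directly from \eqref{eq_cpm_lambda} together with the root-datum relation $\la i,j_X\ra=2(i\cdot j)/(i\cdot i)$. When one slides an extremum of the $i$-strand from one side of a $j$-strand to the other, the weight $\mu$ labelling the region immediately to the right of the extremum changes by $\pm j_X$, so the extremum's contribution to $\deg$ changes by $\pm i\cdot j$. This shift is compensated either by the opposite shift at a second extremum of the same $i$-strand on the opposite side of the $j$-strand (the two are forced to change with opposite signs of $\pm$), or by the simultaneous creation/destruction of a $\mp i\cdot j$ crossing contribution, depending on the subcase of (c) in question.

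The main obstacle I foresee is the combinatorial part: giving a clean list of local moves between two minimal diagrams and verifying that (b) together with (c) and planar isotopy exhausts all the possibilities. Once this classification is in place, the arithmetic in each case reduces to the displayed identity, and the proposition follows by induction on the number of moves connecting $D$ to any other minimal diagram of the same pairing.
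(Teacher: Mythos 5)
Your overall strategy — reduce to invariance under a generating set of local moves (isotopies plus triple-crossing moves plus slides of a crossing past an extremum) — is the same as the paper's, and your treatment of the triple-crossing move matches it. But there is a genuine gap in step (a): it is not true that a boundary-preserving isotopy "alters neither the number and type of the crossings and extrema." The degree is a sum over crossings \emph{and} over local maxima/minima of the chosen planar representative, and an isotopy can create or cancel a pair consisting of one local maximum and one local minimum on the same strand (a zigzag, or ``U-turn'' cancellation): a straight vertical $i$-strand is isotopic to an S-shaped one with one extra cap and one extra cup. Because the extrema carry nonzero contributions $c_{\pm i,\lambda}$, invariance under this move is not formal; it requires the identity
\begin{equation*}
 c_{+i,\lambda}+c_{-i,\lambda+i_X}
 \;=\;\tfrac{i\cdot i}{2}\bigl(1+\la i,\lambda\ra\bigr)+\tfrac{i\cdot i}{2}\bigl(1-\la i,\lambda+i_X\ra\bigr)\;=\;0,
\end{equation*}
which uses $\la i,i_X\ra=2$ and the precise normalization in \eqref{eq_cpm_lambda}. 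This is exactly the one computation the paper carries out explicitly, and it is absent from your proposal; your identity $c_{\pm i,\mu+j_X}-c_{\pm i,\mu}=\pm\, i\cdot j$ in case (c) does not substitute for it, since the zigzag move concerns a cap and a cup on the \emph{same} strand with no second strand involved.

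A secondary point: in case (c) the alternative of ``simultaneous creation/destruction of a $\mp i\cdot j$ crossing contribution'' cannot occur between two minimal diagrams of the same pairing, because whether two given strands cross (an odd or even number of times, hence once or not at all in a minimal diagram) is determined by the endpoint configuration; a move changing the crossing parity of a pair of strands changes the pairing or leaves the class of minimal diagrams. What actually happens when a crossing slides past an extremum is that the crossing's contribution switches between $-i\cdot j$ (same-direction form) and $0$ (balanced form), compensated by the $\pm i\cdot j$ shift in the extremum's contribution — so the arithmetic you wrote is essentially right, but the bookkeeping should be phrased as a change of crossing type, not creation/destruction of a crossing. With the U-turn identity added and case (c) rephrased this way, your argument aligns with the paper's proof.
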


\begin{proof}
Invariance under cancellation of U-turns can be shown as follows:
\[
  \deg \left(\;  \xy   0;/r.18pc/:
    (-8,0)*{}="1";
    (0,0)*{}="2";
    (8,0)*{}="3";
    (-8,-10);"1" **\dir{-};
    "1";"2" **\crv{(-8,8) & (0,8)} ?(0)*\dir{>} ?(1)*\dir{>};
    "2";"3" **\crv{(0,-8) & (8,-8)}?(1)*\dir{>};
    "3"; (8,10) **\dir{-};
    (14,9)*{\lambda};
    (-6,9)*{\lambda+i_X};
    \endxy \;\right)
     =
    \frac{i\cdot i}{2}(1+\la i,\lambda\ra)+\frac{i\cdot i}{2}(1-\la i,\lambda+i_X\ra)
    = 0 = \deg\left(\;
    \xy   0;/r.18pc/:
    (-8,0)*{}="1";
    (0,0)*{}="2";
    (8,0)*{}="3";
    (0,-10);(0,10)**\dir{-} ?(.5)*\dir{>};
    (5,9)*{\lambda};
    (-8,9)*{\lambda+i_X};
    \endxy \right).
\]
Invariance of $\deg(D,\lambda)$ under other isotopies of $D$ is straightforward
and is left to the reader.  Any two minimal diagrams of the same
$(\ii,\jj)$-pairing are related by a sequence of isotopies and
``triple--crossing" moves for various orientations, see examples below.
\begin{equation}
 \vcenter{
 \xy 0;/r.18pc/:
    (-4,-4)*{};(4,4)*{} **\crv{(-4,-1) & (4,1)}?(1)*\dir{>};
    (4,-4)*{};(-4,4)*{} **\crv{(4,-1) & (-4,1)}?(1)*\dir{>};
    (4,4)*{};(12,12)*{} **\crv{(4,7) & (12,9)}?(1)*\dir{>};
    (12,4)*{};(4,12)*{} **\crv{(12,7) & (4,9)}?(1)*\dir{>};
    (-4,12)*{};(4,20)*{} **\crv{(-4,15) & (4,17)}?(1)*\dir{>};
    (4,12)*{};(-4,20)*{} **\crv{(4,15) & (-4,17)}?(1)*\dir{>};
    (-4,4)*{}; (-4,12) **\dir{-};
    (12,-4)*{}; (12,4) **\dir{-};
    (12,12)*{}; (12,20) **\dir{-};
  (18,8)*{\lambda};
  (-6,-3)*{\scs i};
  (6,-3)*{\scs j};
  (15,-3)*{\scs k};
\endxy}
 \;\; \leftrightarrow\;\;
 \vcenter{
 \xy 0;/r.18pc/:
    (4,-4)*{};(-4,4)*{} **\crv{(4,-1) & (-4,1)}?(1)*\dir{>};
    (-4,-4)*{};(4,4)*{} **\crv{(-4,-1) & (4,1)}?(1)*\dir{>};
    (-4,4)*{};(-12,12)*{} **\crv{(-4,7) & (-12,9)}?(1)*\dir{>};
    (-12,4)*{};(-4,12)*{} **\crv{(-12,7) & (-4,9)}?(1)*\dir{>};
    (4,12)*{};(-4,20)*{} **\crv{(4,15) & (-4,17)}?(1)*\dir{>};
    (-4,12)*{};(4,20)*{} **\crv{(-4,15) & (4,17)}?(1)*\dir{>};
    (4,4)*{}; (4,12) **\dir{-};
    (-12,-4)*{}; (-12,4) **\dir{-};
    (-12,12)*{}; (-12,20) **\dir{-};
  (10,8)*{\lambda};
  (7,-3)*{\scs k};
  (-6,-3)*{\scs j};
  (-14,-3)*{\scs i};
\endxy}
\qquad \qquad \vcenter{
 \xy 0;/r.18pc/:
    (-4,-4)*{};(4,4)*{} **\crv{(-4,-1) & (4,1)}?(1)*\dir{>};
    (4,-4)*{};(-4,4)*{} **\crv{(4,-1) & (-4,1)}?(1)*\dir{<};
    ?(0)*\dir{<};
    (4,4)*{};(12,12)*{} **\crv{(4,7) & (12,9)}?(1)*\dir{>};
    (12,4)*{};(4,12)*{} **\crv{(12,7) & (4,9)}?(1)*\dir{>};
    (-4,12)*{};(4,20)*{} **\crv{(-4,15) & (4,17)};
    (4,12)*{};(-4,20)*{} **\crv{(4,15) & (-4,17)}?(1)*\dir{>};
    (-4,4)*{}; (-4,12) **\dir{-};
    (12,-4)*{}; (12,4) **\dir{-};
    (12,12)*{}; (12,20) **\dir{-};
  (18,8)*{\lambda};
  (-6,-3)*{\scs i};
  (7,-3)*{\scs j};
  (15,-3)*{\scs k};
\endxy}
 \;\; \leftrightarrow\;\;
 \vcenter{
 \xy 0;/r.18pc/:
    (4,-4)*{};(-4,4)*{} **\crv{(4,-1) & (-4,1)}?(1)*\dir{>};
    (-4,-4)*{};(4,4)*{} **\crv{(-4,-1) & (4,1)}?(0)*\dir{<};
    (-4,4)*{};(-12,12)*{} **\crv{(-4,7) & (-12,9)}?(1)*\dir{>};
    (-12,4)*{};(-4,12)*{} **\crv{(-12,7) & (-4,9)}?(1)*\dir{>};
    (4,12)*{};(-4,20)*{} **\crv{(4,15) & (-4,17)};
    (-4,12)*{};(4,20)*{} **\crv{(-4,15) & (4,17)}?(1)*\dir{>};
    (4,4)*{}; (4,12) **\dir{-} ?(.5)*\dir{<};
    (-12,-4)*{}; (-12,4) **\dir{-};
    (-12,12)*{}; (-12,20) **\dir{-};
  (10,8)*{\lambda};
  (7,-3)*{\scs k};
  (-7,-3)*{\scs j};
  (-14,-3)*{\scs i};
\endxy}
\end{equation}
The invariance under these moves is manifestly obvious.
\end{proof}

From now on, for each $(\ii,\jj)$-pairing we choose one minimal diagram $D$
representing this pairing and denote by $p(\ii,\jj)$ the set of these diagrams.
Recall the pairings $(,)$ and $\sla,\sra$ on $\U$ from
Section~\ref{subsec_quantumgroups}.

\begin{thm} \label{thm_form_formula} For any $\ii,\jj\in \sseq$ and $\lambda, \mu \in X$
\begin{equation} \label{eq_thm_pairing}
 ( E_{\ii}1_{\lambda}, E_{\jj}1_{\mu} ) \quad = \quad \sla E_{\ii}1_{\lambda}, E_{\jj}1_{\mu} \sra
  \quad = \quad \delta_{\lambda\mu}
  \sum_{D \in p(\ii,\jj)} q^{\deg(D,\lambda)} \prod_{i\in I}
  \frac{1}{(1-q_i^2)^{\kappa_i}},
\end{equation}
where $\kappa_i$ is the number of $i$-colored strands in $D$.
\end{thm}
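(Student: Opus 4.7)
The first equality $(E_{\ii}1_{\lambda}, E_{\jj}1_{\mu}) = \sla E_{\ii}1_{\lambda}, E_{\jj}1_{\mu}\sra$ is immediate: since $\psi$ is an algebra homomorphism fixing each generator $E_{\pm i}$ and each $1_\lambda$, we have $\psi(E_{\ii}1_{\lambda}) = E_{\ii}1_{\lambda}$, hence $\sla E_{\ii}1_{\lambda}, E_{\jj}1_{\mu}\sra = (\psi(E_\ii 1_\lambda),E_\jj 1_\mu) = (E_{\ii}1_{\lambda},E_{\jj}1_{\mu})$ by Definition~\ref{defn_semilinear}. The factor $\delta_{\lambda,\mu}$ is forced by property (ii) of the bilinear form, since $E_{\ii}1_{\lambda}$ and $E_{\jj}1_{\mu}$ have right weights $\lambda$ and $\mu$ respectively.

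To establish the main formula (assuming $\lambda=\mu$), I would proceed by strong induction on $\parallel\ii\parallel$, using property (iii) to peel the leftmost entry of $\ii$ and transfer it to $\jj$ with an opposite sign. Explicitly, writing $\ii = (\epsilon i)\,\ii'$ and using $\bar\rho = \psi\rho\psi$ together with the action of $\rho, \psi$ on the generators, one computes $\bar\rho(E_{\epsilon i}) = q_i^{-1}\tilde K_{-\epsilon i} E_{-\epsilon i}$; pulling the $\tilde K$-factor through to its eigenvalue gives
\[
(E_\ii 1_\lambda, E_\jj 1_\lambda)
\;=\; q_i^{-1-\epsilon\la i,\lambda+\jj_X\ra}\,(E_{\ii'} 1_\lambda,\, E_{(-\epsilon i)\jj}\,1_\lambda).
\]
On the diagrammatic side, the corresponding operation is a bijection $p(\ii,\jj)\cong p(\ii',(-\epsilon i)\jj)$ in which the strand emanating from the leftmost bottom endpoint of $\ii$ is rotated around the left side of the strip so that its endpoint becomes the leftmost top endpoint of $\jj$, now labelled $-\epsilon i$. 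The inductive step is then to verify that for every minimal diagram $D\in p(\ii,\jj)$ and its image $D'\in p(\ii',(-\epsilon i)\jj)$ under this rotation, the equality
$q^{\deg(D,\lambda)} = q_i^{-1-\epsilon\la i,\lambda+\jj_X\ra}\,q^{\deg(D',\lambda)}$ holds, and the factor $(1-q_i^2)^{-\kappa_i}$ is unchanged. This is a local analysis near the rotated strand, using $c_{\pm i,\mu} = (i\cdot i/2)(1\pm\la i,\mu\ra)$ for cup/cap degrees, the vanishing of balanced crossing degrees, and $\tilde K_{\pm i}1_\mu = q_i^{\pm\la i,\mu\ra}1_\mu$; the three cases (strand makes a U-turn at the bottom, a U-turn at the top, or passes straight up) are each mechanical.

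The base case $\parallel\ii\parallel=0$ requires the formula for $(1_\lambda, E_\jj 1_\lambda)$. Applying the symmetry property (v) and again peeling---this time from $\jj$---produces an analogous recursion reducing to $\parallel\jj\parallel\leq 1$: the trivial case $\ii=\jj=\emptyset$ gives $(1_\lambda, 1_\lambda)=1$, matching the unique empty diagram of degree $0$, while $\jj=(\epsilon i)$ gives $0$ on both sides by the weight constraint $\jj_X=0$. As a sanity check, for positive $\ii, \jj$ the identity reduces via property (iv) to the known Lusztig bilinear form on $\mathbf f$, whose diagrammatic interpretation involves only upward strands with crossings. The main obstacle is the bookkeeping in the inductive step: tracking exactly how the $q$-exponents arising from the $\tilde K$-eigenvalues match the cup/cap degree contributions under the unfolding bijection, across all sign/orientation patterns. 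Because invariance of $\deg(D,\lambda)$ under choice of minimal diagram is already established, this computation is local to the peeled strand and reduces to a finite enumeration.
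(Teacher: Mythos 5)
Your opening moves match the paper: the first equality from $\psi$-invariance of $E_{\ii}1_{\lambda}$, the factor $\delta_{\lambda\mu}$ from property (ii), and the peeling step $(\pm i\,\ii,\jj)\leftrightarrow(\ii,\mp i\,\jj)$ realized diagrammatically by attaching a U-turn is exactly the paper's lemma (with one computational slip: since $\tilde K_{-\epsilon i}$ acts on the weight \emph{after} $E_{-\epsilon i}$ has been applied, the correct factor is $q_i^{1-\epsilon\la i,\lambda+\jj_X\ra}$, i.e.\ $\overline{\rho}(1_\mu E_{\pm i})=q_i^{1\mp\la i,\mu\ra}E_{\mp i}1_\mu$, not $q_i^{-1-\epsilon\la i,\lambda+\jj_X\ra}$). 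The genuine gap is your base case. After peeling all of $\ii$ you are left with $(1_\lambda,E_{\jj}1_\lambda)$ for an \emph{arbitrary signed} $\jj$, and your proposed reduction of this ``by symmetry (v) and peeling from $\jj$'' is circular: the adjunction move $(ux,y)=(x,\overline{\rho}(u)y)$ conserves the total length $\parallel\ii\parallel+\parallel\jj\parallel$, and symmetry merely swaps the two slots, so no sequence of these moves ever reaches $\parallel\jj\parallel\le 1$. Since property (iv) only evaluates the form on positive sequences (elements of $\mathbf f\cong{\bf U}^+$), your ``sanity check'' is in fact the only computable base case, and nothing in your scheme connects $(1_\lambda,E_{\jj}1_\lambda)$ for mixed-sign $\jj$ to it.

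Closing this gap requires using relations in $\U$ itself, which is the heart of the paper's proof and is absent from your proposal. The paper sorts the signed sequence: transposing $\pm i\mp j$ with $i\neq j$ is free on both sides (degree-zero crossing), while transposing $+i-i$ and $-i+i$ uses $E_{\ii'+i-i\ii''}1_\lambda=E_{\ii'-i+i\ii''}1_\lambda+[\la i,\mu\ra]_i E_{\ii'\ii''}1_\lambda$; the correction term has length $m-2$ and is handled by induction, and on the diagrammatic side one partitions $p(\ii'\pm i\mp i\,\ii'',\emptyset)$ into three classes of pairings, with the class of $(+i,-i)$ versus $(-i,+i)$ caps contributing $\bigl(q_i^{1-\la i,\mu\ra}-q_i^{1+\la i,\mu\ra}\bigr)/(1-q_i^2)=[\la i,\mu\ra]_i$, matching the algebra exactly. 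Once all negative entries precede all positive ones, bending the negative part up reduces to the positive--positive case, where the diagram sum is identified with Lusztig's form on $\mathbf f$ via property (iv). Without the commutator/sorting step (or some equivalent use of the defining relations of $\U$) your induction cannot terminate, so as written the argument does not prove the theorem.
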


For example,
\[
 p\left((-j,+i,+j,-i),(+i,-i)\right) \quad = \quad
 \left\{\;\;\;\xy
  (-20,12)*{}; (12,12) **\dir{-};(-20,-12)*{}; (12,-12) **\dir{-};
  (-16,12)*{};(-8,12)*{} **\crv{(-12,2) & (-7,2)}?(.22)*\dir{<};
  (-16,-12)*{};(0,-12)*{} **\crv{(-16,-2) & (0,-2)}?(.14)*\dir{<};
  (-8,-12)*{};(8,-12)*{} **\crv{(-8,-2) & (8,-2)}?(.16)*\dir{>};
  (-16,8)*{ \scs i};(-14,-4)*{\scs j};(8,-4)*{\scs i};
  (8,5)*{\lambda};
 \endxy \quad,
 \qquad
  \xy
  (-20,12)*{}; (12,12) **\dir{-};(-20,-12)*{}; (12,-12) **\dir{-};
  (-16,12)*{};(-8,-12)*{} **\crv{(-12,2) & (-8,2)}?(.22)*\dir{<};
  (-16,-12)*{};(0,-12)*{} **\crv{(-16,-2) & (0,-2)}?(.14)*\dir{<};
  (-8,12)*{};(8,-12)*{} **\crv{(-8,2) & (8,-2)}?(.25)*\dir{>};
  (-16,8)*{ \scs i};(-14,-4)*{\scs j};(8,-4)*{\scs i};
  (8,5)*{\lambda};
 \endxy \;\;\; \right\}
 \]
so that
\begin{eqnarray}
(E_{(-j,+i,+j,-i)}1_{\lambda},E_{(+i,-i)}1_{\lambda}  ) &=&
\left(q^{2c_{-i,\lambda}+c_{+j,\lambda}-i\cdot j}+q^{c_{+j,\lambda-i_{ X}}}
\right) \frac{1}{(1-q_i)^2}\frac{1}{(1-q_j)}
\\ &=&
\left(q_i^{2(1-\la i,\lambda \ra)}q_j^{ (1+\la j,\lambda \ra)}q^{ - i \cdot j}
+q_j^{(1+\la j,\lambda \ra)}q^{- i \cdot j} \right)
\frac{1}{(1-q_i)^2}\frac{1}{(1-q_j)}. \nn
\end{eqnarray}

The product term in \eqref{eq_thm_pairing} is independent of $D$ since
$2\kappa_i$ is the number of times label $i$ appears in the sequence $\ii\jj$.

\begin{proof}
The first equality in $\eqref{eq_thm_pairing}$ is obvious since
$E_{\ii}1_{\lambda}$ is invariant under the involution $\psi$. For $\ii$, $\jj$,
$\lambda, \mu$ as in Theorem~\ref{thm_form_formula} define
\begin{equation} \label{eq_pariing prime}
(E_{\ii}1_{\lambda},E_{\jj}1_{\mu})' = \delta_{\lambda\mu}\sum_{D \in p(\ii,\jj)}
q^{\deg(D,\lambda)} \prod_{i\in I}
  \frac{1}{(1-q_i^2)^{\kappa_i}}.
\end{equation}
During the proof we view $E_{\ii}1_{\lambda}$ and $E_{\jj}1_{\mu}$ in
$(E_{\ii}1_{\lambda},E_{\jj}1_{\mu})'$ as formal symbols rather than elements of
$\U$, since we have not yet proved that $(,)'$ descends to $\U$. We want to show
\begin{equation}
  (E_{\ii}1_{\lambda},E_{\jj}1_{\mu})' =
  (E_{\ii}1_{\lambda},E_{\jj}1_{\mu}) \nn
\end{equation}
for all $\ii$, $\jj$, $\lambda$ and $\mu$.

The nontrivial case is $\lambda = \mu$. We can also interpret the formula
$(E_{\ii}1_{\lambda}, E_{\jj}1_{\mu})'=0$ for $\mu \neq \lambda$ via the sum of
diagrams, since then the rightmost region must be colored by both $\lambda$ and
$\mu$, which is not allowed.
\[
 \xy
  (-20,12)*{}; (12,12) **\dir{-};(-20,-12)*{}; (12,-12) **\dir{-};
  (-16,12)*{};(8,12)*{} **\crv{(-12,2) & (7,2)}?(.22)*\dir{<};
  (0,-12)*{};(8,-12)*{} **\crv{(0,-6) & (7,-6)}?(.8)*\dir{>};
  (-16,-12); (0,12) **\crv{ (-15,-6) & (0,4)}?(.2)*\dir{<};
  (-8,-12); (-8,12) **\crv{ (-4,-4) & (-8,4)}?(.2)*\dir{<};
  (-16,8)*{ \scs i};(-14,-4)*{\scs j};(-4,-4)*{\scs i};(9,-9)*{\scs \ell};
  (12,5)*{\lambda};(12,-5)*{\mu};
 \endxy
 \]

Notice that in the absence of $(\ii,\jj)$-pairings the right hand side of
\eqref{eq_thm_pairing} is zero, since there are no diagrams to sum over.  If the
left weights of $E_{\ii}1_{\lambda}$ and $E_{\jj}1_{\lambda}$ are different the
inner product $(E_{\ii}1_{\lambda},E_{\jj}1_{\lambda})=0$ and
$(E_{\ii}1_{\lambda},E_{\jj}1_{\lambda})'=0$ as well since the set of
$(\ii,\jj)$-pairings is empty in this case.

Assume that both $\ii$ and $\jj$ are positive.  Then any minimal
$(\ii,\jj)$-diagram can be isotoped to be braid--like, with the strands going
upward:
\[
  \vcenter{
 \xy 0;/r.18pc/:
    (-12,-12)*{};(4,4)*{} **\crv{(-4,-1) & (4,1)}?(1)*\dir{>};
    (4,-12)*{};(-4,4)*{} **\crv{(4,-1) & (-4,1)}?(1)*\dir{>};
    (12,-4)*{};(20,4)*{} **\crv{(12,-1) & (20,1)}?(1)*\dir{>};
    (20,-4)*{};(12,4)*{} **\crv{(20,-1) & (12,1)}?(0)*\dir{<};
    (-4,12)*{};(4,20)*{} **\crv{(-4,15) & (4,17)}?(1)*\dir{>};
    (20,4)*{};(-4,20)*{} **\crv{(20,8) & (-4,15)}?(1)*\dir{>};
    (4,4)*{};(12,4)*{} **\crv{(4,8) & (12,8)}?(1)*\dir{};
    (20,-4)*{};(28,-4)*{} **\crv{(20,-8) & (28,-8)}?(1)*\dir{};
    (28,-4)*{};(12,20)*{} **\crv{(28,8) & (12,14)}?(1)*\dir{};
    (-4,4)*{}; (-4,12) **\dir{-};
    (12,-12)*{}; (12,-4) **\dir{-};
  (24,14)*{\lambda};
    (-6,0)*{}="1";
    (-10,0)*{}="2";
    (-14,0)*{}="3";
    (-4,-12);"1" **\crv{(-4,-8) & (-4,-4)};
    "1";"2" **\crv{(-8,8) & (-12,8)} ?(0)*\dir{>} ?(1)*\dir{>};
    "2";"3" **\crv{(-8,-8) & (-12,-8)}?(1)*\dir{>};
    "3"; (-12,20) **\crv{(-18,8) & (-9,16)};
        (-12,23)*{\scs j_1};(0,23)*{\scs \cdots};(12,23)*{\scs j_m};
   (-12,-15)*{\scs i_1};(0,-15)*{\scs \cdots};(12,-15)*{\scs i_m};
\endxy}
\quad \rightsquigarrow \quad
  \vcenter{
 \xy 0;/r.18pc/:
    (-12,-12)*{};(4,4)*{} **\crv{(-4,-1) & (4,1)}?(1)*\dir{>};
    (4,-12)*{};(-4,4)*{} **\crv{(4,-1) & (-4,1)}?(1)*\dir{>};
    (4,4)*{};(12,12)*{} **\crv{(4,7) & (12,9)}?(1)*\dir{>};
    (12,4)*{};(4,12)*{} **\crv{(12,7) & (4,9)}?(1)*\dir{>};
    (-4,12)*{};(4,20)*{} **\crv{(-4,15) & (4,17)}?(1)*\dir{>};
    (4,12)*{};(-4,20)*{} **\crv{(4,15) & (-4,17)}?(1)*\dir{>};
    (-4,4)*{}; (-4,12) **\dir{-};
    (12,-12)*{}; (12,4) **\dir{-};
    (12,12)*{}; (12,20) **\dir{-};
  (18,8)*{\lambda};
    (-4,-12);(-12,20) **\crv{(-4,-8) & (-12,-4)}?(1)*\dir{>};
    (-12,23)*{\scs j_1};(0,23)*{\scs \cdots};(12,23)*{\scs j_m};
    (-12,-15)*{\scs i_1};(0,-15)*{\scs \cdots};(12,-15)*{\scs i_m};
\endxy}
\]
Moreover, in this case $k=m$, otherwise $p(\ii,\jj)$ is empty.  The sum on the
right hand side of \eqref{eq_thm_pairing} describes the canonical bilinear form
on ${\bf U}^+ \cong \mathbf{f}$ evaluated on $E_{\ii}=E_{i_1}\cdots E_{i_m}$ and
$E_{\jj}=E_{j_1}\ldots E_{j_m}$, see \cite{Lus6,Reineke,KL,KL2}.

In view of property (iv) of Lusztig's bilinear form, we obtain the following:

\begin{lem} \label{lem_ij_pos}
  If $\ii$, $\jj$ are positive,
\begin{equation}
(E_{\ii}1_{\lambda},E_{\jj}1_{\lambda})' =
(E_{\ii}1_{\lambda},E_{\jj}1_{\lambda})
\end{equation}
for any $\lambda \in X$.
\end{lem}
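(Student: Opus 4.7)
The plan is to reduce to the case already established for $\mathbf{f}\cong \mathbf{U}^+$ and then invoke property (iv) of Lusztig's form.

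First, I would dispose of degenerate cases. If $|\ii|\neq|\jj|$ in $\Z[I]$, then the label multiplicities on the upper and lower boundary do not match, so $p(\ii,\jj)=\varnothing$ and the right-hand side of \eqref{eq_pariing prime} vanishes; simultaneously $E_{\ii}1_\lambda$ and $E_{\jj}1_\lambda$ lie in different weight components of $\mathbf{f}1_\lambda$, so Lusztig's form vanishes too by orthogonality. Thus I may assume $|\ii|=|\jj|=\nu$ for some $\nu\in\N[I]$; in particular $\parallel\!\ii\!\parallel=\parallel\!\jj\!\parallel=m$.

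Next, because both sequences are positive, every $(\ii,\jj)$-pairing admits a braid-like minimal representative: one first cancels the cups and caps introduced by U-turns in a generic immersion (there are none for upward orientations at both boundaries) and then chooses an upward braid diagram realizing the matching. The set $p(\ii,\jj)$ is then in bijection with the set of bijections between the positions carrying each label $i\in I$, and every minimal diagram $D$ has only crossings (no critical points), so $\deg(D,\lambda)=\sum_{x}(-i_x\cdot j_x)$ over the crossings of $D$. In particular $\deg(D,\lambda)$ is independent of $\lambda$, which matches the fact that the form on $\mathbf{f}$ does not see the weight.

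The key identification is now the following. In this upward braid-like picture, the sum
\[
\sum_{D\in p(\ii,\jj)} q^{\deg(D,\lambda)}\prod_{i\in I}\frac{1}{(1-q_i^2)^{\kappa_i}}
\]
is precisely the closed formula for Lusztig's bilinear form $(E_\ii,E_\jj)$ on $\mathbf{f}$. This is exactly Reineke's formula, which is also the formula describing the graded dimension of $\Hom$-spaces in the categorification $R(\nu)\fmod$ of $\mathbf{f}_\nu$; see \cite{Lus6,Reineke} and \cite[Prop.~2.7 and Thm.~2.8]{KL}, \cite{KL2}. Granting this identification, Proposition (iv) of the bilinear form on $\U$ gives $(E_\ii 1_\lambda,E_\jj 1_\lambda)=(E_\ii,E_\jj)$, which concludes the argument.

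The main obstacle is the central identification in the preceding paragraph: verifying that the geometric sum over minimal upward diagrams reproduces Lusztig's form on $\mathbf{f}$. Rather than redoing this computation, the cleanest route is to cite it: the crossing contributions $q^{-i\cdot j}$ together with the prefactor $\prod_i(1-q_i^2)^{-\kappa_i}$ match term by term the formula coming from the KLR algebras, so the identification is immediate from the results in \cite{KL,KL2}. Everything else in the proof is bookkeeping on signed sequences and weights.
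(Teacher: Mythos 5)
Your proposal is correct and follows essentially the same route as the paper: observe that for positive $\ii,\jj$ all minimal diagrams are upward braid-like (so only crossing contributions appear), identify the resulting sum with the known formula for Lusztig's bilinear form on $\mathbf{f}\cong{\bf U}^+$ by citing \cite{Lus6,Reineke,KL,KL2}, and conclude via property (iv) of the bilinear form on $\U$. The extra bookkeeping on mismatched weights/lengths is consistent with remarks the paper makes just before the lemma, so nothing essential differs.
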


Thus, for positive $\ii$, $\jj$, the geometrically defined bilinear form $(,)'$
coincides with $(,)$.

\begin{lem}
Equation \eqref{eq_thm_pairing} holds for the pair $(\pm i \ii,\jj)$ if and only
if it holds for the pair $(\ii,\mp i \jj)$.
\end{lem}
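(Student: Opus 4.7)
The plan is to show that both sides of \eqref{eq_thm_pairing} scale by the same factor $q^{c_{\mp i,\,\mu+\jj_X}}$ under the move $(\ii,\mp i\jj) \mapsto (\pm i\ii,\jj)$, from which the ``iff'' follows immediately. On the algebraic side I will apply property (iii) of the bilinear form from the preceding proposition to obtain
\[
(E_{\pm i}E_\ii 1_\lambda,\ E_\jj 1_\mu) \;=\; (E_\ii 1_\lambda,\ \overline{\rho}(E_{\pm i})\,E_\jj 1_\mu).
\]
A short calculation using $\overline{\rho}=\psi\rho\psi$, the $\Q(q)$-antilinearity of $\psi$, and $\rho(E_{\pm i})=q_i\tilde K_{\pm i}E_{\mp i}$ gives $\overline{\rho}(E_{\pm i})=q_i^{-1}\tilde K_{\mp i}E_{\mp i}$. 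Since $\tilde K_{\mp i}$ acts on $E_{\mp i\jj}1_\mu$ by the scalar $q_i^{\mp\langle i,\,\mu+\jj_X\mp i_X\rangle}$, combining exponents yields the total scalar $q_i^{1\mp\langle i,\mu+\jj_X\rangle}=q^{c_{\mp i,\,\mu+\jj_X}}$, and hence
\[
(E_{\pm i}E_\ii 1_\lambda,\ E_\jj 1_\mu) \;=\; q^{c_{\mp i,\,\mu+\jj_X}}(E_\ii 1_\lambda,\ E_{\mp i\jj}1_\mu).
\]

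On the geometric side I will use the natural bijection $\beta\maps p(\pm i\ii,\jj)\to p(\ii,\mp i\jj)$ obtained by ``bending'': remove the leftmost-bottom endpoint $P$ (sign $\pm$, label $i$) and introduce a new leftmost-top endpoint $Q$ (sign $\mp$, label $i$) as the partner of the old partner $R$ of $P$. I will choose minimal diagrams $D\in p(\pm i\ii,\jj)$ and $\beta(D)\in p(\ii,\mp i\jj)$ that agree outside a neighborhood of the left edge, so that the change in degree is entirely local. The claim to verify is that $\deg(D,\lambda)-\deg(\beta(D),\lambda)=c_{\mp i,\,\mu+\jj_X}$, i.e.\ that the local degree difference is precisely the algebraic scalar computed above. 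Since the number of $i$-colored strands is preserved by $\beta$, the product factor $\prod_i(1-q_i^2)^{-\kappa_i}$ in \eqref{eq_thm_pairing} is unchanged, and so the right-hand side of \eqref{eq_thm_pairing} scales by the same $q^{c_{\mp i,\,\mu+\jj_X}}$ as the left-hand side.

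The main technical point is this local degree calculation, which splits into two cases according to whether $R$ lies at the top or bottom boundary of the strip: in the first case $\beta$ inserts a cup (local minimum) into $\beta(D)$ that is absent in $D$; in the second case $\beta$ removes a cap (local maximum) present in $D$. In each case one identifies the affected critical point's color as $i$ and its orientation from the sign convention, reads off its degree contribution from the table of U-turn degrees above, and traces the weight of the adjacent region by applying the weight-change rule across strands. Both cases yield the same net shift $-c_{\mp i,\,\mu+\jj_X}=c_{\pm i,\,\mu+\jj_X\mp i_X}$ (the equality of the two expressions being an immediate consequence of $\langle i,i_X\rangle=2$), matching the algebraic computation exactly and completing the proof of the equivalence.
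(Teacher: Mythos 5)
Your proof is correct and takes essentially the same route as the paper's: move $E_{\pm i}$ across the pairing via $\overline{\rho}(E_{\pm i})=q_i^{-1}\tilde K_{\mp i}E_{\mp i}$ to extract the scalar $q_i^{1\mp\langle i,\mu+\jj_X\rangle}$, and show that the U-turn bijection $p(\pm i\ii,\jj)\to p(\ii,\mp i\jj)$ shifts $\deg(D,\lambda)$ by exactly that exponent. The only cosmetic difference is that the paper's degree computation is done in one step—since $\deg$ is isotopy-invariant, attaching a U-turn always shifts the degree by the same local amount, so the case split on whether the partner endpoint $R$ lies on the top or bottom boundary (and with it the minor sign inconsistency in your last paragraph) is unnecessary.
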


\begin{proof}
Attaching a $U$-turn gives a bijection between $p( \pm i\ii,\jj)$ and $p(\ii,\mp
i\jj)$.
\[
D \quad
 \xy
  (-20,12)*{}; (12,12) **\dir{-};(-20,-12)*{}; (12,-12) **\dir{-};
  (-16,-12)*{};(8,-12)*{} **\crv{(-12,-2) & (7,-2)}?(.22)*\dir{};
  (0,12)*{};(8,12)*{} **\crv{(0,6) & (7,6)}?(.8)*\dir{>};
  (-16,12); (0,-12) **\crv{ (-15,6) & (2,-4)}?(.2)*\dir{>};
  (-8,12); (-8,-12) **\crv{ (-6,4) & (-8,-4)}?(.2)*\dir{<};
  (12,2)*{\lambda};(-20,2)*{\mu};
  (0,-14)*{\underbrace{\hspace{.7in}}};(-4,17)*{\scs \jj};(-16,-15)*{\pm i};
 (-4,14)*{\overbrace{\hspace{1.1in}}};(0,-17)*{\scs \ii};
 \endxy \qquad \qquad \qquad
 \xy (0,0)*{D}; (0,-2); (-4,2) **\crv{(0,-5) & (-5,-5)} \endxy
 \quad
  \xy
  (-28,12)*{}; (12,12) **\dir{-};(-28,-12)*{}; (12,-12) **\dir{-};
  (-12,-6)*{};(8,-12)*{} **\crv{(-10,0) & (7,-2)}?(.15)*\dir{};
  (-12,-6)*{}; (-24,12) **\crv{(-14,-12) & (-20,-12)};
  (0,12)*{};(8,12)*{} **\crv{(0,6) & (7,6)}?(.8)*\dir{>};
  (-16,12); (0,-12) **\crv{ (-15,6) & (2,-4)}?(.2)*\dir{>};
  (-8,12); (-8,-12) **\crv{ (-6,4) & (-8,-4)}?(.2)*\dir{<};
 (12,2)*{\lambda};(-16,2)*{\mu};
  (0,-14)*{\underbrace{\hspace{.7in}}};(-4,17)*{\scs \jj};
   (-4,14)*{\overbrace{\hspace{1.1in}}};(0,-17)*{\scs \ii};
   (-24,15)*{\mp i};
 \endxy
 \]
We have
\[
 \deg(\xy (0,0)*{D}; (0,-2); (-4,2) **\crv{(0,-5) & (-5,-5)} \endxy, \lambda) =
  \deg(D,\lambda)+\frac{i \cdot i}{2}(1\pm\la i, \mu\mp i_X \ra) =
 \deg(D,\lambda)-\frac{i \cdot i}{2}(1\mp \la i, \mu \ra),
\]
where the additional term matches the power of $q$ in the formula
$\overline{\rho}(1_{\mu}E_{\pm i}) = q_i^{1\mp \la i ,\mu \ra}E_{\mp i}1_{\mu}$.
\end{proof}

The previous lemma implies that it is enough to check the equality
\begin{equation}
  (E_{\ii}1_{\lambda},E_{\jj}1_{\lambda})' =
  (E_{\ii}1_{\lambda},E_{\jj}1_{\lambda})
\end{equation}
when $\jj$ is the empty sequence.  The next three lemmas show that
\begin{equation}
  (E_{\ii}1_{\lambda},1_{\lambda})' = (E_{\ii}1_{\lambda},1_{\lambda})
\end{equation}
for any $\ii$ and $\lambda$.

\begin{lem} \label{lem_minum_then_plus}
Equation \eqref{eq_thm_pairing} holds for all pairs $((-\jj)\ii,\emptyset)$ with
$\ii$, $\jj$ positive.
\end{lem}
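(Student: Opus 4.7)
The plan is to reduce this lemma to the positive–positive case handled by Lemma~\ref{lem_ij_pos}, using the preceding lemma iteratively. Write $\jj = (j_1, j_2, \dots, j_s)$, so the left signed sequence unpacks as $(-\jj)\ii = (-j_1, -j_2, \dots, -j_s, i_1, \dots, i_m)$. First I would apply the preceding lemma with leading sign $-j_1$: equation~\eqref{eq_thm_pairing} holds for the pair $((-j_1)(-j_2, \dots, -j_s, i_1, \dots, i_m), \emptyset)$ if and only if it holds for the pair $((-j_2, \dots, -j_s, i_1, \dots, i_m), (+j_1))$.

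Iterating, each application of the preceding lemma peels one $-j_t$ off the front of the left sequence and inserts $+j_t$ at the front of the right sequence. After $s$ applications, the pair has become $(\ii, \jj^{{\rm rev}})$, where $\jj^{{\rm rev}} = (+j_s, +j_{s-1}, \dots, +j_1)$ is $\jj$ read backward with all signs positive. Both entries are now positive sequences, so Lemma~\ref{lem_ij_pos} applies directly and yields equation~\eqref{eq_thm_pairing} for $(\ii, \jj^{{\rm rev}})$ at any $\lambda \in X$.

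Running the chain of equivalences backward then delivers equation~\eqref{eq_thm_pairing} for the original pair $((-\jj)\ii, \emptyset)$, which is exactly the lemma's assertion. I expect no genuine obstacle: the preceding lemma is an unconditional iff, so the reduction is purely formal bookkeeping. All nontrivial degree matching—namely the shift $-\tfrac{i\cdot i}{2}(1 \mp \la i, \mu\ra)$ tracking the algebraic identity $\overline{\rho}(1_\mu E_{\pm i}) = q_i^{1 \mp \la i, \mu\ra} E_{\mp i} 1_\mu$—has already been verified inside the preceding lemma's U-turn calculation, so nothing new needs to be checked here. Parity/weight sanity also takes care of itself: if $s+m$ is odd, or if $\ii_X \ne \jj_X$, then both pairing sets are empty at every stage of the reduction and the assertion is trivial.
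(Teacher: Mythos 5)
Your proof is correct and is essentially the paper's argument: the paper likewise bends $-\jj$ up by adding U-turns (i.e., iterates the preceding if-and-only-if lemma) to reduce $((-\jj)\ii,\emptyset)$ to a pair of positive sequences covered by Lemma~\ref{lem_ij_pos}, recording the aggregate factor $q^{\alpha}$ from $\overline{\rho}(E_{-\jj}1_{\lambda+\ii_X})$ in one step rather than invoking the iff lemma $s$ times. Your more careful bookkeeping that the reduced pair is $(\ii,\jj^{\mathrm{rev}})$ (the paper writes $(\ii,\jj)$) is immaterial, since Lemma~\ref{lem_ij_pos} applies to every pair of positive sequences.
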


\begin{proof}
Bending up $-\jj$ by adding U-turns transforms the pair $((-\jj)\ii,\emptyset)$
to $(\ii,\jj)$ for which \eqref{eq_thm_pairing} holds (see
Lemma~\ref{lem_ij_pos}). We have
\begin{eqnarray}
  ( E_{(-\jj)\ii}1_{\lambda},1_{\lambda} ) = q^{\alpha}(
  E_{\ii}1_{\lambda},E_{\jj}1_{\lambda} ),  \qquad \quad
 (E_{(-\jj)\ii}1_{\lambda},1_{\lambda} )' = q^{\alpha}(
  E_{\ii}1_{\lambda},E_{\jj}1_{\lambda})' ,
\end{eqnarray}
where $\alpha$ is the power of $q$ in the formula
$\overline{\rho}(E_{-\jj}1_{\lambda+\ii_X}) = \tau(E_{-\jj}1_{\lambda+\ii_X}) =
q^{\alpha}1_{\lambda + \ii_X}E_{\jj}$.
\end{proof}

\begin{lem}
For $i,j \in I$, $i \neq j$
\begin{equation}
  (E_{\ii'\pm i\mp j\ii''}1_{\lambda},1_{\lambda})' =
  (E_{\ii'\pm i\mp j\ii''}1_{\lambda},1_{\lambda})
\end{equation}
if and only if
\begin{equation}
  (E_{\ii'\mp j\pm i\ii''}1_{\lambda},1_{\lambda})' =
  (E_{\ii'\mp j\pm i\ii''}1_{\lambda},1_{\lambda}).
\end{equation}
\end{lem}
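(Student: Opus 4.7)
The plan is to prove the statement by verifying the equality separately for the algebraic and the geometric bilinear forms, and then combining the two.

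On the algebraic side, the relation (iv) defining $\mathbf{U}$ gives $[E_i,F_j]=0$ whenever $i\neq j$, so $E_{\pm i}$ and $E_{\mp j}$ commute in $\mathbf{U}$. Consequently, $E_{\ii'\pm i\mp j\ii''}1_{\lambda}=E_{\ii'\mp j\pm i\ii''}1_{\lambda}$ as elements of $\U$, and therefore
\[
(E_{\ii'\pm i\mp j\ii''}1_{\lambda},1_{\lambda})=(E_{\ii'\mp j\pm i\ii''}1_{\lambda},1_{\lambda}).
\]

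On the geometric side, I will construct a degree-preserving bijection
\[
p(\ii'\pm i\mp j\ii'',\emptyset)\;\longleftrightarrow\;p(\ii'\mp j\pm i\ii'',\emptyset)
\]
by swapping the labels of the two adjacent middle endpoints on the bottom. Since both sequences contain the same multiset of $i$'s and $j$'s (with the same signs), the multiplicities $\kappa_i$ agree and the product factors in \eqref{eq_thm_pairing} match; it suffices to show that corresponding pairings have the same degree.

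Fix a minimal diagram $D$ for a pairing $P$, and let $A_i$ and $A_j$ be the strands emanating from the $\pm i$ and $\mp j$ endpoints. In a minimal diagram $A_i$ and $A_j$ intersect at most once. If they do not intersect in $D$, then the minimal diagram $D'$ for the corresponding swapped pairing $P'$ must contain a single $A_i\cap A_j$ crossing; one obtains such a $D'$ from $D$ by introducing this crossing near the bottom. If $A_i$ and $A_j$ intersect exactly once in $D$, then in a minimal diagram $D'$ for $P'$ the strands are disjoint; using invariance of $\deg$ among minimal diagrams (the preceding proposition), we may isotope $D$ so that its unique $A_i\cap A_j$ crossing occurs right above positions $k,k+1$, and then $D'$ is obtained by removing it via the endpoint swap. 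In either case $\deg(D,\lambda)$ and $\deg(D',\lambda)$ differ by exactly the degree of one $A_i\cap A_j$ crossing placed near the bottom. Since near the bottom $A_i$ is oriented according to the sign $\pm$ and $A_j$ according to the opposite sign $\mp$, that crossing is a balanced crossing and contributes $0$. Hence $\deg(D,\lambda)=\deg(D',\lambda)$, and summing gives
\[
(E_{\ii'\pm i\mp j\ii''}1_{\lambda},1_{\lambda})'=(E_{\ii'\mp j\pm i\ii''}1_{\lambda},1_{\lambda})'.
\]

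Combining the two displayed identities, the equality $(,)'=(,)$ holds on one side of the lemma if and only if it holds on the other. The only non-routine point is the degree comparison in the geometric step; the potential pitfall there is that naively swapping the endpoints of $D$ might produce a non-minimal diagram (two $A_i\cap A_j$ crossings). The invariance of $\deg$ under triple-crossing moves and isotopies of minimal diagrams lets us sidestep this by choosing a minimal representative with the relevant crossing positioned near the bottom, so that the only local change is the balanced crossing of degree $0$.
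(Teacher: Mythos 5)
Your proof is correct and takes essentially the same approach as the paper: both establish the biconditional by showing that the two sides of each proposed equality are in fact equal (the algebraic forms via commutativity of $E_{\pm i}$ and $E_{\mp j}$ in $\mathbf{U}$, and the geometric forms via a degree-preserving bijection of pairings obtained by inserting a balanced crossing at the $\pm i\mp j$ location and, if needed, applying a homotopy to restore minimality). Your treatment is if anything a bit more explicit than the paper's about why the minimality adjustment leaves the degree unchanged.
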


\begin{proof}
Attach a crossing at the $\pm i \mp j$ location to a diagram $D$ in $p(\ii'\pm i
\mp j \ii'', \emptyset)$.
\[
 \xy
  (-24,-12)*{}; (24,-12) **\dir{-};
  (-4,-12)*{};(12,-12)*{} **\crv{(-4,-2) & (12,-2)}?(.15)*\dir{>};
   (4,-12)*{};(-20,-12)*{} **\crv{(5,-2) & (-20,-2)}?(.11)*\dir{<};
     (-12,-12)*{};(20,-12)*{} **\crv{(-12,6) & (20,6)}?(.3)*\dir{};
     (-4,-15)*{\pm i};(4,-15)*{\mp j};(-18,0)*{D};(20,0)*{\lambda};
   (-16,-14)*{\underbrace{\hspace{.45in}}};(-16,-17)*{\scs \ii'};
    (16,-14)*{\underbrace{\hspace{.45in}}};(16,-17)*{\scs \ii''};
 \endxy
 \qquad \longrightarrow \qquad
  \xy
  (-24,-12)*{}; (24,-12) **\dir{-};
  (-4,-12)*{};(12,-12)*{} **\crv{(-4,-2) & (12,-2)}?(.15)*\dir{};
   (4,-12)*{};(-20,-12)*{} **\crv{(5,-2) & (-20,-2)}?(.1)*\dir{};
     (-12,-12)*{};(20,-12)*{} **\crv{(-12,6) & (20,6)}?(.3)*\dir{};
     (4,-23)*{\pm i};(-4,-23)*{\mp j};
     (-4,-20)*{};(4,-12)*{} **\crv{(-4,-17) & (4,-15)}?(1)*\dir{<} ;
    (4,-20)*{};(-4,-12)*{} **\crv{(4,-17) & (-4,-15)}?(1)*\dir{>};
    (-18,0)*{D'};(20,0)*{\lambda};
    (-16,-14)*{\underbrace{\hspace{.45in}}};(-16,-17)*{\scs \ii'};
    (16,-14)*{\underbrace{\hspace{.45in}}};(16,-17)*{\scs \ii''};
 \endxy
 \]
The resulting diagram $D'$ is minimal if the $\pm i$ and $\mp j$ strands of $D$
do not intersect.  Otherwise, it is not minimal, but the homotopy
\[
    \vcenter{\xy 0;/r.18pc/:
    (-4,-4)*{};(4,4)*{} **\crv{(-4,-1) & (4,1)}?(1)*\dir{};?(0)*\dir{<};
    (4,-4)*{};(-4,4)*{} **\crv{(4,-1) & (-4,1)}?(0)*\dir{};
    (-4,4)*{};(4,12)*{} **\crv{(-4,7) & (4,9)}?(1)*\dir{>};
    (4,4)*{};(-4,12)*{} **\crv{(4,7) & (-4,9)}?(1)*\dir{};
 \endxy}
 \quad \longrightarrow \quad
\xy 0;/r.18pc/:
  (3,9);(3,-9) **\dir{-}?(.5)*\dir{<}+(2.3,0)*{};
  (-3,9);(-3,-9) **\dir{-}?(.55)*\dir{>}+(2.3,0)*{};
 \endxy
 \]
will make it minimal (the orientations in the picture are for the $+i-j$ case).

We get a bijection
\begin{equation}
 \xymatrix{ p(\ii' \pm i \mp j \ii'' , \emptyset)\ar[r]^{\cong} &
 p(\ii'\mp j \pm i \ii'', \emptyset) }
\end{equation}
which preserves the degree of a diagram, for any $\lambda$, since $\deg\left(\;
   \xy
  (0,0)*{\xybox{
    (-3,-4)*{};(3,4)*{} **\crv{(-3,-1) & (3,1)}?(0)*\dir{<} ;
    (3,-4)*{};(-3,4)*{} **\crv{(3,-1) & (-3,1)}?(1)*\dir{>};
     (8,1)*{ \mu};
     }};
  \endxy \; \right)=0$ for any $\mu$.  Hence,
\begin{equation}
(E_{\ii' \pm i \mp j \ii'' }1_{\lambda}, 1_{\lambda})' = (E_{\ii'\mp j \pm i
\ii''} 1_{\lambda}, 1_{\lambda})'.
\end{equation}
Since $E_{\ii' \pm i \mp j \ii''} 1_{\lambda} = E_{\ii'\mp j\pm i  \ii''}
1_{\lambda}$ in $\U$, we see that
\begin{equation}
(E_{\ii' \pm i \mp j \ii'' }1_{\lambda}, 1_{\lambda}) = (E_{\ii'\mp j \pm i \ii''
}1_{\lambda}, 1_{\lambda})
\end{equation}
and the lemma follows.
\end{proof}

\begin{lem}
Assume
$(E_{\ii'\ii''}1_{\lambda},1_{\lambda})'=(E_{\ii'\ii''}1_{\lambda},1_{\lambda})$.
Then
\begin{equation}
(E_{\ii' +i-i\ii''}1_{\lambda},1_{\lambda})' = (E_{\ii'
+i-i\ii''}1_{\lambda},1_{\lambda})
\end{equation}
if and only if
\begin{equation}
(E_{\ii' -i+i\ii''}1_{\lambda},1_{\lambda})' = (E_{\ii'
-i+i\ii''}1_{\lambda},1_{\lambda}).
\end{equation}
\end{lem}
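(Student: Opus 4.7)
The plan is to reduce the geometric assertion to the commutator identity $E_i F_i 1_\mu - F_i E_i 1_\mu = [\la i,\mu\ra]_i 1_\mu$ in $\U$. With $\mu = \lambda+\ii''_X$, this gives
\[
(E_{\ii'+i-i\ii''}1_\lambda,1_\lambda) - (E_{\ii'-i+i\ii''}1_\lambda,1_\lambda) = [\la i,\mu\ra]_i(E_{\ii'\ii''}1_\lambda,1_\lambda).
\]
Under the hypothesis, it then suffices to prove the analogous identity for the geometric pairing $(,)'$.

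First I would partition $p(\ii'+i-i\ii'',\emptyset)$ (and likewise $p(\ii'-i+i\ii'',\emptyset)$) into \emph{capped} diagrams, in which the two new middle endpoints are joined to each other by a single cup, and \emph{uncapped} diagrams, in which they connect to strands in $\ii'$ or $\ii''$. For an uncapped diagram I insert a mixed-orientation crossing at the two middle endpoints, producing an uncapped diagram for the swapped sequence (possibly after an $R2$ simplification to restore minimality, exactly as in the preceding lemma). Since mixed-orientation crossings are balanced (degree $0$), this bijection preserves $\deg(\cdot,\lambda)$. Hence the uncapped contributions to $(E_{\ii'+i-i\ii''}1_\lambda,1_\lambda)'$ and $(E_{\ii'-i+i\ii''}1_\lambda,1_\lambda)'$ are equal and cancel in the difference.

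For capped diagrams, removing the cup gives a bijection with $p(\ii'\ii'',\emptyset)$; the region to the right of each cup has weight $\mu=\lambda+\ii''_X$. Reading from the degree table, the $+i-i$ cup has its strand oriented left-to-right at the top and contributes degree $c_{-i,\mu}$, so $q^{c_{-i,\mu}}=q_i^{1-\la i,\mu\ra}$, while the $-i+i$ cup has its strand oriented right-to-left and contributes $c_{+i,\mu}$, so $q^{c_{+i,\mu}}=q_i^{1+\la i,\mu\ra}$. Each cup adds one $i$-colored strand, contributing an extra factor $(1-q_i^2)^{-1}$. Using the identity $(1-q_i^2)[a]_i = q_i^{1-a}-q_i^{1+a}$ with $a=\la i,\mu\ra$, the difference of capped contributions collapses to
\[
\frac{q_i^{1-\la i,\mu\ra}-q_i^{1+\la i,\mu\ra}}{1-q_i^2}(E_{\ii'\ii''}1_\lambda,1_\lambda)' = [\la i,\mu\ra]_i(E_{\ii'\ii''}1_\lambda,1_\lambda)'.
\]

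Combined with the uncapped bijection, this yields $(E_{\ii'+i-i\ii''}1_\lambda,1_\lambda)' - (E_{\ii'-i+i\ii''}1_\lambda,1_\lambda)' = [\la i,\mu\ra]_i(E_{\ii'\ii''}1_\lambda,1_\lambda)'$, which by the hypothesis equals its algebraic counterpart. The biconditional of the lemma follows by subtraction. I expect the main obstacle to be the orientation-and-weight bookkeeping in the capped case: one must verify that the degree table assigns the \emph{exterior}-weight contributions $c_{-i,\mu}$ and $c_{+i,\mu}$ to the two cups in the orders claimed, rather than the interior weights $\mu\mp i_X$. The small case $\ii'=\ii''=\emptyset$, where a direct computation gives $(E_{+i-i}1_\lambda,1_\lambda)=q_i^{1+\la i,\lambda\ra}/(1-q_i^2)+[\la i,\lambda\ra]_i = q_i^{1-\la i,\lambda\ra}/(1-q_i^2)$, serves as a useful sanity check on the orientation assignments.
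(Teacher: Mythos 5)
Your proof is correct and takes essentially the same route as the paper: you reduce the geometric assertion to the $\U$-commutator identity, split the pairings into the same class of ``capped'' (class~(3) in the paper's enumeration) and ``uncapped'' (the union of the paper's classes~(1) and~(2)) diagrams, match the uncapped contributions via the degree-zero crossing bijection, and match the capped contributions by removing the cap and comparing $q^{c_{-i,\mu}}-q^{c_{+i,\mu}}$ to $(1-q_i^2)[\la i,\mu\ra]_i$. The only differences are cosmetic (you say ``cup'' for the arcs joining two lower endpoints, which the paper calls caps, and you fold the paper's classes~(1) and~(2) into a single uncapped set before applying the same $R2$/crossing-insertion bijection), and your degree/orientation bookkeeping for the two caps agrees with the paper's.
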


\begin{proof}
Decompose the diagrams in $p(\ii' +i-i\ii'',\emptyset)$ into three classes
\begin{enumerate}[(1)]
 \item $+i$ and $-i$ strands do not intersect $ \vcenter{ \xy
  (-12,-12)*{}; (12,-12) **\dir{-};
  (-4,-12)*{};(-8,-4)*{} **\crv{ (-6,-6)}?(.3)*\dir{>};
  (4,-12)*{};(8,-4)*{} **\crv{ (6,-6)}?(.15)*\dir{<};
     (-4,-15)*{+ i};(4,-15)*{-i};
 \endxy}$
 \item $+i$ and $-i$ strands intersect $\vcenter{  \xy
  (-12,-12)*{}; (12,-12) **\dir{-};
  (-4,-12)*{};(8,-4)*{} **\crv{(-3,-4) & (0,-6)}?(.18)*\dir{>};
  (4,-12)*{};(-8,-4)*{} **\crv{(3,-4) & (0,-6)}?(.11)*\dir{<};
     (-4,-15)*{+ i};(4,-15)*{-i};
 \endxy}$
 \item a strand connects $+i$ and $-i$ $ \vcenter{\xy
  (-12,-12)*{}; (12,-12) **\dir{-};
  (-4,-12)*{};(4,-12)*{} **\crv{(-4,-4) & (4,-4)}?(.16)*\dir{>};
     (-4,-15)*{+i};(4,-15)*{-i};
 \endxy}$
\end{enumerate}
Likewise, decompose the diagrams in $p(\ii' -i+i \ii'', \emptyset)$ into three
classes:
\begin{enumerate}[(1)]
 \item $-i$ and $+i$ strands do not intersect $ \vcenter{ \xy
  (-12,-12)*{}; (12,-12) **\dir{-};
  (-4,-12)*{};(-8,-4)*{} **\crv{ (-6,-6)}?(.15)*\dir{<};
  (4,-12)*{};(8,-4)*{} **\crv{ (6,-6)}?(.3)*\dir{>};
     (-4,-15)*{- i};(4,-15)*{+i};
 \endxy}$
 \item $-i$ and $+i$ strands intersect $\vcenter{  \xy
  (-12,-12)*{}; (12,-12) **\dir{-};
  (-4,-12)*{};(8,-4)*{} **\crv{(-3,-4) & (0,-6)}?(.17)*\dir{<};
  (4,-12)*{};(-8,-4)*{} **\crv{(3,-4) & (0,-6)}?(.18)*\dir{>};
     (-4,-15)*{-i};(4,-15)*{+i};
 \endxy}$
 \item a strand connects $-i$ and $+i$ $ \vcenter{\xy
  (-12,-12)*{}; (12,-12) **\dir{-};
  (-4,-12)*{};(4,-12)*{} **\crv{(-4,-4) & (4,-4)}?(.11)*\dir{<};
     (-4,-15)*{- i};(4,-15)*{+i};
 \endxy}$
\end{enumerate}
Set up a bijection
\begin{equation}
\xymatrix{p(\ii' +i-i \ii'',\emptyset) \ar[r]^{\cong} & p(\ii'-i+i\ii'',\emptyset
)}
\end{equation}
that takes diagrams from class (1) in the first set to diagrams of class (2) in
the second set by adding a crossing, diagrams of class (2) to diagrams of class
(1) by removing the crossing $ \left(\;\xy
    (-3,-4)*{};(3,4)*{} **\crv{(-3,-1) & (3,1)}?(1)*\dir{>} ;
    (3,-4)*{};(-3,4)*{} **\crv{(3,-1) & (-3,1)}?(0)*\dir{<};
    (-6,-3)*{\scs +i};
     (6,-3)*{\scs -i};
  \endxy\; \right)$, and diagrams of class (3) to diagrams of
class (3) by reversing the orientation of the $+i$ $-i$ strand.

Let $E_{\ii'+i-i\ii''}1_{\lambda} = E_{\ii' +i-i}1_{\mu}E_{\ii''}1_{\lambda}$,
that is, $\mu = \lambda + \ii''_X$ is the weight of the region to the right of
the strand near $-i$
\[
\vcenter{ \xy
  (-12,-12)*{}; (12,-12) **\dir{-};
  (-4,-12)*{};(-8,-4)*{} **\crv{ (-6,-6)}?(.25)*\dir{>};
  (4,-12)*{};(8,-4)*{} **\crv{ (6,-6)}?(.2)*\dir{<};
     (-4,-15)*{+ i};(4,-15)*{-i};(12,-7)*{\mu};
 \endxy}
 \]
We have in $\U$
\begin{equation}
E_{\ii'+i-i\ii''}1_{\lambda} = E_{\ii'-i+i\ii''}1_{\lambda}+ [\la
i,\mu\ra]_iE_{\ii'\ii''}1_{\lambda}.
\end{equation}
Therefore,
\begin{equation}
(E_{\ii'+i-i\ii''}1_{\lambda},1_{\lambda}) =
(E_{\ii'-i+i\ii''}1_{\lambda},1_{\lambda})+[\la
i,\mu\ra]_i(E_{\ii'\ii''}1_{\lambda},1_{\lambda}).
\end{equation}

Since $\deg\left(\; \xy (0,0)*{\xybox{ (-3,-4)*{};(3,4)*{} **\crv{(-3,-1) &
(3,1)}?(0)*\dir{<} ; (3,-4)*{};(-3,4)*{} **\crv{(3,-1) &
(-3,1)}?(1)*\dir{>};(8,1)*{ \mu};  }};\endxy \; \right)=\deg\left(\; \xy
(0,0)*{\xybox{ (-3,-4)*{};(3,4)*{} **\crv{(-3,-1) & (3,1)}?(1)*\dir{>} ;
(3,-4)*{};(-3,4)*{} **\crv{(3,-1) & (-3,1)}?(0)*\dir{<};(8,1)*{ \mu};  }};\endxy
\; \right)=0$, a diagram of class (1), respectively class (2), contributes to
$(E_{\ii'+i-i\ii''}1_{\lambda},1_{\lambda})'$ as much as its image in class (2),
respectively class (1), contributes to
$(E_{\ii'-i+i\ii''}1_{\lambda},1_{\lambda})'$.

A diagrams of class (3) in $p(\ii'+i-i\ii'',\emptyset)$ comes from some diagram
in $p(\ii'\ii'',\emptyset)$ by adding a $(+i-i)$ cap to the correct position.
Similarly, a diagram of class (3) in $p(\ii' -i+i\ii'',\emptyset)$ comes from a
diagram in $p(\ii'\ii'',\emptyset)$ by adding a $(-i+i)$ cap. Thus, the only
contribution to the difference $(E_{\ii'+i-i\ii''}1_{\lambda},1_{\lambda})'
-(E_{\ii'-i+i\ii''}1_{\lambda},1_{\lambda})'$ comes from class $(3)$ and is given
by
\begin{equation}
(E_{\ii'+i-i\ii''}1_{\lambda},1_{\lambda})'
-(E_{\ii'-i+i\ii''}1_{\lambda},1_{\lambda})' = \frac{q^{\deg
 \left(\;\xy
 (0,0)*{\xybox{ (-3,10)*{}; (3,-3)*{}; (3,0)*{}; (-3,0)*{} **\crv{(3,6) &
 (-3,6)}; ?(.05)*\dir{<}  ?(.9)*\dir{<} ?(.5)*\dir{}+(0,2)*{\scriptstyle i};
 (5,7)*{ \mu}; }}; \endxy
 \;\right)}}{1-q_i^2}
(E_{\ii'\ii''}1_{\lambda},1_{\lambda})' -\frac{q^{\deg\left(\; \xy (0,0)*{\xybox{
  (-3,10)*{}; (3,-3)*{};
  (3,0)*{}; (-3,0)*{} **\crv{(3,6) & (-3,6)}; ?(.1)*\dir{>}  ?(.95)*\dir{>}
  ?(.5)*\dir{}+(0,2)*{\scriptstyle i};
    (5,7)*{ \mu};
    }}; \endxy  \;\right)}}{1-q_i^2}
 (E_{\ii'\ii''}1_{\lambda},1_{\lambda})' \nn
\end{equation}
or
\begin{eqnarray}
 (E_{\ii'+i-i\ii''}1_{\lambda},1_{\lambda})' & =&
 (E_{\ii'-i+i\ii''}1_{\lambda},1_{\lambda})' +
  \frac{q_i^{1-\la i, \mu \ra}
       -q_i^{1+ \la i, \mu \ra} }{1-q_i^2}
 (E_{\ii'\ii''}1_{\lambda},1_{\lambda})' \nn \\
 &=&
 (E_{\ii'-i+i\ii''}1_{\lambda},1_{\lambda})'+
  \frac{q_i^{ \la i, \mu \ra}
       -q_i^{-\la i, \mu \ra} }{q_i-q_i^{-1}}
 (E_{\ii'\ii''}1_{\lambda},1_{\lambda})' \nn
\end{eqnarray}
where the coefficient of $(E_{\ii'\ii''}1_{\lambda},1_{\lambda})'$ in the last
term is easily recognized as $[\la i, \mu \ra]_i$, completing the proof of the
lemma.
\end{proof}

We can finish the proof that
\begin{equation}
  (E_{\ii}1_{\lambda},1_{\lambda})' = (E_{\ii}1_{\lambda},1_{\lambda})
\end{equation}
by induction on $\parallel i \parallel$, the length of $\ii$.  During the
induction step we move all negative entries of $\ii$ to the left of all positive
entries, sometimes adding terms $(E_{\jj}1_{\lambda},1_{\lambda})'$, respectively
$(E_{\jj}1_{\lambda},1_{\lambda})$ with $\parallel\jj\parallel
=\parallel\ii\parallel-2$ to the equation. We can then reduce to the case when
all negative entries of $\ii$ precede all positive entries, which is
Lemma~\ref{lem_minum_then_plus}. Theorem \ref{thm_form_formula} follows.
\end{proof}

We can turn this proof around to define $\U$ in a more geometric way than in
Lusztig~\cite{Lus4}.  Start with the $\Q(q)$-algebra $\Upr$ with
mutually--orthogonal idempotents $1_{\lambda}$ and basis
$\{E_{\ii}1_{\lambda}\}_{i,\lambda}$, over all finite signed sequences $\ii \in
\sseq$ and $\lambda \in X$. The multiplication is
\begin{equation}
  E_{\ii'}1_{\mu}E_{\ii}1_{\lambda} = \left\{
   \begin{array}{ccl}
   E_{\ii'\ii}1_{\lambda} & \quad &\text{if $\mu = \lambda+\ii_X$} ,\\
   0 & \quad&\text{otherwise.}
   \end{array}
   \right.
\end{equation}
When $\ii$ is the empty sequence, $E_{\emptyset}1_{\lambda} = 1_{\lambda}$.

Define a $\Q(q)$-bilinear form $(,)'$ on $\Upr$ via the sum over diagrams,
formula \eqref{eq_pariing prime}. Let $\cal{I} \subset \Upr$ be the kernel of
this bilinear form.  Then
\begin{equation}
\cal{I}=\bigoplus_{\mu,\lambda \in X} {_\mu \cal{I}_{\lambda}}, \qquad \qquad
{_{\mu}\cal{I}_{\lambda}}
 := \cal{I} \cap {_{\mu}\Upr_{\lambda}},
\end{equation}
where ${_{\mu}\Upr_{\lambda}}$ is spanned by $E_{\ii}1_{\lambda}$ for all $\ii$
such that $\mu = \lambda+\ii_X$. It follows from the definition of the bilinear
form that $\cal{I}$ is an ideal of $\Upr$.

It is not hard to check that
\begin{equation}
  \U \cong \Upr / \cal{I}  \qquad \text{and} \qquad (,)'=(,),
\end{equation}
following the above proof of Theorem~\ref{thm_form_formula}.  This definition is
not too far off from Lusztig's original definition, which utilizes ${\bf f} \cong
{\bf U}^{\pm}$, defined as the quotient of the free associative algebra $'{\bf
f}$ by the kernel of a bilinear form on $'{\bf f}$.  The latter bilinear form is
the restriction of $(,)'$ on $\Upr$ to $'{\bf f} \subset \Upr$.  Here we map
$'{\bf f} \to \Upr$ by sending $\theta_{\ii}=\theta_{i_1}\dots\theta_{i_m}$ to
$E_{\ii}1_{\lambda}=E_{+i_1}\dots E_{+i_m}1_{\lambda}$ for each positive sequence
$\ii$ (this map is not a homomorphism), alternatively we can send $\theta_{\ii}$
to $E_{-\ii}1_{\lambda}$.
\[
 \xy
 (-25,-10)*++{ '{\bf f} }="tl";(0,-10)*++{ {\bf f}}="tm";(25,-10)*++{{}_{\cal{A}}{\bf f} }="tr";
  (-25,10)*++{ \Upr}="bl";(0,10)*++{\U}="bm";(25,10)*++{\UA}="br";
  {\ar@{_{(}->} "tl";"bl"};{\ar@{_{(}->} "tm";"bm"};{\ar@{_{(}->} "tr";"br"};
  {\ar@{->>} "tl";"tm"};{\ar@{->>} "bl";"bm"};{\ar@{_{(}->} "tr";"tm"};{\ar@{_{(}->} "br";"bm"};
  (60,0)*{\left(\txt{vertical arrows \\do not respect \\ algebra structure}\right)};
  (-12.5,-16)*{\underbrace{\hspace{1.2in}}};(-12.5,-20)*{\text{$\Q(q)$-algebras}};
   (25,-16)*{\underbrace{\hspace{.6in}}};(29,-20)*{\text{$\Z[q,q^{-1}]$-algebras}};
 \endxy
\]
For each $\lambda \in X$ there are two inclusions (corresponding to ${\bf U}^+$
and ${\bf U}^-$) of the lower half of the diagram to the upper half. Restricting
to weight spaces, we get the following diagram
\[
 \xy
 (-35,-10)*++{ '{\bf f}_{\nu} }="tl";(0,-10)*++{ {\bf f}_{\nu}}="tm";
 (35,-10)*++{{}_{\cal{A}}{\bf f} _{\nu}}="tr";
  (-35,10)*++{ {}_{\lambda\pm\nu_X}\Upr_{\lambda}}="bl";(0,10)*++{{}_{\lambda\pm\nu_X}\U_{\lambda}}="bm";
  (35,10)*++{{}_{\lambda\pm\nu_X}(\UA)_{\lambda}}="br";
  {\ar@{_{(}->} "tl";"bl"};{\ar@{_{(}->} "tm";"bm"};{\ar@{_{(}->} "tr";"br"};
  {\ar@{->>} "tl";"tm"};{\ar@{->>} "bl";"bm"};{\ar@{_{(}->} "tr";"tm"};{\ar@{_{(}->} "br";"bm"};
  (-17.5,-16)*{\underbrace{\hspace{1.9in}}};(-17.5,-20)*{\text{$\Q(q)$-vector spaces}};
   (35,-16)*{\underbrace{\hspace{.6in}}};(39,-20)*{\text{$\Z[q,q^{-1}]$-modules}};
 \endxy
\]
where ${}_{\lambda\pm\nu_X}\U_{\lambda}=1_{\lambda\pm\nu_X}\U1_{\lambda}$, etc.

%
\section{Graphical calculus for $\U$ categorification} \label{sec_graphical}
%

%
\subsection{The 2-category $\Ucat$} \label{subsec_Uprime}
%

%
\subsubsection{Definition} \label{subsubsec_definition}
%

We define a 2-category $\Ucat$ for any root datum $(Y,X,\la,\ra, \dots)$ of type
$(I,\cdot)$.  This 2-category has the structure of an additive $\Bbbk$-linear
2-category, see \cite{GK} and \cite[Section 5]{Lau1}.  Thus the hom sets between
any two objects form a $\Bbbk$-linear category, and composition and identities
are given by additive $\Bbbk$-linear functors. The 2-morphisms in $\Ucat$ are
represented graphically using string diagrams, see \cite[Section 4]{Lau1} and the
references therein.

\begin{defn} \label{def_Ucat}
Let $(Y,X,\la,\ra, \dots)$ be a root datum of type $(I,\cdot)$. $\Ucat$ is an
additive $\Bbbk$-linear 2-category. The 2-category $\Ucat$ consists of
\begin{itemize}
  \item objects: $\lambda$ for $\lambda \in X$.
\end{itemize}
The homs $\Ucat(\lambda,\lambda')$ between two objects $\lambda$, $\lambda'$ are
additive $\Bbbk$-linear categories consisting of:
\begin{itemize}
  \item objects\footnote{We refer to objects of the category
$\Ucat(\lambda,\lambda')$ as 1-morphisms of $\Ucat$.  Likewise, the morphisms of
$\Ucat(\lambda,\lambda')$ are called 2-morphisms in $\Ucat$. } of
$\Ucat(\lambda,\lambda')$: a 1-morphism in $\Ucat$ from $\lambda$ to $\lambda'$
is a formal finite direct sum of 1-morphisms
  \[
 \cal{E}_{\ii} \onel\{t\} =\onelp \cal{E}_{\ii} \onel\{t\}
  \]
for any $t\in \Z$ and signed sequence $\ii \in \sseq$ such that
$\lambda'=\lambda+\ii_X$.
  \item morphisms of $\Ucat(\lambda,\lambda')$: for 1-morphisms $\cal{E}_{\ii} \onel\{t\}
  ,\cal{E}_{\jj} \onel\{t'\} \in \Ucat$, hom
sets $\Ucat(\cal{E}_{\ii} \onel\{t\},\cal{E}_{\jj} \onel\{t'\})$ of
$\Ucat(\lambda,\lambda')$ are graded $\Bbbk$-vector spaces given by linear
combinations of degree $t-t'$ diagrams, modulo certain relations, built from
composites of:
\begin{enumerate}[i)]
  \item  Degree zero identity 2-morphisms $1_x$ for each 1-morphism $x$ in
$\Ucat$; the identity 2-morphisms $1_{\cal{E}_{+i} \onel}\{t\}$ and
$1_{\cal{E}_{-i} \onel}\{t\}$, for $i \in I$, are represented graphically by
\[
\begin{array}{ccc}
  1_{\cal{E}_{+i} \onel\{t\}} &\quad  & 1_{\cal{E}_{-i} \onel\{t\}} \\ \\
    \xy
 (0,8);(0,-8); **\dir{-} ?(.5)*\dir{>}+(2.3,0)*{\scriptstyle{}};
 (0,-11)*{ i};(0,11)*{ i};
 (6,2)*{ \lambda};
 (-8,2)*{ \lambda +i_X};
 (-10,0)*{};(10,0)*{};
 \endxy
 & &
 \;\;   \xy
 (0,8);(0,-8); **\dir{-} ?(.5)*\dir{<}+(2.3,0)*{\scriptstyle{}};
 (0,-11)*{ i};(0,11)*{i};
 (6,2)*{ \lambda};
 (-8,2)*{ \lambda -i_X};
 (-12,0)*{};(12,0)*{};
 \endxy
\\ \\
   \;\;\text{ {\rm deg} 0}\;\;
 & &\;\;\text{ {\rm deg} 0}\;\;
\end{array}
\]
and more generally, for a signed sequence $\ii=\epsilon_1i_1 \epsilon_2i_2 \dots
\epsilon_mi_m$, the identity $1_{\cal{E}_{\ii} \onel\{t\}}$ 2-morphism is
represented as
\begin{equation*}
\begin{array}{ccc}
  \xy
 (-12,8);(-12,-8); **\dir{-};
 (-4,8);(-4,-8); **\dir{-};
 (4,0)*{\cdots};
 (12,8);(12,-8); **\dir{-};
 (-12,11)*{i_1}; (-4,11)*{ i_2};(12,11)*{ i_m };
  (-12,-11)*{ i_1}; (-4,-11)*{ i_2};(12,-11)*{ i_m};
 (18,2)*{ \lambda}; (-20,2)*{ \lambda+\ii_X};
 \endxy
\end{array}
\end{equation*}
where the strand labelled $i_{\alpha}$ is oriented up if $\epsilon_{\alpha}=+$
and oriented down if $\epsilon_{\alpha}=-$. We will often place labels with no
sign on the side of a strand and omit the labels at the top and bottom.  The
signs can be recovered from the orientations on the strands as explained in
Section~\ref{subsec_geometric}.

  \item For each $\lambda \in X$ the 2-morphisms
\[
\begin{tabular}{|l|c|c|c|c|}
\hline
 {\bf Notation:} \xy (0,-5)*{};(0,7)*{}; \endxy&
 $\Uup_{i,\lambda}$  &  $\Udown_{i,\lambda}$  &$\Ucross_{i,j,\lambda}$
 &$\Ucrossd_{i,j,\lambda}$  \\
 \hline
 {\bf 2-morphism:} &   \xy
 (0,7);(0,-7); **\dir{-} ?(.75)*\dir{>}+(2.3,0)*{\scriptstyle{}}
 ?(.1)*\dir{ }+(2,0)*{\scs i};
 (0,-2)*{\txt\large{$\bullet$}};
 (6,4)*{ \lambda};
 (-8,4)*{ \lambda +i_X};
 (-10,0)*{};(10,0)*{};
 \endxy
 &
     \xy
 (0,7);(0,-7); **\dir{-} ?(.75)*\dir{<}+(2.3,0)*{\scriptstyle{}}
 ?(.1)*\dir{ }+(2,0)*{\scs i};
 (0,-2)*{\txt\large{$\bullet$}};
 (-6,4)*{ \lambda};
 (8,4)*{ \lambda +i_X};
 (-10,0)*{};(10,9)*{};
 \endxy
 &
   \xy
  (0,0)*{\xybox{
    (-4,-4)*{};(4,4)*{} **\crv{(-4,-1) & (4,1)}?(1)*\dir{>} ;
    (4,-4)*{};(-4,4)*{} **\crv{(4,-1) & (-4,1)}?(1)*\dir{>};
    (-5,-3)*{\scs i};
     (5.1,-3)*{\scs j};
     (8,1)*{ \lambda};
     (-12,0)*{};(12,0)*{};
     }};
  \endxy
 &
   \xy
  (0,0)*{\xybox{
    (-4,4)*{};(4,-4)*{} **\crv{(-4,1) & (4,-1)}?(1)*\dir{>} ;
    (4,4)*{};(-4,-4)*{} **\crv{(4,1) & (-4,-1)}?(1)*\dir{>};
    (-6,-3)*{\scs i};
     (6,-3)*{\scs j};
     (8,1)*{ \lambda};
     (-12,0)*{};(12,0)*{};
     }};
  \endxy
\\ & & & &\\
\hline
 {\bf Degree:} & \;\;\text{  $i \cdot i$ }\;\;
 &\;\;\text{  $i\cdot i$}\;\;& \;\;\text{  $-i \cdot j$}\;\;
 & \;\;\text{  $-i \cdot j$}\;\; \\
 \hline
\end{tabular}
\]

\[
\begin{tabular}{|l|c|c|c|c|}
\hline
  {\bf Notation:} \xy (0,-5)*{};(0,7)*{}; \endxy&\text{$\Ucupr_{i,\lambda}$} & \text{$\Ucupl_{i,\lambda}$} & \text{$\Ucapl_{i,\lambda}$} &
 \text{$\Ucapr_{i,\lambda}$} \\
 \hline
  {\bf 2-morphism:} &  \xy
    (0,-3)*{\bbpef{i}};
    (8,-5)*{ \lambda};
    (-12,0)*{};(12,0)*{};
    \endxy
  & \xy
    (0,-3)*{\bbpfe{i}};
    (8,-5)*{ \lambda};
    (-12,0)*{};(12,0)*{};
    \endxy
  & \xy
    (0,0)*{\bbcef{i}};
    (8,5)*{ \lambda};
    (-12,0)*{};(12,0)*{};
    \endxy
  & \xy
    (0,0)*{\bbcfe{i}};
    (8,5)*{ \lambda};
    (-12,0)*{};(12,0)*{};
    \endxy\\& & &  &\\ \hline
 {\bf Degree:} & \;\;\text{  $c_{+i,\lambda}$}\;\;
 & \;\;\text{ $c_{-i,\lambda}$}\;\;
 & \;\;\text{ $c_{+i,\lambda}$}\;\;
 & \;\;\text{  $c_{-i,\lambda}$}\;\;
 \\
 \hline
\end{tabular}
\]
\end{enumerate}
with $c_{\pm i,\lambda}$ defined in \eqref{eq_cpm_lambda}, such that the
following identities hold.

\item The $\mathfrak{sl}_2$ relations\footnote{
The vertical $ii$-crossing was represented by the diagram $ \xy 0;/r.18pc/:
    (0,0)*{\twoIu{}{}};
    (6,0)*{};
    (-8,0)*{};
    \endxy
$ in \cite{Lau1,Lau2}.  Here we use a standard crossing for simplicity.} ( all of
the strands are labelled by $i$):
\begin{enumerate}[i)]
\item  $\mathbf{1}_{\lambda+i_X}\cal{E}_{+i}\onel$ and
$\onel\cal{E}_{-i}\mathbf{1}_{\lambda+i_X}$ are biadjoint, up to grading shifts:
\begin{equation} \label{eq_biadjoint1}
  \xy   0;/r.18pc/:
    (-8,0)*{}="1";
    (0,0)*{}="2";
    (8,0)*{}="3";
    (-8,-10);"1" **\dir{-};
    "1";"2" **\crv{(-8,8) & (0,8)} ?(0)*\dir{>} ?(1)*\dir{>};
    "2";"3" **\crv{(0,-8) & (8,-8)}?(1)*\dir{>};
    "3"; (8,10) **\dir{-};
    (12,-9)*{\lambda};
    (-6,9)*{\lambda+i_X};
    \endxy
    \; =
    \;
\xy   0;/r.18pc/:
    (-8,0)*{}="1";
    (0,0)*{}="2";
    (8,0)*{}="3";
    (0,-10);(0,10)**\dir{-} ?(.5)*\dir{>};
    (5,8)*{\lambda};
    (-9,8)*{\lambda+i_X};
    \endxy
\qquad \quad  \xy   0;/r.18pc/:
    (-8,0)*{}="1";
    (0,0)*{}="2";
    (8,0)*{}="3";
    (-8,-10);"1" **\dir{-};
    "1";"2" **\crv{(-8,8) & (0,8)} ?(0)*\dir{<} ?(1)*\dir{<};
    "2";"3" **\crv{(0,-8) & (8,-8)}?(1)*\dir{<};
    "3"; (8,10) **\dir{-};
    (12,-9)*{\lambda+i_X};
    (-6,9)*{ \lambda};
    \endxy
    \; =
    \;
\xy   0;/r.18pc/:
    (-8,0)*{}="1";
    (0,0)*{}="2";
    (8,0)*{}="3";
    (0,-10);(0,10)**\dir{-} ?(.5)*\dir{<};
   (9,8)*{\lambda+i_X};
    (-6,8)*{ \lambda};
    \endxy
\end{equation}

\begin{equation}
 \xy   0;/r.18pc/:
    (8,0)*{}="1";
    (0,0)*{}="2";
    (-8,0)*{}="3";
    (8,-10);"1" **\dir{-};
    "1";"2" **\crv{(8,8) & (0,8)} ?(0)*\dir{>} ?(1)*\dir{>};
    "2";"3" **\crv{(0,-8) & (-8,-8)}?(1)*\dir{>};
    "3"; (-8,10) **\dir{-};
    (12,9)*{\lambda};
    (-5,-9)*{\lambda+i_X};
    \endxy
    \; =
    \;
      \xy 0;/r.18pc/:
    (8,0)*{}="1";
    (0,0)*{}="2";
    (-8,0)*{}="3";
    (0,-10);(0,10)**\dir{-} ?(.5)*\dir{>};
    (5,-8)*{\lambda};
    (-9,-8)*{\lambda+i_X};
    \endxy
\qquad \quad \xy  0;/r.18pc/:
    (8,0)*{}="1";
    (0,0)*{}="2";
    (-8,0)*{}="3";
    (8,-10);"1" **\dir{-};
    "1";"2" **\crv{(8,8) & (0,8)} ?(0)*\dir{<} ?(1)*\dir{<};
    "2";"3" **\crv{(0,-8) & (-8,-8)}?(1)*\dir{<};
    "3"; (-8,10) **\dir{-};
    (12,9)*{\lambda+i_X};
    (-6,-9)*{ \lambda};
    \endxy
    \; =
    \;
\xy  0;/r.18pc/:
    (8,0)*{}="1";
    (0,0)*{}="2";
    (-8,0)*{}="3";
    (0,-10);(0,10)**\dir{-} ?(.5)*\dir{<};
    (9,-8)*{\lambda+i_X};
    (-6,-8)*{ \lambda};
    \endxy
\end{equation}
\item
\begin{equation} \label{eq_cyclic_dot}
    \xy
    (-8,5)*{}="1";
    (0,5)*{}="2";
    (0,-5)*{}="2'";
    (8,-5)*{}="3";
    (-8,-10);"1" **\dir{-};
    "2";"2'" **\dir{-} ?(.5)*\dir{<};
    "1";"2" **\crv{(-8,12) & (0,12)} ?(0)*\dir{<};
    "2'";"3" **\crv{(0,-12) & (8,-12)}?(1)*\dir{<};
    "3"; (8,10) **\dir{-};
    (15,-9)*{ \lambda+i_X};
    (-12,9)*{\lambda};
    (0,4)*{\txt\large{$\bullet$}};
    (10,8)*{\scs };
    (-10,-8)*{\scs i};
    \endxy
    \quad = \quad
      \xy
 (0,10);(0,-10); **\dir{-} ?(.75)*\dir{<}+(2.3,0)*{\scriptstyle{}}
 ?(.1)*\dir{ }+(2,0)*{\scs };
 (0,0)*{\txt\large{$\bullet$}};
 (-6,5)*{ \lambda};
 (8,5)*{ \lambda +i_X};
 (-10,0)*{};(10,0)*{};(-2,-8)*{\scs i};
 \endxy
    \quad = \quad
    \xy
    (8,5)*{}="1";
    (0,5)*{}="2";
    (0,-5)*{}="2'";
    (-8,-5)*{}="3";
    (8,-10);"1" **\dir{-};
    "2";"2'" **\dir{-} ?(.5)*\dir{<};
    "1";"2" **\crv{(8,12) & (0,12)} ?(0)*\dir{<};
    "2'";"3" **\crv{(0,-12) & (-8,-12)}?(1)*\dir{<};
    "3"; (-8,10) **\dir{-};
    (15,9)*{\lambda+i_X};
    (-12,-9)*{\lambda};
    (0,4)*{\txt\large{$\bullet$}};
    (-10,8)*{\scs };
    (10,-8)*{\scs i};
    \endxy
\end{equation}
\item  All dotted bubbles of negative degree are zero. That is,
\begin{equation} \label{eq_positivity_bubbles}
 \xy
 (-12,0)*{\cbub{\alpha}{i}};
 (-8,8)*{\lambda};
 \endxy
  = 0
 \qquad
  \text{if $\alpha<\la i,\lambda\ra-1$} \qquad
 \xy
 (-12,0)*{\ccbub{\alpha}{i}};
 (-8,8)*{\lambda};
 \endxy = 0\quad
  \text{if $\alpha< -\la i,\lambda\ra-1$}
\end{equation}
for all $\alpha \in \Z_+$, where a dot carrying a label $\alpha$ denotes the
$\alpha$-fold iterated vertical composite of $\Uup_{i,\lambda}$ or
$\Udown_{i,\lambda}$ depending on the orientation.  A dotted bubble of degree
zero equals 1:
\[
\xy 0;/r.18pc/:
 (0,0)*{\cbub{\la i,\lambda\ra-1}{i}};
  (4,8)*{\lambda};
 \endxy
  = 1 \quad \text{for $\la i,\lambda\ra \geq 1$,}
  \qquad \quad
  \xy 0;/r.18pc/:
 (0,0)*{\ccbub{-\la i,\lambda\ra-1}{i}};
  (4,8)*{\lambda};
 \endxy
  = 1 \quad \text{for $\la i,\lambda\ra \leq -1$.}
\]
\item For the following relations we employ the convention that all summations
are increasing, so that $\sum_{f=0}^{\alpha}$ is zero if $\alpha < 0$.
\begin{eqnarray}
  \text{$\xy 0;/r.18pc/:
  (14,8)*{\lambda};
  (-3,-8)*{};(3,8)*{} **\crv{(-3,-1) & (3,1)}?(1)*\dir{>};?(0)*\dir{>};
    (3,-8)*{};(-3,8)*{} **\crv{(3,-1) & (-3,1)}?(1)*\dir{>};
  (-3,-12)*{\bbsid};
  (-3,8)*{\bbsid};
  (3,8)*{}="t1";
  (9,8)*{}="t2";
  (3,-8)*{}="t1'";
  (9,-8)*{}="t2'";
   "t1";"t2" **\crv{(3,14) & (9, 14)};
   "t1'";"t2'" **\crv{(3,-14) & (9, -14)};
   "t2'";"t2" **\dir{-} ?(.5)*\dir{<};
   (9,0)*{}; (-6,-8)*{\scs i};
 \endxy$} \;\; = \;\; -\sum_{f=0}^{-\la i,\lambda\ra}
   \xy
  (19,4)*{\lambda};
  (0,0)*{\bbe{}};(-2,-8)*{\scs i};
  (12,-2)*{\cbub{\la i,\lambda\ra-1+f}{i}};
  (0,6)*{\bullet}+(8,1)*{\scs -\la i,\lambda\ra-f};
 \endxy
\qquad \qquad
  \text{$ \xy 0;/r.18pc/:
  (-12,8)*{\lambda};
   (-3,-8)*{};(3,8)*{} **\crv{(-3,-1) & (3,1)}?(1)*\dir{>};?(0)*\dir{>};
    (3,-8)*{};(-3,8)*{} **\crv{(3,-1) & (-3,1)}?(1)*\dir{>};
  (3,-12)*{\bbsid};
  (3,8)*{\bbsid}; (6,-8)*{\scs i};
  (-9,8)*{}="t1";
  (-3,8)*{}="t2";
  (-9,-8)*{}="t1'";
  (-3,-8)*{}="t2'";
   "t1";"t2" **\crv{(-9,14) & (-3, 14)};
   "t1'";"t2'" **\crv{(-9,-14) & (-3, -14)};
  "t1'";"t1" **\dir{-} ?(.5)*\dir{<};
 \endxy$} \;\; = \;\;
 \sum_{g=0}^{\la i,\lambda\ra}
   \xy
  (-12,8)*{\lambda};
  (0,0)*{\bbe{}};(2,-8)*{\scs i};
  (-12,-2)*{\ccbub{-\la i,\lambda\ra-1+g}{i}};
  (0,6)*{\bullet}+(8,-1)*{\scs \la i,\lambda\ra-g};
 \endxy
\end{eqnarray}
\begin{eqnarray}
 \vcenter{\xy 0;/r.18pc/:
  (-8,0)*{};
  (8,0)*{};
  (-4,10)*{}="t1";
  (4,10)*{}="t2";
  (-4,-10)*{}="b1";
  (4,-10)*{}="b2";(-6,-8)*{\scs i};(6,-8)*{\scs i};
  "t1";"b1" **\dir{-} ?(.5)*\dir{<};
  "t2";"b2" **\dir{-} ?(.5)*\dir{>};
  (10,2)*{\lambda};
  (-10,2)*{\lambda};
  \endxy}
&\quad = \quad&
 -\;\;
 \vcenter{   \xy 0;/r.18pc/:
    (-4,-4)*{};(4,4)*{} **\crv{(-4,-1) & (4,1)}?(1)*\dir{>};
    (4,-4)*{};(-4,4)*{} **\crv{(4,-1) & (-4,1)}?(1)*\dir{<};?(0)*\dir{<};
    (-4,4)*{};(4,12)*{} **\crv{(-4,7) & (4,9)};
    (4,4)*{};(-4,12)*{} **\crv{(4,7) & (-4,9)}?(1)*\dir{>};
  (8,8)*{\lambda};(-6,-3)*{\scs i};
     (6.5,-3)*{\scs i};
 \endxy}
  \quad + \quad
   \sum_{f=0}^{\la i,\lambda\ra-1} \sum_{g=0}^{f}
    \vcenter{\xy 0;/r.18pc/:
    (-10,10)*{\lambda};
    (-8,0)*{};
  (8,0)*{};
  (-4,-15)*{}="b1";
  (4,-15)*{}="b2";
  "b2";"b1" **\crv{(5,-8) & (-5,-8)}; ?(.05)*\dir{<} ?(.93)*\dir{<}
  ?(.8)*\dir{}+(0,-.1)*{\bullet}+(-5,2)*{\scs f-g};
  (-4,15)*{}="t1";
  (4,15)*{}="t2";
  "t2";"t1" **\crv{(5,8) & (-5,8)}; ?(.15)*\dir{>} ?(.95)*\dir{>}
  ?(.4)*\dir{}+(0,-.2)*{\bullet}+(3,-2)*{\scs \;\;\; \la i,\lambda\ra-1-f};
  (0,0)*{\ccbub{\scs \quad\;\;\;-\la i,\lambda\ra-1+g}{i}};
  \endxy} \nn
 \\  \; \nn \\
 \vcenter{\xy 0;/r.18pc/:
  (-8,0)*{};(-6,-8)*{\scs i};(6,-8)*{\scs i};
  (8,0)*{};
  (-4,10)*{}="t1";
  (4,10)*{}="t2";
  (-4,-10)*{}="b1";
  (4,-10)*{}="b2";
  "t1";"b1" **\dir{-} ?(.5)*\dir{>};
  "t2";"b2" **\dir{-} ?(.5)*\dir{<};
  (10,2)*{\lambda};
  (-10,2)*{\lambda};
  \endxy}
&\quad = \quad&
 -\;\;
   \vcenter{\xy 0;/r.18pc/:
    (-4,-4)*{};(4,4)*{} **\crv{(-4,-1) & (4,1)}?(1)*\dir{<};?(0)*\dir{<};
    (4,-4)*{};(-4,4)*{} **\crv{(4,-1) & (-4,1)}?(1)*\dir{>};
    (-4,4)*{};(4,12)*{} **\crv{(-4,7) & (4,9)}?(1)*\dir{>};
    (4,4)*{};(-4,12)*{} **\crv{(4,7) & (-4,9)};
  (8,8)*{\lambda};(-6,-3)*{\scs i};
     (6,-3)*{\scs i};
 \endxy}
  \quad + \quad
\sum_{f=0}^{-\la i,\lambda\ra-1} \sum_{g=0}^{f}
    \vcenter{\xy 0;/r.18pc/:
    (-8,0)*{};
  (8,0)*{};
  (-4,-15)*{}="b1";
  (4,-15)*{}="b2";
  "b2";"b1" **\crv{(5,-8) & (-5,-8)}; ?(.1)*\dir{>} ?(.95)*\dir{>}
  ?(.8)*\dir{}+(0,-.1)*{\bullet}+(-5,2)*{\scs f-g};
  (-4,15)*{}="t1";
  (4,15)*{}="t2";
  "t2";"t1" **\crv{(5,8) & (-5,8)}; ?(.15)*\dir{<} ?(.97)*\dir{<}
  ?(.4)*\dir{}+(0,-.2)*{\bullet}+(3,-2)*{\scs \;\;-\la i,\lambda\ra-1-f};
  (0,0)*{\cbub{\scs \quad\; \la i,\lambda\ra-1+g}{i}};
  (-10,10)*{\lambda};
  \endxy} \label{eq_ident_decomp}
\end{eqnarray}
for all $\lambda\in X$.  Notice that for some values of $\lambda$ the dotted
bubbles appearing above have negative labels. A composite of $\Uup_{i,\lambda}$
or $\Udown_{i,\lambda}$ with itself a negative number of times does not make
sense. These dotted bubbles with negative labels, called {\em fake bubbles}, are
formal symbols inductively defined by the equation
\begin{center}
\begin{eqnarray}
 \makebox[0pt]{ $
\left( \xy 0;/r.15pc/:
 (0,0)*{\ccbub{-\la i,\lambda\ra-1}{i}};
  (4,8)*{\lambda};
 \endxy
 +
 \xy 0;/r.15pc/:
 (0,0)*{\ccbub{-\la i,\lambda\ra-1+1}{i}};
  (4,8)*{\lambda};
 \endxy t
 + \cdots +
 \xy 0;/r.15pc/:
 (0,0)*{\ccbub{-\la i,\lambda\ra-1+\alpha}{i}};
  (4,8)*{\lambda};
 \endxy t^{\alpha}
 + \cdots
\right)
%
\left( \xy 0;/r.15pc/:
 (0,0)*{\cbub{\la i,\lambda\ra-1}{i}};
  (4,8)*{\lambda};
 \endxy
 + \cdots +
 \xy 0;/r.15pc/:
 (0,0)*{\cbub{\la i,\lambda\ra-1+\alpha}{i}};
 (4,8)*{\lambda};
 \endxy t^{\alpha}
 + \cdots
\right) =1 .$ } \nn \\ \label{eq_infinite_Grass}
\end{eqnarray}
\end{center}
and the additional condition
\[
\xy 0;/r.18pc/:
 (0,0)*{\cbub{-1}{i}};
  (4,8)*{\lambda};
 \endxy
 \quad = \quad
  \xy 0;/r.18pc/:
 (0,0)*{\ccbub{-1}{i}};
  (4,8)*{\lambda};
 \endxy
  \quad = \quad 1 \quad \text{if $\la i,\lambda\ra =0$.}
\]
Although the labels are negative for fake bubbles, one can check that the overall
degree of each fake bubble is still positive, so that these fake bubbles do not
violate the positivity of dotted bubble axiom. The above equation, called the
infinite Grassmannian relation, remains valid even in high degree when most of
the bubbles involved are not fake bubbles.  See \cite{Lau1} for more details.
\item NilHecke relations:
 \begin{equation}
  \vcenter{\xy 0;/r.18pc/:
    (-4,-4)*{};(4,4)*{} **\crv{(-4,-1) & (4,1)}?(1)*\dir{>};
    (4,-4)*{};(-4,4)*{} **\crv{(4,-1) & (-4,1)}?(1)*\dir{>};
    (-4,4)*{};(4,12)*{} **\crv{(-4,7) & (4,9)}?(1)*\dir{>};
    (4,4)*{};(-4,12)*{} **\crv{(4,7) & (-4,9)}?(1)*\dir{>};
  (8,8)*{\lambda};(-5,-3)*{\scs i};
     (5.1,-3)*{\scs i};
 \endxy}
 =0, \qquad \quad
 \vcenter{
 \xy 0;/r.18pc/:
    (-4,-4)*{};(4,4)*{} **\crv{(-4,-1) & (4,1)}?(1)*\dir{>};
    (4,-4)*{};(-4,4)*{} **\crv{(4,-1) & (-4,1)}?(1)*\dir{>};
    (4,4)*{};(12,12)*{} **\crv{(4,7) & (12,9)}?(1)*\dir{>};
    (12,4)*{};(4,12)*{} **\crv{(12,7) & (4,9)}?(1)*\dir{>};
    (-4,12)*{};(4,20)*{} **\crv{(-4,15) & (4,17)}?(1)*\dir{>};
    (4,12)*{};(-4,20)*{} **\crv{(4,15) & (-4,17)}?(1)*\dir{>};
    (-4,4)*{}; (-4,12) **\dir{-};
    (12,-4)*{}; (12,4) **\dir{-};
    (12,12)*{}; (12,20) **\dir{-}; (-5.5,-3)*{\scs i};
     (5.5,-3)*{\scs i};(14,-3)*{\scs i};
  (18,8)*{\lambda};
\endxy}
 \;\; =\;\;
 \vcenter{
 \xy 0;/r.18pc/:
    (4,-4)*{};(-4,4)*{} **\crv{(4,-1) & (-4,1)}?(1)*\dir{>};
    (-4,-4)*{};(4,4)*{} **\crv{(-4,-1) & (4,1)}?(1)*\dir{>};
    (-4,4)*{};(-12,12)*{} **\crv{(-4,7) & (-12,9)}?(1)*\dir{>};
    (-12,4)*{};(-4,12)*{} **\crv{(-12,7) & (-4,9)}?(1)*\dir{>};
    (4,12)*{};(-4,20)*{} **\crv{(4,15) & (-4,17)}?(1)*\dir{>};
    (-4,12)*{};(4,20)*{} **\crv{(-4,15) & (4,17)}?(1)*\dir{>};
    (4,4)*{}; (4,12) **\dir{-};
    (-12,-4)*{}; (-12,4) **\dir{-};
    (-12,12)*{}; (-12,20) **\dir{-};(-5.5,-3)*{\scs i};
     (5.5,-3)*{\scs i};(-14,-3)*{\scs i};
  (10,8)*{\lambda};
\endxy} \label{eq_nil_rels}
  \end{equation}
\begin{eqnarray}
  \xy
  (4,4);(4,-4) **\dir{-}?(0)*\dir{<}+(2.3,0)*{};
  (-4,4);(-4,-4) **\dir{-}?(0)*\dir{<}+(2.3,0)*{};
  (9,2)*{\lambda};     (-5,-3)*{\scs i};
     (5.1,-3)*{\scs i};
 \endxy
 \quad =
\xy
  (0,0)*{\xybox{
    (-4,-4)*{};(4,4)*{} **\crv{(-4,-1) & (4,1)}?(1)*\dir{>}?(.25)*{\bullet};
    (4,-4)*{};(-4,4)*{} **\crv{(4,-1) & (-4,1)}?(1)*\dir{>};
    (-5,-3)*{\scs i};
     (5.1,-3)*{\scs i};
     (8,1)*{ \lambda};
     (-10,0)*{};(10,0)*{};
     }};
  \endxy
 \;\; -
 \xy
  (0,0)*{\xybox{
    (-4,-4)*{};(4,4)*{} **\crv{(-4,-1) & (4,1)}?(1)*\dir{>}?(.75)*{\bullet};
    (4,-4)*{};(-4,4)*{} **\crv{(4,-1) & (-4,1)}?(1)*\dir{>};
    (-5,-3)*{\scs i};
     (5.1,-3)*{\scs i};
     (8,1)*{ \lambda};
     (-10,0)*{};(10,0)*{};
     }};
  \endxy
 \;\; =
\xy
  (0,0)*{\xybox{
    (-4,-4)*{};(4,4)*{} **\crv{(-4,-1) & (4,1)}?(1)*\dir{>};
    (4,-4)*{};(-4,4)*{} **\crv{(4,-1) & (-4,1)}?(1)*\dir{>}?(.75)*{\bullet};
    (-5,-3)*{\scs i};
     (5.1,-3)*{\scs i};
     (8,1)*{ \lambda};
     (-10,0)*{};(10,0)*{};
     }};
  \endxy
 \;\; -
  \xy
  (0,0)*{\xybox{
    (-4,-4)*{};(4,4)*{} **\crv{(-4,-1) & (4,1)}?(1)*\dir{>} ;
    (4,-4)*{};(-4,4)*{} **\crv{(4,-1) & (-4,1)}?(1)*\dir{>}?(.25)*{\bullet};
    (-5,-3)*{\scs i};
     (5.1,-3)*{\scs i};
     (8,1)*{ \lambda};
     (-10,0)*{};(10,0)*{};
     }};
  \endxy \nn \\ \label{eq_nil_dotslide}
\end{eqnarray}
We will also include \eqref{eq_cyclic_cross-gen} for $i =j$ as an
$\mf{sl}_2$-relation.
\end{enumerate}

  \item All 2-morphisms are cyclic\footnote{See \cite{Lau1} and the references therein for
  the definition of a cyclic 2-morphism with respect to a biadjoint structure.} with respect to the above biadjoint
   structure.  This is ensured by the relations \eqref{eq_cyclic_dot}, and the
   relations
\begin{equation} \label{eq_cyclic_cross-gen}
\xy 0;/r.19pc/:
  (0,0)*{\xybox{
    (-4,-4)*{};(4,4)*{} **\crv{(-4,-1) & (4,1)}?(1)*\dir{>};
    (4,-4)*{};(-4,4)*{} **\crv{(4,-1) & (-4,1)};
     (4,4)*{};(-18,4)*{} **\crv{(4,16) & (-18,16)} ?(1)*\dir{>};
     (-4,-4)*{};(18,-4)*{} **\crv{(-4,-16) & (18,-16)} ?(1)*\dir{<}?(0)*\dir{<};
     (18,-4);(18,12) **\dir{-};(12,-4);(12,12) **\dir{-};
     (-18,4);(-18,-12) **\dir{-};(-12,4);(-12,-12) **\dir{-};
     (8,1)*{ \lambda};
     (-10,0)*{};(10,0)*{};
      (4,-4)*{};(12,-4)*{} **\crv{(4,-10) & (12,-10)}?(1)*\dir{<}?(0)*\dir{<};
      (-4,4)*{};(-12,4)*{} **\crv{(-4,10) & (-12,10)}?(1)*\dir{>}?(0)*\dir{>};
      (20,11)*{\scs j};(10,11)*{\scs i};
      (-20,-11)*{\scs j};(-10,-11)*{\scs i};
     }};
  \endxy
\quad =  \quad \xy
  (0,0)*{\xybox{
    (-4,-4)*{};(4,4)*{} **\crv{(-4,-1) & (4,1)}?(0)*\dir{<} ;
    (4,-4)*{};(-4,4)*{} **\crv{(4,-1) & (-4,1)}?(0)*\dir{<};
    (-5,3)*{\scs i};
     (5.1,3)*{\scs j};
     (-8,0)*{ \lambda};
     (-12,0)*{};(12,0)*{};
     }};
  \endxy \quad =  \quad
 \xy 0;/r.19pc/:
  (0,0)*{\xybox{
    (4,-4)*{};(-4,4)*{} **\crv{(4,-1) & (-4,1)}?(1)*\dir{>};
    (-4,-4)*{};(4,4)*{} **\crv{(-4,-1) & (4,1)};
     (-4,4)*{};(18,4)*{} **\crv{(-4,16) & (18,16)} ?(1)*\dir{>};
     (4,-4)*{};(-18,-4)*{} **\crv{(4,-16) & (-18,-16)} ?(1)*\dir{<}?(0)*\dir{<};
     (-18,-4);(-18,12) **\dir{-};(-12,-4);(-12,12) **\dir{-};
     (18,4);(18,-12) **\dir{-};(12,4);(12,-12) **\dir{-};
     (8,1)*{ \lambda};
     (-10,0)*{};(10,0)*{};
     (-4,-4)*{};(-12,-4)*{} **\crv{(-4,-10) & (-12,-10)}?(1)*\dir{<}?(0)*\dir{<};
      (4,4)*{};(12,4)*{} **\crv{(4,10) & (12,10)}?(1)*\dir{>}?(0)*\dir{>};
      (-20,11)*{\scs i};(-10,11)*{\scs j};
      (20,-11)*{\scs i};(10,-11)*{\scs j};
     }};
  \endxy
\end{equation}
The cyclic condition on 2-morphisms expressed by \eqref{eq_cyclic_dot} and
\eqref{eq_cyclic_cross-gen} ensures that diagrams related by isotopy represent
the same 2-morphism in $\Ucat$.

It will be convenient to introduce degree zero 2-morphisms:
\begin{equation} \label{eq_crossl-gen}
  \xy
  (0,0)*{\xybox{
    (-4,-4)*{};(4,4)*{} **\crv{(-4,-1) & (4,1)}?(1)*\dir{>} ;
    (4,-4)*{};(-4,4)*{} **\crv{(4,-1) & (-4,1)}?(0)*\dir{<};
    (-5,-3)*{\scs i};
     (-5,3)*{\scs j};
     (8,2)*{ \lambda};
     (-12,0)*{};(12,0)*{};
     }};
  \endxy
:=
 \xy 0;/r.19pc/:
  (0,0)*{\xybox{
    (4,-4)*{};(-4,4)*{} **\crv{(4,-1) & (-4,1)}?(1)*\dir{>};
    (-4,-4)*{};(4,4)*{} **\crv{(-4,-1) & (4,1)};
     (-4,4);(-4,12) **\dir{-};
     (-12,-4);(-12,12) **\dir{-};
     (4,-4);(4,-12) **\dir{-};(12,4);(12,-12) **\dir{-};
     (16,1)*{\lambda};
     (-10,0)*{};(10,0)*{};
     (-4,-4)*{};(-12,-4)*{} **\crv{(-4,-10) & (-12,-10)}?(1)*\dir{<}?(0)*\dir{<};
      (4,4)*{};(12,4)*{} **\crv{(4,10) & (12,10)}?(1)*\dir{>}?(0)*\dir{>};
      (-14,11)*{\scs j};(-2,11)*{\scs i};
      (14,-11)*{\scs j};(2,-11)*{\scs i};
     }};
  \endxy
  \quad = \quad
  \xy 0;/r.19pc/:
  (0,0)*{\xybox{
    (-4,-4)*{};(4,4)*{} **\crv{(-4,-1) & (4,1)}?(1)*\dir{<};
    (4,-4)*{};(-4,4)*{} **\crv{(4,-1) & (-4,1)};
     (4,4);(4,12) **\dir{-};
     (12,-4);(12,12) **\dir{-};
     (-4,-4);(-4,-12) **\dir{-};(-12,4);(-12,-12) **\dir{-};
     (16,1)*{\lambda};
     (10,0)*{};(-10,0)*{};
     (4,-4)*{};(12,-4)*{} **\crv{(4,-10) & (12,-10)}?(1)*\dir{>}?(0)*\dir{>};
      (-4,4)*{};(-12,4)*{} **\crv{(-4,10) & (-12,10)}?(1)*\dir{<}?(0)*\dir{<};
      (14,11)*{\scs i};(2,11)*{\scs j};
      (-14,-11)*{\scs i};(-2,-11)*{\scs j};
     }};
  \endxy
\end{equation}
\begin{equation} \label{eq_crossr-gen}
  \xy
  (0,0)*{\xybox{
    (-4,-4)*{};(4,4)*{} **\crv{(-4,-1) & (4,1)}?(0)*\dir{<} ;
    (4,-4)*{};(-4,4)*{} **\crv{(4,-1) & (-4,1)}?(1)*\dir{>};
    (5.1,-3)*{\scs i};
     (5.1,3)*{\scs j};
     (-8,2)*{ \lambda};
     (-12,0)*{};(12,0)*{};
     }};
  \endxy
:=
 \xy 0;/r.19pc/:
  (0,0)*{\xybox{
    (-4,-4)*{};(4,4)*{} **\crv{(-4,-1) & (4,1)}?(1)*\dir{>};
    (4,-4)*{};(-4,4)*{} **\crv{(4,-1) & (-4,1)};
     (4,4);(4,12) **\dir{-};
     (12,-4);(12,12) **\dir{-};
     (-4,-4);(-4,-12) **\dir{-};(-12,4);(-12,-12) **\dir{-};
     (-16,1)*{\lambda};
     (10,0)*{};(-10,0)*{};
     (4,-4)*{};(12,-4)*{} **\crv{(4,-10) & (12,-10)}?(1)*\dir{<}?(0)*\dir{<};
      (-4,4)*{};(-12,4)*{} **\crv{(-4,10) & (-12,10)}?(1)*\dir{>}?(0)*\dir{>};
      (14,11)*{\scs j};(2,11)*{\scs i};
      (-14,-11)*{\scs j};(-2,-11)*{\scs i};
     }};
  \endxy
  \quad = \quad
  \xy 0;/r.19pc/:
  (0,0)*{\xybox{
    (4,-4)*{};(-4,4)*{} **\crv{(4,-1) & (-4,1)}?(1)*\dir{<};
    (-4,-4)*{};(4,4)*{} **\crv{(-4,-1) & (4,1)};
     (-4,4);(-4,12) **\dir{-};
     (-12,-4);(-12,12) **\dir{-};
     (4,-4);(4,-12) **\dir{-};(12,4);(12,-12) **\dir{-};
     (-16,1)*{\lambda};
     (-10,0)*{};(10,0)*{};
     (-4,-4)*{};(-12,-4)*{} **\crv{(-4,-10) & (-12,-10)}?(1)*\dir{>}?(0)*\dir{>};
      (4,4)*{};(12,4)*{} **\crv{(4,10) & (12,10)}?(1)*\dir{<}?(0)*\dir{<};
      (-14,11)*{\scs i};(-2,11)*{\scs j};
      (14,-11)*{\scs i};(2,-11)*{\scs j};
     }};
  \endxy
\end{equation}
where the second equality in \eqref{eq_crossl-gen} and \eqref{eq_crossr-gen}
follow from \eqref{eq_cyclic_cross-gen}.

\item For $i \neq j$
\begin{equation} \label{eq_downup_ij-gen}
 \vcenter{   \xy 0;/r.18pc/:
    (-4,-4)*{};(4,4)*{} **\crv{(-4,-1) & (4,1)}?(1)*\dir{>};
    (4,-4)*{};(-4,4)*{} **\crv{(4,-1) & (-4,1)}?(1)*\dir{<};?(0)*\dir{<};
    (-4,4)*{};(4,12)*{} **\crv{(-4,7) & (4,9)};
    (4,4)*{};(-4,12)*{} **\crv{(4,7) & (-4,9)}?(1)*\dir{>};
  (8,8)*{\lambda};(-6,-3)*{\scs i};
     (6,-3)*{\scs j};
 \endxy}
 \;\;= \;\;
\xy 0;/r.18pc/:
  (3,9);(3,-9) **\dir{-}?(.55)*\dir{>}+(2.3,0)*{};
  (-3,9);(-3,-9) **\dir{-}?(.5)*\dir{<}+(2.3,0)*{};
  (8,2)*{\lambda};(-5,-6)*{\scs i};     (5.1,-6)*{\scs j};
 \endxy
 \qquad
    \vcenter{\xy 0;/r.18pc/:
    (-4,-4)*{};(4,4)*{} **\crv{(-4,-1) & (4,1)}?(1)*\dir{<};?(0)*\dir{<};
    (4,-4)*{};(-4,4)*{} **\crv{(4,-1) & (-4,1)}?(1)*\dir{>};
    (-4,4)*{};(4,12)*{} **\crv{(-4,7) & (4,9)}?(1)*\dir{>};
    (4,4)*{};(-4,12)*{} **\crv{(4,7) & (-4,9)};
  (8,8)*{\lambda};(-6,-3)*{\scs i};
     (6,-3)*{\scs j};
 \endxy}
 \;\;=\;\;
\xy 0;/r.18pc/:
  (3,9);(3,-9) **\dir{-}?(.5)*\dir{<}+(2.3,0)*{};
  (-3,9);(-3,-9) **\dir{-}?(.55)*\dir{>}+(2.3,0)*{};
  (8,2)*{\lambda};(-5,-6)*{\scs i};     (5.1,-6)*{\scs j};
 \endxy
\end{equation}

\item The $R(\nu)$-relations:
\begin{enumerate}[i)]
\item For $i \neq j$
\begin{eqnarray}
  \vcenter{\xy 0;/r.18pc/:
    (-4,-4)*{};(4,4)*{} **\crv{(-4,-1) & (4,1)}?(1)*\dir{>};
    (4,-4)*{};(-4,4)*{} **\crv{(4,-1) & (-4,1)}?(1)*\dir{>};
    (-4,4)*{};(4,12)*{} **\crv{(-4,7) & (4,9)}?(1)*\dir{>};
    (4,4)*{};(-4,12)*{} **\crv{(4,7) & (-4,9)}?(1)*\dir{>};
  (8,8)*{\lambda};(-5,-3)*{\scs i};
     (5.1,-3)*{\scs j};
 \endxy}
 \qquad = \qquad
 \left\{
 \begin{array}{ccc}
     \xy 0;/r.18pc/:
  (3,9);(3,-9) **\dir{-}?(.5)*\dir{<}+(2.3,0)*{};
  (-3,9);(-3,-9) **\dir{-}?(.5)*\dir{<}+(2.3,0)*{};
  (8,2)*{\lambda};(-5,-6)*{\scs i};     (5.1,-6)*{\scs j};
 \endxy &  &  \text{if $i \cdot j=0$,}\\ \\
  \vcenter{\xy 0;/r.18pc/:
  (3,9);(3,-9) **\dir{-}?(.5)*\dir{<}+(2.3,0)*{};
  (-3,9);(-3,-9) **\dir{-}?(.5)*\dir{<}+(2.3,0)*{};
  (8,2)*{\lambda}; (-3,4)*{\bullet};(-6.5,5)*{\scs d_{ij}};
  (-5,-6)*{\scs i};     (5.1,-6)*{\scs j};
 \endxy} \quad
 + \quad
 \vcenter{\xy 0;/r.18pc/:
  (3,9);(3,-9) **\dir{-}?(.5)*\dir{<}+(2.3,0)*{};
  (-3,9);(-3,-9) **\dir{-}?(.5)*\dir{<}+(2.3,0)*{};
  (12,2)*{\lambda}; (3,4)*{\bullet};(7,5)*{\scs d_{ji}};
  (-5,-6)*{\scs i};     (5.1,-6)*{\scs j};
 \endxy}
   &  & \text{if $i \cdot j \neq 0$.}
 \end{array}
 \right. \nn \\\label{eq_r2_ij-gen}
\end{eqnarray}

\begin{eqnarray} \label{eq_dot_slide_ij-gen}
\xy
  (0,0)*{\xybox{
    (-4,-4)*{};(4,4)*{} **\crv{(-4,-1) & (4,1)}?(1)*\dir{>}?(.75)*{\bullet};
    (4,-4)*{};(-4,4)*{} **\crv{(4,-1) & (-4,1)}?(1)*\dir{>};
    (-5,-3)*{\scs i};
     (5.1,-3)*{\scs j};
     (8,1)*{ \lambda};
     (-10,0)*{};(10,0)*{};
     }};
  \endxy
 \;\; =
\xy
  (0,0)*{\xybox{
    (-4,-4)*{};(4,4)*{} **\crv{(-4,-1) & (4,1)}?(1)*\dir{>}?(.25)*{\bullet};
    (4,-4)*{};(-4,4)*{} **\crv{(4,-1) & (-4,1)}?(1)*\dir{>};
    (-5,-3)*{\scs i};
     (5.1,-3)*{\scs j};
     (8,1)*{ \lambda};
     (-10,0)*{};(10,0)*{};
     }};
  \endxy
\qquad  \xy
  (0,0)*{\xybox{
    (-4,-4)*{};(4,4)*{} **\crv{(-4,-1) & (4,1)}?(1)*\dir{>};
    (4,-4)*{};(-4,4)*{} **\crv{(4,-1) & (-4,1)}?(1)*\dir{>}?(.75)*{\bullet};
    (-5,-3)*{\scs i};
     (5.1,-3)*{\scs j};
     (8,1)*{ \lambda};
     (-10,0)*{};(10,0)*{};
     }};
  \endxy
\;\;  =
  \xy
  (0,0)*{\xybox{
    (-4,-4)*{};(4,4)*{} **\crv{(-4,-1) & (4,1)}?(1)*\dir{>} ;
    (4,-4)*{};(-4,4)*{} **\crv{(4,-1) & (-4,1)}?(1)*\dir{>}?(.25)*{\bullet};
    (-5,-3)*{\scs i};
     (5.1,-3)*{\scs j};
     (8,1)*{ \lambda};
     (-10,0)*{};(12,0)*{};
     }};
  \endxy
\end{eqnarray}

\item Unless $i = k$ and $i \cdot j \neq 0$
\begin{equation}
 \vcenter{
 \xy 0;/r.18pc/:
    (-4,-4)*{};(4,4)*{} **\crv{(-4,-1) & (4,1)}?(1)*\dir{>};
    (4,-4)*{};(-4,4)*{} **\crv{(4,-1) & (-4,1)}?(1)*\dir{>};
    (4,4)*{};(12,12)*{} **\crv{(4,7) & (12,9)}?(1)*\dir{>};
    (12,4)*{};(4,12)*{} **\crv{(12,7) & (4,9)}?(1)*\dir{>};
    (-4,12)*{};(4,20)*{} **\crv{(-4,15) & (4,17)}?(1)*\dir{>};
    (4,12)*{};(-4,20)*{} **\crv{(4,15) & (-4,17)}?(1)*\dir{>};
    (-4,4)*{}; (-4,12) **\dir{-};
    (12,-4)*{}; (12,4) **\dir{-};
    (12,12)*{}; (12,20) **\dir{-};
  (18,8)*{\lambda};
  (-6,-3)*{\scs i};
  (6,-3)*{\scs j};
  (15,-3)*{\scs k};
\endxy}
 \;\; =\;\;
 \vcenter{
 \xy 0;/r.18pc/:
    (4,-4)*{};(-4,4)*{} **\crv{(4,-1) & (-4,1)}?(1)*\dir{>};
    (-4,-4)*{};(4,4)*{} **\crv{(-4,-1) & (4,1)}?(1)*\dir{>};
    (-4,4)*{};(-12,12)*{} **\crv{(-4,7) & (-12,9)}?(1)*\dir{>};
    (-12,4)*{};(-4,12)*{} **\crv{(-12,7) & (-4,9)}?(1)*\dir{>};
    (4,12)*{};(-4,20)*{} **\crv{(4,15) & (-4,17)}?(1)*\dir{>};
    (-4,12)*{};(4,20)*{} **\crv{(-4,15) & (4,17)}?(1)*\dir{>};
    (4,4)*{}; (4,12) **\dir{-};
    (-12,-4)*{}; (-12,4) **\dir{-};
    (-12,12)*{}; (-12,20) **\dir{-};
  (10,8)*{\lambda};
  (7,-3)*{\scs k};
  (-6,-3)*{\scs j};
  (-14,-3)*{\scs i};
\endxy} \label{eq_r3_easy-gen}
\end{equation}

For $i \cdot j \neq 0$
\begin{equation}
 \vcenter{
 \xy 0;/r.18pc/:
    (-4,-4)*{};(4,4)*{} **\crv{(-4,-1) & (4,1)}?(1)*\dir{>};
    (4,-4)*{};(-4,4)*{} **\crv{(4,-1) & (-4,1)}?(1)*\dir{>};
    (4,4)*{};(12,12)*{} **\crv{(4,7) & (12,9)}?(1)*\dir{>};
    (12,4)*{};(4,12)*{} **\crv{(12,7) & (4,9)}?(1)*\dir{>};
    (-4,12)*{};(4,20)*{} **\crv{(-4,15) & (4,17)}?(1)*\dir{>};
    (4,12)*{};(-4,20)*{} **\crv{(4,15) & (-4,17)}?(1)*\dir{>};
    (-4,4)*{}; (-4,12) **\dir{-};
    (12,-4)*{}; (12,4) **\dir{-};
    (12,12)*{}; (12,20) **\dir{-};
  (18,8)*{\lambda};
  (-6,-3)*{\scs i};
  (6,-3)*{\scs j};
  (14,-3)*{\scs i};
\endxy}
\quad - \quad
 \vcenter{
 \xy 0;/r.18pc/:
    (4,-4)*{};(-4,4)*{} **\crv{(4,-1) & (-4,1)}?(1)*\dir{>};
    (-4,-4)*{};(4,4)*{} **\crv{(-4,-1) & (4,1)}?(1)*\dir{>};
    (-4,4)*{};(-12,12)*{} **\crv{(-4,7) & (-12,9)}?(1)*\dir{>};
    (-12,4)*{};(-4,12)*{} **\crv{(-12,7) & (-4,9)}?(1)*\dir{>};
    (4,12)*{};(-4,20)*{} **\crv{(4,15) & (-4,17)}?(1)*\dir{>};
    (-4,12)*{};(4,20)*{} **\crv{(-4,15) & (4,17)}?(1)*\dir{>};
    (4,4)*{}; (4,12) **\dir{-};
    (-12,-4)*{}; (-12,4) **\dir{-};
    (-12,12)*{}; (-12,20) **\dir{-};
  (10,8)*{\lambda};
  (6,-3)*{\scs i};
  (-6,-3)*{\scs j};
  (-14,-3)*{\scs i};
\endxy}
 \;\; =\;\;
 \sum_{a=0}^{d_{ij}-1} \;\;
\xy 0;/r.18pc/:
  (4,12);(4,-12) **\dir{-}?(.5)*\dir{<};
  (-4,12);(-4,-12) **\dir{-}?(.5)*\dir{<}?(.25)*\dir{}+(0,0)*{\bullet}+(-3,0)*{\scs a};
  (12,12);(12,-12) **\dir{-}?(.5)*\dir{<}?(.25)*\dir{}+(0,0)*{\bullet}+(10,0)*{\scs d_{ij}-1-a};
  (22,-2)*{\lambda}; (-6,-9)*{\scs i};     (6.1,-9)*{\scs j};
  (14,-9)*{\scs i};
 \endxy
 \label{eq_r3_hard-gen}
\end{equation}
\end{enumerate}
For example, for any shift $t$ there are 2-morphisms
\begin{eqnarray}
  \xy
 (0,7);(0,-7); **\dir{-} ?(.75)*\dir{>}+(2.3,0)*{\scriptstyle{}};
 (0.1,-2)*{\txt\large{$\bullet$}};
 (6,4)*{ \lambda};
 (-10,0)*{};(10,0)*{};(0,-10)*{i };(0,10)*{i};
 \endxy
 \maps  \cal{E}_{+i}\onel\{t\} \To \cal{E}_{+i}\onel\{t-2\}\quad
\xy
  (0,0)*{\xybox{
    (-4,-6)*{};(4,6)*{} **\crv{(-4,-1) & (4,1)}?(1)*\dir{>} ;
    (4,-6)*{};(-4,6)*{} **\crv{(4,-1) & (-4,1)}?(1)*\dir{>};
    (-4,-9)*{ i};
     (4,-9)*{ j};
     (8,1)*{ \lambda};
     (-12,0)*{};(12,0)*{};
     }};
  \endxy
  \maps \cal{E}_{+i+j}\onel\{t\} \To \cal{E}_{+j+i}\onel\{t-i\cdot j\} \nn \\
  \xy
    (0,-3)*{\bbpef{i}};
    (8,-5)*{ \lambda};
    (-12,0)*{};(12,0)*{};
    \endxy \maps
    \onel\{t\} \To \cal{E}_{-i+i}\onel\{t-c_{+i,\lambda}\} \quad \xy
    (0,0)*{\bbcfe{i}};
    (8,5)*{ \lambda};
    (-12,0)*{};(12,0)*{};
    \endxy \maps
    \cal{E}_{-i+i}\onel\{t\} \To \onel\{t-c_{-i,\lambda}\} \nn
\end{eqnarray}
in $\Ucat$,  and the diagrammatic relation
\[
\vcenter{
 \xy 0;/r.18pc/:
    (-4,-4)*{};(4,4)*{} **\crv{(-4,-1) & (4,1)}?(1)*\dir{>};
    (4,-4)*{};(-4,4)*{} **\crv{(4,-1) & (-4,1)}?(1)*\dir{>};
    (4,4)*{};(12,12)*{} **\crv{(4,7) & (12,9)}?(1)*\dir{>};
    (12,4)*{};(4,12)*{} **\crv{(12,7) & (4,9)}?(1)*\dir{>};
    (-4,12)*{};(4,20)*{} **\crv{(-4,15) & (4,17)}?(1)*\dir{>};
    (4,12)*{};(-4,20)*{} **\crv{(4,15) & (-4,17)}?(1)*\dir{>};
    (-4,4)*{}; (-4,12) **\dir{-};
    (12,-4)*{}; (12,4) **\dir{-};
    (12,12)*{}; (12,20) **\dir{-}; (-5.5,-3)*{\scs i};
     (5.5,-3)*{\scs i};(14,-3)*{\scs i};
  (18,8)*{\lambda};
\endxy}
 \;\; =\;\;
 \vcenter{
 \xy 0;/r.18pc/:
    (4,-4)*{};(-4,4)*{} **\crv{(4,-1) & (-4,1)}?(1)*\dir{>};
    (-4,-4)*{};(4,4)*{} **\crv{(-4,-1) & (4,1)}?(1)*\dir{>};
    (-4,4)*{};(-12,12)*{} **\crv{(-4,7) & (-12,9)}?(1)*\dir{>};
    (-12,4)*{};(-4,12)*{} **\crv{(-12,7) & (-4,9)}?(1)*\dir{>};
    (4,12)*{};(-4,20)*{} **\crv{(4,15) & (-4,17)}?(1)*\dir{>};
    (-4,12)*{};(4,20)*{} **\crv{(-4,15) & (4,17)}?(1)*\dir{>};
    (4,4)*{}; (4,12) **\dir{-};
    (-12,-4)*{}; (-12,4) **\dir{-};
    (-12,12)*{}; (-12,20) **\dir{-};(-5.5,-3)*{\scs i};
     (5.5,-3)*{\scs i};(-14,-3)*{\scs i};
  (10,8)*{\lambda};
\endxy}
\]
gives rise to relations in
$\Ucat\big(\cal{E}_{iii}\onel\{t\},\cal{E}_{iii}\onel\{t+3i\cdot i\}\big)$ for
all $t\in \Z$.

\item the additive $\Bbbk$-linear composition functor $\Ucat(\lambda,\lambda')
 \times \Ucat(\lambda',\lambda'') \to \Ucat(\lambda,\lambda'')$ is given on
 1-morphisms of $\Ucat$ by
\begin{equation}
  \cal{E}_{\jj}\mathbf{1}_{\lambda'}\{t'\} \times \cal{E}_{\ii}\onel\{t\} \mapsto
  \cal{E}_{\jj\ii}\onel\{t+t'\}
\end{equation}
for $\ii_X=\lambda-\lambda'$, and on 2-morphisms of $\Ucat$ by juxtaposition of
diagrams
\[
\left(\;\;\vcenter{\xy 0;/r.16pc/:
 (-4,-15)*{}; (-20,25) **\crv{(-3,-6) & (-20,4)}?(0)*\dir{<}?(.6)*\dir{}+(0,0)*{\bullet};
 (-12,-15)*{}; (-4,25) **\crv{(-12,-6) & (-4,0)}?(0)*\dir{<}?(.6)*\dir{}+(.2,0)*{\bullet};
 ?(0)*\dir{<}?(.75)*\dir{}+(.2,0)*{\bullet};?(0)*\dir{<}?(.9)*\dir{}+(0,0)*{\bullet};
 (-28,25)*{}; (-12,25) **\crv{(-28,10) & (-12,10)}?(0)*\dir{<};
  ?(.2)*\dir{}+(0,0)*{\bullet}?(.35)*\dir{}+(0,0)*{\bullet};
 (-36,-15)*{}; (-36,25) **\crv{(-34,-6) & (-35,4)}?(1)*\dir{>};
 (-28,-15)*{}; (-42,25) **\crv{(-28,-6) & (-42,4)}?(1)*\dir{>};
 (-42,-15)*{}; (-20,-15) **\crv{(-42,-5) & (-20,-5)}?(1)*\dir{>};
 (6,10)*{\cbub{}{}};
 (-23,0)*{\cbub{}{}};
 (8,-4)*{\lambda'};(-44,-4)*{\lambda''};
 \endxy}\;\;\right) \;\; \times \;\;
\left(\;\;\vcenter{ \xy 0;/r.18pc/: (-14,8)*{\xybox{
 (0,-10)*{}; (-16,10)*{} **\crv{(0,-6) & (-16,6)}?(.5)*\dir{};
 (-16,-10)*{}; (-8,10)*{} **\crv{(-16,-6) & (-8,6)}?(1)*\dir{}+(.1,0)*{\bullet};
  (-8,-10)*{}; (0,10)*{} **\crv{(-8,-6) & (-0,6)}?(.6)*\dir{}+(.2,0)*{\bullet}?
  (1)*\dir{}+(.1,0)*{\bullet};
  (0,10)*{}; (-16,30)*{} **\crv{(0,14) & (-16,26)}?(1)*\dir{>};
 (-16,10)*{}; (-8,30)*{} **\crv{(-16,14) & (-8,26)}?(1)*\dir{>};
  (-8,10)*{}; (0,30)*{} **\crv{(-8,14) & (-0,26)}?(1)*\dir{>}?(.6)*\dir{}+(.25,0)*{\bullet};
   }};
 (-2,-4)*{\lambda}; (-26,-4)*{\lambda'};
 \endxy} \;\;\right)
 \;\;\mapsto \;\;
\vcenter{\xy 0;/r.16pc/:
 (-4,-15)*{}; (-20,25) **\crv{(-3,-6) & (-20,4)}?(0)*\dir{<}?(.6)*\dir{}+(0,0)*{\bullet};
 (-12,-15)*{}; (-4,25) **\crv{(-12,-6) & (-4,0)}?(0)*\dir{<}?(.6)*\dir{}+(.2,0)*{\bullet};
 ?(0)*\dir{<}?(.75)*\dir{}+(.2,0)*{\bullet};?(0)*\dir{<}?(.9)*\dir{}+(0,0)*{\bullet};
 (-28,25)*{}; (-12,25) **\crv{(-28,10) & (-12,10)}?(0)*\dir{<};
  ?(.2)*\dir{}+(0,0)*{\bullet}?(.35)*\dir{}+(0,0)*{\bullet};
 (-36,-15)*{}; (-36,25) **\crv{(-34,-6) & (-35,4)}?(1)*\dir{>};
 (-28,-15)*{}; (-42,25) **\crv{(-28,-6) & (-42,4)}?(1)*\dir{>};
 (-42,-15)*{}; (-20,-15) **\crv{(-42,-5) & (-20,-5)}?(1)*\dir{>};
 (6,10)*{\cbub{}{}};
 (-23,0)*{\cbub{}{}};
 \endxy}
 \vcenter{ \xy 0;/r.16pc/: (-14,8)*{\xybox{
 (0,-10)*{}; (-16,10)*{} **\crv{(0,-6) & (-16,6)}?(.5)*\dir{};
 (-16,-10)*{}; (-8,10)*{} **\crv{(-16,-6) & (-8,6)}?(1)*\dir{}+(.1,0)*{\bullet};
  (-8,-10)*{}; (0,10)*{} **\crv{(-8,-6) & (-0,6)}?(.6)*\dir{}+(.2,0)*{\bullet}?
  (1)*\dir{}+(.1,0)*{\bullet};
  (0,10)*{}; (-16,30)*{} **\crv{(0,14) & (-16,26)}?(1)*\dir{>};
 (-16,10)*{}; (-8,30)*{} **\crv{(-16,14) & (-8,26)}?(1)*\dir{>};
  (-8,10)*{}; (0,30)*{} **\crv{(-8,14) & (-0,26)}?(1)*\dir{>}?(.6)*\dir{}+(.25,0)*{\bullet};
   }};
 (0,-5)*{\lambda};
 \endxy}
\]
\end{itemize}
\end{defn}

\begin{rem}
By choosing an orientation of the graph associated to Cartan datum $(I,\cdot)$
the $R(\nu)$-relations above can be modified by replacing them with signed
$R(\nu)$ relations determined by invertible elements $\tau_{ij}$, $\tau_{ji}$
chosen for each edge of the graph (see \cite{KL2} for more details).
\end{rem}

$\Ucat$ has graded 2-homs defined by
\begin{equation}
  \HOMU(x,y) := \bigoplus_{t \in \Z} \Hom_{\Ucat}(x\{t\},y).
\end{equation}
Also define graded endomorphisms
\begin{equation}
  \ENDU(x):= \HOMU(x,x).
\end{equation}
The 2-category with the same objects and 1-morphisms as $\Ucat$ and 2-homs given
by $\HOM_{\Ucat}(x,y)$ is denoted $\Ucatq$, so that
\begin{equation}
 \Ucatq(x,y) = \HOMU(x,y).
\end{equation}
$\Ucatq$ is a graded additive $\Bbbk$-linear 2-category with
translation~\cite[Section 5.1]{Lau1}.

%
\subsubsection{Relations in $\Ucat$} \label{subsubsec_relations}
%

We now collect some other relations that follow from those above.  These
relations simplify computations in the graphical calculus and can be used to
reduce complex diagrams into simpler ones (see the proofs of
Proposition~\ref{prop_bubbles_same_orient} and Lemma~\ref{lem_surjective}).

\begin{prop}[Bubble Slides] \label{prop_bubble_slide1}
The following identities hold in $\Ucat$
\begin{eqnarray}
    \xy
  (14,8)*{\lambda};
  (0,0)*{\bbe{}};
  (0,-12)*{\scs j};
  (12,-2)*{\ccbub{-\la i,\lambda \ra-1+\alpha}{i}};
  (0,6)*{ }+(7,-1)*{\scs  };
 \endxy
 &\quad = \quad&
 \left\{
 \begin{array}{ccl}
  \xsum{f=0}{\alpha}(\alpha+1-f)
   \xy
  (0,8)*{\lambda+j_X};
  (12,0)*{\bbe{}};
  (12,-12)*{\scs j};
  (0,-2)*{\ccbub{-\la i,\lambda +i_X\ra-1+f}{i}};
  (12,6)*{\bullet}+(5,-1)*{\scs \alpha-f};
 \endxy
    &  & \text{if $i=j$} \\ \\
        \xy
  (0,8)*{\lambda+j_X};
  (12,0)*{\bbe{}};
  (11,-12)*{\scs j};
  (0,-2)*{\ccbub{-\la i,\lambda+j_X \ra -1+\alpha}{i}};
 \endxy
 \quad + \quad
  \xy
  (0,8)*{\lambda+j_X};
  (12,0)*{\bbe{}};
  (12,-12)*{\scs j};
  (0,-2)*{\ccbub{-\la i,\lambda +j_X\ra-2+\alpha}{i}};
  (12,6)*{\bullet}+(5,-1)*{\scs };
 \endxy
   &  & \text{if $i \cdot j =-1$} \\
 \qquad \qquad  \xy
  (0,8)*{\lambda+j_X};
  (12,0)*{\bbe{}};
  (12,-12)*{\scs j};
  (0,-2)*{\ccbub{-\la i,\lambda \ra-1+\alpha}{i}};
 \endxy  &  & \text{if $i \cdot j=0$}
 \end{array}
 \right. \nn
\end{eqnarray}
\begin{eqnarray}
    \xy
  (15,8)*{\lambda};
  (11,0)*{\bbe{}};
  (11,-12)*{\scs j};
  (0,-2)*{\cbub{\la i,\lambda+j_X\ra-1+\alpha\quad }{i}};
 \endxy
   &\quad = \quad&
  \left\{\begin{array}{ccl}
     \xsum{f=0}{\alpha}(\alpha+1-f)
     \xy
  (18,8)*{\lambda};
  (0,0)*{\bbe{}};
  (0,-12)*{\scs j};
  (14,-4)*{\cbub{(\la i,\lambda\ra-1)+f}{i}};
  (0,6)*{\bullet }+(5,-1)*{\scs \alpha-f};
 \endxy
         &  & \text{if $i=j$}  \\
    \xy
  (18,8)*{\lambda};
  (0,0)*{\bbe{}};
  (0,-12)*{\scs j};
  (12,-2)*{\cbub{(\la i,\lambda \ra-1)+ \alpha-1}{i}};
    (0,6)*{\bullet }+(5,-1)*{\scs};
 \endxy
 \quad + \quad
\xy
  (18,8)*{\lambda};
  (0,0)*{\bbe{}};
  (0,-12)*{\scs j};
  (12,-2)*{\cbub{(\la i,\lambda \ra-1)+ \alpha}{i}};
 \endxy
         & &  \text{if $i\cdot j =-1$}\\
    \xy
  (15,8)*{\lambda};
  (0,0)*{\bbe{}};
  (0,-12)*{\scs j};
  (12,-2)*{\cbub{(\la i,\lambda \ra-1)+\alpha}{i}};
 \endxy          &  & \text{if $i \cdot j = 0$}
         \end{array}
 \right. \nn
\end{eqnarray}
\end{prop}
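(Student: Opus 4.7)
My plan is to prove the two bubble-slide identities in parallel, case by case according to the relationship between $i$ and $j$, by exhibiting each side as a local computation that only invokes the $R(\nu)$-relations, the $\mathfrak{sl}_2$-relations (in particular the identity decomposition \eqref{eq_ident_decomp} and the infinite Grassmannian relation \eqref{eq_infinite_Grass}), and the dot-slide \eqref{eq_nil_dotslide}/\eqref{eq_dot_slide_ij-gen}. Throughout, both identities are symmetric under the biadjoint structure (clockwise $\leftrightarrow$ counterclockwise, $\pm$-orientation of the passing strand), so it suffices to handle the first identity; the second follows by reflecting diagrams across a horizontal axis and invoking cyclicity.

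For the case $i\cdot j = 0$ the argument is immediate: by relation \eqref{eq_r2_ij-gen} the $j$-strand and the $i$-strands inside the bubble can be isotoped past each other without introducing any correction, and since $\langle i, j_X\rangle = 0$ the weight in the region to the right of the bubble is unchanged when it crosses. Thus the label on the bubble is the same on both sides and no new terms appear. For the case $i\cdot j = -1$ the strategy is to start with the configuration on the left-hand side, then use relation \eqref{eq_downup_ij-gen} to insert an $i$-cap/$i$-cup pair on the bubble so that the $j$-strand passes between the two $i$-arcs of the bubble. Applying \eqref{eq_r2_ij-gen} to each of the two resulting $ij$-crossings produces a main term (in which the $j$-strand has been moved past without incident, but with the weight label in the relevant region shifted by $j_X$, accounting for the weight adjustment $-\langle i,\lambda\rangle \to -\langle i,\lambda+j_X\rangle = -\langle i,\lambda\rangle+1$ in the bubble label) together with a correction term containing an extra dot on one of the $i$-arcs, which upon reforming the bubble becomes precisely the second summand shown.

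The case $i=j$ is the main obstacle and requires more care. Here the natural approach is induction on $\alpha$: for $\alpha = 0$ one unpacks $\sccbub{-\la i,\lambda\ra - 1}$ via the degree-zero bubble normalization (and the definition of fake bubbles via \eqref{eq_infinite_Grass} when $\langle i,\lambda\rangle \geq 0$), and uses the nilHecke relations \eqref{eq_nil_rels}, \eqref{eq_nil_dotslide} directly on a configuration in which the bubble is written as a cap-cup pair against the ambient $i$-strand, with the identity decomposition \eqref{eq_ident_decomp} producing a single effective term. For the inductive step, one multiplies the statement at level $\alpha - 1$ by an extra dot on the interior of the bubble and uses the dot-slide \eqref{eq_nil_dotslide} combined with \eqref{eq_ident_decomp} to convert a dot on the outer strand into a sum of (bubble with extra dot) plus (bubble with the original dot count, against a dot on the outer strand), which, after reindexing, produces exactly the triangular coefficient $(\alpha + 1 - f)$. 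The coefficient arises as the number of ways of distributing the extra dot across the summation range $0 \leq f \leq \alpha$, which matches the arithmetic appearing in Pascal-like identities for bubbles governed by \eqref{eq_infinite_Grass}.

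The hardest step will be verifying the triangular coefficient $(\alpha + 1 - f)$ in the $i=j$ case: the natural inductive computation produces a sum that must be reorganized using the infinite Grassmannian relation to account for contributions from fake bubbles of negative label. I expect the bookkeeping to require tracking terms where the dot count $\langle i,\lambda\rangle - 1 + f$ crosses zero, and carefully checking that the infinite Grassmannian relation \eqref{eq_infinite_Grass} produces precisely the cancellations needed so that the final answer is the clean triangular sum stated. All other cases reduce to a single local application of an $R(\nu)$-relation together with isotopy invariance and are routine.
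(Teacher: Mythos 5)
Your treatment of the $i\neq j$ cases is essentially the paper's own argument: both sides are obtained by decomposing the configuration in which an arc of the bubble crosses the $j$-strand, using the mixed-orientation relation \eqref{eq_downup_ij-gen} and the upward $R2$ relation \eqref{eq_r2_ij-gen}. One small correction of phrasing: for $i\cdot j=-1$ there is no ``main term plus correction''; the two summands on the right-hand side of the proposition are exactly the two summands of \eqref{eq_r2_ij-gen}, namely the term whose dot lands on the $i$-arc (absorbed into the bubble, whose label shifts by one because $\la i,\lambda+j_X\ra=\la i,\lambda\ra-1$) and the term whose dot lands on the $j$-strand. Also, your reduction of the second identity to the first by ``reflection across a horizontal axis plus cyclicity'' uses the wrong symmetry: that reflection produces the bubble slides past a \emph{downward} strand, not the clockwise-bubble slide past an upward strand; the symmetry exchanging the two stated identities is the vertical-axis reflection $\tilde{\sigma}$ (with $\lambda\mapsto-\lambda$), or one simply runs the same decomposition for both bubble orientations, as the paper does.

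The genuine gap is the case $i=j$, which is where the whole content of the triangular coefficient $(\alpha+1-f)$ lies, and which you explicitly leave unverified (``I expect the bookkeeping\dots''). Moreover the inductive mechanism you describe is not a legitimate manipulation: ``multiplying the statement at level $\alpha-1$ by an extra dot on the interior of the bubble'' is not pre- or post-composition with any fixed $2$-morphism, since the dot sits on a closed component; after sliding, the bubble on the other side of the equation is a different closed component in a different weight region, so there is no a priori way to insert the dot ``on both sides'' consistently. A correct self-contained argument must instead open the dotted bubble against the strand (via the zigzag identities), apply the identity decomposition \eqref{eq_ident_decomp} and the dot-slide \eqref{eq_nil_dotslide}, and then reorganize using the infinite Grassmannian relation \eqref{eq_infinite_Grass} to produce the coefficient $(\alpha+1-f)$, including the contributions of fake bubbles; this computation is nontrivial and is precisely what the paper avoids redoing by citing the $\mathfrak{sl}_2$ case in \cite{Lau1}. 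As it stands, your proposal proves the $i\cdot j=0$ and $i\cdot j=-1$ cases but only outlines, without a valid inductive step or a completed coefficient check, the $i=j$ case.
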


\begin{proof}
When $i = j$ the proof appears in \cite{Lau1}.  For $i \neq j$ the equation
follows from decomposing
\[
 \vcenter{\xy 0;/r.2pc/:
    (-4,-4)*{};(4,4)*{} **\crv{(-4,-1) & (4,1)}?(1)*\dir{>};
    (4,4)*{};(-4,12)*{} **\crv{(4,7) & (-4,9)}?(1)*\dir{>};
  (18,8)*{\lambda};(-5,-3)*{\scs {\bf }};
  (6,1.5)*{\ccbub{\qquad\quad -\la i,\lambda+j_X \ra-2+\alpha}{}};
     (-7,-3)*{\scs j};
     (8,9)*{\scs i};
 \endxy}
\quad = \quad
  \vcenter{\xy 0;/r.2pc/:
    (4,-4)*{};(-4,4)*{} **\crv{(4,-1) & (-4,1)}?(1)*\dir{>};
    (-4,4)*{};(4,12)*{} **\crv{(-4,7) & (4,9)}?(1)*\dir{>};
  (13,8)*{\lambda};(-5,-3)*{\scs {\bf }};
  (-2,1.5)*{\ccbub{-\la i,\lambda+j_X\ra-2+\alpha\qquad \quad }{}};
   (6,-3)*{\scs j};
     (-8,9)*{\scs i};
 \endxy}
 \qquad  \vcenter{\xy 0;/r.2pc/:
    (-4,-4)*{};(4,4)*{} **\crv{(-4,-1) & (4,1)}?(1)*\dir{>};
    (4,4)*{};(-4,12)*{} **\crv{(4,7) & (-4,9)}?(1)*\dir{>};
  (18,8)*{\lambda};(-5,-3)*{\scs {\bf }};
  (6,1.5)*{\cbub{\qquad\quad \la i,\lambda+j_X\ra-1+\alpha}{}};
     (-7,-3)*{\scs j};
     (8,9)*{\scs i};
 \endxy}
\quad = \quad
  \vcenter{\xy 0;/r.2pc/:
    (4,-4)*{};(-4,4)*{} **\crv{(4,-1) & (-4,1)}?(1)*\dir{>};
    (-4,4)*{};(4,12)*{} **\crv{(-4,7) & (4,9)}?(1)*\dir{>};
  (13,8)*{\lambda};(-5,-3)*{\scs {\bf }};
  (-2,1.5)*{\cbub{\la i,\lambda+j_X\ra-1+\alpha\qquad \quad }{}};
   (6,-3)*{\scs j};
     (-8,9)*{\scs i};
 \endxy}
 \]
 using the relations \eqref{eq_downup_ij-gen} and \eqref{eq_r2_ij-gen}.
\end{proof}

\begin{prop}[More bubble slides] \label{prop_bubble_slide2}
The following identities hold in $\Ucat$
  \[
      \xy
  (15,8)*{\lambda};
  (0,0)*{\bbe{}};
  (0,-12)*{\scs j};
  (12,-2)*{\cbub{(\la i,\lambda \ra-1)+\alpha}{i}};
 \endxy
   =
  \left\{
  \begin{array}{cc}
     \xy
  (0,8)*{\lambda+i_X};
  (12,0)*{\bbe{}};
  (12,-12)*{\scs j};
  (0,-2)*{\cbub{(\la i,\lambda \ra+1)+(\alpha-2)}{i}};
  (12,6)*{\bullet}+(3,-1)*{\scs 2};
 \endxy
   -2 \;
         \xy
  (0,8)*{\lambda+i_X};
  (12,0)*{\bbe{}};
  (12,-12)*{\scs j};
  (0,-2)*{\cbub{(\la i,\lambda \ra+1)+(\alpha-1)}{i}};
  (12,6)*{\bullet}+(8,-1)*{\scs };
 \endxy
 + \;\;
     \xy
  (0,8)*{\lambda+i_X};
  (12,0)*{\bbe{}};
  (12,-12)*{\scs j};
  (0,-2)*{\cbub{(\la i,\lambda \ra+1)+\alpha}{i}};
  (12,6)*{}+(8,-1)*{\scs };
 \endxy
  &   \text{if $i = j$} \\
  \xsum{f=0}{\alpha} (-1)^{f}
  \xy
  (0,8)*{\lambda+j_X};
  (14,0)*{\bbe{}};
  (12,-12)*{\scs j};
  (0,-2)*{\cbub{\la i,\lambda+j_X\ra-1+(\alpha-f)\quad}{i}};
  (14,6)*{\bullet}+(3,-1)*{\scs f};
 \endxy &   \text{if $i\cdot j =-1$}
  \end{array}
 \right.
\]
\[
    \xy
  (0,8)*{\lambda+j_X};
  (12,0)*{\bbe{}};
  (12,-12)*{\scs j};
  (0,-2)*{\ccbub{-\la i,\lambda+j_X\ra-1+\alpha}{i}};
  (12,6)*{}+(8,-1)*{\scs };
 \endxy
  =
\left\{
\begin{array}{cc}
    \xy
  (15,8)*{\lambda};
  (0,0)*{\bbe{}};
  (0,-12)*{\scs j};
  (12,-2)*{\ccbub{\quad(-\la i,\lambda \ra-1)+(\alpha-2)}{i}};
  (0,6)*{\bullet }+(3,1)*{\scs 2};
 \endxy
  -2 \;
      \xy
  (15,8)*{\lambda};
  (0,0)*{\bbe{}};
  (0,-12)*{\scs j};
  (12,-2)*{\ccbub{\quad(-\la i,\lambda \ra-1)+(\alpha-1)}{i}};
  (0,6)*{\bullet }+(5,-1)*{\scs };
 \endxy
 + \;\;
      \xy
  (15,8)*{\lambda};
  (0,0)*{\bbe{}};
  (0,-12)*{\scs j};
  (12,-2)*{\ccbub{(-\la i,\lambda \ra-1)+\alpha}{i}};
 \endxy
  &   \text{if $i=j$} \\
   \xsum{f=0}{\alpha}(-1)^f
    \xy
  (15,8)*{\lambda};
  (0,0)*{\bbe{}};
  (0,-12)*{\scs j};
  (14,-2)*{\ccbub{(-\la i,\lambda \ra-1)+(\alpha-f)}{i}};
  (0,6)*{\bullet }+(3,1)*{\scs f};
 \endxy
    &   \text{if $i \cdot j =-1$}
\end{array}
\right.
\]
\end{prop}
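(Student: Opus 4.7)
The approach is to derive Proposition~\ref{prop_bubble_slide2} as a formal-power-series inversion of Proposition~\ref{prop_bubble_slide1}. Intuitively, the two identities of Proposition~\ref{prop_bubble_slide1} express a bubble on one side of a strand in terms of a bubble on the other side plus dots; solving for the bubble on the other side gives exactly the two identities of Proposition~\ref{prop_bubble_slide2}.

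To make this precise, encode all the bubbles in a given region $\mu$ into the formal generating series
\[
B^+_\mu(t) := \sum_{\alpha\ge 0} \beta^+_\mu(\alpha)\, t^\alpha,
\qquad
B^-_\mu(t) := \sum_{\alpha\ge 0} \beta^-_\mu(\alpha)\, t^\alpha,
\]
where $\beta^+_\mu(\alpha)$ (resp.\ $\beta^-_\mu(\alpha)$) denotes the clockwise (resp.\ counterclockwise) bubble of label $\la i,\mu\ra-1+\alpha$ (resp.\ $-\la i,\mu\ra-1+\alpha$), including fake bubbles by the convention of \eqref{eq_infinite_Grass}. Let $S$ stand for an upward $j$-strand, with $\bullet$ indicating dots placed on it. In this notation Proposition~\ref{prop_bubble_slide1} is equivalent to the pair of generating-function identities
\[
S \cdot B^-_\lambda(t) \;=\; B^-_{\lambda+j_X}(t) \cdot S \cdot \phi_{ij}(t,\bullet),
\qquad
B^+_{\lambda+j_X}(t) \cdot S \;=\; S \cdot B^+_\lambda(t) \cdot \phi_{ij}(t,\bullet),
\]
with slide factor
\[
\phi_{ij}(t,\bullet) \;=\; \begin{cases}(1-\bullet t)^{-2} & \text{if } i=j, \\ 1+\bullet t & \text{if } i\cdot j = -1, \\ 1 & \text{if } i\cdot j = 0; \end{cases}
\]
extracting the coefficient of $t^\alpha$ in either identity recovers Proposition~\ref{prop_bubble_slide1} case by case.

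Now invert $\phi_{ij}(t,\bullet)$ in $\Bbbk[\bullet][[t]]$: for $i=j$ one has the finite expansion $\phi_{ii}(t,\bullet)^{-1}=(1-\bullet t)^2 = 1-2\bullet t+\bullet^2 t^2$, and for $i\cdot j=-1$ one has the geometric series $\phi_{ij}(t,\bullet)^{-1}=\sum_{f\ge 0}(-\bullet t)^f$. Rearranging the above as
\[
S \cdot B^+_\lambda(t) \;=\; B^+_{\lambda+j_X}(t) \cdot S \cdot \phi_{ij}(t,\bullet)^{-1},
\qquad
B^-_{\lambda+j_X}(t) \cdot S \;=\; S \cdot B^-_\lambda(t) \cdot \phi_{ij}(t,\bullet)^{-1},
\]
and extracting the coefficient of $t^\alpha$ on each side yields, respectively, the first and second identities of Proposition~\ref{prop_bubble_slide2} in both the $i=j$ and $i\cdot j=-1$ cases.

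The only subtle point is that in these manipulations $\bullet$ must behave as a commuting formal variable. This is legitimate because dots on a single strand freely accumulate and commute, while the bubble on either side sits in a region disjoint from where the dots live, so it also commutes with them formally. Each coefficient of $t^\alpha$ is a finite sum (only three terms survive when $i=j$, and exactly $\alpha+1$ when $i\cdot j=-1$), so the identities live inside a single graded piece of the relevant $2$-hom space of $\Ucat$ and no convergence question arises. No substantial obstacle is expected; once Proposition~\ref{prop_bubble_slide1} has been recast in generating-function form, Proposition~\ref{prop_bubble_slide2} drops out by direct algebraic inversion.
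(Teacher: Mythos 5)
Your proposal is correct and matches the paper's approach: the paper's proof of Proposition~\ref{prop_bubble_slide2} is simply the remark that the identities follow from Proposition~\ref{prop_bubble_slide1}, and your generating-function inversion of the slide factor $\phi_{ij}(t,\bullet)$ is exactly the intended (here made explicit) way of deducing them, with the coefficient extraction reproducing each case, including the fake-bubble conventions.
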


\begin{proof}
These equations follow from the previous Proposition.
\end{proof}

\begin{prop} \label{prop_other_triangle}
Unless $i=k=j$ we have
\begin{equation} \label{eq_other_r3_1}
 \vcenter{
 \xy 0;/r.15pc/:
    (-4,-4)*{};(4,4)*{} **\crv{(-4,-1) & (4,1)}?(1)*\dir{>};
    (4,-4)*{};(-4,4)*{} **\crv{(4,-1) & (-4,1)}?(1)*\dir{<};
    ?(0)*\dir{<};
    (4,4)*{};(12,12)*{} **\crv{(4,7) & (12,9)}?(1)*\dir{>};
    (12,4)*{};(4,12)*{} **\crv{(12,7) & (4,9)}?(1)*\dir{>};
    (-4,12)*{};(4,20)*{} **\crv{(-4,15) & (4,17)};
    (4,12)*{};(-4,20)*{} **\crv{(4,15) & (-4,17)}?(1)*\dir{>};
    (-4,4)*{}; (-4,12) **\dir{-};
    (12,-4)*{}; (12,4) **\dir{-};
    (12,12)*{}; (12,20) **\dir{-};
  (18,8)*{\lambda};
  (-6,-3)*{\scs i};
  (7,-3)*{\scs j};
  (15,-3)*{\scs k};
\endxy}
 \;\; =\;\;
 \vcenter{
 \xy 0;/r.15pc/:
    (4,-4)*{};(-4,4)*{} **\crv{(4,-1) & (-4,1)}?(1)*\dir{>};
    (-4,-4)*{};(4,4)*{} **\crv{(-4,-1) & (4,1)}?(0)*\dir{<};
    (-4,4)*{};(-12,12)*{} **\crv{(-4,7) & (-12,9)}?(1)*\dir{>};
    (-12,4)*{};(-4,12)*{} **\crv{(-12,7) & (-4,9)}?(1)*\dir{>};
    (4,12)*{};(-4,20)*{} **\crv{(4,15) & (-4,17)};
    (-4,12)*{};(4,20)*{} **\crv{(-4,15) & (4,17)}?(1)*\dir{>};
    (4,4)*{}; (4,12) **\dir{-} ?(.5)*\dir{<};
    (-12,-4)*{}; (-12,4) **\dir{-};
    (-12,12)*{}; (-12,20) **\dir{-};
  (10,8)*{\lambda};
  (7,-3)*{\scs k};
  (-7,-3)*{\scs j};
  (-14,-3)*{\scs i};
\endxy}
\end{equation}
and when $i=j=k$ we have
\begin{equation}
 \vcenter{
 \xy 0;/r.17pc/:
    (-4,-4)*{};(4,4)*{} **\crv{(-4,-1) & (4,1)}?(1)*\dir{>};
    (4,-4)*{};(-4,4)*{} **\crv{(4,-1) & (-4,1)}?(1)*\dir{<};
    ?(0)*\dir{<};
    (4,4)*{};(12,12)*{} **\crv{(4,7) & (12,9)}?(1)*\dir{>};
    (12,4)*{};(4,12)*{} **\crv{(12,7) & (4,9)}?(1)*\dir{>};
    (-4,12)*{};(4,20)*{} **\crv{(-4,15) & (4,17)};
    (4,12)*{};(-4,20)*{} **\crv{(4,15) & (-4,17)}?(1)*\dir{>};
    (-4,4)*{}; (-4,12) **\dir{-};
    (12,-4)*{}; (12,4) **\dir{-};
    (12,12)*{}; (12,20) **\dir{-};
  (18,8)*{\lambda};
  (-6,-3)*{\scs i};
  (7,-3)*{\scs i};
  (15,-3)*{\scs i};
\endxy}
-\;
   \vcenter{
 \xy 0;/r.17pc/:
    (4,-4)*{};(-4,4)*{} **\crv{(4,-1) & (-4,1)}?(1)*\dir{>};
    (-4,-4)*{};(4,4)*{} **\crv{(-4,-1) & (4,1)}?(0)*\dir{<};
    (-4,4)*{};(-12,12)*{} **\crv{(-4,7) & (-12,9)}?(1)*\dir{>};
    (-12,4)*{};(-4,12)*{} **\crv{(-12,7) & (-4,9)}?(1)*\dir{>};
    (4,12)*{};(-4,20)*{} **\crv{(4,15) & (-4,17)};
    (-4,12)*{};(4,20)*{} **\crv{(-4,15) & (4,17)}?(1)*\dir{>};
    (4,4)*{}; (4,12) **\dir{-} ?(.5)*\dir{<};
    (-12,-4)*{}; (-12,4) **\dir{-};
    (-12,12)*{}; (-12,20) **\dir{-};
  (10,8)*{\lambda};
  (7,-3)*{\scs i};
  (-7,-3)*{\scs i};
  (-14,-3)*{\scs i};
\endxy}
  \; = \;
 \sum_{} \; \xy 0;/r.17pc/:
    (-4,12)*{}="t1";
    (4,12)*{}="t2";
  "t2";"t1" **\crv{(5,5) & (-5,5)}; ?(.15)*\dir{} ?(.9)*\dir{>}
  ?(.2)*\dir{}+(0,-.2)*{\bullet}+(3,-2)*{\scs f_1};
    (-4,-12)*{}="t1";
    (4,-12)*{}="t2";
  "t2";"t1" **\crv{(5,-5) & (-5,-5)}; ?(.05)*\dir{} ?(.9)*\dir{<}
  ?(.15)*\dir{}+(0,-.2)*{\bullet}+(3,2)*{\scs f_3};
    (-8,1)*{\ccbub{\scs -\la i,\lambda \ra-3+f_4}{i}};
    (13,12)*{};(13,-12)*{} **\dir{-} ?(.5)*\dir{<};
    (13,8)*{\bullet}+(3,2)*{\scs f_2};
  (19,-6)*{\lambda};
  \endxy
+\;
  \sum_{}
\; \xy 0;/r.17pc/: (-10,12)*{};(-10,-12)*{} **\dir{-} ?(.5)*\dir{<};
  (-10,8)*{\bullet}+(-3,2)*{\scs g_2};
  (-4,12)*{}="t1";
  (4,12)*{}="t2";
  "t1";"t2" **\crv{(-4,5) & (4,5)}; ?(.15)*\dir{>} ?(.9)*\dir{>}
  ?(.4)*\dir{}+(0,-.2)*{\bullet}+(3,-2)*{\scs \;\; g_1};
  (-4,-12)*{}="t1";
  (4,-12)*{}="t2";
  "t2";"t1" **\crv{(4,-5) & (-4,-5)}; ?(.12)*\dir{>} ?(.97)*\dir{>}
  ?(.8)*\dir{}+(0,-.2)*{\bullet}+(1,4)*{\scs g_3};
  (16,-4)*{\cbub{\scs \la i,\lambda \ra-1+g_4}{i}};
  (24,6)*{\lambda};
  \endxy
 \label{eq_r3_extra}
\end{equation}
where the first sum is over all $f_1, f_2, f_3, f_4 \geq 0$ with
$f_1+f_2+f_3+f_4=\la i,\lambda \ra$ and the second sum is over all $g_1, g_2,
g_3, g_4 \geq 0$ with $g_1+g_2+g_3+g_4=\la i,\lambda \ra -2$.  Recall that all
summations in this paper are increasing, so that the first summation is zero if
$\la i,\lambda \ra<0$ and the second is zero when $\la i,\lambda \ra<2$.

Reidemeister 3 like relations for all other orientations are determined from
\eqref{eq_r3_easy-gen}, \eqref{eq_r3_hard-gen}, and the above relations using
duality.
\end{prop}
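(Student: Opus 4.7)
My plan is to prove Proposition~\ref{prop_other_triangle} by reducing the mixed-orientation Reidemeister~3 identity to the purely upward-oriented cases already imposed as axioms, using the cyclic biadjoint structure as the bridge.

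First I would rewrite both sides of \eqref{eq_other_r3_1} so that every crossing is between two upward strands. In the LHS, the middle ($j$-labeled) strand is downward between its two crossings. Using the defining equations \eqref{eq_crossl-gen}--\eqref{eq_crossr-gen} for the mixed crossings (which are themselves consequences of cyclicity \eqref{eq_cyclic_cross-gen}), each of the two crossings involving the downward $j$-strand is replaced by an upward crossing framed by cups and caps. After isotopy, the identity \eqref{eq_other_r3_1} becomes an identity between two diagrams each of which contains one upward triple crossing on the sequence $(i,j,k)$ (or $(k,j,i)$), together with cups/caps on the $j$-strand that are the \emph{same} on both sides. Thus \eqref{eq_other_r3_1} is equivalent to the upward triple-crossing identity.

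In the case $i\neq k$, the upward triple-crossing identity is exactly \eqref{eq_r3_easy-gen} and we are done. In the case $i=k$ but $j\neq i$ (still within ``unless $i=j=k$''), one must instead apply \eqref{eq_r3_hard-gen}, which produces a correction with $d_{ij}$ dotted terms. I would show that this correction vanishes after reinserting the $j$-cups/caps: the correction diagrams contain a $j$-cup/cap glued to an $i$-strand with no actual crossing involving $j$, which after isotopy become $\mf{sl}_2$ bigons with distinct labels and so disappear via \eqref{eq_downup_ij-gen}, with dots passing freely through mixed-color crossings by \eqref{eq_dot_slide_ij-gen}. When $i\cdot j=0$, the correction in \eqref{eq_r3_hard-gen} is empty and only \eqref{eq_r3_easy-gen} is needed.

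For the all-equal case $i=j=k$ in \eqref{eq_r3_extra}, after the same bending the upward triple crossing on $(i,i,i)$ satisfies the nilHecke braid relation \eqref{eq_nil_rels} \emph{without} correction, so the bare difference between the two sides of \eqref{eq_other_r3_1} measures exactly the discrepancy coming from bending the middle strand back down. That discrepancy is computed by feeding the $i$-labeled up-down / down-up bigons through the defining $\mf{sl}_2$-identity \eqref{eq_ident_decomp}: one bigon contributes the $(-\la i,\lambda\ra-1+g)$ counterclockwise bubble sum with its double summation index $f,g$, and the other contributes the $(\la i,\lambda\ra-1+f)$ clockwise bubble sum. Relabeling the four resulting summation indices as $(f_1,f_2,f_3,f_4)$ and $(g_1,g_2,g_3,g_4)$, with the constraints $f_1+f_2+f_3+f_4=\la i,\lambda\ra$ and $g_1+g_2+g_3+g_4=\la i,\lambda\ra-2$ arising from the degree bookkeeping (each raw double sum ranges over $0\le g\le f$ with $f$ bounded by $|\la i,\lambda\ra|$ or $|\la i,\lambda\ra|-2$), reproduces the two sums in \eqref{eq_r3_extra}.

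The main obstacle is the bookkeeping in this last step: tracking how the nested double sums in \eqref{eq_ident_decomp}, combined with the nilHecke dot-slide \eqref{eq_nil_dotslide} (needed to move dots past the vertical $ii$-crossing left over after applying \eqref{eq_ident_decomp}), collect into the two clean summations in \eqref{eq_r3_extra} with the correct constraints on $f_1+f_2+f_3+f_4$ and $g_1+g_2+g_3+g_4$. The bubble-slide identities of Proposition~\ref{prop_bubble_slide1} will likely be needed to move the bubbles past the remaining upward $i$-strand into the positions shown in \eqref{eq_r3_extra}, and the fake-bubble convention \eqref{eq_infinite_Grass} must be invoked when $\la i,\lambda\ra$ is small so that the bubble labels may be negative. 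The Reidemeister-3 relations for other orientations then follow by applying the cyclic duality \eqref{eq_cyclic_cross-gen} to \eqref{eq_r3_easy-gen}, \eqref{eq_r3_hard-gen}, \eqref{eq_other_r3_1}, and \eqref{eq_r3_extra}, as noted in the final sentence of the statement.
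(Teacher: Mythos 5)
The paper's own proof of the ``unless $i=j=k$'' part of \eqref{eq_other_r3_1} takes a genuinely different and much shorter route than yours. Rather than unfolding both mixed crossings into upward crossings flanked by cups and caps, the paper \emph{post-composes} (when $i\neq j$) or \emph{pre-composes} (when $i=j$, so necessarily $j\neq k$) both sides with a single mixed $jk$- or $ij$-crossing, which by \eqref{eq_downup_ij-gen} is an isomorphism since the two labels involved differ. This turns the comparison into an instance of \eqref{eq_r3_easy-gen} applied to a relabelled triple where the hypothesis of \eqref{eq_r3_easy-gen} is automatically satisfied, so \eqref{eq_r3_hard-gen} and its correction terms are never invoked. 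The gain is substantial: the argument never enters the regime you call ``$i=k$, $i\cdot j\neq 0$''.

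That is precisely where your sketch has a gap. You apply \eqref{eq_r3_hard-gen} when $i=k$ and $i\cdot j\neq 0$ and then assert that the resulting correction diagrams ``become $\mf{sl}_2$ bigons with distinct labels'' after reinserting the $j$-cups/caps. But the correction on the right of \eqref{eq_r3_hard-gen} is a sum of three \emph{parallel, crossing-free} strands $i,j,i$ with dots on the outer strands; sandwiching a vertical $j$-line between a cup and a cap just reverses its orientation by the zigzag identity \eqref{eq_biadjoint1} and produces three parallel strands with dots, not a bigon, and not obviously zero. Also, your earlier assertion that after unfolding ``the cups/caps on the $j$-strand \ldots are the same on both sides'' needs care: on the left the mixed crossing with $i$ is below and the one with $k$ is above, while on the right the order is reversed, so the cups/caps come out in different places and you must track the isotopies and any extra crossings the $j$-loop makes with the outer strands. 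In short, your reduction to a bare upward R3 plus a vanishing correction is not substantiated and may be more delicate than your sketch allows. If you want to keep the unfolding philosophy you would still have to prove a nontrivial identity in the $i=k\neq j$, $i\cdot j\neq 0$ case that the paper's composition trick sidesteps entirely.

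For the $i=j=k$ case the paper simply cites the $\mf{sl}_2$ computation in \cite[Section~5.4]{Lau1}. Your plan of deriving \eqref{eq_r3_extra} by hand from the nilHecke relation, \eqref{eq_ident_decomp}, \eqref{eq_nil_dotslide}, and the bubble slides of Proposition~\ref{prop_bubble_slide1} is essentially the same computation and is a reasonable program, but as written it is only an outline: the bookkeeping of the double sums and the fake-bubble conventions is exactly where the content lies, and you have not carried it out.
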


\begin{proof}
For \eqref{eq_other_r3_1} if $i \neq j$ post-compose both sides with the
isomorphism
 \[\vcenter{
 \xy 0;/r.15pc/:
    (-4,-4)*{};(4,4)*{} **\crv{(-4,-1) & (4,1)}?(1)*\dir{>};
    (4,-4)*{};(-4,4)*{} **\crv{(4,-1) & (-4,1)}
    ?(0)*\dir{<};
    (12,-4)*{}; (12,4) **\dir{-};
  (18,3)*{\lambda};
  (-4,-7)*{\scs j};
  (4,-7)*{\scs k};
  (12,-7)*{\scs i};
\endxy} ,
\]
then use that $i=j \neq k$ to apply \eqref{eq_r3_easy-gen} on the right term and
establish the equality.  If $i=j$ then we may assume that $j \neq k$.  In this
case we pre-compose with the isomorphism
 \[\vcenter{
 \xy 0;/r.15pc/:
    (4,-4)*{};(-4,4)*{} **\crv{(4,-1) & (-4,1)}?(0)*\dir{<};
    (-4,-4)*{};(4,4)*{} **\crv{(-4,-1) & (4,1)}
    ?(1)*\dir{>};
    (-12,-4)*{}; (-12,4) **\dir{-};
  (10,3)*{\lambda};
  (-12,-7)*{\scs i};
  (-4,-7)*{\scs k};
  (4,-7)*{\scs j};
\endxy} ,
\]
then use \eqref{eq_r3_easy-gen} on the left side to establish the identity.

The case $i=j=k$ appears in \cite[Section 5.4]{Lau1}.
\end{proof}

%
\subsection{Spanning sets of HOMs in $\Ucat$} \label{subsec-spansets}
%

Given two Laurent power series $f(q)= \sum_{k=a}^{+\infty}f_kq^k$ and
$h(q)=\sum_{k=a}^{+\infty}$ with $f_k,h_k\in \Z$ we say that $f(q) \leq h(q)$ if
$f_k \leq h_k$ for all $k$.

%
\subsubsection{Endomorphisms of $\onel$}
%

For any root datum and $\lambda \in X$ define a graded commutative ring
$\Pi_{\lambda}$ freely generated by symbols
\begin{equation} \label{eq_bub_rules}
\xy 0;/r.18pc/:
 (-12,0)*{\cbub{\la i, \lambda\ra-1+\alpha}{i}};
 (-8,8)*{\lambda};
 \endxy
 \quad \text{for $\la i, \lambda\ra\geq 0$} \qquad {\rm and} \qquad
 \xy 0;/r.18pc/:
 (-12,0)*{\ccbub{-\la i, \lambda\ra-1+\alpha}{i}};
 (-8,8)*{\lambda};
 \endxy
 \quad \text{for $\la i, \lambda\ra< 0$}
\end{equation}
of degree $\alpha i \cdot i$ over all $i \in I$ and $\alpha> 0$.

\begin{prop} \label{prop_bubbles_same_orient}
Interpreting these generators of $\Pi_{\lambda}$ as elements of
$\HOMU(\onel,\onel)$ induces a surjective graded $\Bbbk$-algebra homomorphism
\begin{equation} \label{homo_Pi_Uone}
\Pi_{\lambda} \to \HOMU(\onel,\onel).
\end{equation}
\end{prop}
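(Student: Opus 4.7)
The plan has two parts: first, checking that the map is a well-defined graded $\Bbbk$-algebra homomorphism, and second, proving surjectivity by systematic reduction of diagrams.

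For well-definedness, since $\Pi_{\lambda}$ is the free graded commutative $\Bbbk$-algebra on the listed generators, I only need to verify that the images in $\HOMU(\onel,\onel)$ pairwise commute and have the correct degrees. The degrees match by the definitions of $\Uup_{i,\lambda}$ and $\Udown_{i,\lambda}$ (and the degrees of the bubble cup/caps in $c_{\pm i,\lambda}$ combined with dots); one checks that a bubble with $\la i,\lambda\ra-1+\alpha$ dots of the clockwise type has total degree $\alpha(i\cdot i)$, and similarly for counterclockwise. Commutativity is automatic: an element of $\HOMU(\onel,\onel)$ is drawn in the strip $\R\times[0,1]$ with no endpoints on either boundary, so any two disjoint dotted circles can be exchanged by a planar isotopy that keeps them disjoint. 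Together with the cyclicity axioms, this gives a well-defined graded algebra map.

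For surjectivity, I would argue by induction on a lexicographic complexity $(c,u)$, where $c$ is the number of crossings and $u$ is the number of critical points (U-turns). A 2-morphism in $\HOMU(\onel,\onel)$ is represented by a formal $\Bbbk$-linear combination of diagrams consisting of disjoint closed oriented labeled immersed curves with dots. In the base case $(0,0)$ the diagram is empty, representing the identity $1\in\Pi_\lambda$. If $u=0$ then the diagram is a disjoint union of dotted simple closed curves; each such curve is precisely a (possibly fake) bubble of the form \eqref{eq_bub_rules}, and any fake bubble is expressible in terms of the generators by the infinite Grassmannian relation \eqref{eq_infinite_Grass}, together with the degree-zero normalization and the vanishing \eqref{eq_positivity_bubbles}. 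So this case is done.

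For the inductive step, reduce the complexity as follows. If $c>0$, locate an innermost crossing; by small isotopies move it to be of the standard up-up, up-down, or down-down type. Depending on the labels $i,j$ of the two strands, apply one of the following simplifications, each of which strictly decreases $c$ or trades crossings for dots/bubbles on a diagram of lower complexity: the nilHecke relations \eqref{eq_nil_rels}, the mixed $R(\nu)$-quadratic relation \eqref{eq_r2_ij-gen}, and the cap/cup-crossing exchanges \eqref{eq_downup_ij-gen}. If $c=0$ but $u>0$, find an innermost U-turn adjacent to a strand of opposite orientation and apply the identity decompositions \eqref{eq_ident_decomp} (for $i=j$) or the first/second of \eqref{eq_downup_ij-gen} (for $i\neq j$) to replace a cap-cup configuration by a sum of genuinely simpler diagrams, each of which is either already a disjoint union of bubbles or has strictly fewer U-turns after a further small isotopy. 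In each case every term on the right-hand side either has strictly smaller $(c,u)$, or has the same $(c,u)$ but a strictly smaller number of connected strand components (e.g.\ after the first of \eqref{eq_ident_decomp} an up-down pair of strands is removed, at the cost of an additional closed bubble on the side, which sits at the same place and can be floated away by Propositions~\ref{prop_bubble_slide1} and~\ref{prop_bubble_slide2}).

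The main obstacle I anticipate is bookkeeping: once crossings and U-turns are reduced, the resulting bubbles may not yet sit on the correct weight regions, and in general a bubble produced by, say, \eqref{eq_ident_decomp} carries a label $\la i,\lambda\ra$ referring to a region that is not the rightmost one. One must then slide such bubbles toward the outside using the bubble-slide relations of Propositions~\ref{prop_bubble_slide1}–\ref{prop_bubble_slide2}, which introduce additional lower-degree terms but preserve the invariant that the result is expressible in terms of generators of $\Pi_\lambda$. Induction on dot-count handles these correction terms. Putting all of this together reduces any diagram modulo the relations to a $\Bbbk$-linear combination of disjoint dotted circles in the rightmost region, which is exactly a product of generators of $\Pi_\lambda$; this establishes surjectivity of \eqref{homo_Pi_Uone}.
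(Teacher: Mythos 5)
Your proposal is correct and follows essentially the same route as the paper's proof: induction on crossings (and critical points) to reduce any closed diagram to crossingless ones, then the bubble slides of Propositions~\ref{prop_bubble_slide1}--\ref{prop_bubble_slide2} to remove nesting and push bubbles to the rightmost region, and finally the infinite Grassmannian relation \eqref{eq_infinite_Grass} to normalize orientations and eliminate fake bubbles. The paper simply delegates the crossing-reduction bookkeeping to \cite[Section 8]{Lau1}, which is the part you spell out (including the well-definedness check, which the paper omits).
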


\begin{proof}
By induction on the number of crossings of a closed diagram $D$ representing an
endomorphism of $\onel$ one can reduce $D$ to a linear combination of
crossingless diagrams following the methods of \cite[Section 8]{Lau1}.
Crossingless diagrams that contain nested bubbles can be written as linear
combinations of crossingless nonnested diagrams using the bubble slide equations
in Propositions~\ref{prop_bubble_slide1} and \ref{prop_bubble_slide2}. Using the
Grassmannian relations \eqref{eq_infinite_Grass} all dotted bubbles with the same
label $i$ can be made to have the same orientation given by \eqref{eq_bub_rules}.
\end{proof}

The image of the monomial basis of $\Pi_{\lambda}$ under surjective homomorphism
\eqref{homo_Pi_Uone} is a homogeneous  spanning set of the graded vector space
$\HOMU(\onel,\onel)$. Denote this spanning set by
$B_{\emptyset,\emptyset,\lambda}$.

Let
\begin{equation}
\pi = \prod_{i \in I}\prod_{a=1}^{\infty}\frac{1}{1-q_i^{2a}}.
\end{equation}
$\pi$ depends only on the root datum (on the values of $i \cdot i$ over all $i\in
I$).  The graded dimension of $\Pi_{\lambda}$ is $\pi$.

\begin{cor}
$\gdim \;\HOMU(\onel,\onel) \leq \pi$ and $\HOMU(\onel,\onel)$ is a local graded
ring.
\end{cor}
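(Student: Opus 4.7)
The plan is to read off both assertions directly from Proposition~\ref{prop_bubbles_same_orient}. That proposition furnishes a surjective graded $\Bbbk$-algebra map $\Pi_\lambda \twoheadrightarrow \HOMU(\onel,\onel)$, so the graded dimension bound reduces to computing $\gdim \Pi_\lambda$. Since $\Pi_\lambda$ is freely generated as a graded commutative ring by one element of degree $\alpha\, i\cdot i$ for each $i \in I$ and each $\alpha \geq 1$, its graded dimension is
\[
\gdim \Pi_\lambda \;=\; \prod_{i\in I}\prod_{\alpha\ge 1}\frac{1}{1-q^{\alpha\, i\cdot i}} \;=\; \prod_{i\in I}\prod_{\alpha\ge 1}\frac{1}{1-q_i^{2\alpha}} \;=\; \pi,
\]
using $q_i^2 = q^{i\cdot i}$, and the desired inequality $\gdim\,\HOMU(\onel,\onel) \leq \pi$ follows at once from surjectivity of a degree-preserving map.

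For the local ring claim, the crucial observation is that every generator listed in \eqref{eq_bub_rules} lives in strictly positive degree, because $i\cdot i \geq 2$ and $\alpha \geq 1$. Hence $\Pi_\lambda$ is concentrated in non-negative degrees with $(\Pi_\lambda)_0 = \Bbbk$, and both properties descend to the quotient $\HOMU(\onel,\onel)$. Its degree-zero component is therefore a quotient of $\Bbbk$, and it must equal $\Bbbk$ because it contains the (nonzero) identity 2-morphism $1_\onel$. In any commutative graded $\Bbbk$-algebra of this shape, the positive-degree part $\bigoplus_{n>0}\HOMU(\onel,\onel)_n$ is the unique maximal graded ideal, with residue field $\Bbbk$; this is precisely what it means for $\HOMU(\onel,\onel)$ to be a local graded ring.

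Neither step presents a real obstacle: both assertions are formal consequences of Proposition~\ref{prop_bubbles_same_orient} together with the positivity of degrees of the bubble generators. The only point deserving explicit mention is that $1_\onel$ does not vanish in $\HOMU(\onel,\onel)$, so that its degree-zero part is $\Bbbk$ rather than zero; this is built into the 2-category axioms, since the empty diagram represents the identity 2-morphism of $\onel$ and is not forced to zero by any of the defining local relations.
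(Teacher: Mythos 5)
Your argument is essentially the paper's own: the bound follows from the surjection $\Pi_{\lambda}\twoheadrightarrow \HOMU(\onel,\onel)$ of Proposition~\ref{prop_bubbles_same_orient} together with $\gdim\Pi_{\lambda}=\pi$, and locality follows because $\Pi_{\lambda}$ is a connected positively graded ring, so its quotient is graded local with maximal ideal the positive-degree part. The one place where you overreach is the closing claim that $1_{\onel}\neq 0$ is ``built into the 2-category axioms'' because the empty diagram ``is not forced to zero by any of the defining local relations.'' That is not an argument: for an algebra presented by generators and relations, showing an element is nonzero requires exhibiting a model (e.g.\ a nontrivial 2-representation), and this is exactly the nondegeneracy question the paper deliberately leaves open at this point --- indeed the paper later notes that $\Ucat(\onel,\onel)$ is $\Bbbk$ \emph{or} $0$ ``(a possibility if the calculus is degenerate),'' and nonvanishing is only established for $\mathfrak{sl}_n$ via the flag-variety 2-representation in Section~\ref{sec_rep}. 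Since everything the corollary asserts already follows from the surjection from the connected ring $\Pi_{\lambda}$ (the degree-zero part is a quotient of $\Bbbk$, and all higher generators have positive degree), the correct move is simply to drop that claim rather than assert it; with that deletion your proof matches the intended one.
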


\begin{rem}
$\Pi_{\lambda}$ is not Noetherian.
\end{rem}

We call monomials in this basis of $\Pi_{\lambda}$ and their images in
$\HOMU(\onel,\onel)$ {\em bubble monomials}.

%
\subsubsection{Homs between $\cal{E}_{\ii}\onel$ and $\cal{E}_{\jj}\onel$ for positive $\ii$ and $\jj$}
%

Recall rings $R(\nu)$, for $\nu\in \N[I]$, from \cite{KL,KL2} and decomposition
\begin{equation}
 R(\nu) = \bigoplus_{\ii,\jj \in \seq(\nu)} {}_{\jj}R(\nu)_{\ii},
\end{equation}
where ${}_{\jj}R(\nu)_{\ii}$ is spanned by braid-like diagrams $D$ with $\ii$,
$\jj$ being the lower and upper sequences of $D$.  Adding upward orientations to
a diagram $D$ in $R(\nu)$, placing it to the left of a collection of bubbles
representing a monomial in $\Pi_{\lambda}$
\[
\xy 0;/r.18pc/: (-14,8)*{\xybox{
 (0,-10)*{}; (-16,10)*{} **\crv{(0,-6) & (-16,6)}?(.5)*\dir{};
 (-16,-10)*{}; (-8,10)*{} **\crv{(-16,-6) & (-8,6)}?(1)*\dir{}+(.1,0)*{\bullet};
  (-8,-10)*{}; (0,10)*{} **\crv{(-8,-6) & (-0,6)}?(.6)*\dir{}+(.2,0)*{\bullet}?
  (1)*\dir{}+(.1,0)*{\bullet};
  (0,10)*{}; (-16,30)*{} **\crv{(0,14) & (-16,26)}?(1)*\dir{>};
 (-16,10)*{}; (-8,30)*{} **\crv{(-16,14) & (-8,26)}?(1)*\dir{>};
  (-8,10)*{}; (0,30)*{} **\crv{(-8,14) & (-0,26)}?(1)*\dir{>}?(.6)*\dir{}+(.25,0)*{\bullet};
  (-8,-13)*{\underbrace{\hspace{.6in}}};(-8,33)*{\overbrace{\hspace{.6in}}};
  (-8,-18)*{\scs \ii};(-8,38)*{\scs \jj};
   }};
 (14,-5)*{\cbub{\la i, \lambda\ra-1+\alpha_2}{i}};
 (36,-5)*{\ccbub{ \;\;\;-\la j, \lambda\ra-1+\alpha_4}{j}};
 (14,15)*{\cbub{\la i, \lambda\ra-1+\alpha_1}{i}};
 (37,15)*{\ccbub{\;\;\;-\la j, \lambda\ra-1+\alpha_3}{j}};
 (65,8)*{\cbub{\la k, \lambda\ra-1+\alpha_5}{k}};
 (22,24)*{\lambda};
 \endxy
\]
and viewing the result as a 2-morphism from $\cal{E}_{\ii}\onel$ to
$\cal{E}_{\jj}\onel$ induces a grading-preserving $\Bbbk$-linear map
\begin{equation}
  \varphi_{\ii,\jj,\lambda} \maps {}_{\jj}R(\nu)_{\ii} \otimes_{\Bbbk} \Pi_{\lambda}
 \longrightarrow \HOMU(\cal{E}_{\ii}\onel,\cal{E}_{\jj}\onel).
\end{equation}

\begin{lem} \label{lem_surjective}
$\varphi_{\ii,\jj,\lambda}$ is surjective.
\end{lem}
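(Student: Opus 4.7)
My plan is to prove surjectivity by a reduction-to-normal-form argument: given any 2-morphism $f \maps \cal{E}_{\ii}\onel \to \cal{E}_{\jj}\onel$ with $\ii,\jj$ positive, I will show that $f$ can be rewritten, modulo the relations of $\Ucat$, as a $\Bbbk$-linear combination of diagrams that are braid-like on the left (all strands oriented upward, no critical points) with a collection of dotted bubbles stacked on the right-hand side of the strip. Such a normal form is precisely the image under $\varphi_{\ii,\jj,\lambda}$ of an element of ${}_{\jj}R(\nu)_{\ii}\otimes_{\Bbbk}\Pi_{\lambda}$, since the braid-like part lies in ${}_{\jj}R(\nu)_{\ii}$ and the collection of bubbles, once reorganized using Proposition~\ref{prop_bubbles_same_orient}, represents a monomial in $\Pi_{\lambda}$.

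The reduction proceeds by induction on a complexity pair (number of critical points of the underlying 1-manifold, number of crossings), ordered lexicographically. First I would use isotopy (justified by the cyclic relations \eqref{eq_cyclic_dot} and \eqref{eq_cyclic_cross-gen}) to put the diagram in a generic position. Any contractible U-turn pair (a zigzag whose two turning points can be isotoped to cancel) is removed via the biadjointness relations \eqref{eq_biadjoint1}. A remaining local maximum or minimum must be part of a nontrivial U-turn; using the identity decomposition \eqref{eq_ident_decomp}, I can rewrite any local configuration of the form $\cal{E}_{-i}\cal{E}_{+i}\onel \Rightarrow \cal{E}_{-i}\cal{E}_{+i}\onel$ (or its mirror) as a braid-like crossing together with a sum of diagrams containing a bubble and fewer critical points. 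Applying this together with \eqref{eq_downup_ij-gen} (for mixed colors $i\ne j$) converts every subregion containing a downward-oriented strand into combinations of diagrams with strictly fewer critical points, at the cost of introducing bubbles.

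Once the diagram is braid-like (no caps, cups, or downward strands) with some bubbles floating inside, I use the bubble slide identities of Propositions~\ref{prop_bubble_slide1} and \ref{prop_bubble_slide2} to push each bubble rightward past every strand, and finally into the rightmost region. Along the way, dot-slide relations \eqref{eq_nil_dotslide} and \eqref{eq_dot_slide_ij-gen}, the nilHecke and $R(\nu)$-relations \eqref{eq_nil_rels}--\eqref{eq_r3_hard-gen}, and the cyclic conditions ensure that each such move takes a diagram to a $\Bbbk$-linear combination of diagrams of the same complexity in which the collective bubble content has been strictly moved to the right. Once all bubbles are on the right, Proposition~\ref{prop_bubbles_same_orient} reduces them to a linear combination of monomials in the canonical generators \eqref{eq_bub_rules} of $\Pi_{\lambda}$, and the resulting sum lies manifestly in the image of $\varphi_{\ii,\jj,\lambda}$.

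The main obstacle will be ensuring that the rewriting terminates: the identity decomposition \eqref{eq_ident_decomp} trades critical points for bubbles and crossings of opposite type, so care is needed to verify that each step strictly decreases the complexity measure (for instance, by counting critical points first and weighting remaining crossings by orientation type), rather than merely trading one complicated feature for another. Once a suitable well-founded measure is fixed and one checks that each local relation above strictly decreases it, the inductive argument goes through and surjectivity of $\varphi_{\ii,\jj,\lambda}$ follows.
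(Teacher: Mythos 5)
Your overall strategy — rewrite any diagram into braid-like strands on the left with dotted bubbles collected on the right, by induction on a complexity measure, then invoke Proposition~\ref{prop_bubbles_same_orient} — is exactly the route the paper takes. The paper's own proof is correspondingly terse: it states that the relations permit reducing the crossing count whenever a strand or circle self-intersects or two strands cross twice, then slides bubbles to the far right, and cites the analogous argument in~\cite[Section~8]{Lau1} for the detailed bookkeeping.

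One concrete correction to your reduction step, which matters for the termination issue you flagged. The relation you cite, \eqref{eq_ident_decomp}, rewrites the \emph{identity} on $\cal{E}_{-i}\cal{E}_{+i}\onel$ (or its mirror) as minus a double mixed crossing plus a sum of cap--cup-with-bubble terms. Both the crossing term and the cap--cup terms have at least as many critical points as the parallel identity strands (the cap--cup-bubble summands have strictly more), so reading \eqref{eq_ident_decomp} in either direction cannot decrease your first lexicographic coordinate; in fact the useful reading replaces a double crossing with a zero-crossing term plus higher-critical-point terms, trading crossings for critical points. The relations that genuinely eliminate critical points attached to a through-strand are the two \emph{curl} relations displayed immediately after \eqref{eq_positivity_bubbles}, which replace a right or left curl by a dotted line next to a free bubble, together with biadjointness~\eqref{eq_biadjoint1} for removable zigzags. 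For $i\neq j$, \eqref{eq_downup_ij-gen} untwists mixed-orientation double crossings outright. Thus your lexicographic pair (critical points, crossings) is not monotone under \eqref{eq_ident_decomp}; a workable measure must either weight crossings and critical points jointly (so that trading one double crossing for cap--cup-with-bubble summands still strictly decreases), or should treat bubbles as negligible (e.g. count only critical points not on closed circles, then crossings, then circles), since bubbles that have been isolated are dealt with at the end by Proposition~\ref{prop_bubbles_same_orient}. With those two adjustments — cite the curl relations rather than \eqref{eq_ident_decomp} for critical-point reduction, and refine the well-founded measure accordingly — your argument closes the gap you yourself identified and agrees with the paper's.
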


\begin{proof}
Start with a diagram $D$ that represents an element in $\HOMU(\cal{E}_{\ii}\onel,
\cal{E}_{\jj}\onel)$. The relations in our graphical calculus allow us to
inductively simplify $D$, by reducing the number of crossings if a strand or a
circle of $D$ has a self-intersection or if $D$ contains two strands that
intersect more than once. Bubble sliding rules allow moving bubbles to the far
right of the diagram. Eventually, $D$ reduces to a linear combination of diagrams
which are products of diagrams representing elements of ${}_{\jj}R(\nu)_{\ii}$
and monomials in $\Pi_{\lambda}$.
\end{proof}

The elements in $\HOMU(\cal{E}_{\ii}\onel, \cal{E}_{\jj}\onel)$ given by diagrams
without circles, with no two strands intersecting more than once, and with all
dots at the bottom are precisely the image under $\varphi_{\ii,\jj,\lambda}$ of
the basis ${}_{\jj}B_{\ii}$ of ${_{\jj}R(\nu)_{\ii}}$ described in~\cite{KL}.
Denote by $B_{\ii,\jj,\lambda}$ the image under $\varphi_{\ii,\jj,\lambda}$ of
the product basis ${}_{\jj}B_{\ii}\times \{ \text{monomials in $\Pi_{\lambda}$}
\}$ in ${}_{\jj}R(\nu)_{\ii}\times \Pi_{\lambda}$. The lemma implies that
$B_{\ii,\jj,\lambda}$ is a spanning set in $\HOMU(\cal{E}_{\ii}\onel,
\cal{E}_{\jj}\onel)$.

Let
\begin{equation}
 \cal{E}_{\nu}\onel := \bigoplus_{\ii \in \seq(\nu)}\cal{E}_{\ii}\onel, \qquad
 \quad \cal{E}_{-\nu}\onel := \bigoplus_{\ii \in \seq(\nu)} \cal{E}_{-\ii}\onel .
\end{equation}
$\cal{E}_{\nu}\onel$ and $\cal{E}_{-\nu}\onel$ are 1-morphisms in $\Ucat$.
Summing $\varphi_{\ii,\jj,\lambda}$ over all $\ii,\jj \in \seq(\nu)$, we obtain a
homomorphism
\begin{equation} \label{eq_phi_nu_lambda}
  \phi_{\nu,\lambda} \maps R(\nu) \otimes_{\Bbbk} \Pi_{\lambda} \longrightarrow
   \ENDU(\cal{E}_{\nu}\onel)=\End_{\Ucatq}(\cal{E}_{\nu}\onel).
\end{equation}

\begin{prop}
Homomorphism $\phi_{\nu,\lambda}$ of graded $\Bbbk$-algebras is surjective.
\end{prop}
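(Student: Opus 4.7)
The plan is to deduce this proposition by assembly from the preceding lemma, since both sides of $\phi_{\nu,\lambda}$ split naturally over the set of pairs $(\ii,\jj)\in\seq(\nu)\times\seq(\nu)$. On the source side, the block decomposition $R(\nu) = \bigoplus_{\ii,\jj}{}_{\jj}R(\nu)_{\ii}$ from \cite{KL,KL2} gives
\[
R(\nu)\otimes_{\Bbbk}\Pi_{\lambda} \;=\; \bigoplus_{\ii,\jj\in\seq(\nu)}{}_{\jj}R(\nu)_{\ii}\otimes_{\Bbbk}\Pi_{\lambda}.
\]
On the target side, $\cal{E}_{\nu}\onel = \bigoplus_{\ii\in\seq(\nu)}\cal{E}_{\ii}\onel$ by definition, whence
\[
\ENDU(\cal{E}_{\nu}\onel) \;=\; \bigoplus_{\ii,\jj\in\seq(\nu)}\HOMU(\cal{E}_{\ii}\onel,\cal{E}_{\jj}\onel).
\]

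By inspection of the construction of $\phi_{\nu,\lambda}$ (placing a braid-like $R(\nu)$-diagram to the left of a bubble monomial sitting in the rightmost region colored $\lambda$), the block of $\phi_{\nu,\lambda}$ sending ${}_{\jj}R(\nu)_{\ii}\otimes_{\Bbbk}\Pi_{\lambda}$ into $\HOMU(\cal{E}_{\ii}\onel,\cal{E}_{\jj}\onel)$ is precisely the map $\varphi_{\ii,\jj,\lambda}$ of the preceding lemma. Each such $\varphi_{\ii,\jj,\lambda}$ is surjective there, by the inductive graphical reduction that uses the $\mathfrak{sl}_2$-relations, the $R(\nu)$-relations, the bubble-slide identities of Propositions \ref{prop_bubble_slide1}--\ref{prop_bubble_slide2}, and the infinite Grassmannian relation \eqref{eq_infinite_Grass}. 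Summing the block surjectivities over $(\ii,\jj)\in\seq(\nu)\times\seq(\nu)$ yields the surjectivity of $\phi_{\nu,\lambda}$.

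There is no genuine obstacle here: the substantive work has already been carried out in the preceding lemma, and the present proposition is obtained from it by forming the direct sum over sequences of weight $\nu$. The only point worth double-checking en route is that bubble monomials and braid-like $R(\nu)$-diagrams can be placed side by side and composed vertically without any slide corrections, since the bubbles occupy the rightmost region (disjoint from the horizontal strip occupied by the $R(\nu)$-strands); this ensures that $\phi_{\nu,\lambda}$ is actually a $\Bbbk$-algebra homomorphism and not merely a surjective linear map.
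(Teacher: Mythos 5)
Your argument is correct and matches the paper's (implicit) proof, which leaves the Proposition as immediate: $\phi_{\nu,\lambda}$ is by definition the direct sum of the $\varphi_{\ii,\jj,\lambda}$ over all $\ii,\jj\in\seq(\nu)$, so its surjectivity follows at once from Lemma~\ref{lem_surjective}.
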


$\square$

Adding a downward orientation to a diagram $D$ in $R(\nu)$, multiplying it by
$(-1)^a$ where $a$ is the number of crossings of identically colored lines, and
placing it to the left of a collection of bubbles representing a monomial in
$\Pi_{\lambda}$ induces a surjective homomorphism
\begin{equation}
\phi_{-\nu,\lambda} \maps R(\nu) \otimes_{\Bbbk} \Pi_{\lambda} \longrightarrow
\ENDU(\cal{E}_{-\nu}\onel).
\end{equation}

%
\subsubsection{Spanning sets for general $\ii$ and $\jj$}
%

We now describe a spanning set $B_{\ii,\jj,\lambda}$ in $\HOMU \left(
\cal{E}_{\ii}\onel, \cal{E}_{\jj}\onel\right) $ for any $\ii,\jj,\lambda$. Recall
that $p'(\ii,\jj)$ denotes the set of $(\ii,\jj)$-pairings, and $p(\ii,\jj)$ is a
set of minimal diagrammatic representatives of these pairings. For each diagram
$D\in p(\ii,\jj)$ choose an interval on each of the arcs, away from the
intersections. The basis $B_{\ii,\jj,\lambda}$ consists of the union, over all
$D$, of diagrams built out of $D$ by putting an arbitrary number of dots on each
of the intervals and placing any diagram representing a monomial in
$\Pi_{\lambda}$ to the right of $D$ decorated by these dots. Notice that
$B_{\ii,\jj,\lambda}$ depends on extra choices, which we assume are made once and
for all. For example, for $\ii=(-j,+i,+j,-i)$ and $\jj=(+i,-i)$ the choice of
minimal diagrams is unique, see example following Theorem~\ref{thm_form_formula}.
The choice of intervals for the dots is not unique, though. Choosing these
intervals as shown below
\[
  \vcenter{\xy 0;/r.18pc/:
    (-6,10)*{};(6,10)*{} **\crv{(-6,-5) & (6,-5)}?(0)*\dir{<};
    ?(.35)*\dir{|}?(.6)*\dir{|};;
  (-4,-20)*{};(12,-20)*{} **\crv{(-4,-10) & (12,-10)}?(1)*\dir{>};
   ?(.65)*\dir{|}?(.8)*\dir{|};
   (4,-20)*{};(-12,-20)*{} **\crv{(4,-10) &
(-12,-10)}?(1)*\dir{>};
  ?(.65)*\dir{|}?(.8)*\dir{|};
  (10,-5)*{\lambda};(-6,13)*{\scs +i};(6,13)*{\scs -i};
  (-13,-23)*{\scs -j};(-4.5,-23)*{\scs +i};(3.5,-23)*{\scs +j};(11,-23)*{\scs -i};
 \endxy}
 \qquad \qquad
   \vcenter{\xy 0;/r.18pc/:
    (-6,10)*{};(-4,-20)*{} **\crv{(-6,-5) & (-4,-5)}?(0)*\dir{<};
    ?(.4)*\dir{|}?(.6)*\dir{|};;
  (12,-20)*{};(6,10)*{} **\crv{(12,0) & (6,-10)}?(0)*\dir{<};
   ?(.65)*\dir{|}?(.8)*\dir{|};
   (4,-20)*{};(-12,-20)*{} **\crv{(4,-10) &
(-12,-10)}?(1)*\dir{>};
  ?(.65)*\dir{|}?(.8)*\dir{|};
  (16,-5)*{\lambda};(-6,13)*{\scs +i};(6,13)*{\scs -i};
  (-13,-23)*{\scs -j};(-4.5,-23)*{\scs +i};(3.5,-23)*{\scs +j};(11,-23)*{\scs -i};
 \endxy}
\]
results in the spanning set $B_{(-j,+i,+j,-i),(+i,-i),\lambda}$ whose elements
are
\[
  \vcenter{\xy 0;/r.18pc/:
    (-6,10)*{};(6,10)*{} **\crv{(-6,-5) & (6,-5)}?(0)*\dir{<};
    ?(.5)*\dir{}+(0,0)*{\bullet}+(0,-3)*{\scs a_1};
  (-4,-20)*{};(12,-20)*{} **\crv{(-4,-10) & (12,-10)}?(1)*\dir{>};
   ?(.73)*\dir{}+(0,0)*{\bullet}+(1,3)*{\scs a_3};
   (4,-20)*{};(-12,-20)*{} **\crv{(4,-10) &
(-12,-10)}?(1)*\dir{>};
  ?(.73)*\dir{}+(0,0)*{\bullet}+(-1,3)*{\scs a_2};;
  (-6,13)*{\scs +i};(6,13)*{\scs -i};
  (-13,-23)*{\scs -j};(-4.5,-23)*{\scs +i};(3.5,-23)*{\scs +j};(11,-23)*{\scs -i};
 (58,6)*{\xybox{
 (14,-2)*{\cbub{\la i, \lambda\ra-1+\alpha_2}{i}};
 (36,-2)*{\ccbub{ \;\;\;-\la j, \lambda\ra-1+\alpha_4}{j}};
 (14,13)*{\cbub{\la i, \lambda\ra-1+\alpha_1}{i}};
 (37,13)*{\ccbub{\;\;\;-\la j, \lambda\ra-1+\alpha_3}{j}};
 (65,8)*{\cbub{\la k, \lambda\ra-1+\alpha_5}{k}};
 (-2,0)*{\lambda}}};
 (62,-15)*{\underbrace{\hspace{1.9in}}};
 (62,-20)*{\text{bubble monomial in $\Pi_{\lambda}$}};
 \endxy}
 \]
\[
   \vcenter{\xy 0;/r.18pc/:
    (-6,10)*{};(-4,-20)*{} **\crv{(-6,-5) & (-4,-5)}?(0)*\dir{<};
    ?(.5)*\dir{}+(0,0)*{\bullet}+(-3,0)*{\scs a_1};
  (12,-20)*{};(6,10)*{} **\crv{(12,0) & (6,-10)}?(0)*\dir{<};
   ?(.73)*\dir{}+(0,0)*{\bullet}+(3,0)*{\scs a_3};
   (4,-20)*{};(-12,-20)*{} **\crv{(4,-10) &
(-12,-10)}?(1)*\dir{>};
  ?(.73)*\dir{}+(0,0)*{\bullet}+(1,3)*{\scs a_3};
  (-6,13)*{\scs +i};(6,13)*{\scs -i};
  (-13,-23)*{\scs -j};(-4.5,-23)*{\scs +i};(3.5,-23)*{\scs +j};(11,-23)*{\scs -i};
 (58,6)*{\xybox{
 (14,-2)*{\cbub{\la i, \lambda\ra-1+\alpha_2}{i}};
 (36,-2)*{\ccbub{ \;\;\;-\la j, \lambda\ra-1+\alpha_4}{j}};
 (14,13)*{\cbub{\la i, \lambda\ra-1+\alpha_1}{i}};
 (37,13)*{\ccbub{\;\;\;-\la j, \lambda\ra-1+\alpha_3}{j}};
 (65,8)*{\cbub{\la k, \lambda\ra-1+\alpha_5}{k}};
 (-2,0)*{\lambda}}};
 (62,-15)*{\underbrace{\hspace{1.9in}}};
 (62,-20)*{\text{bubble monomial in $\Pi_{\lambda}$}};
 \endxy}
\]
over all nonnegative integers $a_1, a_2, a_3$ and over all diagrammatic monomials
in $\Pi_{\lambda}$ (bubble orientations are for the case $\la i,\lambda \ra \geq
0$, $\la j,\lambda\ra <0$, $\la k,\lambda \ra \geq 0$).

\begin{prop} For any intermediate choices made, $B_{\ii,\jj,\lambda}$
spans the $\Bbbk$-vector space $\HOMU \left( \cal{E}_{\ii}\onel,
 \cal{E}_{\jj}\onel\right) $.
\end{prop}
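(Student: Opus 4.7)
The plan is to extend the argument of Lemma~\ref{lem_surjective} (the case of positive $\ii, \jj$) to arbitrary signed sequences. I would argue by induction on a complexity triple $(c,x,d)$ ordered lexicographically, where $c$ is the number of connected components of the underlying immersed 1-manifold that are closed circles (not counting bubbles already placed in the rightmost region), $x$ is the total number of crossings in the remaining non-bubble part of the diagram, and $d$ is the total number of dots not yet located on the chosen intervals of the minimal diagrams in $p(\ii,\jj)$.

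First I would use the biadjointness and cyclic relations $\eqref{eq_biadjoint1}$, $\eqref{eq_cyclic_dot}$, $\eqref{eq_cyclic_cross-gen}$ to freely isotope the diagram and cancel any pair of consecutive U-turns along a single strand, and then the bubble slide relations of Propositions~\ref{prop_bubble_slide1} and~\ref{prop_bubble_slide2} together with the infinite Grassmannian relation $\eqref{eq_infinite_Grass}$ to push every closed bubble to the right-hand side of the diagram and re-express all of them in the canonical orientations of $\eqref{eq_bub_rules}$, modulo terms of strictly lower complexity. Any innermost closed circle that now appears is itself a bubble and has thus been absorbed into the factor $\Pi_{\lambda}$ on the right.

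Next I would strip the remaining closed circles one at a time: a closed circle that is not innermost can be opened at a local extremum against an adjacent strand via the identity-decomposition relations $\eqref{eq_ident_decomp}$ (for same-label strands) or via $\eqref{eq_downup_ij-gen}$ and $\eqref{eq_r2_ij-gen}$ (for different labels), producing a sum of terms each of which either has strictly fewer closed circles, or has the same number of closed circles but strictly fewer crossings, plus extra bubbles absorbed into $\Pi_{\lambda}$. Once there are no closed components, the diagram is a union of arcs realizing some $(\ii,\jj)$-pairing $P$. Using the Reidemeister-2 type relations $\eqref{eq_r2_ij-gen}$ and $\eqref{eq_downup_ij-gen}$ and the U-turn cancellation $\eqref{eq_biadjoint1}$ I reduce to a diagram in which no two strands cross more than once; by the triple-crossing relations $\eqref{eq_r3_easy-gen}$, $\eqref{eq_r3_hard-gen}$, and Proposition~\ref{prop_other_triangle} such a diagram is equal, modulo strictly lower-complexity terms, to the chosen minimal representative $D_P \in p(\ii,\jj)$ decorated with some dots (this is the geometric content invoked already in the proof that $\deg(D,\lambda)$ depends only on $P$ in Section~\ref{subsec_geometric}). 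Finally, the dot-slide relations $\eqref{eq_nil_dotslide}$ and $\eqref{eq_dot_slide_ij-gen}$, together with isotopy across U-turns, let me move every dot onto the chosen interval of its arc, at the cost of terms with strictly fewer crossings, so after inductively handling these correction terms the original diagram becomes a $\Bbbk$-linear combination of elements of $B_{\ii,\jj,\lambda}$.

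The main obstacle is showing that the complexity measure actually decreases under each reduction step. This is delicate because relations such as $\eqref{eq_ident_decomp}$ and the identities in Propositions~\ref{prop_bubble_slide1}, \ref{prop_bubble_slide2} introduce formal fake bubbles with negative labels whose re-expression through the infinite Grassmannian relation $\eqref{eq_infinite_Grass}$ is an infinite series in the other fake and real bubbles. One must verify that, after this re-expression, every resulting term with nonzero bubble content lies in the right-hand factor $\Pi_{\lambda}$ and leaves the non-bubble part of the diagram with strictly smaller $(c,x,d)$; equivalently, one has to check that the circle-opening and crossing-removal reductions never create new non-bubble crossings or circles in the diagrammatic part. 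This is the analogue for the full 2-category $\Ucat$ of the termination argument used in the proof of Lemma~\ref{lem_surjective} for the subcategory generated by upward-oriented strands.
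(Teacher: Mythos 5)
Your proposal is correct and follows essentially the same route as the paper: the paper's own proof is a two-line remark that the relations permit arbitrary homotopies of coloured dotted diagrams modulo lower-order terms (fewer crossings, fewer circles), with the detailed reduction deferred to \cite[Section 8]{Lau1}, and your induction on the triple (closed circles, crossings, misplaced dots) together with bubble slides, the infinite Grassmannian relation, and the R2/R3 and dot-slide relations is exactly that argument written out. The termination concern you flag is the same point the paper delegates to \cite{Lau1}, so no new idea is needed beyond what you describe.
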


\begin{proof} Relations on 2-morphisms in $\Ucat(\cal{E}_{\nu}\onel)$ allow arbitrary homotopies
of colored dotted diagrams modulo lower order terms, i.e., terms with fewer
crossings, fewer circles, etc. Detailed discussion in~\cite[Section 8]{Lau1}
generalizes to the present situation without difficulty.
\end{proof}

For each $s\in B_{\ii,\jj,\lambda}$ its degree $\deg(s)$ is an integer,
determined by the rules in Section~\ref{subsec_Uprime}.

\begin{prop} For any $\ii,\jj$, and $\lambda$
 \begin{equation} \label{eq_pi_bi_s}
 \pi (E_{\ii}1_{\lambda},E_{\jj}1_{\lambda})=  \sum_{s\in B_{\ii,\jj,\lambda}} q^{\deg(s)}.   \end{equation}
\end{prop}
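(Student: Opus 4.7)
The plan is to evaluate both sides separately and observe that they agree factor-by-factor, using Theorem~\ref{thm_form_formula} for the left-hand side and the explicit description of $B_{\ii,\jj,\lambda}$ for the right-hand side.

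First I would handle the degenerate case. If the left weights of $E_{\ii}1_{\lambda}$ and $E_{\jj}1_{\lambda}$ do not agree (i.e.\ $\ii_X \neq \jj_X$), then there are no $(\ii,\jj)$-pairings (orientations/labels at the upper and lower endpoints of a minimal diagram would force incompatible region colorings around the rightmost region), so $p(\ii,\jj)=\emptyset$ and $B_{\ii,\jj,\lambda}=\emptyset$; at the same time $(E_{\ii}1_{\lambda},E_{\jj}1_{\lambda})=0$ by weight considerations, and both sides of \eqref{eq_pi_bi_s} vanish. So I may assume $\ii_X=\jj_X$.

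Next I would unpack the construction of $B_{\ii,\jj,\lambda}$. By definition, every element of $B_{\ii,\jj,\lambda}$ is obtained from a chosen minimal diagram $D\in p(\ii,\jj)$ by (a) placing a nonnegative integer number of dots on each of the $\frac{\parallel\ii\parallel+\parallel\jj\parallel}{2}$ chosen intervals, one per arc of $D$, and (b) multiplying on the right by a monomial in the free graded commutative ring $\Pi_{\lambda}$. Each dot on an $i$-colored strand adds $i\cdot i$ to the degree, so the generating function for dot placements on a fixed $D$ is $\prod_{i\in I}(1-q_i^2)^{-\kappa_i}$, where $\kappa_i$ counts $i$-colored strands of $D$. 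The monomial basis of $\Pi_{\lambda}$ is parametrized by finitely supported functions from $I\times\{1,2,3,\dots\}$ to $\N$, and the generator indexed by $(i,\alpha)$ has degree $\alpha(i\cdot i)$; hence
\[
\sum_{M\in \Pi_{\lambda}\text{-monomials}} q^{\deg M}
 \;=\; \prod_{i\in I}\prod_{\alpha=1}^{\infty}\frac{1}{1-q_i^{2\alpha}} \;=\; \pi.
\]
Combining these two independent contributions with the degree $\deg(D,\lambda)$ carried by the underlying minimal diagram, I get
\[
\sum_{s\in B_{\ii,\jj,\lambda}} q^{\deg(s)}
 \;=\; \pi\cdot \sum_{D\in p(\ii,\jj)} q^{\deg(D,\lambda)}\prod_{i\in I}\frac{1}{(1-q_i^2)^{\kappa_i}}.
\]

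Finally, I would invoke Theorem~\ref{thm_form_formula} (with $\mu=\lambda$) to identify the second factor on the right with $(E_{\ii}1_{\lambda},E_{\jj}1_{\lambda})$, yielding \eqref{eq_pi_bi_s}. There is no real obstacle: the only things to verify are the bookkeeping for the dot generating function (which uses that dots can be placed freely on each arc because arcs intersect the chosen intervals transversally, and that dot-slide relations do not affect degrees) and the identification of the degrees of the chosen bubble generators of $\Pi_{\lambda}$ with $\alpha(i\cdot i)$, both of which are immediate from the degree table in Section~\ref{subsec_Uprime} and the definition of $\Pi_{\lambda}$ in \eqref{eq_bub_rules}.
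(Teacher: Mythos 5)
Your argument is correct and follows the same route as the paper: invoke Theorem~\ref{thm_form_formula} for the left side, then observe that on the right side the bubble monomials contribute exactly $\pi$, the dot placements contribute $\prod_{i\in I}(1-q_i^2)^{-\kappa_i}$ per minimal diagram, and the underlying minimal diagrams $D\in p(\ii,\jj)$ carry the $q^{\deg(D,\lambda)}$ factors. The preliminary weight-mismatch case is already subsumed by Theorem~\ref{thm_form_formula} (both sides vanish because $p(\ii,\jj)=\emptyset$), so you need not treat it separately.
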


\begin{proof} The LHS of the equation \eqref{eq_pi_bi_s} equals $\pi$ times the
RHS of the formula \eqref{eq_thm_pairing}. This $\pi$ is matched in the RHS of
\eqref{eq_pi_bi_s} by the summation over all monomials in $\Pi_{\lambda}$, since
$q$ to the degree of these monomials add up to $\pi$. The product term in the RHS
of \eqref{eq_thm_pairing} is matched by the contribution to the RHS of
\eqref{eq_pi_bi_s} by all possible placements of dots. For each $i$-labelled
strand dots contribute
$$\sum_{a=0}^{\infty} q^{a (i\cdot i)}= \frac{1}{1-q^{i\cdot i}}=
\frac{1}{1-q_i^2}$$ to the product, since the degree of a dot is $i\cdot i$, and
the sum is over all ways to put some number $a$ of dots on this strand. Finally,
the sums over all minimal diagrams $D\in p(\ii,\jj)$ give equal contributions to
the two sides of \eqref{eq_pi_bi_s}.
\end{proof}

\begin{rem} In view of the first equality in \eqref{eq_thm_pairing}, we
can restate the above proposition via $\sla,\sra$ in place of $(,)$ on the LHS.
\end{rem}

Notice that
\begin{equation} \label{eq_another_equality}
{\rm gdim}_{\Bbbk}\left( \HOMU \left( \cal{E}_{\ii}\onel,
 \cal{E}_{\jj}\onel\right)  \right)\quad = \sum_{s\in B_{\ii,\jj,\lambda}} q^{\deg(s)}
\end{equation}
if and only if $B_{\ii,\jj,\lambda}$ is a basis of $\HOMU \left(
\cal{E}_{\ii}\onel,\cal{E}_{\jj}\onel\right)$.

\begin{cor} \label{cor-ineq} For any sequences $\ii,\jj$ and $\lambda \in X$
\begin{equation}
{\rm gdim}_{\Bbbk}\left( \HOMU \left( \cal{E}_{\ii}\onel,
 \cal{E}_{\jj}\onel\right)  \right)\quad \leq \quad  \pi \la E_{\ii}1_{\lambda},
E_{\jj}1_{\lambda} \ra.
\end{equation}
\end{cor}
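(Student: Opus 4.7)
The plan is very direct: the corollary is essentially a repackaging of the spanning property together with equation \eqref{eq_pi_bi_s}. First I would invoke the proposition immediately preceding \eqref{eq_pi_bi_s}, which states that $B_{\ii,\jj,\lambda}$ is a homogeneous spanning set of the graded $\Bbbk$-vector space $\HOMU(\cal{E}_{\ii}\onel, \cal{E}_{\jj}\onel)$. Because each $s \in B_{\ii,\jj,\lambda}$ has a well-defined integer degree $\deg(s)$, the homogeneous component of degree $r$ of $\HOMU(\cal{E}_{\ii}\onel,\cal{E}_{\jj}\onel)$ is spanned by $\{s \in B_{\ii,\jj,\lambda} : \deg(s)=r\}$. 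Hence, coefficient-wise in $q$,
\begin{equation}
{\rm gdim}_{\Bbbk}\bigl(\HOMU(\cal{E}_{\ii}\onel,\cal{E}_{\jj}\onel)\bigr) \;\leq\; \sum_{s \in B_{\ii,\jj,\lambda}} q^{\deg(s)},
\end{equation}
where the inequality is in the sense of the partial order $\leq$ on Laurent power series introduced at the beginning of Section~\ref{subsec-spansets}.

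Next I would apply equation \eqref{eq_pi_bi_s}, which identifies the right-hand side with $\pi\,(E_{\ii}1_{\lambda},E_{\jj}1_{\lambda})$. Finally, by the first equality in Theorem~\ref{thm_form_formula} (also emphasized in the remark immediately after \eqref{eq_pi_bi_s}), we have $(E_{\ii}1_{\lambda},E_{\jj}1_{\lambda}) = \sla E_{\ii}1_{\lambda},E_{\jj}1_{\lambda}\sra$, since $E_{\ii}1_{\lambda}$ is $\psi$-invariant. Chaining these gives
\begin{equation}
{\rm gdim}_{\Bbbk}\bigl(\HOMU(\cal{E}_{\ii}\onel,\cal{E}_{\jj}\onel)\bigr) \;\leq\; \pi\,\sla E_{\ii}1_{\lambda},E_{\jj}1_{\lambda}\sra,
\end{equation}
which is the required inequality.

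There is essentially no obstacle: all the work has already been done in establishing that $B_{\ii,\jj,\lambda}$ spans and in the geometric identity \eqref{eq_pi_bi_s}. The only thing worth a brief remark is that equality in the corollary is equivalent to $B_{\ii,\jj,\lambda}$ being an actual basis, as recorded in \eqref{eq_another_equality}; this is precisely the nondegeneracy condition of the graphical calculus, and Theorem~\ref{thm-nondegenerate} asserts it holds in the $\mf{sl}_n$ case. But for the corollary itself no such nondegeneracy is needed, only the spanning property, so the proof is a one-line composition of the two displayed inequalities.
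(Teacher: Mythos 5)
Your proposal is correct and follows exactly the route the paper intends: the corollary is an immediate consequence of the spanning property of $B_{\ii,\jj,\lambda}$, the identity \eqref{eq_pi_bi_s}, and the equality $(E_{\ii}1_{\lambda},E_{\jj}1_{\lambda})=\sla E_{\ii}1_{\lambda},E_{\jj}1_{\lambda}\sra$ noted in Theorem~\ref{thm_form_formula} and the remark after \eqref{eq_pi_bi_s}. Your closing observation that equality is equivalent to $B_{\ii,\jj,\lambda}$ being a basis matches \eqref{eq_another_equality} and the paper's definition of nondegeneracy.
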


\begin{defn} We say that our graphical calculus is \emph{nondegenerate}
for a given root system and field $\Bbbk$ if for all $\ii,\jj,\lambda$ the set
$B_{\ii,\jj,\lambda}$ is a basis of $\HOMU \left( \cal{E}_{\ii}\onel,
 \cal{E}_{\jj}\onel\right)$.
\end{defn}

Thus, a calculus is nondegenerate if the equality holds in
Corollary~\ref{cor-ineq} for all $\ii,\jj,\lambda$.

\begin{rem} Nondegeneracy holds if the above condition is true for
all $\lambda\in X$ and all pairs of positive sequences $\ii,\jj$.
\end{rem}

%
\subsubsection{Endomorphisms of  $\cal{E}_{\nu,-\nu'}\onel$ }
%

For $\nu,\nu' \in \N[I]$ let
\begin{equation}
  \cal{E}_{\nu,-\nu'}\onel:= \bigoplus_{
   \xy
  (0,3)*{\scs \ii \in \seq(\nu)};(0,-1)*{\scs \jj \in \seq(\nu')};
   \endxy
  }
   \cal{E}_{\ii -\jj}\onel.
\end{equation}
Consider the graded ring $\ENDU(\cal{E}_{\nu,-\nu'}\onel)$.  A spanning set for
this ring is given by dotted minimal diagrams of
$(\ii(-\ii'),\jj(-\jj'))$-pairings over all $\ii,\jj \in \seq(\nu)$, $\ii',\jj'
\in \seq(\nu')$ times bubble monomials in $\Pi_{\lambda}$.
\[
\xy 0;/r.18pc/:
 (-4,-15)*{}; (-20,25) **\crv{(-3,-6) & (-20,4)}?(0)*\dir{<}?(.6)*\dir{}+(0,0)*{\bullet};
 (-12,-15)*{}; (-4,25) **\crv{(-12,-6) & (-4,0)}?(0)*\dir{<}?(.6)*\dir{}+(.2,0)*{\bullet};
 ?(0)*\dir{<}?(.75)*\dir{}+(.2,0)*{\bullet};?(0)*\dir{<}?(.9)*\dir{}+(0,0)*{\bullet};
 (-28,25)*{}; (-12,25) **\crv{(-28,10) & (-12,10)}?(0)*\dir{<};
  ?(.2)*\dir{}+(0,0)*{\bullet}?(.35)*\dir{}+(0,0)*{\bullet};
  (-50,-15)*{}; (-50,25) **\crv{(-51,10) & (-49,10)}?(1)*\dir{>}?(.35)*\dir{}+(.2,0)*{\bullet};;
 (-36,-15)*{}; (-36,25) **\crv{(-34,-6) & (-35,4)}?(1)*\dir{>};
 (-28,-15)*{}; (-42,25) **\crv{(-28,-6) & (-42,4)}?(1)*\dir{>};
 (-42,-15)*{}; (-20,-15) **\crv{(-42,-5) & (-20,-5)}?(1)*\dir{>};
 (14,-5)*{\cbub{\la i, \lambda\ra-1+\alpha_2}{i}};
 (36,-5)*{\ccbub{ \;\;\;-\la j, \lambda\ra-1+\alpha_4}{j}};
 (14,15)*{\cbub{\la i, \lambda\ra-1+\alpha_1}{i}};
 (37,15)*{\ccbub{\;\;\;-\la j, \lambda\ra-1+\alpha_3}{j}};
 (65,8)*{\cbub{\la k, \lambda\ra-1+\alpha_5}{k}};
 (22,24)*{\lambda};
 (-39,-18)*{\underbrace{\hspace{.75in}}};(-39,-22)*{\scs \ii};
 (-12,-18)*{\underbrace{\hspace{.55in}}};(-12,-22)*{\scs \ii'};
 (-39,28)*{\overbrace{\hspace{.75in}}};(-39,32)*{\scs \jj};
 (-12,28)*{\overbrace{\hspace{.55in}}};(-12,32)*{\scs \jj'};
 \endxy
\]
Let $\cal{I}_{\nu, -\nu',\lambda}$ be the subspace spanned by diagrams which
contain a $U$-turn, i.e. an arc with both endpoints on $\R \times \{1\}$ or on
$\R\times\{0\}$.

\begin{prop}
$\cal{I}_{\nu, -\nu',\lambda}$ is a 2-sided homogeneous ideal of
$\ENDU(\cal{E}_{\nu,-\nu'}\onel)$ which does not depend on choices of minimal
diagrams for pairings.
\end{prop}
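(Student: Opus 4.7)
The plan is to establish three assertions: well-definedness (independence from the choice of minimal diagrams), homogeneity, and closure under left and right multiplication. The underlying topological property ``the pairing contains an arc with both endpoints on the same horizontal boundary'' is intrinsic to the pairing, not to its chosen minimal representative: any two minimal diagrams for the same pairing differ by boundary-preserving isotopies and triple-crossing moves, and both types of moves are applied inside a compact interior disc, leaving untouched any arc that meets the outermost boundary. When the passage from one minimal representative $D_1$ to another $D_2$ is implemented by a relation in $\Ucat$, that relation expresses $D_1-D_2$ as a linear combination of other basis elements; but since the underlying move is interior, each of those terms still carries the same outer boundary U-turn, so the $\Bbbk$-span of the ``U-turn'' basis elements is independent of the choice. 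Homogeneity is immediate since every spanning element is homogeneous.

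For the ideal property I would first record a parity observation. In $\ENDU(\cal{E}_{\nu,-\nu'}\onel)$ every 2-morphism has equal source and target, so the top and bottom boundaries carry matching counts of up-oriented and of down-oriented endpoints. Counting then shows that in any pairing the number of top U-turns equals the number of bottom U-turns, so having a U-turn anywhere is equivalent to having one at the bottom and also equivalent to having one at the top. Now let $D$ be a spanning element with a bottom U-turn realized by an arc $c$ joining two bottom endpoints of $D$, and let $D'$ be any basis element. The composite $D'\cdot D$ stacks $D'$ on top of $D$, so the bottom boundary of $D'\cdot D$ equals the bottom boundary of $D$ and the arc $c$ persists there as a bottom U-turn of the composite.

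Reducing $D'\cdot D$ to a linear combination of spanning elements requires applying the defining relations of $\Ucat$. The crucial point is that all these relations---the $\mf{sl}_2$ relations, the nilHecke relations, the bubble-slide relations of Propositions \ref{prop_bubble_slide1} and \ref{prop_bubble_slide2}, the $R(\nu)$-relations, and the triple-crossing relations of Proposition \ref{prop_other_triangle}---are \emph{interior} relations: each is applied within a compact disc contained in the interior of the ambient diagram and modifies only the configuration of strands inside that disc. Consequently the arc $c$ at the outermost boundary is preserved term-by-term through the reduction, so every basis element occurring in the standard form of $D'\cdot D$ still has a bottom U-turn and lies in $\cal{I}_{\nu,-\nu',\lambda}$. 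The composition $D\cdot D'$ is handled symmetrically using the top U-turn of $D$ guaranteed by the parity observation. The main obstacle is the case-by-case verification that every defining relation is genuinely local in this sense; this is a routine but tedious inspection, and since each relation of Section \ref{subsec_Uprime} is manifestly formulated inside a bounded region of the plane, no issue arises.
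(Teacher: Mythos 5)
Your parity count and the observation that vertical composition literally preserves the bottom U--turn of the bottom factor are fine, but the load-bearing step --- ``each defining relation is interior, hence the arc $c$ is preserved term-by-term through the reduction'' --- is false, and with it the whole argument for both the ideal property and the independence of choices. Locality of a relation only means it is applied inside a disc with fixed boundary points; it does not mean the arcs passing through that disc keep their global connectivity. Indeed, in a pairing diagram \emph{every} arc meets the outer boundary, so the statement that the moves ``leave untouched any arc that meets the outermost boundary'' cannot be what saves you. The relations \eqref{eq_ident_decomp}, \eqref{eq_r3_hard-gen} and \eqref{eq_r3_extra} have correction terms whose internal connectivity differs from that of the main terms (cap--cup versus two through strands in \eqref{eq_ident_decomp}; the identity permutation versus the transposition in \eqref{eq_r3_hard-gen}). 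If the U--turn arc runs through the disc where such a relation is applied, a correction term can reconnect its two endpoints to points on the opposite boundary. Concretely, apply \eqref{eq_ident_decomp} to remove a double crossing between two $i$--strands whose other ends are wired so that one strand joins a bottom point to a bottom point and the other a top point to a top point (so the diagram is in $\cal{I}_{\nu,-\nu',\lambda}$); the cap--cup correction terms then join each bottom point to a top point, i.e.\ they have strictly more through strands and no U--turn at all. The same phenomenon occurs for the triple-crossing move \eqref{eq_r3_hard-gen}, which is exactly the move needed to compare two minimal diagrams of the same pairing, so your well-definedness argument has the identical gap: the alternative minimal diagram differs from the chosen one by correction terms that need not visibly contain a U--turn.

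What a correct proof has to supply is precisely the content you waved away as ``routine inspection.'' One workable route is to replace the diagrammatic condition by the intrinsic one: let $J$ be the span of endomorphisms of $\cal{E}_{\nu,-\nu'}\onel$ that factor through some $\cal{E}_{\kk}\onel\{t\}$ with $\parallel\kk\parallel<\parallel\nu\parallel+\parallel\nu'\parallel$. This $J$ is manifestly a two-sided homogeneous ideal and involves no choices, and every U--turn spanning element lies in $J$ (isotope so all bottom caps lie below a horizontal line and all top cups above it, and cut). The real work is the reverse inclusion $J\subseteq\cal{I}_{\nu,-\nu',\lambda}$ (equivalently, that the reduction of a composite to the spanning set never produces braid-like spanning elements with nonzero coefficient when the composite factors through lower width); this requires an argument organized by a filtration on width or on the number of U--turns that tracks how the connectivity-changing relations \eqref{eq_ident_decomp}, \eqref{eq_r3_hard-gen}, \eqref{eq_r3_extra} interact with the reduction, not an appeal to locality alone. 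As it stands, your proposal proves the easy containments and asserts the hard one.
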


\begin{proof}
Left to the reader.
\end{proof}

Denote by $R_{\nu,-\nu',\lambda}:= \ENDU(\cal{E}_{\nu,-\nu'}\onel)/\cal{I}_{\nu,
-\nu',\lambda}$ the graded quotient ring, and by $\beta$ the quotient map.  There
is a homomorphism
\begin{equation}
 \alpha \maps  R(\nu) \otimes_{\Bbbk} R(\nu') \otimes_{\Bbbk} \Pi_{\lambda} \longrightarrow
\ENDU(\cal{E}_{\nu,-\nu'}\onel)
\end{equation}
given by placing diagrams representing elements in $R(\nu)$, $R(\nu')$,
$\Pi_{\lambda}$ in parallel
\[
(-1)^{a} \;\;\; \vcenter{\xy 0;/r.18pc/: (-48,6)*{\xybox{
 (0,20)*{}; (20,20) **\dir{-};(20,0)*{}; (20,20) **\dir{-};(0,0)*{}; (20,0) **\dir{-};
 (0,0)*{}; (0,20) **\dir{-}; (10,10)*{D};
 (2,20)*{}; (2,28)*{}**\dir{-}?(1)*\dir{>};(2,0)*{}; (2,-8)*{}**\dir{-};
 (10,20)*{}; (10,28)*{}**\dir{-}?(1)*\dir{>};(10,0)*{}; (10,-8)*{}**\dir{-};
 (18,20)*{}; (18,28)*{}**\dir{-}?(1)*\dir{>}; (18,0)*{}; (18,-8)*{}**\dir{-}; }};
(-16,6)*{\xybox{
 (0,20)*{}; (28,20) **\dir{-};(28,0)*{}; (28,20) **\dir{-};(0,0)*{}; (28,0) **\dir{-};
 (0,0)*{}; (0,20) **\dir{-}; (14,10)*{D'};
 (2,20)*{}; (2,28)*{}**\dir{-};(2,0)*{}; (2,-8)*{}**\dir{-}?(1)*\dir{>};
 (10,20)*{}; (10,28)*{}**\dir{-};(10,0)*{}; (10,-8)*{}**\dir{-}?(1)*\dir{>};
 (18,20)*{}; (18,28)*{}**\dir{-}; (18,0)*{}; (18,-8)*{}**\dir{-}?(1)*\dir{>};
 (26,20)*{}; (26,28)*{}**\dir{-};(26,0)*{}; (26,-8)*{}**\dir{-}?(1)*\dir{>}; }};
(14,-5)*{\cbub{\la i, \lambda\ra-1+\alpha_2}{i}};
 (36,-5)*{\ccbub{ \;\;\;-\la j, \lambda\ra-1+\alpha_4}{j}};
 (14,15)*{\cbub{\la i, \lambda\ra-1+\alpha_1}{i}};
 (37,15)*{\ccbub{\;\;\;-\la j, \lambda\ra-1+\alpha_3}{j}};
 (65,8)*{\cbub{\la k, \lambda\ra-1+\alpha_5}{k}};
 (22,24)*{\lambda};
 \endxy} ~,
\]
orienting diagrams for $R(\nu)$ upwards and diagrams for $R(\nu')$ downwards, and
multiplying by $(-1)^a$, where $a$ is the number of crossings in $D'$ of equally
labelled strands. The composition $\beta\alpha$ of this homomorphism with the
quotient map is surjective. The diagram below contains an exact sequence of a
ring, its 2-sided ideal, and the quotient ring:
\begin{equation}
\label{eq_diagram_K}
 \xy
 (-55,0)*+{0}="1";
  (-40,0)*+{\cal{I}_{\nu,-\nu',\lambda}}="2";
 (0,0)*+{\ENDU(\cal{E}_{\nu,-\nu'}\onel)}="3";
 (40,0)*+{R_{\nu,-\nu',\lambda}}="4";
 (55,0)*+{0}="5";
 (0,20)*+{R(\nu) \otimes R(\nu') \otimes \Pi_{\lambda}}="6";
 {\ar "1";"2"}; {\ar "2";"3"};{\ar_{\beta} "3";"4"};{\ar "4";"5"};
 {\ar_{\alpha} "6";"3"};{\ar@{->>}^{\beta\alpha} "6";"4"};
 \endxy .
\end{equation}

\begin{rem} If the graphical calculus is nondegenerate, $\beta\alpha$
is an isomorphism, and the sequence splits
\begin{equation} \label{eq_diagram_Ksplit}
\xymatrix{
 0 \ar[r] & \cal{I}_{\nu,-\nu',\lambda} \ar[r] &
\ENDU(\cal{E}_{\nu,-\nu'}\onel) \ar@<.5ex>[r]^{\beta} & \ar@<.5ex>[l]^{\alpha}
 R(\nu) \otimes R(\nu') \otimes \Pi_{\lambda} \ar[r] & 0 .
}
\end{equation}
A split ring homomorphism induces a split exact sequence of Grothendieck
groups~\cite[Section 1.5]{Ros}
\begin{equation}\label{eq_diagram_KspG}
\xymatrix{ 0 \ar[r] & K_0(\cal{I}_{\nu,-\nu',\lambda}) \ar[r]&
K_0\big(\ENDU(\cal{E}_{\nu,-\nu'}\onel) \big) \ar@<.5ex>[rr]^-{K_0(\beta)} &&
\ar@<.5ex>[ll]^-{K_0(\alpha)} K_0\big( R(\nu) \otimes R(\nu') \otimes
\Pi_{\lambda}\big) \ar[r] & 0} ,
\end{equation}
leading to a canonical decomposition of the middle term as the sum of its two
neighbors.
\end{rem}

%
\subsection{Properties and symmetries of 2-category $\Ucat$}
%

%
\subsubsection{Almost biadjoints}
%

The 1-morphism $\cal{E}_{+i}\onel$ does not have a simultaneous left and right
adjoint $\cal{E}_{-i}\mathbf{1}_{\lambda+i_X}$ because the units and counits
which realize these biadjoints in $\Ucatq$ are not degree preserving. However, if
we shift $\cal{E}_{-i}\mathbf{1}_{\lambda+i_X}$ by $\{-c_{+i,\lambda}\}$, then
the unit and counit for the adjunction $\cal{E}_{+i}\onel \dashv
\cal{E}_{-i}\mathbf{1}_{\lambda+i_X}\{-c_{+i,\lambda} \}$ become degree
preserving.  More generally, we have $\cal{E}_{+}i\onel \{t\} \dashv
\cal{E}_{-i}\mathbf{1}_{\lambda+i_X}\{-c_{+i,\lambda}-t\}$ in $\Ucat$ since the
units and counits have degree:
\begin{eqnarray}
  \deg\left(  \xy
    (0,-3)*{\bbpef{i}};
    (8,-5)*{ \lambda};
    (-4,3)*{\scs -i};
    (14,3)*{\scs +i\;\{t-c_{+i,\lambda}-t\}};
    (-8,0)*{};(12,0)*{};
    \endxy \right)
    & = &
    c_{+i,\lambda}+(-c_{+i,\lambda})
    \quad = \quad 0 \\
    \deg\left(\xy
    (0,0)*{\bbcfe{i}};
    (8,5)*{ \lambda+i_X};
    (-4,-6)*{\scs +i};
    (14,-6)*{\scs -i\;\{t-c_{+i,\lambda}-t\}};
    (-8,0)*{};(12,0)*{};
    \endxy\right)
    & = &
    (c_{-i,\lambda+i_X}))-(-c_{+i,\lambda})
    \quad = \quad 0
\end{eqnarray}
and still satisfy the zigzag identities.  Similarly, $\cal{E}_{+i}\onel \{t\}$
has a left adjoint $\cal{E}_{-i}\mathbf{1}_{\lambda+i_X} \{c_{+i,\lambda}-t\}$ in
$\Ucat$. One can check that with these shifts the units and counits of the
adjunction $\cal{E}_{-i}\mathbf{1}_{\lambda+i_X} \{c_{+i,\lambda}-t\}\dashv
\cal{E}_{+i}\onel \{t\}$ become degree zero and are compatible with the zigzag
identities.

The left adjoint $\cal{E}_{-i}\mathbf{1}_{\lambda+i_X}\{c_{+i,\lambda}-t\}$ and
the right adjoint $\cal{E}_{-i}\mathbf{1}_{\lambda+i_X}\{-c_{+i,\lambda}-t\}$ of
$\cal{E}_{+i}\onel \{t\}$ only differ by a shift. We call morphisms with this
property {\em almost biadjoint}. This situation is familiar to those studying
derived categories of coherent sheaves on Calabi-Yau manifolds. Functors with
these properties are called `almost Frobenius functors' in \cite{Kh3} where
several other examples of this phenomenon are also given.

It is then clear that $\cal{E}_{+i}\onel \{t\}$ and $\cal{E}_{-i}\onel \{t\}$
have almost biadjoints in $\Ucat$ for all $t \in \Z$ and $\lambda \in X$ with
\begin{equation} \label{eq_almost_biadjoints}
\begin{array}{ccccc}
 \onel \cal{E}_{-i}\mathbf{1}_{\lambda+i_X} \{c_{+i,\lambda}-t \} & \;\dashv\;
 & \mathbf{1}_{\lambda+i_X}\cal{E}_{+i}\onel \{t\} & \;\dashv\; & \onel \cal{E}_{-i}\mathbf{1}_{\lambda+i_X} \{-c_{+i,\lambda}-t \} \\
  \onel \cal{E}_{+i}\mathbf{1}_{\lambda-i_X} \{c_{-i,\lambda}-t \} & \;\dashv\; &
  \mathbf{1}_{\lambda-i_X}\cal{E}_{-i}\onel \{t\} & \;\dashv\;
  & \onel \cal{E}_{+i}\mathbf{1}_{\lambda-i_X} \{-c_{-i,\lambda}-t \}.
\end{array}
\end{equation}
Every 1-morphism in $\Ucat$ is a direct sum of composites of $\cal{E}_{+i}\onel
\{t\}$'s and $\cal{E}_{-i}\onel \{t\}$'s together with identities; by composing
adjunctions as explained in \cite[Section 5.5]{Lau1}, the right and left adjoints
of $\cal{E}_{\ii}\onel \{t\}$ can be computed. Thus, it is clear that all
1-morphisms in $\Ucat$ have almost biadjoints.

\paragraph{Positivity of bubbles:}
$\Ucat(\onel ,\onel \{t\}) = 0$ if $t >0$, and $\Ucat(\onel,\onel)$ is at most
1-dimensional (isomorphic to $\Bbbk$ if the calculus is nondegenerate).

%
\subsubsection{Symmetries of $\Ucat$ } \label{sec_symm}
%

We denote by $\Ucat^{\op}$ the 2-category with the same objects as $\Ucat$ but
the 1-morphisms reversed.  The direction of the 2-morphisms remain fixed. The
2-category $\Ucat^{\co}$ has the same objects and 1-morphism as $\Ucat$, but the
directions of the 2-morphisms is reversed. That is, $\Ucat^{\co}(x,y)=\Ucat(y,x)$
for 1-morphisms $x$ and $y$. Finally, $\Ucat^{\co\op}$ denotes the 2-category
with the same objects as $\Ucat$, but the directions of the 1-morphisms and
2-morphisms have been reversed.

Using the symmetries of the diagrammatic relations imposed on $\Ucat$ we
construct 2-functors on the various versions of $\Ucat$.  In
Proposition~\ref{prop_tilde_lifts} we relate these 2-functors to various
$\Z[q,q^{-1}]$-(anti)linear (anti)automorphisms of the algebra $\U$. The various
forms of contravariant behaviour for 2-functors on $\Ucat$ translate into
properties of the corresponding homomorphism in $\U$ as the following table
summarizes:
\begin{center}
\begin{tabular}{|l|l|}
  \hline
  {\bf 2-functors} & {\bf Algebra maps} \\ \hline \hline
  $\Ucat \to \Ucat$ &  $\Z[q,q^{-1}]$-linear
 homomorphisms\\
  $\Ucat \to \Ucat^{\op}$ & $\Z[q,q^{-1}]$-linear
antihomomorphisms \\
  $\Ucat \to \Ucat^{\co}$ & $\Z[q,q^{-1}]$-antilinear
 homomorphisms \\
  $\Ucat \to \Ucat^{\co\op}$ & $\Z[q,q^{-1}]$-antilinear
antihomomorphisms \\
  \hline
\end{tabular}
\end{center}

\paragraph{Rescale, invert the orientation, and send $\lambda \mapsto -\lambda$:}

Consider the operation on the diagrammatic calculus that rescales the
$ii$-crossing $\Ucross_{i,i,\lambda} \mapsto -\Ucross_{i,i,\lambda}$ for all $i
\in I$ and $\lambda \in X$, inverts the orientation of each strand and sends
$\lambda \mapsto -\lambda$:
\[
\tilde{\omega}\left(\;\;\vcenter{\xy 0;/r.16pc/:
 (-4,-15)*{}; (-20,25) **\crv{(-3,-6) & (-20,4)}?(0)*\dir{<}?(.6)*\dir{}+(0,0)*{\bullet};
 (-12,-15)*{}; (-4,25) **\crv{(-12,-6) & (-4,0)}?(0)*\dir{<}?(.6)*\dir{}+(.2,0)*{\bullet};
 ?(0)*\dir{<}?(.75)*\dir{}+(.2,0)*{\bullet};?(0)*\dir{<}?(.9)*\dir{}+(0,0)*{\bullet};
 (-28,25)*{}; (-12,25) **\crv{(-28,10) & (-12,10)}?(0)*\dir{<};
  ?(.2)*\dir{}+(0,0)*{\bullet}?(.35)*\dir{}+(0,0)*{\bullet};
 (-36,-15)*{}; (-36,25) **\crv{(-34,-6) & (-35,4)}?(1)*\dir{>};
 (-28,-15)*{}; (-42,25) **\crv{(-28,-6) & (-42,4)}?(1)*\dir{>};
 (-42,-15)*{}; (-20,-15) **\crv{(-42,-5) & (-20,-5)}?(1)*\dir{>};
 (6,10)*{\cbub{}{i}};
 (-23,0)*{\cbub{}{j}};
 (8,-4)*{\lambda};(-44,-4)*{\mu};
 (-44,-19)*{\scs +i};(-36,-19)*{\scs +k};(-29,-19)*{\scs +j};(-21,-19)*{\scs -i};(-13,-19)*{\scs -j};(-4,-19)*{ \scs -j};
 (-44,29)*{\scs +j};(-36,29)*{\scs+k};  (-29,29)*{\scs+k};(-21,29)*{\scs-j};(-13,29)*{\scs-k};
 (-4,29)*{\scs-j};
 \endxy}\;\;\right) \quad = \quad - \;\;
 \vcenter{\xy 0;/r.16pc/:
 (-4,-15)*{}; (-20,25) **\crv{(-3,-6) & (-20,4)}?(1)*\dir{>}?(.6)*\dir{}+(0,0)*{\bullet};
 (-12,-15)*{}; (-4,25) **\crv{(-12,-6) & (-4,0)}?(1)*\dir{>}?(.6)*\dir{}+(.2,0)*{\bullet};
 ?(1)*\dir{>}?(.75)*\dir{}+(.2,0)*{\bullet};?(.9)*\dir{}+(0,0)*{\bullet};
 (-28,25)*{}; (-12,25) **\crv{(-28,10) & (-12,10)}?(1)*\dir{>};
  ?(.2)*\dir{}+(0,0)*{\bullet}?(.35)*\dir{}+(0,0)*{\bullet};
 (-36,-15)*{}; (-36,25) **\crv{(-34,-6) & (-35,4)}?(0)*\dir{<};
 (-28,-15)*{}; (-42,25) **\crv{(-28,-6) & (-42,4)}?(0)*\dir{<};
 (-42,-15)*{}; (-20,-15) **\crv{(-42,-5) & (-20,-5)}?(0)*\dir{<};
 (6,10)*{\ccbub{}{i}};
 (-23,0)*{\ccbub{}{j}};
 (8,-4)*{-\lambda};(-44,-4)*{-\mu};
 (-46,-19)*{\scs -i};(-36,-19)*{\scs -k};(-29,-19)*{\scs -j};(-21,-19)*{\scs +i};(-13,-19)*{\scs +j};
 (-4,-19)*{\scs +j};
 (-46,29)*{\scs  -j};(-36,29)*{\scs  -k};  (-29,29)*{\scs  -k};(-21,29)*{\scs  +j};(-13,29)*{\scs  +k};
 (-4,29)*{\scs  +j};
 \endxy}
\]
This transformation preserves the degree of a diagram, so by extending to sums of
diagrams we get a 2-functor $\tilde{\omega}\maps \Ucat \to \Ucat$ given by
\begin{eqnarray}
  \tilde{\omega} \maps \Ucat &\to& \Ucat \nn \\
  \lambda &\mapsto&  -\lambda \nn \\
  \mathbf{1}_{\mu} \cal{E}_{\ii}\onel \{t\}
 &\mapsto &
  \mathbf{1}_{-\mu} \cal{E}_{-\ii} \mathbf{1}_{-\lambda}  \{t\}
\end{eqnarray}
It is straight forward to check that $\tilde{\omega}$ is a strict 2-functor. In
fact, it is a 2-isomorphism since its square is the identity.

\paragraph{Rescale, reflect across the $y$-axis, and send $\lambda \mapsto -\lambda$: }

The operation on diagrams that rescales the $ii$-crossing $\Ucross_{i,i,\lambda}
\mapsto -\Ucross_{i,i,\lambda}$ for all $i \in I$ and $\lambda \in X$, reflects a
diagram across the y-axis, and sends $\lambda$ to $-\lambda$ leaves invariant the
relations on the 2-morphisms of $\Ucat$. This operation
\[
\tilde{\sigma}\left(\;\;\vcenter{\xy 0;/r.16pc/:
 (-4,-15)*{}; (-20,25) **\crv{(-3,-6) & (-20,4)}?(0)*\dir{<}?(.6)*\dir{}+(0,0)*{\bullet};
 (-12,-15)*{}; (-4,25) **\crv{(-12,-6) & (-4,0)}?(0)*\dir{<}?(.6)*\dir{}+(.2,0)*{\bullet};
 ?(0)*\dir{<}?(.75)*\dir{}+(.2,0)*{\bullet};?(0)*\dir{<}?(.9)*\dir{}+(0,0)*{\bullet};
 (-28,25)*{}; (-12,25) **\crv{(-28,10) & (-12,10)}?(0)*\dir{<};
  ?(.2)*\dir{}+(0,0)*{\bullet}?(.35)*\dir{}+(0,0)*{\bullet};
 (-36,-15)*{}; (-36,25) **\crv{(-34,-6) & (-35,4)}?(1)*\dir{>};
 (-28,-15)*{}; (-42,25) **\crv{(-28,-6) & (-42,4)}?(1)*\dir{>};
 (-42,-15)*{}; (-20,-15) **\crv{(-42,-5) & (-20,-5)}?(1)*\dir{>};
 (6,10)*{\cbub{}{i}};
 (-23,0)*{\cbub{}{j}};
 (8,-4)*{\lambda};(-44,-4)*{\mu};
 (-44,-19)*{\scs +i};(-36,-19)*{\scs +k};(-29,-19)*{\scs +j};(-21,-19)*{\scs -i};(-13,-19)*{\scs -j};(-4,-19)*{ \scs -j};
 (-44,29)*{\scs +j};(-36,29)*{\scs+k};  (-29,29)*{\scs+k};(-21,29)*{\scs-j};(-13,29)*{\scs-k};
 (-4,29)*{\scs-j};
 \endxy}\;\;\right) \quad = \quad -\;\;
 \vcenter{\xy 0;/r.16pc/:
 (4,-15)*{}; (20,25) **\crv{(3,-6) & (20,4)}?(0)*\dir{<}?(.6)*\dir{}+(0,0)*{\bullet};
 (12,-15)*{}; (4,25) **\crv{(12,-6) & (4,0)}?(0)*\dir{<}?(.6)*\dir{}+(.2,0)*{\bullet};
 ?(0)*\dir{<}?(.75)*\dir{}+(.2,0)*{\bullet};?(0)*\dir{<}?(.9)*\dir{}+(0,0)*{\bullet};
 (28,25)*{}; (12,25) **\crv{(28,10) & (12,10)}?(0)*\dir{<};
  ?(.2)*\dir{}+(0,0)*{\bullet}?(.35)*\dir{}+(0,0)*{\bullet};
 (36,-15)*{}; (36,25) **\crv{(34,-6) & (35,4)}?(1)*\dir{>};
 (28,-15)*{}; (42,25) **\crv{(28,-6) & (42,4)}?(1)*\dir{>};
 (42,-15)*{}; (20,-15) **\crv{(42,-5) & (20,-5)}?(1)*\dir{>};
 (-6,10)*{\ccbub{}{i}};
 (23,0)*{\ccbub{}{j}};
 (-8,-4)*{-\lambda};(44,-4)*{-\mu};
 (23,5)*{\xybox{
 (-44,-19)*{\scs -j};(-36,-19)*{\scs -j};(-29,-19)*{\scs -i};(-21,-19)*{\scs +j};(-13,-19)*{\scs +k};
 (-4,-19)*{ \scs +i};
 (-44,29)*{\scs -j};(-36,29)*{\scs-k};  (-29,29)*{\scs-j};(-21,29)*{\scs+k};(-13,29)*{\scs+k};
 (-4,29)*{\scs+j};}};
 \endxy}
\]
is contravariant for composition of 1-morphisms, covariant for composition of
2-morphisms, and preserves the degree of a diagram.  Hence, this symmetry gives a
2-isomorphism
\begin{eqnarray}
  \tilde{\sigma} \maps \Ucat &\to& \Ucat^{\op} \nn \\
  \lambda &\mapsto&  -\lambda \nn \\
   \mathbf{1}_{\mu}  \cal{E}_{s_1} \cal{E}_{s_2} \cdots
    \cal{E}_{s_{m-1}}\cal{E}_{s_m}\onel \{t\}
 &\mapsto &
  \mathbf{1}_{-\lambda} \cal{E}_{s_m}  \cal{E}_{s_{m-1}} \cdots
    \cal{E}_{s_2}\cal{E}_{s_1}\mathbf{1}_{-\mu} \{t\} \nn
\end{eqnarray}
and on 2-morphisms $\tilde{\sigma}$ maps linear combinations of diagrams to the
linear combination of the diagrams obtained by applying the above transformation
to each summand. The relations on $\Ucat$ are symmetric under this
transformation, and $\tilde{\sigma}$ is a 2-functor.  The square of
$\tilde{\sigma}$ is the identity.

\paragraph{Reflect across the x-axis and invert orientation:}
Here we are careful to keep track of what happens to the shifts of sources and
targets
\[
\tilde{\psi}\left(\;\;\vcenter{\xy 0;/r.16pc/:
 (-4,-15)*{}; (-20,25) **\crv{(-3,-6) & (-20,4)}?(0)*\dir{<}?(.6)*\dir{}+(0,0)*{\bullet};
 (-12,-15)*{}; (-4,25) **\crv{(-12,-6) & (-4,0)}?(0)*\dir{<}?(.6)*\dir{}+(.2,0)*{\bullet};
 ?(0)*\dir{<}?(.75)*\dir{}+(.2,0)*{\bullet};?(0)*\dir{<}?(.9)*\dir{}+(0,0)*{\bullet};
 (-28,25)*{}; (-12,25) **\crv{(-28,10) & (-12,10)}?(0)*\dir{<};
  ?(.2)*\dir{}+(0,0)*{\bullet}?(.35)*\dir{}+(0,0)*{\bullet};
 (-36,-15)*{}; (-36,25) **\crv{(-34,-6) & (-35,4)}?(1)*\dir{>};
 (-28,-15)*{}; (-42,25) **\crv{(-28,-6) & (-42,4)}?(1)*\dir{>};
 (-42,-15)*{}; (-20,-15) **\crv{(-42,-5) & (-20,-5)}?(1)*\dir{>};
 (6,10)*{\cbub{}{i}};
 (-23,0)*{\cbub{}{j}};
 (8,-4)*{\lambda};(-44,-4)*{\mu};
 (-44,-19)*{\scs +i};(-36,-19)*{\scs +k};(-29,-19)*{\scs +j};(-21,-19)*{\scs -i};(-13,-19)*{\scs -j};(-4,-19)*{ \scs -j};
 (-44,29)*{\scs +j};(-36,29)*{\scs+k};  (-29,29)*{\scs+k};(-21,29)*{\scs-j};(-13,29)*{\scs-k};
 (-4,29)*{\scs-j}; (3,-19)*{\scs \{t\}};(3,29)*{\scs \{t'\}};
 \endxy}\;\;\right) \quad = \quad
  \vcenter{\xy 0;/r.16pc/:
 (-4,15)*{}; (-20,-25) **\crv{(-3,6) & (-20,-4)}?(1)*\dir{>}?(.6)*\dir{}+(0,0)*{\bullet};
 (-12,15)*{}; (-4,-25) **\crv{(-12,6) & (-4,0)}?(1)*\dir{>}?(.6)*\dir{}+(.2,0)*{\bullet};
 ?(1)*\dir{>}?(.75)*\dir{}+(.2,0)*{\bullet};?(.9)*\dir{}+(0,0)*{\bullet};
 (-28,-25)*{}; (-12,-25) **\crv{(-28,-10) & (-12,-10)}?(1)*\dir{>};
  ?(.2)*\dir{}+(0,0)*{\bullet}?(.35)*\dir{}+(0,0)*{\bullet};
 (-36,15)*{}; (-36,-25) **\crv{(-34,6) & (-35,-4)}?(0)*\dir{<};
 (-28,15)*{}; (-42,-25) **\crv{(-28,6) & (-42,-4)}?(0)*\dir{<};
 (-42,15)*{}; (-20,15) **\crv{(-42,5) & (-20,5)}?(0)*\dir{<};
 (6,-10)*{\cbub{}{i}};
 (-23,0)*{\cbub{}{j}};
 (8,4)*{\lambda};(-44,4)*{\mu};
 (-46,19)*{\scs +i};(-38,19)*{\scs+k};(-29,19)*{\scs+j};(-21,19)*{\scs-i};(-13,19)*{\scs-j};(-4,19)*{\scs-j};
 (-46,-29)*{\scs+j};(-38,-29)*{\scs+k};  (-29,-29)*{\scs+k};(-21,-29)*{\scs-j};(-13,-29)*{\scs-k}; (-4,-29)*{\scs-j};
 (3,-29)*{\scs \;\;\{-t'\}};(3,19)*{\scs\;\; \{-t\}};
 \endxy}
\]
The degree shifts on the right hand side are required for this transformation to
preserve the degree of a diagram.  This transformation preserves the order of
composition of 1-morphisms, but is contravariant with respect to composition of
2-morphisms.  Hence, by extending this transformation to sums of diagrams we get
a 2-isomorphism given by
\begin{eqnarray}
  \tilde{\psi} \maps \Ucat &\to& \Ucat^{\co} \nn \\
  \lambda &\mapsto&  \lambda \nn \\
  \mathbf{1}_{\mu} \cal{E}_{\ii}\onel \{t\}
 &\mapsto &
 \mathbf{1}_{\mu} \cal{E}_{\ii}\onel \{-t\}
\end{eqnarray}
and on 2-morphisms $\tilde{\psi}$ reflects the diagrams representing summands
across the $x$-axis and inverts the orientation.  Again, the relations on $\Ucat$
possess this symmetry so it is not difficult to check that $\tilde{\psi}$ is a
2-functor. Furthermore, it is clear that $\tilde{\psi}$ is invertible since its
square is the identity.

\bigskip

It is easy to see that these 2-functors commute with each other `on-the-nose'.
That is, we have equalities
\begin{equation}
  \tilde{\omega} \tilde{\sigma} = \tilde{\omega} \tilde{\sigma}, \qquad \tilde{\sigma} \tilde{\psi} = \tilde{\psi} \tilde{\sigma}, \qquad
  \tilde{\omega} \tilde{\psi} = \tilde{\psi} \tilde{\omega}
\end{equation}
The composite 2-functor $\tilde{\psi} \tilde{\omega} \tilde{\sigma}$ is given by
\begin{eqnarray}
  \tilde{\psi} \tilde{\omega} \tilde{\sigma} \maps \Ucat &\to& \Ucat^{\co\op} \nn \\
  \lambda &\mapsto&  \lambda \nn \\
   \mathbf{1}_{\mu}  \cal{E}_{s_1} \cal{E}_{s_2} \cdots
    \cal{E}_{s_{m-1}}\cal{E}_{s_m}\onel \{t\}
 &\mapsto &
  \mathbf{1}_{\lambda} \cal{E}_{-s_m}  \cal{E}_{-s_{m-1}} \cdots
    \cal{E}_{-s_2}\cal{E}_{-s_1}\mathbf{1}_{\mu} \{-t\} \nn
\end{eqnarray}
and is given on 2-morphisms by rotating diagrams by $180^{\circ}$.

The following transformation only differs from $\tilde{\psi} \tilde{\omega}
\tilde{\sigma}$ by a shift and is given by taking adjoints.

\paragraph{Rotation by $180^{\circ}$ (taking right adjoints):}
This transformation is a bit more subtle because it uses the almost biadjoint
structure of $\Ucat$, in particular, the calculus of mates (see \cite[Section
4.3]{Lau1}). For each $\mathbf{1}_{\mu}x\onel  \in \Ucat$ denote its right
adjoint by $\onel y\mathbf{1}_{\mu}$. The symmetry of rotation by $180^{\circ}$
can also be realized by the 2-functor that sends a 1-morphism
$\mathbf{1}_{\mu}x\onel$ to its right adjoint $\onel y\mathbf{1}_{\mu}$ and each
2-morphism $\zeta \maps \mathbf{1}_{\mu}x\onel  \To \mathbf{1}_{\mu}x'\onel$ to
its mate under the adjunctions $\mathbf{1}_{\mu}x\onel  \dashv \onel
y\mathbf{1}_{\mu}$ and $\mathbf{1}_{\mu}x'\onel \dashv \onel y'\mathbf{1}_{\mu}$.
That is, $\zeta$ is mapped to its right dual $\zeta^*$. Pictorially,
\[
 \xy 0;/r.16pc/:
 (0,0)*{\bullet}+(3,1)*{\scs \zeta};
 (0,-16);(0,16) **\dir{-};
 (11,-14)*{\lambda}; (-11,14)*{\lambda'};
 (0,-18)*{\scs x};(-0,18)*{\scs x'};
 \endxy
 \quad \rightsquigarrow \quad
  \xy 0;/r.16pc/:
 (8,4)*{}="1";
 (0,4)*{}="2";
 (0,-4)*{}="2'";
 (-8,-4)*{}="3";
 (0,0)*{\bullet}+(3,1)*{\scs \zeta};
 (8,-16);"1" **\dir{-};
 "2";"2'" **\dir{-};
 "1";"2" **\crv{(8,12) & (0,12)};
 "2'";"3" **\crv{(0,-12) & (-8,-12)};
 "3";(-8,16) **\dir{-};
 (-11,-14)*{\lambda}; (11,14)*{\lambda'};
 (8,-18)*{\scs y'};(-8,18)*{\scs y};
 (-3.5,-12)*{\scs};
 (4,12)*{\scs };
 \endxy
 \quad = \quad
  \xy 0;/r.16pc/:
 (0,0)*{\bullet}+(5,1)*{\scs \zeta^*};
 (0,-16);(0,16) **\dir{-};
 (11,-14)*{\lambda'}; (-11,14)*{\lambda};
 (0,-18)*{\scs y'};(-0,18)*{\scs y};
 \endxy
\]
This transformation is contravariant with respect to composition of 1-morphisms
and 2-morphisms. We get a 2-functor
\begin{eqnarray}
  \tilde{\tau} \maps \Ucat &\to& \Ucat^{\co\op} \nn \\
  \lambda &\mapsto&  \lambda \nn \\
   \mathbf{1}_{\mu}  \cal{E}_{s_1} \cal{E}_{s_2} \cdots
    \cal{E}_{s_{m-1}}\cal{E}_{s_m}\onel \{t\}
 &\mapsto &
  \mathbf{1}_{\lambda} \cal{E}_{-s_m}  \cal{E}_{-s_{m-1}} \cdots
    \cal{E}_{-s_2}\cal{E}_{-s_1}\mathbf{1}_{\mu} \{-t+t'\} \nn \\
 \zeta & \mapsto & \zeta^*
\end{eqnarray}
where the degree shift $t'$ for the right adjoint $\mathbf{1}_{\lambda}
\cal{E}_{-s_m}  \cal{E}_{-s_{m-1}} \cdots
\cal{E}_{-s_2}\cal{E}_{-s_1}\mathbf{1}_{\mu} \{-t+t''\}$, determined by
\eqref{eq_almost_biadjoints},  ensures that $\tilde{\tau}$ is degree preserving.
Inspection of the relations for $\Ucat$ will reveal that they are invariant under
this transformation so that $\tilde{\tau}$ is a 2-functor.

We can define an inverse for $\tilde{\tau}$ given by taking left adjoints.  We
record this 2-morphism here.
\begin{eqnarray}
  \tilde{\tau}^{-1} \maps \Ucat &\to& \Ucat^{\co\op} \nn \\
  \lambda &\mapsto&  \lambda \nn \\
   \mathbf{1}_{\mu}  \cal{E}_{s_1} \cal{E}_{s_2} \cdots
    \cal{E}_{s_{m-1}}\cal{E}_{s_m}\onel \{t\}
 &\mapsto &
  \mathbf{1}_{\lambda} \cal{E}_{-s_m}  \cal{E}_{-s_{m-1}} \cdots
    \cal{E}_{-s_2}\cal{E}_{-s_1}\mathbf{1}_{\mu} \{-t+t''\} \nn \\
 \zeta & \mapsto & ^*\zeta.
\end{eqnarray}
with degree shift $t''$ determined from \eqref{eq_almost_biadjoints} and the left
dual $^*\zeta$ of the 2-morphism $\zeta$ defined in \cite[Section 4.3]{Lau1}.

\begin{rem}
The composition $\tilde{\tau} \tilde{\psi} \tilde{\omega} \tilde{\sigma} \maps
\Ucat \to \Ucat$ gives 2-isomorphism that fixes all diagrams and only effects the
grading shifts.
\end{rem}

\begin{rem} \label{lem_right_adjoints}
There are degree zero isomorphisms of graded $\Bbbk$-vector spaces
\begin{eqnarray}
 \Ucatq(f x,y) &\to& \Ucatq(x,\tilde{\tau}(f)y), \\
  \Ucatq(x, g y) &\to& \Ucatq(\tilde{\tau}^{-1}(g)x,y),
\end{eqnarray}
for all 1-morphisms $f,g,x,y$ in $\Ucatq$, defined at the end of
Section~\ref{subsubsec_definition}.
\end{rem}

%
\subsection{Karoubi envelope, $\UcatD$, and 2-representations}
\label{subsec_Karoubi}
%

The Karoubi envelope $Kar(\cal{C})$ of a category $\cal{C}$ is an enlargement of
the category $\cal{C}$ in which all idempotents split (see \cite[Section 9]{Lau1}
and references therein). There is a fully faithful functor $\cal{C} \to
Kar(\cal{C})$ that is universal with respect to functors which split idempotents
in $\cal{C}$. This means that if $F\maps \cal{C} \to \cal{D}$ is any functor
where all idempotents split in $\cal{D}$, then $F$ extends uniquely (up to
isomorphism) to a functor $\tilde{F} \maps Kar(\cal{C}) \to \cal{D}$ (see for
example \cite{Bor}, Proposition 6.5.9). Furthermore, for any functor $G \maps
\cal{C} \to \cal{D}$ and a natural transformation $\alpha \maps F \To G$,
$\alpha$ extends uniquely to a natural transformation $\tilde{\alpha} \maps
\tilde{F}\To\tilde{G}$.  When $\cal{C}$ is additive the inclusion $\cal{C} \to
Kar(\cal{C})$ is an additive functor.

\begin{defn}
Define the additive $\Bbbk$-linear 2-category $\UcatD$ to have the same objects
as $\Ucat$ and hom additive $\Bbbk$-linear categories given by
$\UcatD(\lambda,\lambda') = Kar\left(\Ucat(\lambda,\lambda')\right)$. The
fully-faithful additive $\Bbbk$-linear functors $\Ucat(\lambda,\lambda') \to
\UcatD(\lambda,\lambda')$ combine to form an additive $\Bbbk$-linear 2-functor
$\Ucat \to \UcatD$ universal with respect to splitting idempotents in the hom
categories $\UcatD(\lambda,\lambda')$.  The composition functor
$\UcatD(\lambda,\lambda') \times \UcatD(\lambda',\lambda'') \to
\UcatD(\lambda,\lambda'')$ is induced by the universal property of the Karoubi
envelope from the composition functor for $\Ucat$. The 2-category $\UcatD$ has
graded 2-homs given by
\begin{equation}
\HOM_{\UcatD}(x,y) := \bigoplus_{t\in \Z}\Hom_{\UcatD}(x\{t\},y).
\end{equation}
\end{defn}

\begin{defn} A 2-representation of $\Ucatq$ is a (weak) graded additive $\Bbbk$-linear
2-functor $\Psi^* \maps \Ucatq \to \cal{M}^*$, where $\cal{M}^*$ is a graded
additive $\Bbbk$-linear 2-category with a translation.

 A 2-representation of $\Ucat$ is an additive $\Bbbk$-linear 2-functor $\Psi \maps
\Ucat \to \cal{M}$ that respects the grading.  This happens when there is an
additive $\Bbbk$-linear 2-functor $\cal{M} \to \cal{M}^*$, with $\cal{M}^*$ a
graded additive $\Bbbk$-linear 2-category, making the diagram
\begin{equation}
 \xymatrix{ \Ucat \ar[d]_-{\Psi} \ar[r]& \Ucatq \ar[d]^-{\Psi^*} \\
  \cal{M} \ar[r] & \cal{M}^* }
\end{equation}
weakly commutative.  Thus, to study 2-representation of $\Ucat$ it suffices to
study 2-representation of $\Ucatq$ and then restrict to degree--preserving
2-morphisms.

A 2-representation of $\UcatD$ is a additive $\Bbbk$-linear 2-functor $\dot{\Psi}
\maps \UcatD \to \cal{M}$ that respects the grading.
\end{defn}

Let $\cal{M}$ be a 2-category as above in which idempotents split. Any
2-representation $\Psi^*\maps\Ucatq \to \cal{M}^*$ gives a unique (up to
isomorphism) 2-representation $\dot{\Psi}\maps \UcatD \to \cal{M}$.  The
2-functor $\dot{\Psi}$ is obtained from $\Psi^*$ by restricting to the degree
preserving 2-morphisms of $\Ucatq$ and using the universal property of the
Karoubi envelope. This is illustrated schematically below:

\[
  \xy
 (-40,10)*+{\Ucatq}="q"; (0,10)*++{\Ucat}="0"; (50,10)*+{\UcatD}="d";
 (-40,-15)*+{\cal{M}^* }="kq"; (0,-15)*++{\cal{M}}="k";
  {\ar_{\Psi^*} "q";"kq"};
  {\ar^{\Psi} "0";"k"};
  {\ar "0";"d"};{\ar^{\dot{\Psi}} "d";"k"};
  {\ar@<.6em> "q";"0"}; {\ar@<.6em>@{_{(}->} "0";"q"};
  {\ar@{_{(}->} "k";"kq"};
  (-20,20)*{\txt\small{restrict to \\ degree 0 \\ 2-morphisms}};
  (25,15)*{\txt\small{Karoubian envelope}};
 \endxy
\]

\begin{rem}
The 2-functors $\tilde{\omega}$, $\tilde{\sigma}$, $\tilde{\psi}$, $\tilde{\tau}$
on $\Ucat$ extend to 2-functors on $\UcatD$, for which we use the same notations.
For example,
\begin{eqnarray}
  \tilde{\omega} \maps \UcatD &\to& \UcatD \nn \\
  \lambda &\mapsto&  -\lambda \nn \\
  \Big( \cal{E}_{\ii}\onel \{t\}, e \Big)
 &\mapsto &
 \Big( \tilde{\omega}\big(\cal{E}_{\ii}\onel \{t\}\big),\tilde{\omega}(e) \Big) \nn\\
 \zeta &\mapsto& \tilde{\omega}(\zeta)
\end{eqnarray}
and the other 2-morphisms $\tilde{\sigma}$, $\tilde{\psi}$, and $\tilde{\tau}$
are defined analogously.  In particular, each 1-morphism in $\UcatD$ has left and
right adjoints.
\end{rem}

%
\subsection{Direct sum decompositions} \label{subsec_dirsumdecs}
%

Recall that $\cal{E}_{\nu}\onel$ is the direct sum of $\cal{E}_{\ii}\onel$ over
all $\ii \in \seq(\nu)$:
\begin{equation}
\cal{E}_{\nu}\onel := \bigoplus_{\ii \in \seq(\nu)} \cal{E}_{\ii}\onel.
\end{equation}
Due to the existence of the homomorphism $\phi_{\nu,\lambda}$ in the formula
\eqref{eq_phi_nu_lambda} any degree $0$ idempotent $e$ of $R(\nu)$ gives rise to
the idempotent $\phi_{\nu,\lambda}(e)$ of $\cal{E}_{\nu}\onel$ and to the
1-morphism $\left(\cal{E}_{\nu}\onel,\phi_{\nu,\lambda}\left(e\right)\right)$ of
$\UcatD$.

Introduce idempotents
\[
e_{+i,m} = \xy 0;/r.15pc/:
 (-12,-20)*{}; (12,20) **\crv{(-12,-8) & (12,8)}?(1)*\dir{>};
 (-4,-20)*{}; (4,20) **\crv{(-4,-13) & (12,2) & (12,8)&(4,13)}?(1)*\dir{>};?(.88)*\dir{}+(0.1,0)*{\bullet};
 (4,-20)*{}; (-4,20) **\crv{(4,-13) & (12,-8) & (12,-2)&(-4,13)}?(1)*\dir{>}?(.86)*\dir{}+(0.1,0)*{\bullet};
 ?(.92)*\dir{}+(0.1,0)*{\bullet};
 (12,-20)*{}; (-12,20) **\crv{(12,-8) & (-12,8)}?(1)*\dir{>}?(.70)*\dir{}+(0.1,0)*{\bullet};
 ?(.90)*\dir{}+(0.1,0)*{\bullet};?(.80)*\dir{}+(0.1,0)*{\bullet};
 \endxy
 \in \Ucat(\cal{E}_{mi}\onel,\cal{E}_{mi}\onel),
\qquad  e_{-i,m} = (-1)^{\frac{m(m-1)}{2}}\;\xy 0;/r.15pc/:
 (-12,-20)*{}; (12,20) **\crv{(-12,-8) & (12,8)}?(0)*\dir{<};
 (-4,-20)*{}; (4,20) **\crv{(-4,-13) & (12,2) & (12,8)&(4,13)}?(0)*\dir{<};?(.88)*\dir{}+(0.1,0)*{\bullet};
 (4,-20)*{}; (-4,20) **\crv{(4,-13) & (12,-8) & (12,-2)&(-4,13)}?(0)*\dir{<}?(.86)*\dir{}+(0.1,0)*{\bullet};
 ?(.92)*\dir{}+(0.1,0)*{\bullet};
 (12,-20)*{}; (-12,20) **\crv{(12,-8) & (-12,8)}?(0)*\dir{<}?(.70)*\dir{}+(0.1,0)*{\bullet};
 ?(.90)*\dir{}+(0.1,0)*{\bullet};?(.80)*\dir{}+(0.1,0)*{\bullet};
 \endxy
\in \Ucat(\cal{E}_{-mi}\onel,\cal{E}_{-mi}\onel)
\]
similar to the idempotent $e_{i,m}$ in \cite[Section 2.2]{KL} and \cite{KL2}.
Define 1-morphisms $\cal{E}_{+i^{(m)}}\onel$ and $\cal{E}_{-i^{(m)}}\onel$ in
$\UcatD$ by
\begin{equation}
\cal{E}_{+i^{(m)}}\onel : = \left( \cal{E}_{+i^m}\onel, e_{+i,m}\right)\left\{
\frac{m(1-m)}{2} \frac{i \cdot i}{2} \right\},
\end{equation}
\begin{equation}
\cal{E}_{-i^{(m)}}\onel : = \left( \cal{E}_{-i^m}\onel, e_{-i,m}\right)\left\{
\frac{m(1-m)}{2} \frac{i \cdot i}{2} \right\}.
\end{equation}
As in \cite{KL,KL2}, we have direct sum decompositions
\begin{equation}
  \cal{E}_{+i^m}\onel \cong \left(\cal{E}_{+i^{(m)}}\onel\right)^{\oplus[m]_i!}, \qquad \quad
  \cal{E}_{-i^m}\onel \cong \left(\cal{E}_{-i^{(m)}}\onel\right)^{\oplus[m]_i!}. \nn
\end{equation}

For any divided power sequence $\ii =
(\epsilon_1i_1^{(a_1)},\epsilon_2i_2^{(a_2)}, \dots, \epsilon_mi_m^{(a_m)})$
define
\begin{equation}
  \cal{E}_{\ii}\onel := (\cal{E}_{\hat{\ii}}\onel,e_{\ii}), \nn
\end{equation}
where $\hat{\ii}$ is the sequence
\begin{equation}
  \big(\epsilon_1 i_1, \dots,\epsilon_1 i_1, \epsilon_2 i_2, \dots \epsilon_2 i_2, \dots
  , \epsilon_m i_m \dots, \epsilon_m i_m\big) = \big((\epsilon_1 i_1)^{a_1} (\epsilon_2
  i_2)^{a_2} \dots (\epsilon_m i_m)^{a_m}\big), \nn
\end{equation}
with term $\epsilon_1i_1$ repeating $a_1$ times, term $\epsilon_2i_2$ repeating
$a_2$ times, etc., and
\begin{equation}
  e_{\ii} = e_{\epsilon_1i_1,a_1} \cdot e_{\epsilon_2i_2,a_2}\cdot \ldots \cdot
   e_{\epsilon_mi_m,a_m}
\end{equation}
is the horizontal product of idempotents.

When interested in only one part of a sequence $\ii$, we write $\dots \ii''
\dots$ instead of $\ii = \ii' \ii''\ii'''$ and $\cal{E}_{\dots\ii''\dots}\onel$
instead of $\cal{E}_{\ii}\onel =\cal{E}_{\ii'\ii''\ii'''}\onel$.

\begin{prop} \label{serre-isoms}
For each $i,j\in I$, $i \neq j$, and $\lambda \in X$ there are 2-isomorphisms of
1-morphisms in $\UcatD$
\begin{eqnarray}
  \bigoplus_{a=0}^{\lfloor \frac{d+1}{2} \rfloor}
 {\cal{E}}_{\ldots +i^{(2a)}+j +i^{(d+1-2a)} \ldots} \onel &\cong&
   \bigoplus_{a=0}^{\lfloor \frac{d}{2} \rfloor}
 {\cal{E}}_{\ldots +i^{(2a+1)}+j +i^{(d-2a)} \ldots}\onel , \nn\\
 \bigoplus_{a=0}^{\lfloor \frac{d+1}{2} \rfloor}
 {\cal{E}}_{\ldots -i^{(2a)}-j -i^{(d+1-2a)} \ldots} \onel &\cong&
   \bigoplus_{a=0}^{\lfloor \frac{d}{2} \rfloor}
 {\cal{E}}_{\ldots -i^{(2a+1)}-j -i^{(d-2a)} \ldots}\onel ,
\end{eqnarray}
where $d=d_{ij}=-\la i,j_X\ra$.
\end{prop}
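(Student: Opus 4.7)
The plan is to derive this proposition from the categorification of the quantum Serre relations for the rings $R(\nu)$ established in \cite{KL,KL2}, transported into $\UcatD$ via the homomorphism $\phi_{\nu,\lambda}$ of \eqref{eq_phi_nu_lambda}. The first isomorphism, involving only positive sequences, is essentially a statement inside the positive half of the calculus, and the second will follow by applying the sign-reversing symmetry $\tilde{\omega}$.

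First I would treat the first isomorphism. Fix $\lambda$, set $\mu=\lambda+\ii''_X$, and let $\nu=(d+1)i+j\in\N[I]$. Each summand appearing on either side has the form $\cal{E}_{\ii'}\cdot\bigl(\cal{E}_{+\hat{\nu}}\mathbf{1}_{\mu},\,e_{a}\bigr)\cdot\cal{E}_{\ii''}\onel$, where $e_{a}$ is the horizontal composite $e_{+i,2a}\cdot 1_{+j}\cdot e_{+i,d+1-2a}$ (respectively $e_{+i,2a+1}\cdot 1_{+j}\cdot e_{+i,d-2a}$), interpreted as a 2-endomorphism of $\cal{E}_{+\hat{\nu}}\mathbf{1}_{\mu}$ via $\phi_{\nu,\mu}\maps R(\nu)\otimes_{\Bbbk}\Pi_{\mu}\to \ENDU(\cal{E}_{+\hat{\nu}}\mathbf{1}_{\mu})$. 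The direct sums over even $a$ and over odd $a$ correspond to two idempotents in $R(\nu)$ that together categorify the terms of the quantum Serre relation $\sum_{a+b=d+1}(-1)^{a}\theta_{i}^{(a)}\theta_{j}\theta_{i}^{(b)}=0$.

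The desired isomorphism of these projective $R(\nu)$-modules is exactly the categorified Serre relation proved in \cite{KL2}: explicit mutually inverse braid-like morphisms between them are built out of crossings and dots, and the identities they must satisfy reduce to the ``hard'' Reidemeister~3 relation \eqref{eq_r3_hard-gen} together with \eqref{eq_r2_ij-gen} and \eqref{eq_nil_rels}. Transporting these morphisms through $\phi_{\nu,\mu}$ yields 2-morphisms in $\Ucat$; horizontally composing with the identity 2-morphisms on $\cal{E}_{\ii'}$ and $\cal{E}_{\ii''}$ produces mutually inverse 2-morphisms in $\UcatD$ between the two direct sums, establishing the isomorphism.

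For the second isomorphism I would apply the 2-isomorphism $\tilde{\omega}\maps\UcatD\to\UcatD$ of Section~\ref{sec_symm}, which acts by $\lambda\mapsto -\lambda$, $\cal{E}_{+i^{(m)}}\mathbf{1}_{\lambda+mi_X}\mapsto \cal{E}_{-i^{(m)}}\mathbf{1}_{-\lambda-mi_X}$, and more generally intertwines positive and negative sequences. Applying $\tilde{\omega}$ to the first isomorphism taken at weight $-\lambda$ yields the second, after relabelling. The main obstacle is the first step: one must verify that the specific divided-power idempotents $e_{+i,m}$ used here agree with those of \cite{KL2} and that the Serre morphisms constructed there are carried faithfully through $\phi_{\nu,\mu}$. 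Both are straightforward: the agreement of idempotents is immediate from Section~\ref{subsec_dirsumdecs}, and the Serre morphisms in \cite{KL2} are expressed purely in terms of braid-like generators with upward orientation, which embed tautologically into $\Ucat$ via the upward-oriented $R(\nu)$-subcalculus.
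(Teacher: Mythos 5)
Your proposal is correct and follows essentially the same route as the paper: derive the isomorphisms from the categorified quantum Serre relations of \cite{KL,KL2} between idempotents in $R(\nu)$ and transport them into $\UcatD$ via $\phi_{\nu,\lambda}$. The only cosmetic difference is that you obtain the negative-sequence case by applying $\tilde{\omega}$ to the positive one, whereas the paper invokes $\phi_{-\nu,\lambda}$ directly — these are interchangeable, since $\tilde{\omega}$ implements precisely the orientation-reversal and $(-1)^a$ rescaling of $ii$-crossings that is built into the definition of $\phi_{-\nu,\lambda}$ (and hence carries $e_{+i,m}$ to $e_{-i,m}$ as needed).
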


\begin{proof}
These isomorphisms follow from categorified quantum Serre relations
\cite[Proposition 2.13]{KL} and \cite[Proposition 6]{KL2} between idempotents in
rings $R(\nu)$, via homomorphisms $\phi_{\nu,\lambda}$ and $\phi_{-\nu,\lambda}$.
\end{proof}

\begin{prop} \label{pmii-isoms}
For each $i\in I$, $\lambda \in X$ there are 2-isomorphisms in $\UcatD$
\[
 \begin{array}{ccl}
   \text{$\cal{E}_{\ii'+i-i\ii''}\onel \cong \cal{E}_{\ii'-i+i\ii''}\onel \oplus_{
  [\la i, \mu \ra]_i} \cal{E}_{\ii'\ii''} \onel$}
  & \quad & \text{if $\la i,\lambda +\ii''_X\ra \geq 0$},
   \\
  \text{$ \cal{E}_{\ii'-i+i\ii''}\onel \cong \cal{E}_{\ii'+i-i\ii''}\onel
    \oplus_{-[\la i, \mu \ra]_i} \cal{E}_{\ii'\ii''} \onel$}& \quad&
     \text{if $\la i, \lambda +\ii''_X\ra \leq 0$,}
 \end{array}
\]
where $\mu=\lambda+\ii''_X$.
\end{prop}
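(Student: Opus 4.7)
The plan is a two-step reduction. First I reduce to the $\mf{sl}_2$ case $\ii'=\ii''=\emptyset$, in which the proposition becomes the categorified $\mf{sl}_2$-commutator $\cal{E}_{+i}\cal{E}_{-i}\mathbf{1}_\mu \cong \cal{E}_{-i}\cal{E}_{+i}\mathbf{1}_\mu \oplus_{[\la i,\mu\ra]_i}\mathbf{1}_\mu$ in $\UcatD$ (for $\la i,\mu\ra \ge 0$), together with its mirror. Second I propagate this local decomposition to the full statement by horizontally composing with $\cal{E}_{\ii'}$ on the left and $\cal{E}_{\ii''}\onel$ on the right. Since horizontal composition in $\UcatD$ is an additive bifunctor it preserves direct sums and commutes with grading shifts, and writing $\mu = \lambda+\ii''_X$ identifies the right weight of the local $\cal{E}_{+i-i}$ (resp.\ $\cal{E}_{-i+i}$) factor with the $\mu$ appearing in the proposition.

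For the $\mf{sl}_2$ step I would construct explicit splitting data out of the generating 2-morphisms of $\Ucat$, following the strategy of \cite{Lau1}. The $\cal{E}_{-i+i}\mathbf{1}_\mu$ summand is split off by the sideways crossings \eqref{eq_crossl-gen}--\eqref{eq_crossr-gen} specialized to $j=i$, which together with the zigzag identities \eqref{eq_biadjoint1} and the nilHecke relations \eqref{eq_nil_rels}--\eqref{eq_nil_dotslide} produce an idempotent of $\cal{E}_{+i-i}\mathbf{1}_\mu$ with image isomorphic to $\cal{E}_{-i+i}\mathbf{1}_\mu$. For each $k\in\{0,\dots,\la i,\mu\ra-1\}$ the trivial summand $\mathbf{1}_\mu$ (with the grading shift needed so that the total contribution assembles to $[\la i,\mu\ra]_i\cdot\mathbf{1}_\mu$) is cut out by an idempotent whose inclusion--projection pair is given by a dotted cup carrying an appropriate (possibly fake) bubble, paired with the corresponding dotted cap, exactly as in the summands of \eqref{eq_ident_decomp}; the grading shifts match because a dot has degree $i\cdot i$ and cups/caps carry the degrees $c_{\pm i,\mu}$ recorded in Section~\ref{subsubsec_definition}.

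The heart of the argument is to verify that these idempotents are mutually orthogonal and sum to the identity of $\cal{E}_{+i-i}\mathbf{1}_\mu$. Completeness is precisely the first equation of \eqref{eq_ident_decomp}. Pairwise orthogonality among the $\mathbf{1}_\mu$-idempotents reduces, after composing a cup with a cap, to the infinite Grassmannian identity \eqref{eq_infinite_Grass} together with the positivity of bubbles \eqref{eq_positivity_bubbles}. The subtlest point, which I expect to be the main obstacle, is the mixed orthogonality between the $\cal{E}_{-i+i}$-idempotent and each $\mathbf{1}_\mu$-idempotent: each such composite is a sideways crossing stacked onto a dotted cup or cap, which via the zigzag axioms \eqref{eq_biadjoint1} collapses to a single dotted bubble of strictly negative degree and hence vanishes by \eqref{eq_positivity_bubbles}. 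The $\la i,\mu\ra \le 0$ case is then handled symmetrically via the second equation of \eqref{eq_ident_decomp}, or equivalently by applying the involutive 2-isomorphism $\tilde\omega$ of Section~\ref{sec_symm} that exchanges $+i$ with $-i$ and sends $\mu$ to $-\mu$.
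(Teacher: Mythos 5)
Your proposal is correct and is essentially the paper's own argument: the paper presents the same splitting data as an explicit pair of mutually inverse matrices $\alpha$, $\alpha^{-1}$ built from the sideways crossings and the dotted cups/caps carrying (fake) bubbles, with the verification via the $\mathfrak{sl}_2$-relations \eqref{eq_ident_decomp}, the infinite Grassmannian relation and positivity of bubbles deferred to \cite{Lau1}, and with the spectator strands $\ii'$, $\ii''$ carried along implicitly rather than through your explicit horizontal-composition reduction. The case $\la i,\mu\ra\leq 0$ is likewise treated symmetrically, just as you indicate.
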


\begin{proof}
Set $\lambda+\ii''_X= \mu$.  The decomposition $\cal{E}_{\ii'+i-i\ii''}\onel
\cong \cal{E}_{\ii'-i+i\ii''}\onel \oplus_{
  [\la i, \mu \ra]_i} \cal{E}_{\ii'\ii''} \onel$  for $\la i, \mu\ra \geq 0$ is given by 2-morphisms in $\Ucat$
\begin{eqnarray}
 \alpha &\maps& \cal{E}_{\ii'+i-i\ii''}\onel   \to
\cal{E}_{\ii'-i+i\ii''}\onel \oplus_{
  [\la i, \mu \ra]_i} \cal{E}_{\ii'\ii''} \onel\\
 \alpha^{-1} &\maps& \cal{E}_{\ii'-i+i\ii''}\onel \oplus_{
  [\la i, \mu \ra]_i} \cal{E}_{\ii'\ii''} \onel
\to  \cal{E}_{\ii'+i-i\ii''}\onel
\end{eqnarray}
where $\alpha$ and $\alpha^{-1}$ consist of matrices of diagrams
\begin{eqnarray}
 \alpha &=&
 \left(
   \begin{array}{c}
    - \;
  \xy 0;/r.15pc/:
  (0,0)*{\xybox{
    (-4,-4)*{};(4,4)*{} **\crv{(-4,-1) & (4,1)}?(1)*\dir{>} ;
    (4,-4)*{};(-4,4)*{} **\crv{(4,-1) & (-4,1)}?(0)*\dir{<};
    (-5,-3)*{\scs i};
     (7,-3)*{\scs i};
     (14,2)*{ \mu};
     }};
  \endxy  \\
  \alpha_0   \\
     \vdots \\
    \alpha_{\la i,\mu\ra-1}  \\
   \end{array}
 \right),
  \quad
 \alpha_s \;\; :=
  \sum_{j=0}^{s}
 \vcenter{\xy 0;/r.16pc/:
           (-4,-2)*{}="t1";
            (4,-2)*{}="t2";
            "t2";"t1" **\crv{(4,5) & (-4,5)}; ?(.03)*\dir{<} ?(.9)*\dir{<}
            ?(.25)*\dir{}+(0,-.1)*{\bullet}+(5,1)*{\scs s-j};;
            (2,13)*{\ccbub{\;\; -\la i,\mu\ra-1+j}{i}};
            (18,18)*{ \mu};
        \endxy}
 \quad \text{for $0 \leq s \leq \la i,\mu\ra-1$},
 \nn\\ \nn\\
 \alpha^{-1} &=&
 \left(
  \begin{array}{cccccc}
 \xy 0;/r.15pc/:
  (0,0)*{\xybox{
    (-4,-4)*{};(4,4)*{} **\crv{(-4,-1) & (4,1)}?(0)*\dir{<} ;
    (4,-4)*{};(-4,4)*{} **\crv{(4,-1) & (-4,1)}?(1)*\dir{>};
    (-7,-3)*{\scs i};
     (6,-3)*{\scs i};
     (10,2)*{ \mu};
     }};
  \endxy
  &
  \vcenter{\xy 0;/r.15pc/:
            (-8,0)*{};
           (-4,2)*{}="t1";
            (4,2)*{}="t2";
            "t2";"t1" **\crv{(4,-5) & (-4,-5)}; ?(.1)*\dir{>} ?(.95)*\dir{>}
            ?(.2)*\dir{}+(0,-.1)*{\bullet}+(4,-2)*{\scs \quad \la i,\mu\ra-1};;
        \endxy}
  & \dots
   &  \vcenter{\xy 0;/r.15pc/:
            (-7,0)*{};
           (-4,2)*{}="t1";
            (4,2)*{}="t2";
            "t2";"t1" **\crv{(4,-5) & (-4,-5)}; ?(.1)*\dir{>} ?(.95)*\dir{>}
            ?(.2)*\dir{}+(0,-.1)*{\bullet}+(4,-2)*{\scs \qquad \la i,\mu\ra-1-s};;
        \endxy}
   & \dots &
   \vcenter{\xy 0;/r.15pc/:
            (0,0)*{};
           (-4,2)*{}="t1";
            (4,2)*{}="t2";
            "t2";"t1" **\crv{(4,-5) & (-4,-5)}; ?(.1)*\dir{>} ?(.95)*\dir{>}
            ?(.2)*\dir{};
        \endxy} \nn \\
    \end{array}
    \right) .
\end{eqnarray}
Note that all bubbles that appear in $\alpha_s$ above are fake bubbles.  One can
check that $\alpha^{-1}\alpha ={\rm Id}_{\cal{E}_{\ii'+i-i\ii''}\onel}$ and that
$\alpha\alpha^{-1} ={\rm Id}_{\cal{E}_{\ii'-i+i\ii''}\onel\oplus_{[\la
i,\mu\ra]}\onel}$ using the $\mathfrak{sl}_2$-relations (for details see
\cite{Lau1}). Here we have taken
\begin{equation}
 \oplus_{[\la i,\mu\ra]} \onel = \onel\{1-\la i,\mu\ra\} \oplus
 \cdots \oplus \onel\{2s+1-\la i,\mu\ra\} \oplus \cdots \onel\{\la i,\mu\ra-1\}
\end{equation}
so that $\alpha$ and $\alpha^{-1}$ have degree zero.  The isomorphism $
\cal{E}_{\ii'-i+i\ii''}\onel  \cong \cal{E}_{\ii'+i-i\ii''}\onel
    \oplus_{-[\la i, \mu \ra]_i} \cal{E}_{\ii'\ii''} \onel$ for $\la i, \lambda +\ii''_X\ra \leq 0$ is given similarly (see
\cite{Lau1}).
\end{proof}

\begin{prop} \label{pmij-isoms}
For each $i,j \in I$, $i \neq j$, $\lambda\in X$ there are 2-isomorphisms
\begin{eqnarray}
  \cal{E}_{\ldots+i-j\ldots}\onel \cong \cal{E}_{\ldots-j+i\ldots}\onel
\end{eqnarray}
\end{prop}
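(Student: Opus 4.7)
The plan is to take the desired isomorphism to be the mixed $(+i,-j)$-crossing (and its inverse the oppositely oriented mixed crossing) defined via adjunction in \eqref{eq_crossl-gen}--\eqref{eq_crossr-gen}. More precisely, define
$$\alpha \maps \cal{E}_{\ldots +i -j \ldots}\onel \To \cal{E}_{\ldots -j +i \ldots}\onel$$
to be the identity 2-morphism on all strands of $\cal{E}_{\ldots}\onel$ outside the two displayed positions, and at the displayed positions place the mixed crossing that swaps the $+i$ strand (oriented up) past the $-j$ strand (oriented down). The candidate inverse $\alpha^{-1}$ is built symmetrically from the opposite mixed crossing.

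The first thing to check is that $\alpha$ and $\alpha^{-1}$ are degree-preserving. This is exactly the ``balanced crossing'' observation made in the geometric calculus section: the mixed $(+i,-j)$- and $(-j,+i)$-crossings \eqref{eq_crossl-gen}--\eqref{eq_crossr-gen} were shown to have degree zero, obtained as a composition of $\Ucross_{i,j,\lambda}$ with four cups/caps whose degrees cancel. Hence both $\alpha$ and $\alpha^{-1}$ are degree-zero 2-morphisms.

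The second and essentially only substantive step is verifying $\alpha\alpha^{-1}=\id$ and $\alpha^{-1}\alpha=\id$. Both are an immediate consequence of \eqref{eq_downup_ij-gen}: the two identities there say exactly that the composite of the mixed $ij$-crossing with its opposite equals a pair of parallel strands, which is the identity 2-morphism on the relevant composite. The local invertibility promotes to a global one by horizontal composition with the identity on the unchanged strands (functoriality of $\circ$ in $\Ucat$).

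The main obstacle is essentially absent here, in contrast to Proposition~\ref{pmii-isoms}: because $i \neq j$, no $\mathfrak{sl}_2$-type relations involving bubbles are triggered, the inverse needs no ``correction summand'' $\cal{E}_{\ldots\ldots}\onel^{\oplus[\la i,\mu\ra]_i}$, and the whole argument collapses to a single application of \eqref{eq_downup_ij-gen}. Thus the proposition reduces to the degree check plus one invocation of the defining relations.
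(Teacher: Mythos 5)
Your proposal matches the paper's proof essentially verbatim: the 2-isomorphism and its inverse are the mixed $(+i,-j)$- and $(-j,+i)$-crossings from \eqref{eq_crossl-gen}--\eqref{eq_crossr-gen}, which are degree zero, and the relations \eqref{eq_downup_ij-gen} show the two composites are identities. No gap.
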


\begin{proof}
The degree zero 2-isomorphism $\cal{E}_{\ldots+i-j\ldots}\onel \cong
\cal{E}_{\ldots-j+i\ldots}\onel$ for $i \neq j$ is given by maps
\begin{eqnarray}
 \xy 0;/r.15pc/:
  (0,0)*{\xybox{
    (-4,-4)*{};(4,4)*{} **\crv{(-4,-1) & (4,1)}?(1)*\dir{>} ;
    (4,-4)*{};(-4,4)*{} **\crv{(4,-1) & (-4,1)}?(0)*\dir{<};
    (-5,-3)*{\scs i};
     (7,-3)*{\scs j};
     (10,2)*{ \lambda};
     }};
  \endxy &\maps & \cal{E}_{\ldots+i-j\ldots}\onel \to
  \cal{E}_{\ldots-j+i\ldots}\onel \nn \\
   \xy 0;/r.15pc/:
  (0,0)*{\xybox{
    (-4,-4)*{};(4,4)*{} **\crv{(-4,-1) & (4,1)}?(0)*\dir{<} ;
    (4,-4)*{};(-4,4)*{} **\crv{(4,-1) & (-4,1)}?(1)*\dir{>};
    (-7,-3)*{\scs j};
     (6,-3)*{\scs i};
     (10,2)*{ \lambda};
     }};
  \endxy &\maps &  \cal{E}_{\ldots-j+i\ldots}\onel\to
   \cal{E}_{\ldots+i-j\ldots}\onel\nn
\end{eqnarray}
To see that these maps are isomorphisms use \eqref{eq_downup_ij-gen}.
\end{proof}

%
\subsection{$K_0(\UcatD)$ and homomorphism $\gamma$} \label{subsec_KzeroU}
%

$K_0(\UcatD)$ can be viewed as a pre-additive category or, alternatively, as an
idempotented ring.  When thought of as a category, it has objects $\lambda$, over
all $\lambda \in X$.  The abelian group of morphisms
$K_0\big(\UcatD(\lambda,\mu)\big)$ is defined as the (split) Grothendieck group
of the additive category $\UcatD(\lambda,\mu)$. The split Grothendieck group
$K_0(\mc{A})$ of an additive category $\mc{A}$ has generators $[P]$, over all
objects $P$ of $\mc{A}$, and relations $[P]=[P']+[P'']$ whenever $P\cong P'\oplus
P''$. In the case of $\UcatD(\lambda,\mu)$ the generators are
$[\cal{E}_{\ii}\onel\{t\},e]$, where $\mu=\lambda+\ii_X$, $t\in \Z$, and
\begin{equation}
  e \in \End_{\UcatD}(\cal{E}_{\ii}\onel\{t\}) \cong
  \End_{\UcatD}(\cal{E}_{\ii}\onel) = \End_{\Ucat}(\cal{E}_{\ii}\onel)
\end{equation}
is an idempotent (degree zero idempotent when viewed as an element of the larger
ring $\ENDU(\cal{E}_{\ii}\onel) $).  The defining relations are
\begin{equation}
  [\cal{E}_{\ii}\onel\{t\},e] = [\cal{E}_{\ii'}\mathbf{1}_{\lambda'}\{t'\},e']
 + [\cal{E}_{\ii''}\mathbf{1}_{\lambda''}\{t''\},e'']
\end{equation}
whenever there is an isomorphism in $\UcatD(\lambda,\mu)$
\begin{equation}
(\cal{E}_{\ii}\onel\{t\},e) \cong (\cal{E}_{\ii'}\mathbf{1}_{\lambda'}\{t'\},e')
\oplus (\cal{E}_{\ii''}\mathbf{1}_{\lambda''}\{t''\},e'').
\end{equation}
Moreover, $K_0\big(\UcatD(\lambda,\mu)\big)$ is a $\Z[q,q^{-1}]$-module, with
multiplication by $q$ coming from the grading shift
\begin{equation}
  [\cal{E}_{\ii}\onel \{t+1\},e\{1\}] = q[\cal{E}_{\ii}\onel\{t\},e].
\end{equation}
We write $[\cal{E}_{\ii}\onel]$ instead of $[\cal{E}_{\ii}\onel,1]$, where $1$ is
the identity 2-morphism of $\cal{E}_{\ii}\onel$.

The space of homs between any two objects in $\UcatD(\lambda,\mu)$ is a
finite-dimensional $\Bbbk$-vector space.  In particular, the Krull-Schmidt
decomposition theorem holds, and an indecomposable object of
$\UcatD(\lambda,\mu)$ has the form $(\cal{E}_{\ii}\onel\{t\},e)$ for some
minimal/primitive idempotent $e$.  Any presentation of $1=e_1+\dots+e_k$ into the
sum of minimal mutually-orthogonal idempotents gives rise to a decomposition
\begin{equation}
  \cal{E}_{\ii}\onel\{t\} \cong \bigoplus_{r=1}^{k}(\cal{E}_{\ii}\onel\{t\},e_r)
\end{equation}
into a direct sum of indecomposable objects of $\UcatD(\lambda,\mu)$. Any object
of $\UcatD(\lambda,\mu)$ has a unique presentation, up to permutation of factors
and isomorphisms, as a direct sum of indecomposables. Choose one representative
${b}$ for each isomorphism class of indecomposables, up to grading shifts, and
denote by $\dot{\mc{B}}(\lambda,\mu)$ the set of these representatives. Then
$\{[{b}]\}_b$ is a basis of $K_0\big(\UcatD(\lambda,\mu)\big)$, viewed as a free
$\Z[q,q^{-1}]$-module. Composition bifunctors
\begin{equation}
  \UcatD(\lambda,\lambda') \times \UcatD(\lambda',\lambda'') \longrightarrow
  \UcatD(\lambda,\lambda'')
\end{equation}
induce $\Z[q,q^{-1}]$-bilinear maps
\begin{equation}
  K_0\big(\UcatD(\lambda,\lambda')\big) \otimes K_0\big(\UcatD(\lambda',\lambda'')\big) \longrightarrow
  K_0\big(\UcatD(\lambda,\lambda'')\big)
\end{equation}
turning $K_0(\UcatD)$ into a $\Z[q,q^{-1}]$-linear additive category with objects
$\lambda\in X$. Alternatively, we may view $K_0(\UcatD)$ as a non-unital
$\Z[q,q^{-1}]$-algebra
\begin{equation}
  \bigoplus_{\lambda,\mu \in X} K_0(\Ucat)(\lambda,\mu)
\end{equation}
with a family of idempotents $[\onel]$.  The set $\dot{\mc{B}} :=
\bigoplus_{\lambda,\mu \in X} \dot{\mc{B}}(\lambda,\mu)$ gives rise to a basis
$[\dot{\mc{B}}]:=\{[{b}] \}_{b\in \dot{\mc{B}}}$ of idempotented
$\Z[q,q^{-1}]$-algebra $K_0(\UcatD)$.  Notice that basis elements are defined up
to multiplication by powers of $q$;  we will not try to choose a canonical
grading normalization here.  Multiplication in this basis has coefficients in
$\N[q,q^{-1}]$.

Both $\UA$ and $K_0(\UcatD)$ are idempotented $\Z[q,q^{-1}]$-algebras, with the
idempotents $1_{\lambda}$ and $[\onel]$ labelled by $\lambda\in X$. To relate the
two algebras, send $1_{\lambda}$ to $[\onel]$ and, more generally,
$E_{\ii}1_{\lambda} $ to $[\Eol]$ for all $\ii \in \sseqd$.

\begin{prop}
The assignment $E_{\ii}1_{\lambda} \longrightarrow [\Eol]$ extends to
$\Z[q,q^{-1}]$-algebra homomorphism
\begin{equation}
  \gamma \maps \UA \longrightarrow K_0(\UcatD).
\end{equation}
\end{prop}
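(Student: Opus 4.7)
The plan is to define $\gamma$ on the $\Z[q,q^{-1}]$-algebra generators of $\UA$, namely the divided powers $E_{+i}^{(a)}\onel$ and $E_{-i}^{(a)}\onel$ for $i\in I$, $a\ge 0$, $\lambda\in X$, by
\[
 E_{\pm i}^{(a)}\onel \;\longmapsto\; [\cal{E}_{\pm i^{(a)}}\onel],
\]
and then verify that all the defining relations of $\UA$ (as an idempotented $\Z[q,q^{-1}]$-algebra) are satisfied by the images in $K_0(\UcatD)$. Since the classes $[\cal{E}_{\ii}\onel]$ for divided-power signed sequences $\ii\in\sseqd$ already lie in the image, and since such elements span $\UA$ as a $\Z[q,q^{-1}]$-module, this will automatically show that $\gamma(E_{\ii}\onel)=[\cal{E}_{\ii}\onel]$ for every $\ii\in\sseqd$.

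First I will handle the easy structural relations. Orthogonality $\onel\onelp=\delta_{\lambda,\lambda'}\onel$ in $K_0(\UcatD)$ is immediate, because $\UcatD$ is a $2$-category: $\cal{E}_{\ii}\onel\circ\cal{E}_{\jj}\mathbf{1}_{\mu}$ is zero unless $\lambda=\mu+\jj_X$. The left/right weight identities $E_{\pm i}\onel = \mathbf{1}_{\lambda\pm i_X}E_{\pm i}$ are built into the sequence conventions of Section~\ref{subsec_Uprime}. The divided-power identities $E_{\pm i}^{(a)}E_{\pm i}^{(b)}\onel=\qbin{a+b}{a}_i E_{\pm i}^{(a+b)}\onel$ follow from the isomorphisms
\[
\cal{E}_{\pm i^m}\onel\;\cong\;\bigl(\cal{E}_{\pm i^{(m)}}\onel\bigr)^{\oplus [m]_i!}
\]
recorded after the definition of $\cal{E}_{\pm i^{(m)}}\onel$ in Section~\ref{subsec_dirsumdecs}, combined with the horizontal composition of the idempotents $e_{\pm i,a}$ and the corresponding result for rings $R(\nu)$ from \cite{KL,KL2}.

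Next I will verify the two genuinely nontrivial families of relations. The mixed commutator relation
\[
 (E_iF_j-F_jE_i)\onel\;=\;\delta_{i,j}[\la i,\lambda\ra]_i\,\onel
\]
is exactly the content of Propositions~\ref{pmii-isoms} and~\ref{pmij-isoms}: for $i\ne j$, Proposition~\ref{pmij-isoms} gives $[\cal{E}_{+i-j}\onel]=[\cal{E}_{-j+i}\onel]$, while for $i=j$ the two cases of Proposition~\ref{pmii-isoms} (according to the sign of $\la i,\lambda\ra$) translate directly, after passing to the Grothendieck group, into the identity $[E_i,F_i]\onel=[\la i,\lambda\ra]_i\onel$, with the grading-shifts $\oplus_{[\la i,\mu\ra]_i}\onel$ encoding the quantum integer exactly. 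The quantum Serre relations for both the $E$'s and the $F$'s reduce to Proposition~\ref{serre-isoms}: summing the displayed direct-sum decompositions with alternating signs in $K_0$ yields $\sum_{a+b=d_{ij}+1}(-1)^a [E_i^{(a)}E_jE_i^{(b)}\onel]=0$ and likewise for $F$.

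The main obstacle I anticipate is purely bookkeeping: making sure that the grading shifts $\{t\}$ built into the isomorphisms of Propositions~\ref{serre-isoms}, \ref{pmii-isoms}, and~\ref{pmij-isoms}, together with the shift $\{m(1-m)(i\cdot i)/4\}$ inside the definition of $\cal{E}_{\pm i^{(m)}}\onel$, match the powers of $q$ appearing in the Lusztig presentation of $\UA$. Once this matching is verified on the generators $E_{\pm i}^{(a)}\onel$, the universal property of $\UA$ (as the quotient of the free associative $\Z[q,q^{-1}]$-algebra on these generators by the relations listed in Section~\ref{subsubsec_algebras_f}) yields a well-defined $\Z[q,q^{-1}]$-algebra homomorphism $\gamma\maps\UA\to K_0(\UcatD)$, and the formula $\gamma(E_{\ii}\onel)=[\cal{E}_{\ii}\onel]$ for general $\ii\in\sseqd$ then follows by unfolding the divided powers in $\ii$.
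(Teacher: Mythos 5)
Your approach diverges from the paper's, and as written it has a genuine gap. You try to build $\gamma$ directly as a $\Z[q,q^{-1}]$-algebra map by checking relations on the divided-power generators of $\UA$, appealing at the end to ``the universal property of $\UA$ (as the quotient of the free associative $\Z[q,q^{-1}]$-algebra on these generators by the relations listed in Section~\ref{subsubsec_algebras_f}).'' That is not correct: the relations in Section~\ref{subsubsec_algebras_f} are a presentation of the $\Q(q)$-algebra $\U$ (indeed of ${\bf U}$, with $K_\mu$'s and a division by $q_i-q_i^{-1}$), not of $\UA$ over $\Z[q,q^{-1}]$. The integral form $\UA$ is \emph{defined} as the subalgebra generated by the divided powers $E_{\pm i}^{(a)}1_\lambda$, not by generators and relations, and a correct presentation of $\UA$ over $\Z[q,q^{-1}]$ requires a larger relation set (higher quantum Serre relations among divided powers, the commutation formulas $E_i^{(a)}F_i^{(b)}1_\lambda=\sum_t\qbins{a-b+\la i,\lambda\ra}{t}_iF_i^{(b-t)}E_i^{(a-t)}1_\lambda$, etc.), only part of which you verify. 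So the universal-property step does not go through as stated.

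The paper avoids this problem entirely by first noting that $K_0(\UcatD)$ is a free $\Z[q,q^{-1}]$-module, hence embeds in $K_0(\UcatD)\otimes_{\Z[q,q^{-1}]}\Q(q)$. It then builds a $\Q(q)$-algebra map $\gamma_{\Q(q)}\colon\U\to K_0(\UcatD)\otimes\Q(q)$ by checking the standard $\U$-relations against Propositions~\ref{serre-isoms}, \ref{pmii-isoms}, \ref{pmij-isoms} (this is where your verifications for the commutator and Serre relations do belong, and they are fine), and finally restricts to $\UA$: since the divided-power generators of $\UA$ map into $K_0(\UcatD)$, the image of all of $\UA$ lies there. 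This sidesteps the need for any presentation of $\UA$. Your argument could be repaired either by adopting this scalar-extension shortcut, or by invoking a genuine presentation of $\UA$ over $\Z[q,q^{-1}]$ and checking the full set of relations, but as it stands the relation-checking is incomplete and the cited universal property is misapplied.
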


Multiplication by $q$ corresponds to the grading shift $\{ 1\}.$

\begin{proof}  $K_0(\UcatD)$ is a  free $\Z[q,q^{-1}]$-module,
so it is enough to check that the assignment above extends to a homomorphism of
$\Q(q)$-algebras
\begin{equation}\label{gamma-field}
\gamma_{\Q(q)} \maps \U \lra K_0(\UcatD)\otimes_{\Z[q,q^{-1}]}\Q(q)
\end{equation}
($\UA$ is also a free $\Z[q,q^{-1}]$-module, but this fact is not needed in the
proof). Propositions \ref{serre-isoms},~\ref{pmii-isoms}, and~\ref{pmij-isoms}
show that defining relations of $\U$ lift to 2-isomorphisms of 1-morphisms in
$\UcatD$ and, therefore, descend to relations in the Grothendieck group
$K_0(\UcatD)$. Restricting $\gamma_{\Q(q)}$ to $\UA$ gives a homomorphism of
$\Z[q,q^{-1}]$-algebras with the image of the homomorphism lying in
$K_0(\UcatD)$.
\end{proof}

For each $\lambda, \mu\in X$ homomorphism $\gamma$ restricts to a homomorphism of
$\Z[q,q^{-1}]$-modules
\begin{equation}
   1_{\mu}(\UA)1_{\lambda} \longrightarrow
  K_0\left(\UcatD(\lambda,\mu)\right).
\end{equation}

\bigskip

\begin{prop} \label{prop_tilde_lifts}
Homomorphism $\gamma$ intertwines (anti)automorphisms $\psi$, $\omega$, $\sigma$,
$\tau$ of $\UA$ with (anti) automorphisms $[\tilde{\psi}]$, $[\tilde{\omega}]$,
$[\tilde{\sigma}]$, and $[\tilde{\tau}]$ of $K_0(\UcatD)$, respectively, \ie, the
following diagrams commute.
\[
 \xymatrix{\UA \ar[r]^-{\gamma} \ar[d]_-{\psi} & K_0(\UcatD) \ar[d]^-{[\tilde{\psi}]} \\
 \UA \ar[r]_-{\gamma}  & K_0(\UcatD) }
 \qquad \quad
  \xymatrix{\UA \ar[r]^-{\gamma} \ar[d]_-{\omega} & K_0(\UcatD) \ar[d]^-{[\tilde{\omega}]} \\
 \UA \ar[r]_-{\gamma}  & K_0(\UcatD) }
 \qquad \quad
  \xymatrix{\UA \ar[r]^-{\gamma} \ar[d]_-{\sigma} & K_0(\UcatD) \ar[d]^-{[\tilde{\sigma}]} \\
 \UA \ar[r]_-{\gamma}  & K_0(\UcatD) }
 \qquad \quad
  \xymatrix{\UA \ar[r]^-{\gamma} \ar[d]_-{\tau} & K_0(\UcatD) \ar[d]^-{[\tilde{\tau}]} \\
 \UA \ar[r]_-{\gamma}  & K_0(\UcatD) }
\]
$[\tilde{\psi}]$ denotes the induced action of $\tilde{\psi}$ on the Grothendieck
group, etc.
\end{prop}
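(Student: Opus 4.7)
\medskip

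\noindent\textbf{Proof plan.} The strategy is to reduce to a check on algebra generators and then inspect the action of each 2-functor directly. Since $\gamma$ is a $\Z[q,q^{-1}]$-algebra homomorphism and both the source and target of each diagram carry compatible (anti)automorphism structures, it suffices to verify commutativity on the non-unital generating set $\{1_\lambda,\ E_i1_\lambda,\ F_i1_\lambda\}_{\lambda\in X,\,i\in I}$ of $\UA$. Indeed, if two (anti)homomorphisms agree on a generating set, they agree everywhere: linearity and (anti)multiplicativity propagate the equality. The maps $[\tilde\omega]$, $[\tilde\sigma]$, $[\tilde\tau]$ are $\Z[q,q^{-1}]$-linear because each 2-functor is degree-preserving, and $[\tilde\psi]$ is $\Z[q,q^{-1}]$-antilinear because $\tilde\psi$ sends a shift $\{t\}$ to $\{-t\}$; these match the (anti)linearity types of $\psi,\omega,\sigma,\tau$ on $\UA$.

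For $\psi$ and $\omega$ the verification is immediate from the definitions in Section~\ref{sec_symm}. The 2-functor $\tilde\psi$ fixes every 1-morphism $\cal{E}_{\ii}\onel$ (only the shift is negated), so on $K_0$ we get $[\tilde\psi][\cal{E}_{\pm i}\onel]=[\cal{E}_{\pm i}\onel]$ with $q$ replaced by $q^{-1}$, matching $\psi(E_{\pm i}1_\lambda)=E_{\pm i}1_\lambda$ with the antilinearity. The 2-functor $\tilde\omega$ sends $\cal{E}_{+i}\onel$ to $\cal{E}_{-i}\mathbf{1}_{-\lambda}$ and $\cal{E}_{-i}\onel$ to $\cal{E}_{+i}\mathbf{1}_{-\lambda}$, which on $K_0$ gives exactly $\gamma(\omega(E_{\pm i}1_\lambda))=\gamma(F_{\pm i}1_{-\lambda})$.

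For $\sigma$, the 2-functor $\tilde\sigma\maps \Ucat\to\Ucat^{\op}$ sends $\mathbf{1}_\mu\cal{E}_{s_1}\cdots\cal{E}_{s_m}\onel$ to $\mathbf{1}_{-\lambda}\cal{E}_{s_m}\cdots\cal{E}_{s_1}\mathbf{1}_{-\mu}$. Reversing the order of composition of 1-morphisms is precisely the categorical incarnation of an antihomomorphism on $K_0$, and negating the weights matches $\sigma(1_\lambda)=1_{-\lambda}$. On generators, $[\tilde\sigma][\cal{E}_{\pm i}\onel]=[\cal{E}_{\pm i}\mathbf{1}_{-\lambda-(\pm i_X)}]$, which equals $\gamma(\sigma(E_{\pm i}1_\lambda))=\gamma(1_{-\lambda}E_{\pm i})$, so the square commutes on generators.

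The only case with a nontrivial computation is $\tau$, and this is the step where one must be careful. Recall that $\tilde\tau$ sends each 1-morphism to its right adjoint in the sense of almost biadjoints of Section~\ref{sec_symm}, so the grading shifts~\eqref{eq_almost_biadjoints} come into play. Concretely, $\tilde\tau(\mathbf{1}_{\lambda+i_X}\cal{E}_{+i}\onel)=\onel\cal{E}_{-i}\mathbf{1}_{\lambda+i_X}\{-c_{+i,\lambda}\}$, and in $K_0$ this reads
\[
[\tilde\tau]\bigl[\mathbf{1}_{\lambda+i_X}\cal{E}_{+i}\onel\bigr]
\;=\;q^{-c_{+i,\lambda}}\bigl[\onel\cal{E}_{-i}\mathbf{1}_{\lambda+i_X}\bigr]
\;=\;q_i^{-1-\la i,\lambda\ra}\bigl[\onel\cal{E}_{-i}\mathbf{1}_{\lambda+i_X}\bigr],
\]
using $q^{c_{+i,\lambda}}=q_i^{1+\la i,\lambda\ra}$. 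On the algebra side, $\tau(1_{\lambda+i_X}E_i1_\lambda)=q_i^{-1-\la i,\lambda\ra}1_\lambda F_i1_{\lambda+i_X}$, so the two expressions agree after applying $\gamma$. The analogous computation for $F_i1_\lambda$ uses the dual almost-biadjoint shift $-c_{-i,\lambda}$, and again the power of $q_i$ matches $\tau(1_{\lambda-i_X}F_i1_\lambda)=q_i^{-1+\la i,\lambda\ra}1_\lambda E_i1_{\lambda-i_X}$. The potential obstacle here is purely bookkeeping: one must check that the normalization of $c_{\pm i,\lambda}$ chosen in~\eqref{eq_cpm_lambda} is exactly the shift demanded by Lusztig's formula for $\tau$, which is indeed the case. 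With all four generators handled, the diagrams commute.
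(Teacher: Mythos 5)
Your overall approach—reduce to generators, then verify each of $\psi,\omega,\sigma,\tau$ directly—is sound and is precisely what the paper's terse "follows from definitions" intends. The verifications for $\psi$, $\omega$, $\sigma$, and the $\tau$-computation of the $q_i$-exponents from the almost-biadjoint shifts in~\eqref{eq_almost_biadjoints} are all correct.

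There is, however, a slip in the setup sentence asserting the linearity types: you claim $[\tilde\tau]$ is $\Z[q,q^{-1}]$-linear "because each 2-functor is degree-preserving," but degree-preservation on 2-morphisms does not control the sign of the shift on 1-morphisms. By the paper's definition, $\tilde\tau$ sends $\{t\}$ to $\{-t+t'\}$, exactly as $\tilde\psi$ sends $\{t\}$ to $\{-t\}$, so $[\tilde\tau]$ (like $[\tilde\psi]$) is $\Z[q,q^{-1}]$-\emph{anti}linear. This is what it must be: $\tau=\psi\rho$ is $\Q(q)$-antilinear on $\UA$, so if $[\tilde\tau]$ were $q$-linear the composites $\gamma\circ\tau$ and $[\tilde\tau]\circ\gamma$ would have incompatible linearity types and could not agree on any non-trivial generating set. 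Your explicit generator computation for $\tau$ happens not to trip over this because it is done at a fixed shift, but the reduction-to-generators argument you rely on needs the correct statement that both $\tau$ and $[\tilde\tau]$ are antilinear antihomomorphisms. Once that sentence is corrected, the proof is complete and agrees with the paper's intent.
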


\begin{proof}
The proof follows from definitions and our construction of $\gamma$.
\end{proof}

The 2-isomorphisms $\tilde{\omega}$, $\tilde{\sigma}$, $\tilde{\psi}$ on $\Ucatq$
give isomorphisms of graded $\Bbbk$-vector spaces
\begin{eqnarray}
  \Ucatq\big(x,y\big) & \cong &
  \Ucatq\big(\tilde{\omega}(x),\tilde{\omega}(y)\big), \\
  \Ucatq\big(x,y\big) & \cong &
  (\Ucatq)^{\op}\big(\tilde{\sigma}(x),\tilde{\sigma}(y)\big)
  \;\; = \;\;  \Ucatq\big(\tilde{\sigma}(x),\tilde{\sigma}(y)\big),\\
    \Ucatq\big(x,y\big) & \cong &
  (\Ucatq)^{\co}\big(\tilde{\psi}(x),\tilde{\psi}(y)\big)
  \;\; = \;\;  \Ucatq\big(\tilde{\psi}(y),\tilde{\psi}(x)\big).
\end{eqnarray}
On Grothendieck rings these isomorphisms give equalities
\begin{eqnarray}
  \sla x,y \sra & =& \sla \omega(x),\omega(y) \sra ,\\
    \sla x,y \sra & =& \sla \sigma(x),\sigma(y) \sra, \\
      \sla x,y \sra & =& \sla \psi(y),\psi(x) \sra, \\
\end{eqnarray}
which should be compared with Propositions 26.1.4 and 26.1.6 in \cite{Lus4} and
property (v) of the semilinear form.   The last equality expressed in terms of
the bilinear form $(,)$, with $x$ replaced by $\psi(x)$, gives the identity
$(x,y) = (y,x)$.

%
\subsection{Idempotented rings} \label{subsec-idemp}
%

An idempotented ring $A$ is an associative ring, not necessarily unital, equipped
with a system of idempotents $\{1_x\}$, over elements $x$ of some set $Z$.   We
require orthogonality $1_{x}1_{y}=\delta_{x,y}1_{x}$ and decomposition
\begin{equation}
  A = \bigoplus_{x, y \in Z} 1_{y}A1_{x}.
\end{equation}
By a (left) $A$-module we mean an $A$-module $M$ such that
\begin{equation}
  M = \bigoplus_{x\in Z}1_{x}M.
\end{equation}

In this paper three collections of idempotented rings appear:
\begin{itemize}
  \item Lusztig's $\U$ and its integral form $\UA$.  Here $Z=X$, the weight
  lattice,
  \begin{equation}
    \U = \bigoplus_{\lambda,\mu \in X} 1_{\mu}\U1_{\lambda}, \qquad
    \UA = \bigoplus_{\lambda,\mu \in X} 1_{\mu}(\UA)1_{\lambda}.
  \end{equation}

  \item The Grothendieck groups $K_0(Kar(\Ucatq))$ and $K_0(\UcatD)$, the latter
defined in Section~\ref{subsec_KzeroU}.  Again, the parameterizing set $Z=X$.  We
only study
  \begin{equation}
K_0(\UcatD) = \bigoplus_{\lambda,\mu \in X} [{\bf 1}_{\mu}]K_0(\UcatD)[\onel],
  \end{equation}
 with $\left\{[\onel] \right\}_{\lambda \in X}$ being the system of idempotents
 in $K_0(\UcatD)$.  Map $\gamma \maps \UA \to K_0(\UcatD)$
is a homomorphism of idempotented rings.

\item For each $\lambda, \mu \in X$ the $\Z$-graded ring
\begin{equation}
  {}_{\mu}\Ucatq_{\lambda} := \bigoplus_{\ii,\jj} \HOMU(\cal{E}_{\ii}\onel,
  \cal{E}_{\jj}\onel),
\end{equation}
where the sum is over all $\ii, \jj \in \sseq$ with $\ii_X,\jj_X=\lambda-\mu$.
Thus, the sum is over all sequences such that $E_{\ii}1_{\lambda}$,
$E_{\jj}1_{\lambda}$ have left weight $\mu$.  The parameterizing set $Z=\left\{
\ii \in \sseq \mid \ii_X = \lambda -\mu\right\}$.
\end{itemize}

The category $\UcatD(\lambda,\mu)$ is equivalent to the category of right
finitely-generated graded projective ${}_{\mu}\Ucatq_{\lambda}$-modules and
grading preserving homomorphisms. The equivalence functor
\begin{equation}
  \UcatD(\lambda,\mu) \longrightarrow {\rm pmod-}{}_{\mu}\Ucatq_{\lambda}
\end{equation}
takes $\cal{E}_{\ii}\onel$ to
\[
 {}_{\ii,\lambda}P := \bigoplus_{\jj \in Z} \HOMU(\cal{E}_{\ii}\onel, \cal{E}_{\jj}\onel),
\]
and, more generally, an object $(\cal{E}_{\ii}\onel,e)$ to
\begin{equation}
  {}_{\ii,\lambda,e}P :=
  \bigoplus_{\jj \in Z}
  \HOM_{\UcatD}\big((\cal{E}_{\ii}\onel,e),\cal{E}_{\jj}\onel\big). \nn
\end{equation}
The Grothendieck group $K_0\big(\UcatD(\lambda,\mu)\big)$ is isomorphic to the
Grothendieck group of ${\rm pmod-}{}_{\mu}\Ucatq_{\lambda}$.

\bigskip

\noindent Notice that we get idempotented rings from the 2-category $\UcatD$ in
various ways:
\begin{enumerate}[1)]
 \item as the Grothendieck ring/pre-additive category $K_0(\UcatD)$ of $\UcatD$,
 \item as rings associated to categories $\UcatD(\lambda,\mu)$.
\end{enumerate}

The 2-category $\UcatD$ can itself be viewed as an idempotented monoidal
category.  We encode these observations into a diagram
\[
 \xy
 (-35,20)*+++{\txt\bf{(small) pre-additive \\ 2-categories}}="t1";
 (35,20)*+++{\txt\bf{idempotented additive \\ monoidal categories}}="t2";
 (-35,-15)*+++{\txt\bf{(small) pre-additive categories}}="m1";
  (35,-15)*+++{\txt\bf{idempotented  rings }}="m2";
  {\ar@{<->} "t1";"t2"};{\ar@{<->} "m1";"m2"};
  {\ar@/_1pc/@{=>}_{\txt{Grothendieck \\ category/ring}\;\;} (-5,15)*+{} ;(-5,-10)*+{}};
  {\ar@/^1pc/@{=>}^{\;\;\txt{Categories/rings of \\homs between objects}} (5,15)*+{} ;(5,-10)*+{}};
 \endxy
\]

%
\subsection{Surjectivity of $\gamma$} \label{subsec_surjectivity}
%

To prove surjectivity of $\gamma$ we will analyze the diagram of $K$-groups and
their homomorphisms induced by the diagram \eqref{eq_diagram_K} and trace minimal
idempotents there, but first recall some basics of Grothendieck groups of
finite-dimensional algebras (as a model example) and graded algebras.

%
\subsubsection{$K_0$ of finite-dimensional algebras}
%

A homomorphism of rings $\alpha:A \to B$ induces a homomorphism of $K_0$-groups
$$K_0(\alpha)\maps K_0(A) \to K_0(B)$$ of finitely-generated projective modules.
For definition and properties of $K_0$ we refer the reader
to~\cite{Ros},~\cite[Chapter II]{Weibel}.

Assume that $A$ and $B$ are finite-dimensional $\Bbbk$-algebras, for a field
$\Bbbk$, and $\alpha$ is a $\Bbbk$-algebra homomorphism. If $\alpha$ is
surjective then $K_0(\alpha)$ is surjective as well. On the level of idempotents,
if $1 = e_1 + \dots +e_k$ is a decomposition of $1\in A$ into a sum of
mutually-orthogonal minimal idempotents, then $Ae_s$ is an indecomposable
projective $A$-module, $K_0(A)$ is a free abelian group with a basis
$\{[Ae_r]\}_{r\in S}$, for a subset $S\subset\{1,\dots,k\}$. $S$ is any maximal
subset with the property that $Ae_s \ncong Ae_t$ as left $A$-modules for any $s,t
\in S$, $s\neq t$. Applying $\alpha$ to the above decomposition results in the
equation
\begin{equation}
  B \ni 1= \alpha(e_1) + \dots + \alpha(e_k).
\end{equation}
where each $\alpha(e_s)$ is either $0$ or a minimal idempotent in $B$. Relabel
minimal idempotents so that $\alpha(e_1),\dots, \alpha(e_m)\neq 0$,
$\alpha(e_{m+1})= \dots = \alpha(e_k)=0$ (elements of $S$ get permuted as well).
Then $1=\alpha(e_1)+\dots + \alpha(e_m)$ is a decomposition of $1\in B$ into a
sum of mutually orthogonal minimal idempotents, $B\alpha(e_r)$ is an
indecomposable projective $B$-module, $1 \leq r \leq m$, and
\begin{equation}
  \{[B\alpha(e_r)] \}_{r \in S\cap \{1,\dots, m\}}
\end{equation}
is a basis of $K_0(B)$.

\begin{rem}
If A is a finite-dimensional $\Bbbk$-algebra, the quotient map $A \to A/J(A)$,
where $J(A)$ is the Jacobson radical of $A$, induces an isomorphism of
$K_0$-groups
\begin{equation} \label{eq_KJA}
   K_0(A) \cong K_0(A/J(A)).
\end{equation}
Indeed, $J(A)$ is a nilpotent ideal, $J(A)^N=0$ for sufficiently large $N$, and
the quotient by a nilpotent ideal induces an isomorphism of $K_0$'s,
see~\cite[Chapter II, Lemma 2.2]{Weibel}.
\end{rem}

\begin{prop}
If $A, B$ are finite-dimensional $\Bbbk$-algebras such that all simple $A$- and
$B$-modules are absolutely irreducible over $\Bbbk$, then
\begin{equation}
  K_0(A) \otimes_{\Z} K_0(B) \cong K_0(A \otimes_{\Bbbk} B)
\end{equation}
via an isomorphism which takes $[P]\otimes[Q]$ for projective $A$, respectively
$B$, modules $P$ and $Q$ to $[P \otimes_{\Bbbk} Q]$.
\end{prop}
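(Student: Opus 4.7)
The plan is to construct a map
\[
\mu \maps K_0(A) \otimes_\Z K_0(B) \longrightarrow K_0(A \otimes_\Bbbk B), \qquad [P] \otimes [Q] \mapsto [P \otimes_\Bbbk Q],
\]
and prove it is an isomorphism by reducing to the semisimple case via the Jacobson radical. First I would verify well-definedness: if $P$ is a finitely generated projective $A$-module, exhibited as a direct summand of $A^n$, and $Q$ is a direct summand of $B^m$, then $P \otimes_\Bbbk Q$ is a direct summand of $(A \otimes_\Bbbk B)^{nm}$, hence a finitely generated projective $(A \otimes_\Bbbk B)$-module. Bilinearity with respect to direct sums is immediate, so $\mu$ is well defined.

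Next, the reduction step. Write $\bar A = A/J(A)$ and $\bar B = B/J(B)$. By absolute irreducibility of all simple $A$- and $B$-modules, the Artin--Wedderburn theorem yields $\Bbbk$-algebra isomorphisms $\bar A \cong \prod_i M_{n_i}(\Bbbk)$ and $\bar B \cong \prod_j M_{m_j}(\Bbbk)$. Since $M_n(\Bbbk) \otimes_\Bbbk M_m(\Bbbk) \cong M_{nm}(\Bbbk)$, the product $\bar A \otimes_\Bbbk \bar B \cong \prod_{i,j} M_{n_i m_j}(\Bbbk)$ is semisimple. Consider the ideal $I := J(A) \otimes_\Bbbk B + A \otimes_\Bbbk J(B)$ of $A \otimes_\Bbbk B$; it is nilpotent because $J(A)$ and $J(B)$ are, and the quotient is $\bar A \otimes_\Bbbk \bar B$. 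Applying the isomorphism \eqref{eq_KJA} three times produces identifications $K_0(A) \cong K_0(\bar A)$, $K_0(B) \cong K_0(\bar B)$, and $K_0(A \otimes_\Bbbk B) \cong K_0(\bar A \otimes_\Bbbk \bar B)$.

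Finally, for the semisimple case, $K_0(\bar A)$ is free abelian on the classes $[S_i]$ of the simple modules $S_i = \Bbbk^{n_i}$, and similarly $K_0(\bar B)$ has basis $\{[T_j]\}$ with $T_j = \Bbbk^{m_j}$. The simple modules of $\bar A \otimes_\Bbbk \bar B \cong \prod_{i,j} M_{n_i m_j}(\Bbbk)$ are precisely the $S_i \otimes_\Bbbk T_j = \Bbbk^{n_i m_j}$, so the analogue $\bar\mu$ of $\mu$ for $(\bar A, \bar B)$ sends the basis $\{[S_i] \otimes [T_j]\}$ bijectively onto the basis $\{[S_i \otimes_\Bbbk T_j]\}$, hence is an isomorphism. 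The main technical point is to check that the reductions modulo radicals intertwine $\mu$ with $\bar\mu$; this follows from the natural isomorphism
\[
(P/J(A)P) \otimes_\Bbbk (Q/J(B)Q) \;\cong\; (P \otimes_\Bbbk Q)\big/\, I\cdot(P \otimes_\Bbbk Q),
\]
which expresses that passing from a projective $A$-module to its reduction modulo $J(A)$ commutes with the external tensor product over $\Bbbk$. Commutativity of the resulting square of $K_0$-groups, together with the fact that $\bar\mu$ is an isomorphism, forces $\mu$ to be an isomorphism as well.
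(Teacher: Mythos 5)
Your proof is correct and follows essentially the same strategy as the paper's: reduce to the semisimple case via the invariance of $K_0$ under quotienting by a nilpotent ideal (Jacobson radical), then use absolute irreducibility together with Artin--Wedderburn to identify the semisimple quotients as finite products of matrix algebras over $\Bbbk$, for which the assertion is transparent. The paper's own proof is only a two-sentence sketch ("By passing to $A/J(A)$, $B/J(B)$ and using the above remark, we reduce to the case of semisimple $A$ and $B$. Then both $A$ and $B$ are finite products of the field $\Bbbk$ and the proposition follows."), and you have supplied the details it elides -- most usefully, the explicit nilpotent ideal $I = J(A)\otimes_\Bbbk B + A\otimes_\Bbbk J(B)$ and the naturality check
\[
(P/J(A)P)\otimes_\Bbbk(Q/J(B)Q)\;\cong\;(P\otimes_\Bbbk Q)\big/I\cdot(P\otimes_\Bbbk Q),
\]
which is exactly what makes the square of $K_0$-groups commute and hence allows the conclusion to be transported back from the semisimple quotients. (One cosmetic difference: the paper says $A$ and $B$ become "finite products of the field $\Bbbk$", which should be read up to Morita equivalence; your version with $\prod_i M_{n_i}(\Bbbk)$ is the literal statement of Artin--Wedderburn and is the cleaner formulation.)
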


\begin{proof}
By passing to $A/J(A)$, $B/J(B)$ and using the above remark, we reduce to the
case of semisimple $A$ and $B$.  Then both $A$ and $B$ are finite products of the
field $\Bbbk$ and the proposition follows.
\end{proof}

%
\subsubsection{$K_0$ of graded algebras}
%

From here on we only consider $\Z$-graded $\Bbbk$-algebras, for a field $\Bbbk$.
For a $\Z$-graded $\Bbbk$-algebra $A=\oplus_{a\in\Z} A_a$ denote by $K_0(A)$ the
Grothendieck group of finitely-generated graded left projective $A$-modules.
$K_0(A)$ is a $\Z[q,q^{-1}]$-module.

Throughout this subsection we assume that all weight spaces $A_a$ are
finite-dimensional, and the grading is bounded below: $A_a=0$ for all $a\ll0$.

Let $PI(A)$ be the set of isomorphism classes of indecomposable graded projective
$A$-modules, up to a grading shift. We can normalize the grading and choose one
representative $Q$ for each element of $PI(A)$ so that $0$ is the lowest
nontrivial degree of $Q$. We write $Q\in PI(A)$.

\begin{prop} \label{prop-freemod}
For $A$ as above, $K_0(A)$ is a free $\Z[q,q^{-1}]$-module with the basis $\{
[Q]\}_{Q\in PI(A)}$.
\end{prop}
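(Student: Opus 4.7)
The plan is to establish the Krull–Schmidt theorem in the category of finitely generated graded projective $A$-modules, and then read off freeness of $K_0(A)$ from the resulting uniqueness of indecomposable decompositions.

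First I would verify that for any finitely generated graded projective $P$, the degree-zero endomorphism ring $\End^0(P)$ is a finite-dimensional $\Bbbk$-algebra. Since $P$ is a direct summand of some $A^n\{t_1,\dots,t_n\}$, its degree-zero endomorphism ring embeds into $\End^0(A^n\{t_1,\dots,t_n\})$, whose entries are the finite-dimensional weight spaces $A_{t_j-t_i}$. Hence $\End^0(P)$ is finite-dimensional. In particular, if $P$ is indecomposable as a graded module, $\End^0(P)$ is a finite-dimensional $\Bbbk$-algebra with no nontrivial idempotents and is therefore local. This is the hypothesis needed for the Krull–Schmidt theorem in the additive category of finitely generated graded projective $A$-modules: every such $P$ decomposes as a finite direct sum of indecomposables, unique up to permutation and isomorphism.

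Next I would use the hypothesis that the grading of $A$ is bounded below and that each $A_a$ is finite-dimensional to normalize indecomposables. Each indecomposable finitely generated graded projective module has a bounded-below grading with finite-dimensional weight spaces, so it has a well-defined lowest nontrivial degree; up to shift we may place this at $0$, giving the representatives $Q \in PI(A)$. Two shifts $Q\{t\}$ and $Q'\{t'\}$ of such normalized representatives are isomorphic as graded modules iff $Q \cong Q'$ and $t = t'$. Consequently every indecomposable finitely generated graded projective is isomorphic to exactly one $Q\{t\}$ with $Q\in PI(A)$ and $t\in\Z$.

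Finally I would conclude. By definition $K_0(A)$ is the free abelian group on isomorphism classes of finitely generated graded projective $A$-modules modulo $[P\oplus P']=[P]+[P']$; Krull–Schmidt identifies it with the free abelian group on isomorphism classes of indecomposables. Under the $\Z[q,q^{-1}]$-action, $[Q\{t\}]=q^t[Q]$, so the classes $\{[Q]\}_{Q\in PI(A)}$ span $K_0(A)$ over $\Z[q,q^{-1}]$. The uniqueness half of Krull–Schmidt, combined with the fact that the isomorphism classes $\{Q\{t\}\}_{Q\in PI(A),\, t\in\Z}$ are pairwise distinct, forces these spanning elements to be $\Z[q,q^{-1}]$-linearly independent, proving that $K_0(A)$ is free with the stated basis. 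The main obstacle is the first step: ensuring that the endomorphism ring of an indecomposable is local, and this is precisely where the standing assumptions (finite-dimensional weight spaces, grading bounded below) are essential.
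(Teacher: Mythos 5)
Your proof is correct and follows essentially the same route as the paper's: finite-dimensional weight spaces give the Krull--Schmidt property, and boundedness below prevents an indecomposable from being isomorphic to a nontrivial shift of itself, yielding freeness over $\Z[q,q^{-1}]$. You spell out the intermediate step (local degree-zero endomorphism rings for indecomposables) that the paper leaves implicit, but the argument is the same in substance.
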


\begin{proof} Since each weight space of $A$ is finite-dimensional, the
Krull-Schmidt property holds for graded projective finitely-generated
$A$-modules. Any such module has a unique, up to isomorphism, decomposition as a
direct sum of indecomposables, and $K_0(A)$ is a free abelian group with a basis
labelled by isomorphism classes of indecomposable projectives. Boundedness of $A$
from below ensures that an indecomposable projective is not isomorphic to itself
with a shifted grading, implying that $K_0(A)$ is a free $\Z[q,q^{-1}]$-module
and the rest of the proposition.
\end{proof}

We say that a 2-sided homogeneous ideal $J$ of $A$ is \emph{virtually nilpotent}
if for any $a\in \Z$ the weight space $(J^N)_a=0$ for sufficiently large $N$.

\begin{prop} \label{prop-virtuallynil}
For $A$ as above and $J$ a virtually nilpotent ideal of $A$ the quotient map
$A\lra A/J$ induces an isomorphism $K_0(A)\cong K_0(A/J)$.
\end{prop}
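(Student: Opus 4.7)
The plan is to imitate the classical finite-dimensional argument (the isomorphism \eqref{eq_KJA}) but carried out weight-by-weight in the graded setting. By Proposition~\ref{prop-freemod} both $K_0(A)$ and $K_0(A/J)$ are free $\Z[q,q^{-1}]$-modules on bases indexed by $PI(A)$ and $PI(A/J)$ respectively, with grading normalized so that each representative starts in degree $0$. So it suffices to show that the quotient functor $\pi_\ast\maps A\text{-pmod}\to (A/J)\text{-pmod}$ induces a bijection $PI(A)\xrightarrow{\sim} PI(A/J)$ that respects the $q$-grading.

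The key reduction is to note that although $J$ itself need not be nilpotent, its degree-zero part $J_0=J\cap A_0$ is an honest nilpotent two-sided ideal of the ungraded ring $A_0$. Indeed, $J_0$ is a two-sided ideal of $A_0$ since $J$ is a homogeneous ideal of $A$, and $(J_0)^N\subseteq (J^N)_0=0$ for $N$ sufficiently large by the virtual nilpotency hypothesis applied at weight $a=0$. Consequently the classical idempotent lifting theorem applies to the surjection $A_0\twoheadrightarrow A_0/J_0=(A/J)_0$: every idempotent $\bar e\in (A/J)_0$ lifts to an idempotent $e\in A_0$, and orthogonal families of idempotents lift to orthogonal families; moreover any two lifts of the same idempotent are conjugate by an inner automorphism of $A_0$ coming from $1+J_0$.

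The bijection $PI(A)\leftrightarrow PI(A/J)$ is then built as follows. Since the grading is bounded below with finite-dimensional weight spaces, every indecomposable graded projective (normalized so that its lowest nontrivial degree is $0$) is of the form $A e$ for some primitive idempotent $e\in A_0$, with $Ae\cong Ae'$ iff $e,e'$ are conjugate in $A_0^\times$; likewise for $A/J$. The lifting result above gives a bijection between conjugacy classes of primitive idempotents in $A_0$ and those in $A_0/J_0$. Indecomposability matches up because the graded endomorphism ring $\End_A(Ae)^{0}=(eAe)_0$ is local iff $(eAe)_0/(eJe)_0=(\bar e(A/J)\bar e)_0$ is local (the kernel $(eJe)_0\subseteq J_0$ being nilpotent); and under $\pi_\ast$ the module $Ae$ maps to $(A/J)\bar e$, proving $\pi_\ast$ realizes this bijection. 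Since $\pi_\ast$ respects grading shifts, this identifies the two $\Z[q,q^{-1}]$-bases, so $K_0(\pi)$ is an isomorphism.

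The main obstacle is really the verification that the classical idempotent-lifting toolbox (lifting, orthogonalizing, and conjugacy of lifts) works with only $J_0$ nilpotent rather than $J$ itself, and that one never needs to lift objects outside degree zero. This is pinned down by the observation that all primitive idempotents in a $\Z$-graded algebra bounded below live in $A_0$, so the entire combinatorics of $PI$ takes place inside the ungraded finite layer $A_0\twoheadrightarrow A_0/J_0$, where only honest nilpotency is used. Once that is checked, the matching of indecomposability and of grading shifts is formal, and Proposition~\ref{prop-virtuallynil} follows.
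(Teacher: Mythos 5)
Your proof is correct and makes precise the same idempotent-lifting route the paper invokes by citing the ``graded version'' of Weibel's Lemma~2.2. The observation that $J_0 = J\cap A_0$ is an honestly nilpotent ideal of the finite-dimensional algebra $A_0$, and that (by Krull--Schmidt and boundedness below) the classification of indecomposable graded projectives up to shift reduces to primitive idempotents of $A_0$ up to equivalence, is exactly the content of that citation.
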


\begin{proof} Proposition follows from the lifting idempotents property.
This is the graded version of Lemma 2.2 in~\cite[Chapter 2]{Weibel}.
\end{proof}

\begin{prop} \label{prop_induced_surjective}
Let $\alpha: A\lra B$ be a surjective homomorphism of
finite-dimensional graded $\Bbbk$-algebras. Then the induced map $K_0(\alpha)$ is
surjective.
\end{prop}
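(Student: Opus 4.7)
The plan is to reduce to the case where $A$ and $B$ are graded semisimple. Let $J(A)$ and $J(B)$ be the (graded) Jacobson radicals; in the finite-dimensional setting these are homogeneous nilpotent two-sided ideals. By Proposition~\ref{prop-virtuallynil} the quotient maps induce isomorphisms $K_0(A)\cong K_0(A/J(A))$ and $K_0(B)\cong K_0(B/J(B))$. Since $\alpha$ is surjective, $\alpha(J(A))$ is a nilpotent homogeneous ideal of $B$, hence contained in $J(B)$. Thus $\alpha$ descends to a surjective graded $\Bbbk$-algebra homomorphism $\bar\alpha \maps A/J(A)\to B/J(B)$, and it suffices to show that $K_0(\bar\alpha)$ is surjective.

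Now I would invoke the graded Wedderburn structure theorem: any finite-dimensional graded semisimple $\Bbbk$-algebra $C$ decomposes as a finite product $C=\prod_i C_i$ of graded simple algebras, and $K_0(C)$ is a free $\Z[q,q^{-1}]$-module with one basis element $[Ce_i]$ per factor, where $e_i\in C_i$ is any primitive homogeneous idempotent. Moreover, every homogeneous two-sided ideal of $C$ is a product of a subset of the factors. Applying this to $A/J(A)$, the kernel of $\bar\alpha$ is a product of some subset $S$ of the simple factors, and $\bar\alpha$ is identified with the projection onto the complementary product. The induced map $K_0(\bar\alpha)$ then sends each generator $[(A/J(A))e_i]$ either to the corresponding generator of $K_0(B/J(B))$ (if $i\notin S$) or to $0$ (if $i\in S$), and is in particular surjective.

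The main potential obstacle is verifying the graded analogues of the classical facts used above — principally that $J(A)$ is homogeneous and nilpotent, that $\alpha(J(A))\subseteq J(B)$, and the graded Wedderburn decomposition — but each is standard for finite-dimensional graded $\Bbbk$-algebras, following from the ungraded version together with compatibility of the constructions with the $\Z$-grading. The only nontrivial input from the paper itself is Proposition~\ref{prop-virtuallynil}, which legitimizes the passage to the semisimple quotient on the level of Grothendieck groups.
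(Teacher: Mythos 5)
Your proof is correct, and it reaches the conclusion by a route related to but distinct from the paper's. The paper's argument (which simply points back to the ungraded discussion at the start of Section~\ref{subsec_surjectivity}) works directly with a decomposition $1 = e_1 + \cdots + e_k$ of the identity of $A$ into minimal mutually-orthogonal idempotents: since $\alpha(e_s) B \alpha(e_s) = \alpha(e_s A e_s)$ is a quotient of the local ring $e_s A e_s$, each $\alpha(e_s)$ is either zero or a minimal idempotent of $B$, and the nonzero ones form a decomposition of $1\in B$; hence $K_0(\alpha)$ carries each basis element $[Ae_s]$ of $K_0(A)$ to zero or to a basis element of $K_0(B)$, and the nonzero images exhaust that basis. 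You instead pass explicitly to the graded semisimple quotients $A/J(A)$, $B/J(B)$ via Proposition~\ref{prop-virtuallynil} and then use the graded Wedderburn decomposition to identify $\bar\alpha$ with a projection onto a subproduct of simple factors, which is visibly surjective on $K_0$. These are essentially two presentations of the same structural fact — minimal idempotents, indecomposable projectives, and simple factors of $A/J(A)$ are the same data — but yours makes the reduction to the semisimple case explicit and rests the combinatorics on the Wedderburn theorem, whereas the paper stays with idempotents in $A$ and $B$ themselves and never needs to name the semisimple quotient in this particular proof. Your approach requires one additional (standard) input that you correctly flag: the graded Jacobson radical of a finite-dimensional graded $\Bbbk$-algebra is homogeneous and nilpotent, so that Proposition~\ref{prop-virtuallynil} applies and $\alpha(J(A)) \subseteq J(B)$. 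With that granted, the argument is complete.
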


\begin{proof} The argument is essentially the same as in the nongraded case
discussed earlier. In the bases of $K_0(A)$ and $K_0(B)$ given by indecomposable
projective modules, the map $K_0(\alpha)$ sends some basis elements of $K_0(A)$
to 0 and the rest go bijectively to the basis of $K_0(B)$ (possibly after grading
shifts).
\end{proof}

Finally, we discuss $K_0$ of graded idempotented algebras. Let $A$ be an
associative graded $\Bbbk$-algebra, possibly nonunital, with a family of
mutually-orthogonal degree 0 idempotents $1_{x}\in A, x\in Z$, such that
$$ A = \bigoplus_{x,y\in Z} 1_y A 1_x $$
(compare with the definition of idempotented ring in Section~\ref{subsec-idemp}).
We say that $A$ is a graded idempotented $\Bbbk$-algebra. By a graded
finitely-generated projective $A$-module we mean a homogeneous direct summand of
a finite direct sum (with finite multiplicities) of graded left $A$-modules $A1_x
\{t\}$, over $x\in Z$ and $t\in \Z$. By $K_0(A)$ we denote the corresponding
Grothendieck group, which is again a $\Z[q,q^{-1}]$-module.

We assume that for each $x,y\in Z$ the graded $\Bbbk$-vector space $1_y A1_x$ is
bounded below and has finite-dimensional weight spaces.

\begin{prop}  For $A$ as above, $K_0(A)$ is a free $\Z[q,q^{-1}]$-module
with a basis given by isomorphism classes of indecomposables, up to grading
shifts.
\end{prop}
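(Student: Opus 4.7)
The plan is to adapt the argument from the earlier Proposition~\ref{prop-freemod} (the non-idempotented graded version) to the present idempotented setting, with only notational modifications. The two ingredients needed are Krull--Schmidt for graded finitely generated projective $A$-modules and the rigidity statement that an indecomposable is not isomorphic to a nontrivial grading shift of itself.

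First I will establish Krull--Schmidt. Every graded finitely generated projective $P$ is, by definition, a graded direct summand of a module of the form $\widetilde{P} = \bigoplus_{r=1}^n A1_{x_r}\{t_r\}$, so it suffices to decompose $\widetilde{P}$. The degree--preserving endomorphism ring
\[
\End_{A\text{-gmod}}^{0}(\widetilde{P}) \;=\; \bigoplus_{r,s} \bigl(1_{x_s} A 1_{x_r}\bigr)_{t_r - t_s}
\]
is finite dimensional over $\Bbbk$, because each weight space $1_{x_s} A 1_{x_r}$ is finite dimensional by hypothesis. A finite dimensional $\Bbbk$-algebra has the idempotent lifting property modulo its Jacobson radical and the quotient is semisimple, so any idempotent splits into a sum of mutually orthogonal primitive idempotents. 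Translating this back through the Yoneda--style identification between graded summands of $\widetilde{P}$ and idempotents in $\End_{A\text{-gmod}}^{0}(\widetilde{P})$ gives a decomposition of $\widetilde{P}$, and hence of $P$, into indecomposable graded projective summands, unique up to permutation and isomorphism.

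Next I will use the boundedness hypothesis to rule out self--shift isomorphisms. Any indecomposable graded projective $Q$ is a summand of some $A1_x\{t\}$, hence is itself bounded below in grading because each $1_y A1_x$ is. If $Q\cong Q\{t\}$ for some $t\neq 0$, iterating the isomorphism produces $Q\cong Q\{nt\}$ for all $n\in\Z$, which contradicts boundedness below unless $Q=0$. Therefore in each isomorphism class of nonzero indecomposables one may choose a unique representative (up to isomorphism) by normalizing the lowest nontrivial degree to be $0$; let $PI(A)$ denote the set of these representatives.

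Combining the two observations: every class $[P]\in K_0(A)$ can be written uniquely, up to permutation, as a finite $\N$-linear combination $\sum_j q^{t_j}[Q_j]$ with $Q_j \in PI(A)$, so the classes $\{[Q]\}_{Q\in PI(A)}$ span $K_0(A)$ as a $\Z[q,q^{-1}]$-module; the Krull--Schmidt uniqueness together with the non--isomorphism $Q\not\cong Q\{t\}$ for $t\neq 0$ shows that these classes are $\Z[q,q^{-1}]$-linearly independent. Hence $K_0(A)$ is free on $\{[Q]\}_{Q \in PI(A)}$. The only real point requiring care is the finite dimensionality of $\End_{A\text{-gmod}}^{0}(\widetilde{P})$, and this is immediate from the assumption that all the spaces $1_y A 1_x$ have finite dimensional weight spaces; no additional input is needed.
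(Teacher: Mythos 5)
Your argument is essentially the paper's: Krull--Schmidt for graded finitely generated projectives from the finite-dimensionality of the weight spaces $1_yA1_x$, plus the impossibility of a self-shift isomorphism $Q\cong Q\{t\}$, $t\neq 0$, coming from boundedness below, and these two facts give freeness of $K_0(A)$ over $\Z[q,q^{-1}]$ on the shift-classes of indecomposables. The one step that is not quite right is your claim that an indecomposable $Q$ is itself bounded below in grading, and hence admits a canonical normalization with lowest nontrivial degree $0$: the hypothesis only bounds each piece $1_yA1_x$ below, and since $A1_x=\bigoplus_{y\in Z}1_yA1_x$ is an infinite sum over $y$, these bounds need not be uniform, so $A1_x\{t\}$ and its summands can be unbounded below; this is exactly why the paper remarks that in the idempotented case there is no canonical grading normalization and a representative of each shift-class must be chosen ad hoc. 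The self-shift argument survives with a one-line repair: choose $y$ with $1_yQ\neq 0$; then $1_yQ$ is a graded summand of $\bigoplus_r 1_yA1_{x_r}\{t_r\}$, hence nonzero, bounded below, with finite-dimensional graded pieces, and $Q\cong Q\{t\}$ would force $1_yQ\cong 1_yQ\{t\}$, contradicting comparison of lowest degrees. With that adjustment (and the representatives chosen arbitrarily rather than by a lowest-degree normalization), your proof coincides with the paper's.
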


\begin{proof} The proof is essentially the same as that of Proposition~\ref{prop-freemod}.
The difference is in the absence of a canonical grading normalization for a
representative $Q$ of an isomorphism class of indecomposables up to grading
shifts. This normalization can be chosen ad hoc, of course.
\end{proof}

%
\subsubsection{A triangle of $K_0$'s}\label{subsubsec-triangle}
%

We will work in the graded case, so that the rings are $\Z$-graded and $K_0$ are
$\Z[q,q^{-1}]$-modules.  Consider the diagram of $\Z[q,q^{-1}]$-modules
\begin{equation}\label{eq-KO-triangle}
 \xy
 (0,0)*+{K_0\big(\ENDU(\cal{E}_{\nu,-\nu'}\onel)\big)}="3";
 (45,0)*+{K_0(R_{\nu,-\nu',\lambda})}="4";
 (0,20)*+{K_0\big(R(\nu) \otimes R(\nu') \otimes \Pi_{\lambda}\big)}="6";
{\ar_-{K_0(\beta)} "3";"4"};
 {\ar_{K_0(\alpha)} "6";"3"};
{\ar^{K_0(\beta \alpha)} "6";"4"};
 \endxy
\end{equation}
given by applying the $K_0$ functor to the commutative triangle
in~\eqref{eq_diagram_K}.

Recall that in \cite{KL,KL2} we constructed an isomorphism of
$\Z[q,q^{-1}]$-modules
\begin{equation}
  K_0(R(\nu)) \cong {}_{\cal{A}}{\bf f}_{\nu},
\end{equation}
where ${}_{\cal{A}}{\bf f}_{\nu}$ is the weight $\nu$ summand of the
$\Z[q,q^{-1}]$-algebra ${}_{\cal{A}}{\bf f}$. Likewise,
\begin{equation}
  K_0(R(\nu')) \cong {}_{\cal{A}}{\bf f}_{\nu'}.
\end{equation}

$R(\nu)$, respectively $R(\nu')$,
 is a free finite rank graded module over its center $Z(R(\nu)) \cong
{\rm Sym}(\nu)$, respectively ${\rm Sym}(\nu')$, isomorphic to the
$\Bbbk$-algebra of polynomials in several homogeneous generators, all of positive
degree.  Hence, $R(\nu) \otimes R(\nu') \otimes \Pi_{\lambda}$ is a free finite
rank graded module over the central graded polynomial algebra ${\rm
Sym}(\nu)\otimes {\rm Sym}(\nu') \otimes \Pi_{\lambda}$.  This algebra contains a
homogeneous augmentation ideal ${\rm Sym}^+$ of codimension 1.  Let
\begin{equation}
J=\big(R(\nu) \otimes R(\nu') \otimes \Pi_{\lambda}\big){\rm Sym}^+
\end{equation}
be the corresponding 2-sided ideal of $R(\nu) \otimes R(\nu') \otimes
\Pi_{\lambda}$, and consider the quotient algebra
\begin{equation}
  R := R(\nu) \otimes R(\nu') \otimes \Pi_{\lambda}/ J
  \cong \Big(R(\nu)/\left(R(\nu) \cdot {\rm Sym}^+(\nu)\right)\Big) \otimes
  \Big(R(\nu')/\left(R(\nu') \cdot {\rm Sym}^+(\nu') \right)\Big).
\end{equation}
$R$ is a finite-dimensional $\Bbbk$-algebra, and the quotient map
\begin{equation}
  \alpha' \maps R(\nu) \otimes R(\nu') \otimes \Pi_{\lambda} \longrightarrow R
\end{equation}
induces an isomorphism of $K_0$-groups
\begin{equation}
  K_0(\alpha') \maps K_0(R(\nu) \otimes R(\nu') \otimes \Pi_{\lambda})
  \longrightarrow K_0(R),
\end{equation}
since the ideal $J$ is virtually nilpotent, see
Proposition~\ref{prop-virtuallynil}.

We proved in \cite{KL,KL2} that any simple graded $R(\nu)$-module is absolutely
irreducible for any field $\Bbbk$, same for simple graded $R(\nu)/\big(R(\nu){\rm
Sym}^+\big)$-modules.  Also note that  $K_0(\Pi_{\lambda}) \cong \Z[q,q^{-1}]$,
since $\Pi_{\lambda}$ is a graded local ring.  The chain of isomorphisms
\begin{equation}
 K_0 \big(R(\nu) \otimes R(\nu') \otimes
  \Pi_{\lambda}\big) \cong K_0(R)  \cong  K_0(R(\nu)) \otimes K_0(R(\nu'))
 \cong {}_{\cal{A}}{\bf f}_{\nu} \otimes {}_{\cal{A}}{\bf f}_{\nu'}
\end{equation}
establishes the following result:
\begin{prop} \label{prop-can-koiso}
There is a canonical isomorphism
\begin{equation}
K_0\big(R(\nu) \otimes R(\nu')\otimes\Pi_{\lambda}\big) \cong {}_{\cal{A}}{\bf
f}_{\nu} \otimes {}_{\cal{A}}{\bf f}_{\nu'}
\end{equation}
induced by isomorphisms $K_0(R(\nu)) \cong {}_{\cal{A}}{\bf f}_{\nu}$ and
$K_0(R(\nu'))\cong {}_{\cal{A}}{\bf f}_{\nu'}$ constructed in \cite{KL,KL2}.
\end{prop}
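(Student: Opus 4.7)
The plan is to assemble the chain of isomorphisms already sketched in the discussion immediately preceding the proposition, verifying that each arrow is well-defined and canonical. Concretely, I intend to establish
\[
 K_0\big(R(\nu) \otimes R(\nu') \otimes \Pi_\lambda\big) \;\stackrel{(1)}{\cong}\; K_0(R) \;\stackrel{(2)}{\cong}\; K_0\!\left(\tfrac{R(\nu)}{R(\nu){\rm Sym}^+(\nu)}\right) \otimes K_0\!\left(\tfrac{R(\nu')}{R(\nu'){\rm Sym}^+(\nu')}\right) \;\stackrel{(3)}{\cong}\; K_0(R(\nu)) \otimes K_0(R(\nu')) \;\stackrel{(4)}{\cong}\; \Af_\nu \otimes \Af_{\nu'},
\]
where the final identification is the one from \cite{KL,KL2}, and the upshot is that the composite is the map induced functorially by $[P]\otimes[P']\otimes[\Pi_\lambda]\mapsto [P\otimes P'\otimes\Pi_\lambda]$.

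First I would check that the graded ideal $J=(R(\nu)\otimes R(\nu')\otimes\Pi_\lambda)\cdot{\rm Sym}^+$ is virtually nilpotent in the sense of Section~\ref{subsubsec-triangle}: since $R(\nu)$ is a finitely generated free module over its polynomial center ${\rm Sym}(\nu)$ (and similarly for $R(\nu')$), and $\Pi_\lambda$ is a polynomial algebra in positive-degree generators, each graded weight space of $(J^N)$ vanishes for $N$ large. Proposition~\ref{prop-virtuallynil} then gives isomorphism (1). Note also that $R$ is a finite-dimensional $\Bbbk$-algebra because it is the tensor product of the two finite-dimensional quotients $R(\nu)/R(\nu){\rm Sym}^+(\nu)$ and $R(\nu')/R(\nu'){\rm Sym}^+(\nu')$ (here the $\Pi_\lambda$-tensor factor collapses to $\Bbbk$, since all positive-degree generators of $\Pi_\lambda$ lie in ${\rm Sym}^+$).

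For isomorphism (2) I would invoke the tensor product proposition for $K_0$ of finite-dimensional algebras whose simple modules are absolutely irreducible; the hypothesis is verified because simple graded $R(\nu)$-modules (and hence simple modules over the quotient by ${\rm Sym}^+$) are absolutely irreducible over an arbitrary field $\Bbbk$, as established in \cite{KL,KL2}. Isomorphism (3) is another application of Proposition~\ref{prop-virtuallynil}, now to the virtually nilpotent ideal $R(\nu){\rm Sym}^+(\nu)$ inside $R(\nu)$ (and similarly for $\nu'$). Isomorphism (4) is the categorification theorem of \cite{KL,KL2}.

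Since each step is already essentially laid out in the paragraph preceding the proposition, there is no genuine obstacle; the only items requiring care are the verification of virtual nilpotency (a short computation using that $R(\nu)$ is free of finite rank over its polynomial center) and the bookkeeping that the composite isomorphism is the canonical one, i.e.\ that it sends $[R(\nu)e]\otimes[R(\nu')e']\otimes 1$ to the image of $[R(\nu)e]\otimes[R(\nu')e']$ under the identifications from \cite{KL,KL2}. This last point follows from the functoriality of $K_0$ and the fact that all maps in the chain are induced by grading-preserving algebra homomorphisms or by multiplicative maps on Grothendieck groups.
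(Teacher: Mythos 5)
Your proposal is correct and follows essentially the same route as the paper: reduce modulo the virtually nilpotent ideal $J$ generated by the augmentation ideal of the central polynomial subalgebra (Proposition~\ref{prop-virtuallynil}), apply the tensor-product statement for $K_0$ of finite-dimensional algebras with absolutely irreducible simples to the quotient $R$, and then identify the factors with $K_0(R(\nu))\otimes K_0(R(\nu'))\cong {}_{\cal{A}}{\bf f}_{\nu}\otimes{}_{\cal{A}}{\bf f}_{\nu'}$ via \cite{KL,KL2}. The only cosmetic difference is that you spell out as a separate step the identification $K_0\big(R(\nu)/R(\nu){\rm Sym}^+(\nu)\big)\cong K_0(R(\nu))$, which the paper subsumes into its chain of isomorphisms.
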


This proposition gives us a grip on the top term in the diagram
\eqref{eq-KO-triangle}.

\vspace{0.1in}

\begin{prop}\label{prop_betaalpha_surj} $K_0(\beta\alpha)$ is surjective. \end{prop}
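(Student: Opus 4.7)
The plan is to factor $K_0(\beta\alpha)$ through a surjection between finite-dimensional graded $\Bbbk$-algebras and then invoke Proposition~\ref{prop_induced_surjective}. The starting point is that $\beta\alpha$ is already surjective as a ring homomorphism, as noted immediately after diagram~\eqref{eq_diagram_K}; the subtlety is that neither the source nor the target is finite-dimensional, so surjectivity on Grothendieck groups does not follow from that observation alone.

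First, I would reduce the source using the virtually nilpotent ideal
\[
J = \big(R(\nu)\otimes R(\nu')\otimes \Pi_\lambda\big)\cdot {\rm Sym}^+
\]
from Section~\ref{subsubsec-triangle}, whose quotient $R$ is finite-dimensional. Proposition~\ref{prop-virtuallynil} then supplies the isomorphism $K_0\big(R(\nu)\otimes R(\nu')\otimes \Pi_\lambda\big) \cong K_0(R)$ induced by $\alpha'$.

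Next, I would introduce the image ideal $J' := \beta\alpha(J) \subset R_{\nu,-\nu',\lambda}$ and verify that it is also virtually nilpotent: since $\beta\alpha$ is grading-preserving and surjective, the degree-$a$ component of $(J')^N$ lies inside $\beta\alpha\big((J^N)_a\big)$, which vanishes for $N$ sufficiently large depending on $a$. Applying Proposition~\ref{prop-virtuallynil} again then yields $K_0(R_{\nu,-\nu',\lambda}) \cong K_0(R_{\nu,-\nu',\lambda}/J')$. The quotient $R_{\nu,-\nu',\lambda}/J'$, being a quotient of the finite-dimensional $R$ via the descended map, is itself finite-dimensional, and $\beta\alpha$ descends to a surjective graded homomorphism $\overline{\beta\alpha} \maps R \twoheadrightarrow R_{\nu,-\nu',\lambda}/J'$ of finite-dimensional graded $\Bbbk$-algebras.

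Proposition~\ref{prop_induced_surjective} applied to $\overline{\beta\alpha}$ would then give the surjectivity of $K_0(\overline{\beta\alpha})$, and composing with the two isomorphisms above identifies $K_0(\beta\alpha)$ with a surjection, as desired. The main obstacle is the verification that $J'$ inherits virtual nilpotency from $J$ and, correspondingly, that $R_{\nu,-\nu',\lambda}/J'$ is genuinely finite-dimensional; this is the one place where one must be careful that passage through $\beta\alpha$ does not destroy the nilpotency structure needed to invoke Proposition~\ref{prop-virtuallynil}. Both points follow routinely from degree-preservation of $\beta\alpha$ together with the graded components being bounded below with finite-dimensional weight spaces.
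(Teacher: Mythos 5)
Your argument is correct and is essentially the paper's own proof: both pass to the quotient by the virtually nilpotent ideal $J$ and its image $\beta\alpha(J)$, identify the resulting vertical maps on $K_0$ as isomorphisms via Proposition~\ref{prop-virtuallynil}, and deduce surjectivity from Proposition~\ref{prop_induced_surjective} applied to the induced surjection of finite-dimensional graded algebras. The only difference is that you spell out the routine check that $\beta\alpha(J)$ is virtually nilpotent and that the target quotient is finite-dimensional, which the paper leaves implicit.
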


\begin{proof} Since $\beta\alpha$ is surjective, $\beta\alpha(J)$ is a 2-sided
ideal of $R_{\nu,-\nu',\lambda}$. Start with a commutative square of surjective
algebra homomorphisms
\[
 \xy
 (-25,10)*++{R(\nu) \otimes R(\nu')\otimes \Pi_{\lambda} }="tl";
  (25,10)*+++{R_{\nu,-\nu',\lambda}}="tr";
 (-25,-10)*++{R}="bl"; (25,-10)*++{R_{\nu,-\nu',\lambda}/(\beta\alpha(J))}="br";
 {\ar@{->>}^-{\beta\alpha} "tl";"tr"};{\ar@{->>}_{{/J}} "tl";"bl"};
 {\ar@{->>} "bl";"br"};
 {\ar@{->>}^{/\beta\alpha(J)} "tr";"br"};
 \endxy
\]
and apply functor $K_0$ to obtain a commutative diagram
\[
 \xy
 (-30,10)*++{K_0\big(R(\nu) \otimes R(\nu')\otimes \Pi_{\lambda}\big)}="tl";
  (30,10)*+++{K_0\big(R_{\nu,-\nu',\lambda}\big)}="tr";
 (-30,-10)*++{K_0(R) }="bl"; (30,-10)*++{K_0\big(R_{\nu,-\nu',\lambda}/(\beta\alpha(J))\big)}="br";
 {\ar@{->}^-{K_0(\beta\alpha)} "tl";"tr"};{\ar@{->}_{{/J}} "tl";"bl"};
 {\ar@{->} "bl";"br"};
 {\ar@{->}^{{/\beta\alpha(J)}} "tr";"br"};
 \endxy
\]
The vertical arrows are isomorphisms since $J$ and $\beta\alpha(J)$ are virtually
nilpotent ideals. The bottom arrow is surjective, by
Proposition~\ref{prop_induced_surjective}, since $R$ and $R_{\nu,-\nu',\lambda}$
are finite-dimensional over $\Bbbk$. Surjectivity of the top arrow follows.
\end{proof}

\begin{cor} $K_0(\beta)$ is surjective. \end{cor}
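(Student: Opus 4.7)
The plan is essentially a one-line observation in elementary category theory: if a composition of maps is surjective, then the second map is surjective. Specifically, the commutative triangle \eqref{eq-KO-triangle} gives $K_0(\beta\alpha) = K_0(\beta) \circ K_0(\alpha)$, so Proposition~\ref{prop_betaalpha_surj} immediately forces $K_0(\beta)$ to be surjective.

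In more detail, I would argue as follows. Fix any element $y \in K_0(R_{\nu,-\nu',\lambda})$. By Proposition~\ref{prop_betaalpha_surj} there exists $x \in K_0(R(\nu) \otimes R(\nu') \otimes \Pi_{\lambda})$ with $K_0(\beta\alpha)(x) = y$. Setting $z := K_0(\alpha)(x) \in K_0(\ENDU(\cal{E}_{\nu,-\nu'}\onel))$ and using commutativity of \eqref{eq-KO-triangle} gives $K_0(\beta)(z) = K_0(\beta)(K_0(\alpha)(x)) = K_0(\beta\alpha)(x) = y$. Hence $y$ lies in the image of $K_0(\beta)$, and since $y$ was arbitrary, $K_0(\beta)$ is surjective.

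There is no real obstacle here; all the work was already done in Proposition~\ref{prop_betaalpha_surj}, which reduced surjectivity of $K_0(\beta\alpha)$ to the finite-dimensional graded case via the virtually nilpotent ideals $J$ and $\beta\alpha(J)$ and the lifting idempotents property. The corollary is simply extracting the consequence that the middle-to-quotient map in the diagram already hits everything. Consequently I would present this as a two-sentence proof rather than as a separate argument.
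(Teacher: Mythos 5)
Your argument is correct and is exactly the reasoning the paper intends: the corollary is stated immediately after Proposition~\ref{prop_betaalpha_surj} with no separate proof, relying on the commutativity of the triangle \eqref{eq-KO-triangle} so that surjectivity of $K_0(\beta\alpha)=K_0(\beta)\circ K_0(\alpha)$ forces surjectivity of $K_0(\beta)$. Nothing more is needed.
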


These observations are summarized in the following enhancement of
\eqref{eq-KO-triangle}
\begin{equation}\label{eq-KO-triangleII}
 \xy
 (0,0)*+{K_0(\ENDU(\cal{E}_{\nu,-\nu'}\onel))}="3";
 (45,0)*+{K_0(R_{\nu,-\nu',\lambda})}="4";
 (0,20)*+{K_0(R(\nu) \otimes R(\nu') \otimes \Pi_{\lambda})
 \cong {}_{\cal{A}}{\bf f}_{\nu}\otimes {}_{\cal{A}}{\bf f}_{\nu'}}="6";
{\ar@{->>}_-{K_0(\beta)} "3";"4"};
 {\ar_{K_0(\alpha)} "6";"3"};
{\ar@{->>}^{K_0(\beta \alpha)} "6";"4"};
 \endxy .
\end{equation}

%
\subsubsection{Idempotents in $\UcatD$}
%

Let $1=e_1+ \dots + e_k$, respectively $1=e'_1 + \dots+ e_{k'}'$, be a
decomposition of $1 \in \End_{\UcatD}(\cal{E}_{\nu}\onel) \cong R(\nu)_0$,
respectively $1 \in \End_{\UcatD}(\cal{E}_{-\nu'}\onel) \cong R(\nu')_0$, into
the sum of minimal mutually-orthogonal idempotents. Here $R(\nu)_0$ denotes the
degree $0$ subalgebra of $R(\nu)$.  Each term in the decomposition
\begin{equation}
  1 = \sum_{r=1}^{k} \sum_{r'=1}^{k'} e_r \otimes e'_{r'} \otimes 1
\end{equation}
of $1 \in R(\nu) \otimes R(\nu') \otimes \Pi_{\lambda}$ is a minimal degree 0
idempotent, in view of discussion preceding Proposition~\ref{prop-can-koiso}.

Let
\begin{equation}
  e_{r,r'}:= \alpha\left( e_r \otimes e'_{r'} \otimes 1 \right) \in
  \End_{\UcatD}(\cal{E}_{\nu,-\nu'}\onel)
\end{equation}
be the corresponding idempotent in the endomorphism algebra of
$\cal{E}_{\nu,-\nu'}\onel$ which may not be minimal.  We can decompose
\begin{equation} \label{eq_err}
  e_{r,r'} = \sum_{r''=1}^{k(r,r')} e_{r,r',r''}
\end{equation}
into a sum of minimal mutually-orthogonal degree zero idempotents $e_{r,r',r''}
\in \End_{\UcatD}(\cal{E}_{\nu,-\nu'}\onel)$.

Homomorphism $\beta\alpha$ induces a surjection of Grothendieck groups and maps
each minimal idempotent $e_r \otimes e'_{r'} \otimes 1$ either to 0 or to a
minimal degree 0 idempotent in $R_{\nu,-\nu',\lambda}$
(Proposition~\ref{prop_betaalpha_surj}). Consequently, for each $(r,r',r'')$ the
image $\beta(e_{r,r',r''})$ is either 0 or a minimal idempotent in
$R_{\nu,-\nu',\lambda}$.  Moreover, for each $(r,r')$ at most one of
$\beta(e_{r,r',r''})\neq 0$ in $R_{\nu,-\nu',\lambda}$.  We can relabel
idempotents so that $\beta(e_{r,r',1}) \neq 0$ and $\beta(e_{r,r',r''})=0$ for
$r''>1$ whenever $\beta\alpha(e_r \otimes e'_{r'} \otimes 1) \neq 0$.
Necessarily, $\beta(e_{r,r',r''})=0$ for all $r''$ if $\beta\alpha(e_r \otimes
e'_{r'} \otimes 1) =0$.

If $\beta(e_r\otimes e'_{r'}\otimes 1)=0$ then $e_{r,r',r''} \in
\cal{I}_{\nu,-\nu',\lambda}$. A homogeneous element $a \in
\cal{I}_{\nu,-\nu',\lambda}$ can be written as a finite sum $a = \sum_{s=1}^u
a'_s a_s$, where $a_s,a'_s$ are homogeneous,
\begin{equation}
  a_s \in \Ucatq(\cal{E}_{\nu,-\nu}\onel,\cal{E}_{\textbf{\textit{i(s)}}}\onel),
  \qquad \quad
  a'_s \in \Ucatq(\cal{E}_{\textbf{\textit{i(s)}}}\onel,\cal{E}_{\nu,-\nu}\onel),
\end{equation}
and $\textbf{\textit{i(s)}} \in \sseq$ with
$\parallel\textbf{\textit{i(s)}}\parallel
  \;< \;\parallel\nu\parallel +\parallel\nu'\parallel$.
Indeed, an element of $\cal{I}_{\nu,-\nu',\lambda}$ can be written as  a linear
combination of diagrams with $U$-turns.  Cutting each diagram in the middle
allows us to view it as composition
\[
 \cal{E}_{\nu,-\nu'}\onel \longrightarrow \cal{E}_{\textbf{\textit{i(s)}}}\onel
 \longrightarrow  \cal{E}_{\nu,-\nu'}\onel,
 \]
 with the length $\parallel\textbf{\textit{i(s)}}\parallel$ of the sequence
$\textbf{\textit{i(s)}}$ strictly less than the sum of lengths
$\parallel\nu\parallel +\parallel\nu'\parallel$.
\[
\xy 0;/r.18pc/:
 (-4,-15)*{}; (-20,25) **\crv{(-3,-6) & (-20,4)}?(0)*\dir{<}?(.6)*\dir{}+(0,0)*{\bullet};
 (-12,-15)*{}; (-4,25) **\crv{(-12,-6) & (-4,0)}?(0)*\dir{<}?(.6)*\dir{}+(.2,0)*{\bullet};
 ?(0)*\dir{<}?(.75)*\dir{}+(.2,0)*{\bullet};?(0)*\dir{<}?(.9)*\dir{}+(0,0)*{\bullet};
 (-28,25)*{}; (-12,25) **\crv{(-28,10) & (-12,10)}?(0)*\dir{<};
  ?(.2)*\dir{}+(0,0)*{\bullet}?(.35)*\dir{}+(0,0)*{\bullet};
  (-50,-15)*{}; (-50,25) **\crv{(-51,10) & (-49,10)}?(1)*\dir{>}?(.35)*\dir{}+(.2,0)*{\bullet};;
 (-36,-15)*{}; (-36,25) **\crv{(-34,-6) & (-35,4)}?(1)*\dir{>};
 (-28,-15)*{}; (-42,25) **\crv{(-28,-6) & (-42,4)}?(1)*\dir{>};
 (-42,-15)*{}; (-20,-15) **\crv{(-42,-5) & (-20,-5)}?(1)*\dir{>};
 (-60,0)*{}; (8,0)*{} **\dir{.}; (15,0)*{\textbf{\textit{i(s)}}};
 (22,24)*{\lambda};
 \endxy
\]

Choose such a decomposition for each
\begin{equation}
 e_{r,r',r''} \in \cal{I}_{\nu,-\nu',\lambda}, \qquad \quad e_{r,r',r''} =
 \sum_{s=1}^{u}a_s'a_s,
\end{equation}
where in the notations we suppress dependence of $u(r,r',r'')$, $a_s(r,r',r'')$
and $a_s'(r,r',r'')$ on the three parameters. Multiplication by $e_{r,r',r''}$ is
the identity endomorphism of $(\cal{E}_{\nu,-\nu'}\onel,e_{r,r',r''})$.  We can
view this indecomposable 1-morphism of $\UcatD$ as an indecomposable projective
module (call it $P$) over the graded idempotented ring
${}_{\mu}\Ucatq_{\lambda}$, $\mu=\lambda+\nu_X-\nu'_X$.  Then the identity
endomorphism of $P$ factors through projective module $Q$ corresponding to the
direct sum $\bigoplus_{s=1}^{u}\cal{E}_{\textbf{\textit{i(s)}}}\onel\{t_s\}$
\[
 \xy
 (-30,0)*+{P}="1";(0,0)*+{Q}="2";(30,0)*+{P}="3";
 {\ar "1";"2"}; {\ar "2";"3"};  {\ar@/^1.9pc/^{\Id} "1";"3"};
 \endxy
\]
for some $t_s\in \Z$.  Therefore, $P$ is isomorphic to a direct summand of $Q$,
and the 1-morphism $(\cal{E}_{\nu,-\nu'}\onel,e_{r,r',r''})$ of $\UcatD$ is
isomorphic to a direct summand of
$\bigoplus_{s=1}^{u}\cal{E}_{\textbf{\textit{i(s)}}}\onel\{t_s\}$.

Define the width $\parallel P\parallel$ of an indecomposable 1-morphism $P \in
{\rm Ob}( \UcatD(\lambda,\mu))$ as the smallest $m$ such that $P$ is isomorphic
to a direct summand of $\cal{E}_{\ii}\onel\{t\}$ for some $\ii \in\sseq$,
$\parallel\ii\parallel=m$ and $t \in \Z$.

For example, if $P$ has width 0, then $P$ is isomorphic to a direct summand of
$\onel\{t\}$ for some $\lambda$ and $t$. One-morphism $\onel\{t\}$ is
indecomposable, since its endomorphism ring
$\UcatD(\onel\{t\},\onel\{t\})=\Bbbk$, or 0 (a possibility if the calculus is
degenerate). This implies that any width zero 1-morphism is isomorphic to
$\onel\{t\}$.

\begin{lem}
If $P$ has width $m$ then $P$ is isomorphic to a direct summand of
$\cal{E}_{\nu,-\nu'}\onel\{t\}$ for some $\nu,\nu' \in \N[I]$,
$\parallel\nu\parallel +\parallel \nu'\parallel=m$, $\lambda \in X$, and $t \in
\Z$.
\end{lem}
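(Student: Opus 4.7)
By definition of width $m$, the indecomposable $P$ is a direct summand of some $\cal{E}_{\ii}\onel\{t\}$ with $\parallel\ii\parallel = m$. The plan is to reduce to the case where $\ii$ has all positive entries preceding all negative entries, so that $\ii$ is of the form $(+k_1,\ldots,+k_a,-\ell_1,\ldots,-\ell_b)$; then setting $\nu = \sum_r k_r$ and $\nu' = \sum_s \ell_s$ in $\N[I]$, one has $\parallel\nu\parallel + \parallel\nu'\parallel = m$ and $\cal{E}_{\ii}\onel\{t\}$ is a direct summand (indeed, a summand by definition) of $\cal{E}_{\nu,-\nu'}\onel\{t\}$.

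I will induct on the statistic $\mathrm{inv}(\ii)$ counting pairs $s<t$ with $\epsilon_s=-$ and $\epsilon_t=+$. If $\mathrm{inv}(\ii)=0$ we are already done by the remark above. Otherwise, choose two adjacent positions in $\ii$ with a minus immediately followed by a plus, and write $\ii = \ii'\,(-i)(+j)\,\ii''$. If $i\neq j$, Proposition \ref{pmij-isoms} provides an isomorphism $\cal{E}_{\ii'(-i)(+j)\ii''}\onel \cong \cal{E}_{\ii'(+j)(-i)\ii''}\onel$ of 1-morphisms in $\UcatD$; the signed sequence $\jj := \ii'(+j)(-i)\ii''$ has the same length $m$ and satisfies $\mathrm{inv}(\jj) = \mathrm{inv}(\ii) - 1$, so $P$ is a summand of $\cal{E}_{\jj}\onel\{t\}$ and the induction hypothesis applies.

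The slightly delicate case is $i = j$. Setting $\mu = \lambda + \ii''_X$, Proposition \ref{pmii-isoms} yields an isomorphism in $\UcatD$ relating $\cal{E}_{\ii'(-i)(+i)\ii''}\onel$ and $\cal{E}_{\ii'(+i)(-i)\ii''}\onel$, up to summands isomorphic to grading shifts of $\cal{E}_{\ii'\ii''}\onel$ (the precise formulation depending on the sign of $\la i,\mu\ra$, but in every case the two sequences become isomorphic after adjoining finitely many copies of $\cal{E}_{\ii'\ii''}\onel\{s\}$ on one side). Since the hom categories of $\UcatD$ are Krull--Schmidt, the indecomposable summands of $\cal{E}_{\ii'(-i)(+i)\ii''}\onel\{t\}$ are, up to isomorphism and grading shift, precisely those of $\cal{E}_{\ii'(+i)(-i)\ii''}\onel\{t\}$ together with those of $\cal{E}_{\ii'\ii''}\onel\{t\}$. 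But $\parallel\ii'\ii''\parallel = m-2 < m$, so $P$ cannot be a summand of $\cal{E}_{\ii'\ii''}\onel\{s\}$ for any $s$ without contradicting the hypothesis that $P$ has width $m$. Hence $P$ is a direct summand of $\cal{E}_{\jj}\onel\{t\}$ with $\jj := \ii'(+i)(-i)\ii''$, and again $\mathrm{inv}(\jj) = \mathrm{inv}(\ii) - 1$, so the induction closes.

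The only non-routine point is the case $i=j$, where one must exclude the possibility that $P$ gets absorbed into the shorter correction term $\cal{E}_{\ii'\ii''}\onel$; this is exactly what the minimality in the definition of width rules out, with Krull--Schmidt doing the bookkeeping. Once the induction terminates at $\mathrm{inv}(\ii)=0$, one reads off $\nu$ and $\nu'$ as described, completing the proof.
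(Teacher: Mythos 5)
Your proposal is correct and follows essentially the same route as the paper: move the negative entries past the positive ones one adjacent swap at a time, using Proposition \ref{pmij-isoms} when the labels differ and Proposition \ref{pmii-isoms} together with Krull--Schmidt when they coincide, with width-minimality of $P$ ruling out absorption into the length-$(m-2)$ correction summands $\cal{E}_{\ii'\ii''}\onel\{s\}$. Your explicit induction on the inversion count is just a formalization of the paper's "move all positive terms to the left" step, so there is nothing substantive to add.
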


\begin{proof}
If $i,j \in I$ and $\ii=\ii'-i+j\ii''$ has length $m$, then $P$ is direct summand
of $\cal{E}_{\ii}\onel\{t\}$ if an only if it is a direct summand of
$\cal{E}_{\ii'+j-i\ii''}\onel\{t\}$.  Indeed, these two 1-morphisms are either
isomorphic (if $i\neq j$) or differ by direct summands
$\cal{E}_{\ii'\ii''}\onel\{t'\}$, all whose indecomposable summands have width at
most $m-2$, and thus cannot be isomorphic to $P$.  By assumption, $P$ is
isomorphic to a direct summand of $\cal{E}_{\ii}\onel\{t\}$ with
$\parallel\ii\parallel=m$. Moving all positive terms of $\ii$ to the left of all
negative terms produces a sequence $\jj(-\jj')$ with $\jj,\jj'$ positive,
$\parallel\jj\parallel+\parallel\jj'\parallel=m$ and $P$ being a summand of
$\cal{E}_{\jj(-\jj)}\onel\{t\}$.  But this 1-morphism is a direct summand of
$\cal{E}_{\nu,-\nu'}\onel\{t\}$ with $\nu$, respectively $\nu'$, being the weight
of $\jj$, respectively $\jj'$.
\end{proof}

\begin{proof}[Proof of Theorem~\ref{thm-surjective}]
We show that $[P]$ is in the image of $\gamma \maps \UA \to K_0(\UcatD)$ by
induction on the length of $P$.  Let $P$ have length $m$.  Then $P$ is a direct
summand of $\cal{E}_{\nu,-\nu'}\onel \{t\}$ for $\nu,\nu'$ as above.  By shifting
the degree of $P$ down by $t$, $P \cong (\cal{E}_{\nu,-\nu'}\onel,e_{r,r',r''})$
for at least one minimal idempotent $e_{r,r',r''}$. We must have $r''=1$ and
$\beta(e_{r,r',r''})\neq 0$, for otherwise $P$ is isomorphic to a direct summand
of $\bigoplus_s \cal{E}_{\textbf{\textit{i(s)}}}\onel\{t_s\}$, and since
$\parallel\textbf{\textit{i(s)}}\parallel=m-2$, the width of $P$ is at most
$m-2$, a contradiction. Thus $r''=1$ and $\beta(e_{r,r',r''})\neq 0$.

We have
\begin{equation}
  [P] =[\cal{E}_{\nu,-\nu'}\onel,e_{r,r',1}],
\end{equation}
and
\begin{equation}
  \sum_{r''=1}^{k(r,r')}[\cal{E}_{\nu,-\nu'}\onel,e_{r,r',r''}] =
  [\cal{E}_{\nu,-\nu'}\onel,e_{r,r'}],
\end{equation}
from \eqref{eq_err}.  For $r''>1$ we have $\beta(e_{r,r',r''})=0$, and
$(\cal{E}_{\nu,-\nu'}\onel,e_{r,r',r''})$ is isomorphic to a direct summand of a
finite sum of $\cal{E}_{\ii}\onel\{t\}$, for sequences $\ii$ of length $m-2$.
Each indecomposable summand of $(\cal{E}_{\nu,-\nu'}\onel,e_{r,r',r''})$ has
length at most $m-2$.  By induction hypothesis,
$[\cal{E}_{\nu,-\nu'}\onel,e_{r,r',r''}]$ belongs to the image of $\gamma$ for
all $2 \leq r'' \leq k(r,r')$.  Thus,
\begin{eqnarray}
  [P] = [\cal{E}_{\nu,-\nu'}\onel,e_{r,r',1}] =
  [\cal{E}_{\nu,-\nu'}\onel,e_{r,r'}]-\sum_{r''=2}^{k(r,r')}[\cal{E}_{\nu,-\nu'}
  \onel,e_{r,r',r''}] \in [\cal{E}_{\nu,-\nu'}\onel,e_{r,r'}]+\gamma(\UA). \nn
\end{eqnarray}

It now suffices to show that $[\cal{E}_{\nu,-\nu'}\onel,e_{r,r'}]$ belongs to
image of $\gamma$.  But the idempotent $e_{r,r'}$ is the image of $e_r \otimes
e_{r'} \otimes 1$ in $R(\nu) \otimes R(\nu') \otimes \Pi_{\lambda}$, and the
Grothendieck group of the latter is isomorphic to ${}_{\cal A}{\bf f}(\nu)
\otimes {}_{\cal A}{\bf f}(\nu')$.  Therefore, $[\cal{E}_{\nu,-\nu'},e_{r,r'}]$
is in the image of ${}_{\cal A}{\bf f}(\nu) \otimes {}_{\cal A}{\bf f}(\nu')$
under the composition map
\[
 \xymatrix{
{}_{\cal A}{\bf f}(\nu) \otimes {}_{\cal A}{\bf f}(\nu') \ar[rr] &&
 1_{\mu}(\UA)1_{\lambda} \ar[rr]^{\gamma} && K_0\big(\UcatD(\lambda,\mu)\big)
 }
\]
\[
 \xymatrix{
x\otimes y \quad \ar[rr] && \;\; x^+y^-1_{\lambda}, \ar@{}[rr] && \qquad}
\]
\begin{eqnarray}
  x \mapsto x^+ &\maps& {}_{\cal A}{\bf f}(\nu) \longrightarrow {\bf U}^+,\nn\\
  y \mapsto y^-& \maps& {}_{\cal A}{\bf f}(\nu') \longrightarrow {\bf U}^-.\nn
\end{eqnarray}
This completes the proof of surjectivity of $\gamma$.
\end{proof}

\bigskip

%
\subsection{Injectivity of $\gamma$ in the nondegenerate case}
\label{subsec_injectivity}
%

Assume that our graphical calculus is nondegenerate for a given root datum and
field $\Bbbk$, so that $B_{\ii,\jj,\lambda}$ is a basis of
$\HOMU(\cal{E}_{\ii}\onel,\cal{E}_{\jj}\onel)$ for all $\ii$, $\jj$ and
$\lambda$. Then
\begin{equation}
 \gdim \;\HOMU\left( \cal{E}_{\ii}\onel,\cal{E}_{\jj}\onel \right)=
 \sum_{t \in \Z}q^t\dim_{\Bbbk}\UcatD(\cal{E}_{\ii}\onel\{t\},\cal{E}_{\jj}\onel
 ).
\end{equation}
Since the calculus is nondegenerate,
\begin{equation} \label{eq_gdim_U_semi}
  \gdim \; \HOMU\left( \cal{E}_{\ii}\onel,\cal{E}_{\jj}\onel \right)
  = \pi \la E_{\ii}1_{\lambda}, E_{\jj}1_{\lambda}\ra ,
\end{equation}
and $E_{\ii}1_{\lambda}$, over all $\ii$, $\lambda$, span $\U$, the
$\Q(q)$-algebra homomorphism
\begin{equation}
  \gamma_{\Q(q)} \maps \U \to K_0(\UcatD) \otimes_{\Z[q,q^{-1}]} \Q(q)
\end{equation}
intertwines the $\Q(q)$-semilinear forms $\pi\sla, \sra$ on $\U$ and $\gdim \;
\HOMU(,)$ on $K_0$.  The latter form, which we denote $\sla,\sra_{\pi}$, is given
by
\begin{equation} \label{eq_semilinear_form_pi}
  \sla [P],[Q] \sra_{\pi} := \sum_{t\in\Z}q^t\gdim \big( \UcatD(P\{t\},Q) \big)
\end{equation}
for any two 1-morphisms $P,Q \in \UcatD(\lambda,\mu)$, and extends to the entire
$K_0(\UcatD) \otimes_{\Z[q,q^{-1}]} \Q(q)$ via $\Q(q)$-semilinearity and the
orthogonality condition $\sla x,y \sra_{\pi}=0$ for $x \in
K_0\big(\UcatD(\lambda,\mu)\big)$, $y\in K_0\big(\UcatD(\lambda',\mu')\big)$
unless $\lambda=\lambda'$ and $\mu=\mu'$.

By Proposition~\ref{prop_nondeg}, $\sla ,\sra$ is nondegenerate on $\U$.
Therefore, $\gamma_{\Q(q)}$ is injective, implying that $\gamma$ is injective.

%
\section{Categorification of $\U$ for $\mathfrak{sl}_n$} \label{sec_Usln}
%

%
\subsection{Forms of quantum $\mf{sl}_n$}
%

We consider various forms of the quantized enveloping algebra of $\mf{sl}_n$
corresponding to the root datum of the Dynkin graph
\begin{equation} \label{eq_dynkin_sln}
    \xy
  (-15,0)*{\circ}="1";
  (-5, 0)*{\circ}="2";
  (5,  0)*{\circ}="3";
  (35,  0)*{\circ}="4";
  "1";"2" **\dir{-}?(.55)*\dir{};
  "2";"3" **\dir{-}?(.55)*\dir{};
  "3";(15,0) **\dir{-}?(.55)*\dir{};
  (25,0);"4" **\dir{-}?(.55)*\dir{};
  (-15,2.2)*{\scs 1};
  (-5,2.2)*{\scs 2};
  (5,2.2)*{\scs 3};
  (35,2.2)*{\scs n-1};
  (20,0)*{\cdots };
  \endxy
  \nn
\end{equation}
For this root datum,  any weight $\lambda \in X$ can be written as $\lambda =
(\lambda_1,\lambda_2, \dots, \lambda_{n-1})$, where $\lambda_i=\la i, \lambda
\ra$.

The algebra $\Uq$ is the $\Q(q)$-algebra with 1 generated by the elements $E_i$,
$F_i$ and $K_i^{\pm 1}$ for $i = 1, 2, \dots , n-1$, with  the defining relations
\begin{equation}
K_iK_i^{-1} = K_i^{-1}K_i = 1, \quad  K_iK_j = K_jK_i,
\end{equation}
\begin{equation}
K_iE_jK_i^{-1} = q^{i \cdot j} E_j, \quad  K_iF_jK_i^{-1} = q^{-i \cdot j} F_j,
\end{equation}where $i \cdot i =2$, $i \cdot j =-1$ if $j=i\pm 1$, and $i\cdot j =0$
otherwise,
\begin{equation}
E_iF_j - F_jE_i = \delta_{ij} \frac{K_i-K_{i}^{-1}}{q-q^{-1}},
\end{equation}
\begin{equation}
E_i^2E_j-(q+q^{-1})E_iE_jE_i+E_jE_i^2 =0\;\; \text{if $j=i\pm 1$},
\end{equation}\begin{equation}
F_i^2F_j-(q+q^{-1})F_iF_jF_i+F_jF_i^2=0 \;\; \text{if $j=i\pm 1$},
\end{equation}\begin{equation}
E_iE_j=E_jE_i, \quad F_iF_j = F_jF_i \;\; \text{if $|i-j|>1$.}
\end{equation}

Recall that $\U(\mf{sl}_n)$ is obtained from $\Uq$ by adjoining a collection of
orthogonal idempotents $1_{\lambda}$ indexed by the weight lattice $X$ of
$\mathfrak{sl}_n$,
\begin{equation}
  1_{\lambda}1_{\lambda'} = \delta_{\lambda\lambda'} 1_{\lambda},
\end{equation}
such that if $\lambda = (\lambda_1,\lambda_2, \dots, \lambda_{n-1})$, then
\begin{equation} \label{eq_onesubn}
K_i1_{\lambda} =1_{\lambda}K_i= q^{\lambda_i} 1_{\lambda}, \quad
E_i^{}1_{\lambda} = 1_{\lambda+i_X}E_i, \quad F_i1_{\lambda} = 1_{\lambda-i_X}F_i
,
\end{equation}
where
\begin{eqnarray} \label{eq_weight_action1}
 \lambda +i_X &=& \quad \left\{
\begin{array}{ccl}
   (\lambda_1+2, \lambda_2-1,\lambda_3,\dots,\lambda_{n-2}, \lambda_{n-1}) & \quad & \text{if $i=1$} \\
   (\lambda_1, \lambda_2,\dots,\lambda_{n-2},\lambda_{n-1}-1, \lambda_{n-1}+2) & \quad & \text{if $i=n-1$} \\
  (\lambda_1, \dots, \lambda_{i-1}-1, \lambda_i+2, \lambda_{i+1}-1, \dots,
\lambda_{n-1}) & \quad & \text{otherwise.}
\end{array}
 \right.
\end{eqnarray}
The $\Z[q,q^{-1}]$-algebra $\UA(\mf{sl}_n)$ is the integral form of
$\U(\mf{sl}_n)$ generated by products of divided powers $E^{(a)}_i1_{\lambda}:=
\frac{E^{a}_i}{[a]!}1_{\lambda}$, $F^{(a)}_i1_{\lambda}:=
\frac{F^{a}_i}{[a]!}1_{\lambda}$ for $\lambda \in X$ and $i = 1, 2, \dots , n-1$.
The relationships are collected below:
\[
  \xy
 (-50,0)*+{\Uq}="q"; (50,0)*++{\UA(\mathfrak{sl}_n)}="0";
 (0,0)*+{\U(\mf{sl}_n)}="kq";
  {\ar^{\txt{add \\idempotents}} "q";"kq"};
  {\ar@{_{(}->}_{\txt{integral \\form}}  "0";"kq"};
 \endxy
\]

%
\subsection{The 2-category $\Ucats(\mathfrak{sl}_n)$}
%

We introduce a 2-category $\Ucats$ that is defined analogously to $\Ucat$ in the
$\mf{sl}_n$-case, but the $R(\nu)$-relations have been modified to the signed
$R(\nu)$-relations given in \cite{KL2}. Namely, the $R(\nu)$-relations in $\Ucat$
are replaced in $\Ucats$ by the signed $R(\nu)$-relations obtained from the
oriented graph
\begin{equation} \label{eq_oriented_graph}
    \xy
  (-15,0)*{\circ}="1";
  (-5, 0)*{\circ}="2";
  (5,  0)*{\circ}="3";
  (35,  0)*{\circ }="4";
  "1";"2" **\dir{-}?(.55)*\dir{>};
  "2";"3" **\dir{-}?(.55)*\dir{>};
  "3";(15,0) **\dir{-}?(.55)*\dir{>};
  (25,0);"4" **\dir{-}?(.55)*\dir{>};
  (-15,2.2)*{\scs 1};
  (-5,2.2)*{\scs 2};
  (5,2.2)*{\scs 3};
  (35,2.2)*{\scs n-1};
  (20,0)*{\cdots };
  \endxy ~,
\end{equation}
with vertices enumerated by the set $\{1,2,\dots,n-1\}$, using signs
$\tau_{ij}=\tau_{ji}=-1$ for all edges.  It was observed in \cite{KL2} that the
resulting ring $R_{\tau}(\nu)$ is isomorphic to $R(\nu)$. In
Section~\ref{subsubsec_isom}, following the definition of $\Ucats$, we extend
this isomorphism to an isomorphism $\Ucat \to \Ucats$ of 2-categories. The
2-category $\Ucats$ is more convenient for constructing a representation on
iterated flag varieties in Section~\ref{sec_rep}.

In general it is a poor practice to set up an isomorphism rather than an
equivalence of categories, not to mention 2-categories.  However, having an
isomorphism $ \Ucat\to \Ucats$ is justified, since  $\Ucat$ and $\Ucats$ have the
same objects, morphisms, and generating 2-morphisms.

\begin{defn} \label{def_Ucatq-sln}
$\Ucats(\mathfrak{sl}_n)$ is a additive $\Bbbk$-linear 2-category with
translation.  The 2-category $\Ucats(\mathfrak{sl}_n)$ has objects, morphisms,
and generating 2-morphisms as defined in \eqref{def_Ucat}, but some of the
relations on 2-morphisms are modified.
\begin{itemize}
  \item The $\mf{sl}_2$ relations and the shift isomorphism relations
are the same as before, see equations
\eqref{eq_biadjoint1}--\eqref{eq_nil_dotslide}.

  \item All 2-morphisms are cyclic with respect to the biadjoint
   structure as before, see \eqref{eq_cyclic_dot} and \eqref{eq_cyclic_cross-gen}.

 \item The relations \eqref{eq_downup_ij-gen} hold.

\item The signed $R(\nu)$ relations are
\begin{enumerate}[(a)]
\item For $i \neq j$, the relations
\begin{eqnarray}
  \vcenter{\xy 0;/r.18pc/:
    (-4,-4)*{};(4,4)*{} **\crv{(-4,-1) & (4,1)}?(1)*\dir{>};
    (4,-4)*{};(-4,4)*{} **\crv{(4,-1) & (-4,1)}?(1)*\dir{>};
    (-4,4)*{};(4,12)*{} **\crv{(-4,7) & (4,9)}?(1)*\dir{>};
    (4,4)*{};(-4,12)*{} **\crv{(4,7) & (-4,9)}?(1)*\dir{>};
  (8,8)*{\lambda};(-5,-3)*{\scs i};
     (5.1,-3)*{\scs j};
 \endxy}
 \qquad = \qquad
 \left\{
 \begin{array}{ccc}
     \xy 0;/r.18pc/:
  (3,9);(3,-9) **\dir{-}?(.5)*\dir{<}+(2.3,0)*{};
  (-3,9);(-3,-9) **\dir{-}?(.5)*\dir{<}+(2.3,0)*{};
  (8,2)*{\lambda};(-5,-6)*{\scs i};     (5.1,-6)*{\scs j};
 \endxy &  &  \text{if $i \cdot j=0$,}\\ \\
 (i-j)\left(\;\; \xy 0;/r.18pc/:
  (3,9);(3,-9) **\dir{-}?(.5)*\dir{<}+(2.3,0)*{};
  (-3,9);(-3,-9) **\dir{-}?(.5)*\dir{<}+(2.3,0)*{};
  (8,2)*{\lambda}; (-3,4)*{\bullet};
  (-5,-6)*{\scs i};     (5.1,-6)*{\scs j};
 \endxy \quad
 - \quad
 \xy 0;/r.18pc/:
  (3,9);(3,-9) **\dir{-}?(.5)*\dir{<}+(2.3,0)*{};
  (-3,9);(-3,-9) **\dir{-}?(.5)*\dir{<}+(2.3,0)*{};
  (8,2)*{\lambda}; (3,4)*{\bullet};
  (-5,-6)*{\scs i};     (5.1,-6)*{\scs j};
 \endxy\;\;\right)
   &  & \text{if $i \cdot j =-1$.}
 \end{array}
 \right. \nn \\\label{eq_r2_ij}
\end{eqnarray}

\item For $i \neq j$, the relations
\begin{eqnarray} \label{eq_dot_slide_ij}
\xy
  (0,0)*{\xybox{
    (-4,-4)*{};(4,4)*{} **\crv{(-4,-1) & (4,1)}?(1)*\dir{>}?(.75)*{\bullet};
    (4,-4)*{};(-4,4)*{} **\crv{(4,-1) & (-4,1)}?(1)*\dir{>};
    (-5,-3)*{\scs i};
     (5.1,-3)*{\scs j};
     (8,1)*{ \lambda};
     (-10,0)*{};(10,0)*{};
     }};
  \endxy
 \;\; =
\xy
  (0,0)*{\xybox{
    (-4,-4)*{};(4,4)*{} **\crv{(-4,-1) & (4,1)}?(1)*\dir{>}?(.25)*{\bullet};
    (4,-4)*{};(-4,4)*{} **\crv{(4,-1) & (-4,1)}?(1)*\dir{>};
    (-5,-3)*{\scs i};
     (5.1,-3)*{\scs j};
     (8,1)*{ \lambda};
     (-10,0)*{};(10,0)*{};
     }};
  \endxy
\qquad  \xy
  (0,0)*{\xybox{
    (-4,-4)*{};(4,4)*{} **\crv{(-4,-1) & (4,1)}?(1)*\dir{>};
    (4,-4)*{};(-4,4)*{} **\crv{(4,-1) & (-4,1)}?(1)*\dir{>}?(.75)*{\bullet};
    (-5,-3)*{\scs i};
     (5.1,-3)*{\scs j};
     (8,1)*{ \lambda};
     (-10,0)*{};(10,0)*{};
     }};
  \endxy
\;\;  =
  \xy
  (0,0)*{\xybox{
    (-4,-4)*{};(4,4)*{} **\crv{(-4,-1) & (4,1)}?(1)*\dir{>} ;
    (4,-4)*{};(-4,4)*{} **\crv{(4,-1) & (-4,1)}?(1)*\dir{>}?(.25)*{\bullet};
    (-5,-3)*{\scs i};
     (5.1,-3)*{\scs j};
     (8,1)*{ \lambda};
     (-10,0)*{};(12,0)*{};
     }};
  \endxy
\end{eqnarray}
for all $\lambda$.

\item Unless $i = k$ and $j =i \pm 1$
\begin{equation}
 \vcenter{
 \xy 0;/r.18pc/:
    (-4,-4)*{};(4,4)*{} **\crv{(-4,-1) & (4,1)}?(1)*\dir{>};
    (4,-4)*{};(-4,4)*{} **\crv{(4,-1) & (-4,1)}?(1)*\dir{>};
    (4,4)*{};(12,12)*{} **\crv{(4,7) & (12,9)}?(1)*\dir{>};
    (12,4)*{};(4,12)*{} **\crv{(12,7) & (4,9)}?(1)*\dir{>};
    (-4,12)*{};(4,20)*{} **\crv{(-4,15) & (4,17)}?(1)*\dir{>};
    (4,12)*{};(-4,20)*{} **\crv{(4,15) & (-4,17)}?(1)*\dir{>};
    (-4,4)*{}; (-4,12) **\dir{-};
    (12,-4)*{}; (12,4) **\dir{-};
    (12,12)*{}; (12,20) **\dir{-};
  (18,8)*{\lambda};
  (-6,-3)*{\scs i};
  (6,-3)*{\scs j};
  (15,-3)*{\scs k};
\endxy}
 \;\; =\;\;
 \vcenter{
 \xy 0;/r.18pc/:
    (4,-4)*{};(-4,4)*{} **\crv{(4,-1) & (-4,1)}?(1)*\dir{>};
    (-4,-4)*{};(4,4)*{} **\crv{(-4,-1) & (4,1)}?(1)*\dir{>};
    (-4,4)*{};(-12,12)*{} **\crv{(-4,7) & (-12,9)}?(1)*\dir{>};
    (-12,4)*{};(-4,12)*{} **\crv{(-12,7) & (-4,9)}?(1)*\dir{>};
    (4,12)*{};(-4,20)*{} **\crv{(4,15) & (-4,17)}?(1)*\dir{>};
    (-4,12)*{};(4,20)*{} **\crv{(-4,15) & (4,17)}?(1)*\dir{>};
    (4,4)*{}; (4,12) **\dir{-};
    (-12,-4)*{}; (-12,4) **\dir{-};
    (-12,12)*{}; (-12,20) **\dir{-};
  (10,8)*{\lambda};
  (7,-3)*{\scs k};
  (-6,-3)*{\scs j};
  (-14,-3)*{\scs i};
\endxy} \label{eq_r3_easy}
\end{equation}

For $i \cdot j =i \pm 1$
\begin{equation}
\xy 0;/r.18pc/:
  (4,12);(4,-12) **\dir{-}?(.5)*\dir{<}+(2.3,0)*{};
  (-4,12);(-4,-12) **\dir{-}?(.5)*\dir{<}+(2.3,0)*{};
  (12,12);(12,-12) **\dir{-}?(.5)*\dir{<}+(2.3,0)*{};
  (17,2)*{\lambda}; (-6,-9)*{\scs i};     (6.1,-9)*{\scs j};
  (14,-9)*{\scs i};
 \endxy
 \;\; =\;\;
(i-j) \left( \vcenter{
 \xy 0;/r.18pc/:
    (-4,-4)*{};(4,4)*{} **\crv{(-4,-1) & (4,1)}?(1)*\dir{>};
    (4,-4)*{};(-4,4)*{} **\crv{(4,-1) & (-4,1)}?(1)*\dir{>};
    (4,4)*{};(12,12)*{} **\crv{(4,7) & (12,9)}?(1)*\dir{>};
    (12,4)*{};(4,12)*{} **\crv{(12,7) & (4,9)}?(1)*\dir{>};
    (-4,12)*{};(4,20)*{} **\crv{(-4,15) & (4,17)}?(1)*\dir{>};
    (4,12)*{};(-4,20)*{} **\crv{(4,15) & (-4,17)}?(1)*\dir{>};
    (-4,4)*{}; (-4,12) **\dir{-};
    (12,-4)*{}; (12,4) **\dir{-};
    (12,12)*{}; (12,20) **\dir{-};
  (18,8)*{\lambda};
  (-6,-3)*{\scs i};
  (6,-3)*{\scs j};
  (14,-3)*{\scs i};
\endxy}
\quad - \quad
 \vcenter{
 \xy 0;/r.18pc/:
    (4,-4)*{};(-4,4)*{} **\crv{(4,-1) & (-4,1)}?(1)*\dir{>};
    (-4,-4)*{};(4,4)*{} **\crv{(-4,-1) & (4,1)}?(1)*\dir{>};
    (-4,4)*{};(-12,12)*{} **\crv{(-4,7) & (-12,9)}?(1)*\dir{>};
    (-12,4)*{};(-4,12)*{} **\crv{(-12,7) & (-4,9)}?(1)*\dir{>};
    (4,12)*{};(-4,20)*{} **\crv{(4,15) & (-4,17)}?(1)*\dir{>};
    (-4,12)*{};(4,20)*{} **\crv{(-4,15) & (4,17)}?(1)*\dir{>};
    (4,4)*{}; (4,12) **\dir{-};
    (-12,-4)*{}; (-12,4) **\dir{-};
    (-12,12)*{}; (-12,20) **\dir{-};
  (10,8)*{\lambda};
  (6,-3)*{\scs i};
  (-6,-3)*{\scs j};
  (-14,-3)*{\scs i};
\endxy}
\right) \label{eq_r3_hard}
\end{equation}
\end{enumerate}

\end{itemize}

\end{defn}

%
\subsubsection{The 2-isomorphism $\Sigma \maps \Ucat \to \Ucats$}
\label{subsubsec_isom}
%

Define an isomorphism of 2-categories $\Sigma \maps \Ucat \to \Ucats$  on objects
by mapping $\lambda \mapsto \lambda$, and on hom categories by graded additive
$\Bbbk$-linear functors
\begin{eqnarray}
\Sigma \maps \Ucat(\lambda, \mu)  &\longrightarrow&
\Ucats(\lambda, \mu) \\
\cal{E}_{\ii}\onel & \mapsto & \cal{E}_{\ii}\onel \\
   \xy 0;/r.18pc/:
  (6,6);(6,-6) **\dir{-}?(.5)*\dir{<}+(2.3,0)*{};
  (18,6);(18,-6) **\dir{-}?(.5)*\dir{}+(2.3,0)*{};
  (-6,6);(-6,-6) **\dir{-}?(.5)*\dir{}+(2.3,0)*{};
  (24,2)*{\lambda};(-6,-9)*{\scs i_1};     (6,-9)*{\scs i_{\alpha}};
  (19,-9)*{\scs i_m};
  (6.1,2)*{\bullet};
  (0,0)*{\cdots};(12,0)*{\cdots};
 \endxy & \mapsto &(-1)^{i_{\alpha}}
    \;
   \xy 0;/r.18pc/:
  (6,6);(6,-6) **\dir{-}?(.5)*\dir{<}+(2.3,0)*{};
  (18,6);(18,-6) **\dir{-}?(.5)*\dir{}+(2.3,0)*{};
  (-6,6);(-6,-6) **\dir{-}?(.5)*\dir{}+(2.3,0)*{};
  (24,2)*{\lambda};(-6,-9)*{\scs i_1};     (6,-9)*{\scs i_{\alpha}};
  (19,-9)*{\scs i_m};
  (6.1,2)*{\bullet};
  (0,0)*{\cdots};(12,0)*{\cdots};
 \endxy \nn \\ \nn\\
 \xy 0;/r.18pc/:
  (0,0)*{\xybox{
    (-4,-6)*{};(4,6)*{} **\crv{(-4,-1) & (4,1)}?(1)*\dir{>} ;
    (4,-6)*{};(-4,6)*{} **\crv{(4,-1) & (-4,1)}?(1)*\dir{>};
    (15,6);(15,-6) **\dir{-}?(.5)*\dir{}+(2.3,0)*{};
    (-15,6);(-15,-6) **\dir{-}?(.5)*\dir{}+(2.3,0)*{};
     (24,1)*{ \lambda}; (-9,0)*{\cdots};(9,0)*{\cdots};
     (-12,0)*{};(12,0)*{};(-15,-9)*{\scs i_1};     (-4,-9)*{\scs i_{\alpha}};(5,-9)*{\scs i_{\alpha+1}};
       (16,-9)*{\scs i_m};
     }};
  \endxy & \mapsto &
 \left\{\begin{array}{cc}
 (-1)^{i_{\alpha+1}} \;  \xy 0;/r.18pc/:
  (0,0)*{\xybox{
    (-4,-6)*{};(4,6)*{} **\crv{(-4,-1) & (4,1)}?(1)*\dir{>} ;
    (4,-6)*{};(-4,6)*{} **\crv{(4,-1) & (-4,1)}?(1)*\dir{>};
    (15,6);(15,-6) **\dir{-}?(.5)*\dir{}+(2.3,0)*{};
    (-15,6);(-15,-6) **\dir{-}?(.5)*\dir{}+(2.3,0)*{};
     (24,1)*{ \lambda}; (-9,0)*{\cdots};(9,0)*{\cdots};
     (-12,0)*{};(12,0)*{};(-15,-9)*{\scs i_1};     (-4,-9)*{\scs i_{\alpha}};(5,-9)*{\scs i_{\alpha+1}};
       (16,-9)*{\scs i_m};
     }};
  \endxy & \text{if $i_{\alpha}=i_{\alpha+1}$, or $i_{\alpha} \longrightarrow i_{\alpha+1}$}
  \\ \\
   \xy 0;/r.18pc/:
  (0,0)*{\xybox{
    (-4,-6)*{};(4,6)*{} **\crv{(-4,-1) & (4,1)}?(1)*\dir{>} ;
    (4,-6)*{};(-4,6)*{} **\crv{(4,-1) & (-4,1)}?(1)*\dir{>};
    (15,6);(15,-6) **\dir{-}?(.5)*\dir{}+(2.3,0)*{};
    (-15,6);(-15,-6) **\dir{-}?(.5)*\dir{}+(2.3,0)*{};
     (24,1)*{ \lambda}; (-9,0)*{\cdots};(9,0)*{\cdots};
     (-12,0)*{};(12,0)*{};(-15,-9)*{\scs i_1};     (-4,-9)*{\scs i_{\alpha}};(5,-9)*{\scs i_{\alpha+1}};
       (16,-9)*{\scs i_m};
     }};
  \endxy & \text{otherwise}
 \end{array}\right. \nn \\
  \xy 0;/r.18pc/:
  (0,0)*{\xybox{
    (-4,6)*{};(4,6)*{} **\crv{(-4,-1) & (4,-1)}?(0)*\dir{} ;
    (15,6);(15,-6) **\dir{-}?(.5)*\dir{}+(2.3,0)*{};
    (-15,6);(-15,-6) **\dir{-}?(.5)*\dir{}+(2.3,0)*{};
     (24,1)*{ \lambda}; (-9,0)*{\cdots};(9,0)*{\cdots};
     (-12,0)*{};(12,0)*{};(-15,-9)*{\scs i_1};     (-4,-9)*{\scs i_{\alpha}};(5,-9)*{\scs i_{\alpha+1}};
       (16,-9)*{\scs i_m};
     }}; \endxy & \mapsto &
\xy 0;/r.18pc/:
  (0,0)*{\xybox{
    (-4,6)*{};(4,6)*{} **\crv{(-4,-1) & (4,-1)}?(0)*\dir{} ;
    (15,6);(15,-6) **\dir{-}?(.5)*\dir{}+(2.3,0)*{};
    (-15,6);(-15,-6) **\dir{-}?(.5)*\dir{}+(2.3,0)*{};
     (24,1)*{ \lambda}; (-9,0)*{\cdots};(9,0)*{\cdots};
     (-12,0)*{};(12,0)*{};(-15,-9)*{\scs i_1};     (-4,-9)*{\scs i_{\alpha}};(5,-9)*{\scs i_{\alpha+1}};
       (16,-9)*{\scs i_m};
     }}; \endxy \qquad \text{for all orientations}
\nn\\
 \xy 0;/r.18pc/:
  (0,0)*{\xybox{
    (-4,-6)*{};(4,-6)*{} **\crv{(-4,1) & (4,1)}?(0)*\dir{} ;
    (15,6);(15,-6) **\dir{-}?(.5)*\dir{}+(2.3,0)*{};
    (-15,6);(-15,-6) **\dir{-}?(.5)*\dir{}+(2.3,0)*{};
     (24,1)*{ \lambda}; (-9,0)*{\cdots};(9,0)*{\cdots};
     (-12,0)*{};(12,0)*{};(-15,-9)*{\scs i_1};     (-4,-9)*{\scs i_{\alpha}};(5,-9)*{\scs i_{\alpha+1}};
       (16,-9)*{\scs i_m};
     }}; \endxy & \mapsto &
 \xy 0;/r.18pc/:
  (0,0)*{\xybox{
    (-4,-6)*{};(4,-6)*{} **\crv{(-4,1) & (4,1)}?(0)*\dir{} ;
    (15,6);(15,-6) **\dir{-}?(.5)*\dir{}+(2.3,0)*{};
    (-15,6);(-15,-6) **\dir{-}?(.5)*\dir{}+(2.3,0)*{};
     (24,1)*{ \lambda}; (-9,0)*{\cdots};(9,0)*{\cdots};
     (-12,0)*{};(12,0)*{};(-15,-9)*{\scs i_1};     (-4,-9)*{\scs i_{\alpha}};(5,-9)*{\scs i_{\alpha+1}};
       (16,-9)*{\scs i_m};
     }}; \endxy \qquad \text{for all orientations}
\nn
\end{eqnarray}
Above, the $i_{\alpha}$ in $(-1)^{i_{\alpha}}$ refers to the enumeration of the
vertex $i_{\alpha}$ in \eqref{eq_oriented_graph}.  One can check that the above
transformations respects the $\mf{sl}_2$-relations and the cyclic condition.
Furthermore, $\Sigma$ maps the $R(\nu)$-relations to their signed analogs by
rescaling the generators as above.  $\Sigma$ is a 2-functor and an isomorphism of
2-categories.

\begin{rem}
Since the 2-categories $\Ucat$ and $\Ucats$ are isomorphic, by the universal
property of the Karoubi envelope,  their Karoubi envelopes are isomorphic as
well.
\end{rem}

Define $\Ucats^*$ to be the graded additive $\Bbbk$-linear category which has the
same objects and 1-morphisms as $\Ucats$ and 2-morphisms
\begin{equation}
 \Ucats^*(x,y) := \bigoplus_{t\in \Z} \Ucats(x\{t\},y).
\end{equation}

%
\subsubsection{Relation to rings $R(\nu)$}
%

Regard the graded $\Bbbk$-algebra $R(\nu)$ with system of idempotents
$\{1_{\ii}\}$  as a pre-additive $\Bbbk$-linear category whose objects are $\{
\ii \mid \ii \in \seq(\nu)\}$. The $\Bbbk$-vector space of morphisms from $\ii$
to $\ii'$ is ${_{\ii'}R(\nu)_{\ii}}$. The composition ${_{\ii''}R(\nu)_{\ii'}}
\otimes {_{\ii'}R(\nu)_{\ii}} \to {_{\ii''}R(\nu)_{\ii}}$ is given by
multiplication in $R(\nu)$.

For any weight $\lambda$ there is a graded additive $\Bbbk$-linear functor
\begin{eqnarray} \label{eq_inclusion_iota}
  \iota_{\lambda} \maps R(\nu) &\longrightarrow& \Ucats^*(\lambda, \lambda+\nu_X)
\end{eqnarray}
that takes object $\ii$ to $\cal{E}_{{\ii}}\onel$, and is given on generators of
homs by
\begin{eqnarray} \nn
{_{\jj}R(\nu)_{\ii}}\qquad  &\to& \Ucats^*(\cal{E}_{\ii}\onel,
\cal{E}_{\jj}\onel) \\
  \xy
  (-12,0)*{\sup{}};  (-12,-6)*{\scs i_1};
  (-6,0)*{  \dots};
  (0,0)*{\supdot{}};  (0,-6)*{\scs i_{\alpha}};
  (6,0)*{ \dots};
  (12,0)*{\sup{}};  (13,-6)*{\scs i_m}; \endxy & \mapsto &(-1)^{i_{\alpha}}
    \;
   \xy 0;/r.18pc/:
  (6,6);(6,-6) **\dir{-}?(.5)*\dir{<}+(2.3,0)*{};
  (18,6);(18,-6) **\dir{-}?(.5)*\dir{<}+(2.3,0)*{};
  (-6,6);(-6,-6) **\dir{-}?(.5)*\dir{<}+(2.3,0)*{};
  (24,2)*{\lambda};(-6,-9)*{\scs i_1};     (6,-9)*{\scs i_{\alpha}};
  (19,-9)*{\scs i_m};
  (6.1,2)*{\bullet};
  (0,0)*{\cdots};(12,0)*{\cdots};
 \endxy \nn \\ \nn\\
 \xy  (-12,0)*{\sup{}};  (-12,-6)*{\scs i_1};
 (-8,0)*{\dots};
 (0,0)*{\dcross{i_\alpha}{\; \; i_{\alpha+1}}};
 (8,0)*{\dots};
 (12,0)*{\sup{}}; (13,-6)*{\scs i_m}; \endxy & \mapsto &
 \left\{\begin{array}{cc}
 (-1)^{i_{\alpha+1}} \;  \xy 0;/r.18pc/:
  (0,0)*{\xybox{
    (-4,-6)*{};(4,6)*{} **\crv{(-4,-1) & (4,1)}?(1)*\dir{>} ;
    (4,-6)*{};(-4,6)*{} **\crv{(4,-1) & (-4,1)}?(1)*\dir{>};
    (15,6);(15,-6) **\dir{-}?(.5)*\dir{<}+(2.3,0)*{};
    (-15,6);(-15,-6) **\dir{-}?(.5)*\dir{<}+(2.3,0)*{};
     (24,1)*{ \lambda}; (-9,0)*{\cdots};(9,0)*{\cdots};
     (-12,0)*{};(12,0)*{};(-15,-9)*{\scs i_1};     (-4,-9)*{\scs i_{\alpha}};(5,-9)*{\scs i_{\alpha+1}};
       (16,-9)*{\scs i_m};
     }};
  \endxy & \text{if $i_{\alpha}=i_{\alpha+1}$, or $i_{\alpha} \longrightarrow i_{\alpha+1}$}
  \\ \\
   \xy 0;/r.18pc/:
  (0,0)*{\xybox{
    (-4,-6)*{};(4,6)*{} **\crv{(-4,-1) & (4,1)}?(1)*\dir{>} ;
    (4,-6)*{};(-4,6)*{} **\crv{(4,-1) & (-4,1)}?(1)*\dir{>};
    (15,6);(15,-6) **\dir{-}?(.5)*\dir{<}+(2.3,0)*{};
    (-15,6);(-15,-6) **\dir{-}?(.5)*\dir{<}+(2.3,0)*{};
     (24,1)*{ \lambda}; (-9,0)*{\cdots};(9,0)*{\cdots};
     (-12,0)*{};(12,0)*{};(-15,-9)*{\scs i_1};     (-4,-9)*{\scs i_{\alpha}};(5,-9)*{\scs i_{\alpha+1}};
       (16,-9)*{\scs i_m};
     }};
  \endxy & \text{otherwise.}
 \end{array}\right. \nn
\end{eqnarray}
Linear combination of diagrams in $R(\nu)$ get sent to the corresponding rescaled
linear combination in $\Ucats^*(\lambda, \lambda+\nu_X)$ with the weight
$\lambda$ labelling the far right region.   In what follows we will refer to
those diagrams in the image of $\iota_{\lambda}$ as $R(\nu)$ generators.  The
image of $R(\nu)$ is spanned by diagrams with all strands upward pointing and no
caps or cups.

%
\section{Iterated flag varieties} \label{sec_Flag}
%

%
\subsection{Cohomology of $n$-step varieties}
%

The material in this section generalizes that of \cite[Section 6]{Lau1}.  The
reader is encouraged to start there for more examples and greater detail in the
$\mathfrak{sl}_2$ case. We enumerate by $I=\{1,2,\dots,n-1\}$ the vertex set of
the Dynkin diagram of $\mf{sl}_n$
\begin{equation}
    \xy
  (-15,0)*{\circ}="1";
  (-5, 0)*{\circ}="2";
  (5,  0)*{\circ}="3";
  (35,  0)*{\circ}="4";
  "1";"2" **\dir{-}?(.55)*\dir{};
  "2";"3" **\dir{-}?(.55)*\dir{};
  "3";(15,0) **\dir{-}?(.55)*\dir{};
  (25,0);"4" **\dir{-}?(.55)*\dir{};
  (-15,2.2)*{\scs 1};
  (-5,2.2)*{\scs 2};
  (5,2.2)*{\scs 3};
  (35,2.2)*{\scs n-1};
  (20,0)*{\cdots };
  \endxy
  \nn
\end{equation}

Fix $N \geq 0$,  and consider the variety $Fl(n)$ of $n$-step partial flags $F$
\[
 F = ( 0=F_0 \subseteq F_1 \subseteq \ldots \subseteq F_n =\C^{N} )
\]
in $\C^N$.  The dimensions of the subspaces $F_i$ are conveniently expressed as a
vector,
\[
 \underline{\dim} F = (\dim F_0,\dim F_1,\dim F_2, \ldots, \dim F_n ).
\]
The connected components of $Fl(n)$ are parameterized by non-negative integers
$\uk = (k_0,k_1,k_2,\ldots,k_{n})$ such that $0=k_0\leq k_1 \leq k_2 \leq \ldots
\leq k_{n} = N$.  The connected component $Fl(\uk)$ corresponding to $\uk$
consists of all flags $F$ such that $\underline{\dim}F = \uk$.  Throughout this
section we refer to the terms $k_{\alpha}$ of $\uk$ with the convention that
\begin{equation}
  k_{\alpha} = \left\{ \begin{array}{ccc}
                  0 &  & \text{if $\alpha\leq0$,} \\
                  N &  & \text{if $\alpha\geq n$.}
                \end{array}
  \right.
\end{equation}

The cohomology algebra of $Fl(\uk)$ is $\Z_{+}$-graded,
\[
 H_{\uk}:= H^*(Fl(\uk), \Bbbk) =
 \bigoplus_{0 \leq \ell \leq k_1(k_2-k_1)\cdots(N-k_{n-1})}H^{2\ell}(Fl(\uk),
 \Bbbk).
\]
For $1 \leq j\leq n$, $0 < \alpha \leq k_{j}-k_{j-1}$, let $x(\uk)_{j,\alpha}$ be
a formal variable of degree $2\alpha$.  The ring $H_{\uk}$ is isomorphic to the
quotient ring
\begin{equation}
 \left( \bigotimes_{j=1}^{n}\Bbbk[x(\uk)_{j,1},x(\uk)_{j,2},\dots,
  x(\uk)_{j,k_j-k_{j-1}}]\right)/I_{\uk,N},
\end{equation}
where $I_{\uk,N}$ is the ideal generated by the homogeneous terms in the equation
\begin{equation} \label{eq_Hk}
 \prod_{j=1}^{n} (1+x(\uk)_{j,1}t+ x(\uk)_{j,2}t^2+\cdots+
 x(\uk)_{j,k_j-k_{j-1}}t^{k_{j}-k_{j-1}})=1.
\end{equation}
Above, $t$ is a formal variable used to keep track of the degrees.  For
notational convenience we add variables $x(\uk)_{j,0}$ and set $x(\uk)_{j,0}=1$.
Furthermore, we set
\begin{equation} \label{eq_zero_variables}
x(\uk)_{j,\alpha} = 0 \qquad \text{if $\alpha > k_{j}-k_{j-1}$}.
\end{equation}

It is helpful to express the above relation in an alternative form.  Let
$\overline{x(\uk)_{i,\alpha}}$ denote the homogeneous term of degree $2\alpha$ in
the product
\begin{equation} \label{eq_overlinea}
 \prod_{
\xy
 (0,1)*{ \scs  j=1,};
 (-0.7,-1.7)*{\scs  j\neq i};
\endxy
  }^{n} (1+x(\uk)_{j,1}t+ x(\uk)_{j,2}t^2+\cdots+
 x(\uk)_{j,k_j-k_{j-1}}t^{k_{j}-k_{j-1}}).
\end{equation}
For example, if $n=4$ and $\uk=(1,3,4,7)$, then equation \eqref{eq_Hk} becomes
\begin{eqnarray}
 \big(1+x(\uk)_{1,1}t\big)
 \big(1+x(\uk)_{2,1}t+x(\uk)_{2,2}t^2\big)
 \big(1+x(\uk)_{3,1}t\big)
 \big(1+x(\uk)_{4,1}t+x(\uk)_{4,2}t^2+x(\uk)_{4,3}t^3\big)=1 \nn
\end{eqnarray}
and the terms $\overline{x(\uk)_{2,\alpha}}$ are given by omitting the second
term and multiplying out the rest
\begin{eqnarray}\nn
 \big(1+x(\uk)_{1,1}t\big)
 \big(1+x(\uk)_{3,1}t\big)
 \big(1+x(\uk)_{4,1}t+x(\uk)_{4,2}t^2+x(\uk)_{4,3}t^3\big)
\end{eqnarray}
so that
\begin{eqnarray}
  \overline{x(\uk)_{2,0}} &=& 1 ,\nn \\
  \overline{x(\uk)_{2,1}} &=& x(\uk)_{1,1}+x(\uk)_{3,1}+x(\uk)_{4,1} ,\nn \\
  \overline{x(\uk)_{2,2}} &=& x(\uk)_{1,1}x(\uk)_{3,1}+x(\uk)_{1,1}x(\uk)_{4,1}+x(\uk)_{3,1}x(\uk)_{4,1}+x(\uk)_{4,2} ,\nn \\
  \overline{x(\uk)_{2,3}} &=& x(\uk)_{1,1}x(\uk)_{3,1}x(\uk)_{4,1}+x(\uk)_{1,1}x(\uk)_{4,2}+x(\uk)_{3,1}x(\uk)_{4,2}
  +x(\uk)_{4,3}, \nn \\
 \overline{x(\uk)_{2,4}} &=& x(\uk)_{1,1}x(\uk)_{3,1}x(\uk)_{4,2}+
 x(\uk)_{1,1}x(\uk)_{4,3}+x(\uk)_{3,1}x(\uk)_{4,3}, \nn\\
 \overline{x(\uk)_{2,5}} &=& x(\uk)_{1,1}x(\uk)_{3,1}x(\uk)_{4,3},
\end{eqnarray}
and $\overline{x(\uk)_{2,\alpha}}=0$ for $\alpha>5$.  It is clear that
\eqref{eq_Hk} can be written
\begin{equation} \label{eq_sum_equal_delta}
 \sum_{f=0}^{\alpha} x(\uk)_{j,f} \;\overline{x(\uk)_{j,\alpha-f}} = \delta_{\alpha,0}
\end{equation}
for any $1 \leq j \leq n$, where $\delta_{\alpha,0}$ is the Kronecker delta.  We
call the elements $\overline{x(\uk)_{j,\alpha}}$ {\em dual generators} in light
of \eqref{eq_sum_equal_delta}.

For $1 \leq i \leq n-1$ let
\begin{equation}
\ukep=\big(({}_{+i}k)_0,({}_{+i}k)_1,({}_{+i}k)_2,\ldots, ({}_{+i}k)_{n}\big ),
\qquad 0 =({}_{+i}k)_0 \leq ({}_{+i}k)_1 \leq \cdots \leq ({}_{+i}k)_n=N \nn,
\end{equation}
be the sequence obtained from the sequence $\uk$ by increasing the $i$th term by
one
\begin{equation}
\ukep:=(k_0,k_1,k_2,\ldots, k_{i-1},k_i+1,k_{i+1},\ldots,k_n)
\end{equation}
if $k_i+1\leq k_{i+1}$, or by setting the sequence to the empty sequence
$\emptyset$ if $k_{i}=k_{i+1}$.  Namely, $({}_{+i}k)_j=k_j$ if $j\neq i$ and
$({}_{+i}k)_{i}=k_i+1$ if $k_i+1\leq k_{i+1}$.

When $k_i+1\leq k_{i+1}$ then $H_{\ukep}$ is the cohomology ring of the partial
flag variety consisting of flags $F$ with $\underline{\dim}F = \ukep$. The ring
$H_{\ukep}$ is given by
\begin{eqnarray}
 H_{\ukep}&=& \left(\bigotimes_{j =1}^{n}
 \Bbbk[x(\ukep)_{j,1},x(\ukep)_{j,2},\dots, x(\ukep)_{j,({}_{+i}k)_j-({}_{+i}k)_{j-1}}] \right)
 /I_{\ukep,N} \nn \\
 &=& \bigotimes_{j \neq i,i+1}
 \Bbbk[x(\ukep)_{j,1},\dots, x(\ukep)_{j,k_j-k_{j-1}}]
 \otimes \Bbbk[x(\ukep)_{i,1},\dots,x(\ukep)_{k_{i}-k_{i-1}+1}]
 \nn \\ && \quad
 \otimes  \Bbbk[x(\ukep)_{i+1,1},\dots, x(\ukep)_{k_{i+1}-k_{i}-1}] /I_{\ukep,N}
\end{eqnarray}
where $I_{\ukep,N}$ is the ideal generated by the homogeneous terms in
\begin{equation}
 \prod_{j=1}^{n} \left( \sum_{\alpha\geq 0}x(\ukep)_{j,\alpha}\;t^{\alpha} \right)
 =1.
\end{equation}
We define $H_{\emptyset}=0$.

Going back to the example of $n=4$ and $\uk=(1,3,4,7)$, then ${}_{+3}\uk =
(1,3,5,7)$, so that
\begin{equation}
H_{{}_{+3}\uk} =
\Bbbk[x({}_{+3}\uk)_{1,1},x({}_{+3}\uk)_{2,1},x({}_{+3}\uk)_{2,2},
x({}_{+3}\uk)_{3,1},x({}_{+3}\uk)_{3,2},x({}_{+3}\uk)_{4,1},x({}_{+3}\uk)_{4,2}]/I_{{}_{+3}\uk,7}
\end{equation}
where $I_{{}_{+3}\uk,7}$ is the ideal generated by the homogeneous terms in
\begin{eqnarray} \nn
(1+x({}_{+3}\uk)_{1,1})(1+x({}_{+3}\uk)_{2,1}+x({}_{+3}\uk)_{2,2})(1+x({}_{+3}\uk)_{3,1}+
x({}_{+3}\uk)_{3,2}) \\ \hspace{1in} \times
(1+x({}_{+3}\uk)_{4,1}+x({}_{+3}\uk)_{4,2})=1.
\end{eqnarray}

Similarly, we write $\ukem=(k_0, k_1,k_2,\ldots,
k_{i-1},k_i-1,k_{i+1},\ldots,k_n)$ for the sequence $\uk$ where we have
subtracted one from the $i$th position whenever $k_{i-1}\leq k_{i}-1$.  The
cohomology ring of the flag variety $Fl(\ukem)$ is $H_{\ukem}$, which can be
expressed explicitly in terms of generators as above.  When $k_{i-1}=k_{i}$ then
$\ukem= \emptyset$ and $H_{\ukem}:=H_\emptyset=0$.

%
\subsubsection{Flag varieties for the action of $E_i$ and $F_i$}
%

For $1\leq i \leq n-1$, define
\begin{eqnarray}
 \ukp &=& \left\{
 \begin{array}{ccc}
   (k_0,k_1, k_2, \ldots k_i, k_i+1, k_{i+1}, \ldots, k_{n-1},k_n)
   &  & \text{if $k_{i+1} \geq k_i+1$,} \\
   \emptyset &  & \text{otherwise.}
 \end{array} \right. \nn \\
 \ukm &=& \left\{
 \begin{array}{ccc}
   (k_0,k_1, k_2, \ldots  k_{i-1},k_i-1, k_i, \ldots, k_{n-1},k_n)
   &  & \text{if $k_{i}-1 \geq k_{i-1}$,} \\
  \emptyset &  & \text{otherwise.}
 \end{array} \right. \nn
\end{eqnarray}

For $\uk^{\pm i} \neq \emptyset$  the variety $Fl(\uk^{\pm i})$ is the component
of $Fl(n+1)$ consisting of flags $F$ such that $\underline{\dim}F= \uk^{\pm i}$.
The cohomology ring of $Fl(\uk^{\pm i})$ will be denoted by $H_{\uk^{\pm i}}$.
The cohomology ring $H_{\ukp}$ is
\begin{equation}
  H_{\ukp} =
  \bigotimes_{j\neq i+1} \Bbbk[x(\ukp)_{j,1},\dots,x(\ukp)_{j,k_j-k_{j-1}}]
  \otimes \Bbbk[\xi_i] \otimes \Bbbk[x(\ukp)_{i+1,1},\dots,x(\ukp)_{i+1,k_{i+1}-k_{i}-1}]
  /I_{\ukp,N}
\end{equation}
where $I_{\ukp,N}$ is the ideal generated by the homogeneous terms in
\begin{eqnarray}
 \big(1+\xi_{i} t \big)
 \big(1+x(\ukp)_{i+1,1}t+ \cdots +x(\ukp)_{i+1,k_{i+1}-k_i-1}t^{k_{i+1}-k_i-1}\big)
 \hspace{0.5in}\\
 \times
 \prod_{j\neq  i+1} \big(\sum_{f=0}^{k_j-k_{j-1}} x(\ukp)_{j,f}\;t^{f}\big) =1.
 \nn
\end{eqnarray}

The forgetful maps
  \[
 \xymatrix{Fl(\uk) & Fl(\ukp )\ar[l]_{p_1} \ar[r]^{p_2} & Fl(\ukep})
\]
induce maps of cohomology rings
\[
 \xymatrix{ H_{\uk} \ar[r]^-{p_1^*} & H_{\ukp}   & H_{\ukep} \ar[l]_-{p_2^*}}
\]
that make $H_{\ukp}$ a right $H_{\uk} \otimes H_{\ukep}$-module.  Since the
algebra $H_{\ukep}$ is commutative, we can turn a right $ H_{\ukep}$-module into
a left $H_{\ukep}$-module.   Hence, we can make $ H_{\ukp}$ into a $( H_{\ukep},
H_{\uk})$-bimodule.  In fact, $ H_{\ukp}$ is free as a graded $ H_{\uk}$-module
and as a graded $ H_{\ukep}$-module.

These inclusions making $H_{\ukp}$ a $(H_{\ukep},H_{\uk})$-bimodule are given
explicitly as follows:
\begin{eqnarray}\label{eq_inclusion1}
 H_{\uk} & \xymatrix@1{\ar@{^{(}->}[r] & } & H_{\ukp} \\
 x(\uk)_{j,\alpha} & \mapsto & x(\ukp)_{j,\alpha} \qquad \text{for $j\neq i+1$}~,\nn \\
 x(\uk)_{i+1,\alpha} & \mapsto & \xi_i \cdot x(\ukp)_{i+1,\alpha-1}+x(\ukp)_{i+1,\alpha}~,
 \nn
\end{eqnarray}
and
\begin{eqnarray} \label{eq_inclusion2}
 H_{\ukep} & \xymatrix@1{\ar@{^{(}->}[r] & } & H_{\ukp} \\
 x(\ukep)_{j,\alpha} & \mapsto & x(\ukp)_{j,\alpha} \qquad \text{for $j\neq i$}~, \nn \\
 x(\ukep)_{i,\alpha} & \mapsto & \xi_i \cdot x(\ukp)_{i,\alpha-1}+x(\ukp)_{i,\alpha}~.
 \nn
\end{eqnarray}
Notice that $x(\uk)_{j,\alpha}$ and $x(\ukep)_{j,\alpha}$ for $j\neq i,i+1$ are
mapped to the same element of $H_{\ukp}$.  Using these inclusions we identify
these elements of $H_{\uk}$ and $H_{\ukep}$ with their images in the bimodule
$H_{\ukp}$. Furthermore, we can also express the generators $x(\ukp)_{i,\alpha}$
and $x(\ukp)_{i+1,\beta}$ of $H_{\ukp}$ as the images of certain generators in
$H_{\uk}$ or $H_{\ukep}$.  Thus we can write $ H_{\ukp}$ as
\begin{eqnarray} \label{eq_generators_Heik1} \nn
 H_{\ukp} = \bigotimes_{j\neq i+1}^{} \Bbbk\left[ x(\uk)_{j,1}, \dots, x(\uk)_{j,k_{j}-k_{j-1}}  \right]
 \otimes \Bbbk[x(\ukep)_{i+1,1}, \dots, x(\ukep)_{i+1,k_{i+1}-k_{i}-1}] \otimes
 \Bbbk[\xi_i]/I_{\ukp,N}
\end{eqnarray}
or equivalently,
\begin{eqnarray}\label{eq_generators_Heik2} \nn
 H_{\ukp} = \bigotimes_{j\neq i}^{} \Bbbk\left[ x(\ukep)_{j,1}, \dots, x(\ukep)_{j,k_{j}-k_{j-1}}  \right]
 \otimes \Bbbk[x(\uk)_{i,1}, \dots, x(\uk)_{i,k_{i}-k_{i-1}}] \otimes
 \Bbbk[\xi_i]/I_{\ukp,N},
\end{eqnarray}
where $I_{\ukp,N}$ is the ideal described above.  Therefore, $\xi_i$ is the only
generator of $H_{\ukp}$ that is not identified with a generator of $H_{\uk}$ or
$H_{\ukep}$ under the above inclusions.

\begin{rem}
The generators $x(\uk)_{j,\alpha}$, $x(\ukep)_{j,\alpha}$, $\xi_i$ of $H_{\ukp}$
correspond to Chern classes of tautological bundles over the variety $Fl(\ukp)$.
The generator $\xi_i$ corresponds to the line bundle $F_{k_{i}+1}/F_{k_i}$
associated to the subspaces $F_{k_i} \subset F_{k_{i}+1}$ created by the subspace
insertion $\ukp$.
\end{rem}

\begin{defn}
The set of multiplicative generators
\[
\left\{ \begin{array}{ccl}
  \xi_i & & \\
    x(\uk)_{i,\alpha_i} & \qquad & \text{for $0 < \alpha_i\leq k_{i}-k_{i-1}$,} \\
   x(\ukep)_{i+1,\alpha_{i+1}} & \qquad & \text{for $0 <\alpha_{i+1}\leq k_{i+1}-k_{i}-1$,} \\
   x(\uk)_{j,\alpha_j}=x(\ukep)_{j,\alpha_j}\in H_{\ukp} & \qquad & \text{for $j \neq i, i+1$, and
   $ 0 < \alpha_j\leq k_{j}-k_{j-1}$},
 \end{array}\right\}
\]
for the ring $H_{\ukp}$, corresponding to the Chern class of the tautological
line bundle, and to the canonical inclusions \eqref{eq_inclusion1} and
\eqref{eq_inclusion2} of generators in $H_{\ukep}$ and $H_{\uk}$ into $H_{\ukp}$,
are called the {\em canonical generators} of $H_{\ukp}$.
\end{defn}

Using commutativity we can regard $H_{\ukp}$ as an
$(H_{\uk},H_{\ukep})$-bimodule. The generators of $H_{\uk}$ and $H_{\ukep}$ that
are not mapped to canonical generators can be expressed in terms of canonical
generators as follows:
\begin{eqnarray}
  x(\uk)_{i+1,\alpha} & =&  \xi_i \cdot x(\ukep)_{i+1,\alpha-1}
  + x(\ukep)_{i+1,\alpha}, \nn\\
  x(\ukep)_{i,\alpha} & = & \xi_i \cdot x(\uk)_{i,\alpha-1}  +x(\uk)_{i,\alpha}, \label{eq_noncan}
\end{eqnarray}
for all values of $\alpha$.

%
\subsection{Graphical calculus for iterated flag varieties}
%

%
\subsubsection{Rings $H_{\uk}$}
%

All of what has been described can be easily visualized using a shorthand
notation in which the generators of $H_{\uk}$ are drawn as labelled bubbles
floating in a region carrying a label $\lambda$ called the weight.  The label
$\lambda$ will be important when we relate partial flag varieties to the
2-category $\UcatD$ categorifying $\UA(\mathfrak{sl}_n)$. More precisely,
$\lambda$ will correspond to a weight of the irreducible representation of
$\UA(\mathfrak{sl}_n)$ with highest weight $(N,0,0,\ldots,0)$.

To a sequence $\uk=(k_0, k_1,k_2,\ldots,k_n)$ as above associate
$\lambda=\lambda(\uk)=(\lambda_1,\lambda_2, \ldots , \lambda_{n-1}) \in \Z^{n-1}$
where
\begin{equation}
\lambda_{\alpha} = -k_{\alpha+1}+2k_{\alpha}-k_{\alpha-1}.
\end{equation}
The weight corresponding to the sequence $\ukep$ is defined analogously, where
$k_i$ is replaced by $k_i+1$.  Comparing with \eqref{eq_weight_action1} it is
clear that $\lambda(\ukep) = \lambda+i_X$.  Similarly, $\lambda(\ukem)= \lambda -
i_X$.

With this convention, the generators of $H_{\uk}$ and $H_{\ukep}$ are depicted as
follows:
\begin{eqnarray}
 x(\uk)_{j,\alpha}:= \begin{pspicture}[.5](2.4,1.7)
  \rput(2.3,1.9){$\lambda$}
  \rput(1,1.1){\chern{j,\alpha}}
  \end{pspicture} \\ \nn\\
 x(\ukep)_{j,\alpha}:= \begin{pspicture}[.5](2.4,1.7)
  \rput(2.3,1.9){$\lambda+i_X$}
  \rput(1,1.1){\chern{j,\alpha}}
  \end{pspicture}
\end{eqnarray}
where the identity is depicted by the empty region of the appropriate weight.
Products of generators are depicted by a bubble in the plane for each generator
present in the product. Diagrams are only considered up to planar isotopy.  A
generic element in $H_{\uk}$ can be depicted as a formal linear combination of
such diagrams. For example, if $n=4$, $\uk=(1,3,4,7)$ then the element
$x(\uk)_{1,1}x(\uk)_{4,3}+5 \cdot x(\uk)_{3,1} \in H_{\uk}$ is represented as
\[
 \begin{pspicture}[.5](3.4,1.9)
  \rput(1.7,1.9){$\lambda$}
  \rput(1,.8){\chern{1,1}}
  \rput(2.4,.8){\chern{4,3}}
  \end{pspicture}
  + 5
  \begin{pspicture}[.5](3.4,1.9)
  \rput(1.7,1.9){$\lambda$}
  \rput(1,.8){\chern{3,1}}
  \end{pspicture}
\]

If we depict the dual generators $\overline{x(\uk)_{j,\alpha}}$ of $H_{\uk}$
defined in \eqref{eq_overlinea} as
\begin{equation} \nn
\overline{x(\uk)_{j,\alpha}} =
 \begin{pspicture}[.5](2.4,1.7)
  \rput(2.3,1.9){$\lambda$}
  \rput(1,1.1){\dchern{j,\alpha}}
  \end{pspicture}
\end{equation}
then the defining relations \eqref{eq_sum_equal_delta} for $H_{\uk}$ become the
equations
\begin{equation} \label{eq_graph_def_relations}
\sum_{f=0}^{\alpha}
 \begin{pspicture}[.5](3.5,1.7)
  \rput(1.7,1.9){$\lambda$}
  \rput(1,1){\chern{j,f}}
  \rput(2.7,1){\dchern{j,{\alpha-f}}}
  \end{pspicture}
 \quad = \quad
 \sum_{g=0}^{\alpha}
 \begin{pspicture}[.5](3.5,1.7)
  \rput(1.7,1.9){$\lambda$}
  \rput(1,1){\chern{j,{\alpha-g}}}
  \rput(2.7,1){\dchern{j,g}}
  \end{pspicture}
  \quad = \quad
 \begin{pspicture}[.5](3.5,1.7)
  \rput(.5,.85){$  \delta_{\alpha,0}$}
  \rput(1.7,1.9){$\lambda$}
  \end{pspicture}
\end{equation}
for $\alpha\geq 0$ and $1 \leq j \leq n$.  Notice that
\begin{eqnarray}
\begin{pspicture}[.5](2.4,1.7)
  \rput(2.3,1.9){$\lambda$}
  \rput(1,1.1){\chern{j,0}}
  \end{pspicture} \quad = \quad \begin{pspicture}[.5](2.4,1.7)
  \rput(2.3,1.9){$\lambda$}
  \rput(1,1.1){\dchern{j,0}}
  \end{pspicture}\;\; =1, \qquad\begin{pspicture}[.5](2.4,1.7)
  \rput(2.3,1.9){$\lambda$}
  \rput(1,1.1){\chern{j,\alpha}}
  \end{pspicture}\;\; = 0 \qquad \text{if $\alpha < 0$, or $k_j-k_{j-1}<\alpha$,}
  \nn
\end{eqnarray}
see \eqref{eq_zero_variables} and the comments preceding
\eqref{eq_zero_variables}.

%
\subsubsection{Bimodules $H_{\ukp}$}
%

The identity element in $H_{\ukp}$  is represented by a vertical line labelled
$i$
\[
  H_{\ukp}\ni 1 \quad := \qquad
 \begin{pspicture}[.5](2,1.5)
  \rput(1.5,0){\Eline{i}}
  \rput(2.6,1.1){$\lambda$}
  \rput(.4,1.1){$\lambda+i_X$}
\end{pspicture}
\]
where the orientation indicates that we are regarding $H_{\ukp}$ as an
$(H_{\ukep},H_{\uk})$-bimodule.  The $\lambda$ on the right hand side is the
weight corresponding to $\uk$.  Hence, having $\lambda$ on the right hand side of
the diagram indicates the right action of $H_{\uk}$ on $H_{\ukp}$. Similarly, the
$\lambda+i_X$ on the left indicates the left action of $H_{\ukep}$ on $H_{\ukp}$.

When regarding $H_{\ukp}$ as an $(H_{\uk},H_{\ukep})$-bimodule we depict it in
the graphical calculus with the opposite orientation (a downward pointing arrow).
\[
H_{\ukp} \ni 1 \quad := \qquad
 \begin{pspicture}[.5](2,1.5)
  \rput(1.5,0){\Fline{i}}
  \rput(2.6,1.1){$\lambda+i_X$}
  \rput(.4,1.1){$\lambda
  $}
\end{pspicture}
\]
We will often omit the weights from all regions but one, with it understood that
crossing an upward pointing arrow with label $i$ from right to left changes the
weight by $i_X$, and crossing a downward pointing arrow from right to left
changes the weight by $-i_X$.

Equations \eqref{eq_generators_Heik2} show that all of the generators from
$H_{\ukp}$ except for $\xi_i$ can be interpreted as either generators of
$H_{\uk}$ or $H_{\ukep}$ under the natural inclusions. This fact is represented
in the graphical calculus as follows:
\begin{eqnarray}
 H_{\ukp} \ni x(\uk)_{j,\alpha} & := &
 \begin{pspicture}[.5](4,1.5)
  \rput(.5,0){\Eline{i}}
  \rput(1.5,1){\chern{j,\alpha}}
  \rput(2.6,1.5){$\lambda$}
\end{pspicture} \qquad j\neq i+1 \label{eq_c_calc}\\
H_{\ukp} \ni x(\ukep)_{j,\beta} & := &
 \begin{pspicture}[.5](4,2)
  \rput(2,0){\Eline{i}}
  \rput(1,1){\chern{j,\beta}}
  \rput(2.6,1.5){$\lambda$}
\end{pspicture} \qquad j\neq i \\
 H_{\ukp} \ni \xi_i & :=&
\begin{pspicture}[.5](3,2)
  \rput(.5,0){\Elinedot{i}{}}
  \rput(1.6,1.5){$\lambda$}
\end{pspicture}
\end{eqnarray}
where each diagram inherits a grading from the Chern class it represents ($\deg
x(\uk)_{j,\alpha} =2\alpha$, $\deg x(\ukep)_{j,\beta}=2\beta$, and $\deg \xi_i
=2$). Equation \eqref{eq_c_calc} is meant to depicts the generator
$x(\uk)_{j,\alpha}\in H_{\ukp}$ as the element $x(\uk)_{j,\alpha}\in H_{\uk}$
acting on the identity of $H_{\ukp}$. Likewise, the generator $x(\ukep)_{j,\beta}
\in H_{\ukp}$ is depicted as the element $x(\ukep)_{j,\beta} \in H_{\ukep}$
acting on the identity of $H_{\ukp}$. The generator $\xi_i$ is represented by a
dotted line so that $\xi_i^{\alpha}$ is represented by $\alpha$ dots on a line,
but for simplicity we write this using a single dot and a label to indicate the
power.

The identification \eqref{eq_inclusion1} and \eqref{eq_inclusion2} of
$x(\uk)_{j,\alpha}$ with $x(\ukep)_{j,\alpha}$ in $H_{\ukp}$ for $j\neq, i, i+1$
leads to the graphical identity:
\begin{equation} \label{eq_graph_easy_slide}
 \begin{pspicture}[.5](3,1.7)
  \rput(.5,0){\Eline{i}}
  \rput(1.5,1){\chern{j,\alpha}}
  \rput(2.6,1.5){$\lambda$}
\end{pspicture}
\quad = \quad
 \begin{pspicture}[.5](3,1.7)
  \rput(2,0){\Eline{i}}
  \rput(1,1){\chern{j,\alpha}}
  \rput(2.6,1.5){$\lambda$}
\end{pspicture}
\quad \text{for $j\neq i,i+1$.}
\end{equation}
Similarly, \eqref{eq_noncan} provides the identities
\begin{eqnarray} \label{eq_noncan1}
 \begin{pspicture}[.5](3.1,1.7)
  \rput(.5,0){\Eline{i}}
  \rput(1.7,1){\chern{i+1,\alpha}}
  \rput(2.8,1.5){$\lambda$}
\end{pspicture}
\quad &=& \quad
 \begin{pspicture}[.5](3.3,1.7)
  \rput(2.4,0){\Elinedot{i}{}}
  \rput(1,1){\chern{i+1,\alpha-1}}
  \rput(3,1.5){$\lambda$}
\end{pspicture}
 \;\;+ \;\;
 \begin{pspicture}[.5](3,1.7)
  \rput(2.2,0){\Eline{i}}
  \rput(1,1){\chern{i+1,\alpha}}
  \rput(2.8,1.5){$\lambda$}
\end{pspicture} \\ \label{eq_noncan2}
\begin{pspicture}[.5](3,2)
  \rput(2,0){\Eline{i}}
  \rput(1,1){\chern{i,\alpha}}
  \rput(2.6,1.5){$\lambda$}
\end{pspicture}
\quad &=& \quad
 \begin{pspicture}[.5](3,1.7)
  \rput(.5,0){\Elinedot{i}{}}
  \rput(1.5,1){\chern{i,\alpha-1}}
  \rput(2.6,1.5){$\lambda$}
\end{pspicture}
+
 \begin{pspicture}[.5](3,1.7)
  \rput(.5,0){\Eline{i}}
  \rput(1.5,1){\chern{i,\alpha}}
  \rput(2.6,1.5){$\lambda$}
\end{pspicture}
\end{eqnarray}
expressing non-canonical generators in terms of canonical generators.  It is
sometimes helpful to express the canonical generators in terms of non-canonical
generators:
\begin{eqnarray} \label{eq_can_to_noncan1}
 \begin{pspicture}[.5](3.1,1.5)
  \rput(2,0){\Eline{i}}
  \rput(.8,1){\chern{i+1,\alpha}}
  \rput(2.8,1.5){$\lambda$}
\end{pspicture}
\quad &=& \quad
 \sum_{f=0}^{\alpha} \; (-1)^{f}
 \begin{pspicture}[.5](3.3,2)
  \rput(.5,0){\Elinedot{i}{f}}
  \rput(2.1,1){\chern{i+1,\alpha-f}}
  \rput(3.6,1.5){$\lambda$}
\end{pspicture}
 \\ \label{eq_can_to_noncan2}
 \begin{pspicture}[.5](3,2)
  \rput(.5,0){\Eline{i}}
  \rput(1.5,1){\chern{i,\alpha}}
  \rput(2.6,1.5){$\lambda$}
\end{pspicture}
\quad &=& \quad \sum_{f=0}^{\alpha} \; (-1)^{f}
 \begin{pspicture}[.5](3.1,2)
  \rput(2.3,0){\Elinedot{i}{f}}
  \rput(1.1,1){\chern{i,\alpha-f}}
  \rput(3.6,1.5){$\lambda$}
\end{pspicture}
\end{eqnarray}
which can be verified using \eqref{eq_noncan1} and \eqref{eq_noncan2}. From these
equations we can derive other useful identities:
\begin{eqnarray}  \label{eq_alphadot1}
 \begin{pspicture}[.5](2.5,1.8)
  \rput(.8,0){\Elinedot{i}{\alpha}}
  \rput(2,1.5){$\lambda$}
\end{pspicture}
&=& (-1)^{\alpha}\sum_{f=0}^{\alpha}
 \begin{pspicture}[.5](5,1.8)
  \rput(2.7,0){\Eline{i}}
  \rput(1.3,1){\chern{i+1,\alpha-f}}
  \rput(3.8,1){\dchern{i+1,f}}
  \rput(4.9,1.5){$\lambda$}
\end{pspicture} \\ \label{eq_alphadot2}
\begin{pspicture}[.5](2.5,1.8)
  \rput(.8,0){\Elinedot{i}{\alpha}}
  \rput(2,1.5){$\lambda$}
\end{pspicture}
&=& (-1)^{\alpha}\sum_{g=0}^{\alpha}
 \begin{pspicture}[.5](5,1.8)
  \rput(2.7,0){\Eline{i}}
  \rput(1.3,1){\dchern{i,\alpha-g}}
  \rput(3.8,1){\chern{i,g}}
  \rput(4.9,1.5){$\lambda$}
\end{pspicture}
\end{eqnarray}
\begin{prop}
\begin{eqnarray}
 \begin{pspicture}[.5](3,1.8)
  \rput(.5,0){\Eline{i}}
  \rput(1.5,1){\dchern{j,\alpha}}
  \rput(2.6,1.5){$\lambda$}
\end{pspicture}
\quad &=& \quad \left\{
\begin{array}{ccc}
 \begin{pspicture}[.5](3.1,1.8)
  \rput(2.3,0){\Eline{i}}
  \rput(1.1,1){\dchern{j,\alpha}}
  \rput(3.6,1.5){$\lambda$}
\end{pspicture}
 & \quad & \text{if $j \neq i, i+1$} \\
 \begin{pspicture}[.5](3.3,2)
  \rput(2.4,0){\Elinedot{i}{}}
  \rput(1,1){\dchern{i,\alpha-1}}
  \rput(3,1.5){$\lambda$}
\end{pspicture}
 \;\;+ \;\;
 \begin{pspicture}[.5](3,2)
  \rput(2.2,0){\Eline{i}}
  \rput(1,1){\dchern{i,\alpha}}
  \rput(2.8,1.5){$\lambda$}
\end{pspicture}
  &  \quad & \text{if $j=i$} \\ \\
 \sum_{f=0}^{\alpha} \; (-1)^{f}
 \begin{pspicture}[.5](4.1,1.8)
  \rput(2.6,0){\Elinedot{i}{f}}
  \rput(1.2,1){\dchern{i+1,\alpha-f}}
  \rput(3.9,1.5){$\lambda$}
\end{pspicture}
  & \quad  & \text{if $j=i+1$}
\end{array}
\right. \nn \\
\label{eq_dark_slide1}\end{eqnarray} 
\begin{eqnarray}
 \begin{pspicture}[.5](3.1,1.8)
  \rput(2.3,0){\Eline{i}}
  \rput(1.1,1){\dchern{j,\alpha}}
  \rput(3.6,1.5){$\lambda$}
\end{pspicture}
\quad &=& \quad \left\{
\begin{array}{ccc}
\begin{pspicture}[.5](3,1.8)
  \rput(.5,0){\Eline{i}}
  \rput(1.5,1){\dchern{j,\alpha}}
  \rput(2.6,1.5){$\lambda$}
\end{pspicture}
 & \quad & \text{if $j \neq i, i+1$} \\
 \sum_{f=0}^{\alpha} \; (-1)^{f}
 \begin{pspicture}[.5](3.3,2)
  \rput(.5,0){\Elinedot{i}{f}}
  \rput(2.1,1){\dchern{i,\alpha-f}}
  \rput(3.6,1.5){$\lambda$}
\end{pspicture}
  &  \quad & \text{if $j=i$} \\ \\
 \begin{pspicture}[.5](3.7,1.5)
  \rput(.5,0){\Elinedot{i}{}}
  \rput(2,1){\dchern{i+1,\alpha-1}}
  \rput(3.4,1.5){$\lambda$}
\end{pspicture}
+
 \begin{pspicture}[.5](3.3,1.5)
  \rput(.5,0){\Eline{i}}
  \rput(1.9,1){\dchern{i+1,\alpha}}
  \rput(3,1.5){$\lambda$}
\end{pspicture}
  & \quad  & \text{if $j=i+1$}
\end{array}
\right. \nn \\
\label{eq_dark_slide2}
\end{eqnarray}
\end{prop}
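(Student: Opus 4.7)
The plan is to verify each of the three cases of \eqref{eq_dark_slide2} by passing to generating-function identities in $H_{\ukp}$, where all generators under consideration (both the canonical $x(\uk)_{j,\alpha}$, $x(\ukep)_{j,\alpha}$, $\xi_i$ and the dual generators $\overline{x(\uk)_{j,\alpha}}$, $\overline{x(\ukep)_{j,\alpha}}$) can be compared directly. The graphical identity being proved simply records the equality of an element regarded as a left $H_{\ukep}$-module element and the same element expressed as a combination of right $H_{\uk}$-module actions times a power of $\xi_i$.

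First, for each $j$ set $c_{\uk}^{(j)}(t) := \sum_{\alpha \ge 0} x(\uk)_{j,\alpha}\, t^{\alpha}$ (with the convention $x(\uk)_{j,0}=1$), and likewise $c_{\ukep}^{(j)}(t)$. The relation \eqref{eq_sum_equal_delta} says $c_{\uk}^{(j)}(t)\cdot \overline{c}_{\uk}^{(j)}(t)=1$ where $\overline{c}_{\uk}^{(j)}(t):=\sum_\alpha \overline{x(\uk)_{j,\alpha}}\,t^\alpha$, and similarly in $H_{\ukep}$. Now, translating \eqref{eq_inclusion1}--\eqref{eq_inclusion2} and \eqref{eq_noncan} into generating-function form in $H_{\ukp}$, one finds: $c_{\uk}^{(j)}(t)=c_{\ukep}^{(j)}(t)$ whenever $j\neq i,i+1$; $c_{\ukep}^{(i)}(t) = (1+\xi_i t)\, c_{\uk}^{(i)}(t)$; and $c_{\uk}^{(i+1)}(t) = (1+\xi_i t)\, c_{\ukep}^{(i+1)}(t)$.

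Taking multiplicative inverses in $H_{\ukp}\![[t]]$ yields the corresponding relations for the dual generators. For $j\neq i,i+1$, $\overline{c}_{\uk}^{(j)}(t)=\overline{c}_{\ukep}^{(j)}(t)$; for $j=i$, $\overline{c}_{\ukep}^{(i)}(t) = (1+\xi_i t)^{-1}\,\overline{c}_{\uk}^{(i)}(t) = \sum_{f\ge 0}(-\xi_i t)^f\,\overline{c}_{\uk}^{(i)}(t)$; and for $j=i+1$, $\overline{c}_{\ukep}^{(i+1)}(t) = (1+\xi_i t)\,\overline{c}_{\uk}^{(i+1)}(t)$. Reading off the coefficient of $t^\alpha$ in each identity produces exactly the three cases of the RHS of \eqref{eq_dark_slide2}. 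The graphical translation is immediate: a dual bubble $\overline{x(\ukep)_{j,\alpha}}$ sitting in the $\lambda+i_X$ region on the left of the $i$-line becomes, after sliding rightward through the line into the $\lambda$ region, the stated $\Bbbk$-linear combination of dual bubbles $\overline{x(\uk)_{j,\alpha-f}}$ in the $\lambda$ region weighted by appropriate powers of $\xi_i$ on the strand (with the alternating signs in the $j=i$ case coming from expanding $(1+\xi_i t)^{-1}$).

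There is essentially no obstacle here beyond bookkeeping; the only subtle point is that \eqref{eq_overlinea} defines $\overline{x(\uk)_{j,\alpha}}$ in $H_{\uk}$ via the product over $j'\neq j$, so one must first use \eqref{eq_sum_equal_delta} to re-express it as the inverse of $c_{\uk}^{(j)}(t)$, which is the form in which it interacts cleanly with the change-of-generator identities \eqref{eq_noncan}. Once this is done, the computation is purely formal. Note also that \eqref{eq_dark_slide2} is exactly the inverse of \eqref{eq_dark_slide1} in the sense that sliding from left to right and then back must be the identity, which gives a useful consistency check on the coefficients.
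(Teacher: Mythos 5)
Your generating-function argument is correct and somewhat cleaner than the paper's proof, which is a coefficient-wise calculation. The paper observes that for $j\neq i,i+1$ the element $\overline{x(\uk)_{j,\alpha}}$ only involves the variables $x(\uk)_{\ell,\beta}$ with $\ell\neq j$, and that the combinations $\sum_{f} x(\uk)_{i,f}\,x(\uk)_{i+1,\beta-f}$ slide across the $i$-line unchanged; this is verified by a telescoping computation using \eqref{eq_can_to_noncan2} and \eqref{eq_noncan1} and a change of summation index. The cases $j=i$ and $j=i+1$ are treated separately, each by sliding the ``other'' family of variables using \eqref{eq_noncan1} or \eqref{eq_can_to_noncan2}. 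Your approach packages all of this into the single observation that $\overline{c}_{\uk}^{(j)}(t)=\bigl(c_{\uk}^{(j)}(t)\bigr)^{-1}$ in $H_{\ukp}[[t]]$ (valid because the inclusion $H_{\uk}\hookrightarrow H_{\ukp}$ is a ring map, so $\prod_j c_{\uk}^{(j)}(t)=1$ persists), combined with the generating-function form of \eqref{eq_inclusion1}--\eqref{eq_inclusion2}: $c_{\ukep}^{(i)}(t)=(1+\xi_i t)\,c_{\uk}^{(i)}(t)$, $c_{\uk}^{(i+1)}(t)=(1+\xi_i t)\,c_{\ukep}^{(i+1)}(t)$, and $c_{\uk}^{(j)}(t)=c_{\ukep}^{(j)}(t)$ otherwise. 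Inverting these gives all three cases of each of \eqref{eq_dark_slide1} and \eqref{eq_dark_slide2} uniformly, whereas the paper handles them one by one. Your approach trades the paper's explicit index-shuffling for the single formal-series inversion, and makes manifest that the two slide rules are mutually inverse. What you lose is a bit of transparency about which specific relations in $H_{\ukp}$ are being invoked at each step, but that is easily recovered by reading off coefficients of $t^\alpha$. Both proofs rest on exactly the same algebraic facts; the packaging differs.
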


\begin{proof}
Recall that the elements $\overline{x(\uk)_{j,\alpha}}$ are sums of homogeneous
symmetric terms in all variables except for $x(\uk)_{j,\beta}$. By
\eqref{eq_graph_easy_slide} all terms $x(\uk)_{\ell,\alpha}$ for $\ell \neq i,
i+1$ can be slid across the line labelled $i$.  The case when $j \neq i, i+1$
then reduces to the problem of sliding symmetric combinations $\sum_{f=0}^{\beta}
x(\uk)_{i,f} x(\uk)_{i+1,\beta-f}$ across a line labelled $i$. Such slides are
determined by the following calculation in $H_{\ukp}$:
\begin{eqnarray}
  \sum_{f=0}^{\beta} x(\uk)_{i,f} \;x(\uk)_{i+1,\beta-f}
  & \refequal{\eqref{eq_can_to_noncan2}} &
  \sum_{f=0}^{\beta} \sum_{g=0}^{f} (-1)^{g} x(_{+i}\uk)_{i,f-g}\;
  \xi_i^{g} \;x(\uk)_{i+1,\beta-f} \nn
\end{eqnarray}
\begin{eqnarray}
  \refequal{\eqref{eq_noncan1}}
 \sum_{f=0}^{\beta} \sum_{g=0}^{f} (-1)^{g} x(_{+i}\uk)_{i,f-g}\;
 x(_{+i}\uk)_{i+1,\beta-f-1}\;\xi_i^{g+1} \nn \\
+
 \sum_{f=0}^{\beta} \sum_{g=0}^{f} (-1)^{g} x(_{+i}\uk)_{i,f-g}
 \;x(_{+i}\uk)_{i+1,\beta-f}\;
 \xi_i^{g}
\end{eqnarray}
Change variables to $f'=f+1$, and $g'=g+1$ in the first summation, so that all
terms cancel except for $g=0$ term in the second summation; this term is equal to
\begin{equation}
  \sum_{f=0}^{\beta} x(_{+i}\uk)_{i,f}\;
  x(_{+i}\uk)_{i+1,\beta-f}.
\end{equation}
Hence, when $j \neq i,i+1$ the elements $\overline{x(\uk)_{j,\alpha}}$ and
$\overline{x(_{+i}\uk)_{j,\alpha}}$ can be slid across a line labelled $i$.

The dual generator $\overline{x(\uk)_{j,\alpha}}$ contains symmetric homogeneous
combinations of variables $x(\uk)_{\ell,\beta}$ for $\ell \neq j$. When $j=i$ in
\eqref{eq_dark_slide1} all terms in $\overline{x(\uk)_{i,\alpha}}$ slide across
lines labelled $i$ except for the variables $x(\uk)_{i+1,\beta}$. Using
\eqref{eq_noncan1} to slide these across establishes \eqref{eq_dark_slide1} for
the case $j=i$. Similarly, when $j=i+1$ all terms in
$\overline{x(\uk)_{i+1,\alpha}}$ slide from right to left across lines labelled
$i$ except for the variables $x(\uk)_{i,\beta}$. Using \eqref{eq_can_to_noncan2}
to slide these completes the proof of \eqref{eq_dark_slide1}.  Equation
\eqref{eq_dark_slide2} is proven similarly.
\end{proof}

By duality, analogous equations as those above hold for downward pointing arrows.
For example, equation \eqref{eq_alphadot1} implies
\begin{equation}
\begin{pspicture}[.5](2.5,1.8)
  \rput(2,0){\Flinedot{i}{\alpha}}
  \rput(.4,1.5){$\lambda$}
\end{pspicture}
= (-1)^{\alpha}\sum_{f=0}^{\alpha}
 \begin{pspicture}[.5](5,1.8)
  \rput(3.1,0){\Fline{i}}
  \rput(1.7,1){\dchern{i,\alpha-f}}
  \rput(4.1,1){\chern{i,f}}
  \rput(.4,1.5){$\lambda$}
\end{pspicture}
\end{equation}

%
\subsubsection{Bimodules $H_{\uk^{\ii}}$}
%

For the remainder of this paper we write a signed sequence $\ii = \epsilon_1 i_1
\epsilon_2 i_2 \dots \epsilon_m i_m$ as $\ii = s_1s_2 \dots s_m$ with $s_{\alpha}
= \epsilon_{\alpha}{i_{\alpha}}$. For $s_{\alpha}=\epsilon_{\alpha} i_{\alpha}$
write ${}_{s_{\alpha}}\uk$ for the sequence obtained from $\uk =
(k_0,k_1,k_2,\ldots,k_{n})$ by increasing the $k_{i_{\alpha}}$  by 1 if
$\epsilon_{\alpha} =+$ and $i_{\alpha}<i_{\alpha+1}$, decreasing the sequence by
1 if $\epsilon_{\alpha}=-$ and $i_{\alpha-1}<i_{\alpha}$, and setting the
sequence to $\emptyset$ otherwise. Then ${}_{\ii}\uk$ is either set to
$\emptyset$, or else it denotes the sequence obtained from $\uk$ with the
$k_{i_{\alpha}}$th term increased by one if $s_{\alpha}=+{i_{\alpha}}$, or
decreased by one if $s_{\alpha}=-{i_{\alpha}}$, sequentially for each
$s_{\alpha}$ in the signed sequence $\ii$, reading from the right. It is clear
that the sequence ${}_{\ii}\uk$ is equal to the sequence ${}_{\jj}\uk$ whenever
$\ii,\jj\in\sseq$ with $\ii_X=\jj_X\in X$ and ${}_{\ii}\uk\neq \emptyset$ and
${}_{\jj}\uk\neq\emptyset$.

We write $H_{\uk^{\ii}}$ for the $(H_{{}_{\ii}\uk},H_{\uk})$-bimodule
\begin{eqnarray} \label{eq_Hes}
  H_{\uk^{\ii}} :=
    H_{{}_{s_2s_3 \cdots s_m}\uk^{s_1}}
    \otimes_{H_{{}_{s_2s_3 \cdots s_m}\uk}}
\cdots \otimes_{H_{{}_{s_{m-1}s_m}\uk}} H_{{}_{s_m}\uk^{s_{m-1}}}
\otimes_{H_{{}_{s_m}\uk}} H_{\uk^{{s_m}}}    .
\end{eqnarray}
This bimodule can also be described as the cohomology ring of the variety of
$m+n$ step iterated partial flags corresponding to the sequence obtained from
$\uk$ by the ordered insertion of subspaces determined from the signed sequence
$\ii$.

The $(H_{{}_{\ii}\uk},H_{\uk})$-bimodule $H_{\uk^{\ii}}$ can be understood using
the graphical calculus. A general signed sequence $\ii = s_1 s_2 \cdots s_m$ is
represented by a sequence of lines coloured by the sequence $i_1i_2\cdots i_m$,
where the line coloured by $i_{\alpha}$ is oriented upward if
$s_{\alpha}=+{i_{\alpha}}$ and oriented downward if $s_{\alpha}=-{i_{\alpha}}$.

\paragraph{Examples:}
\begin{enumerate}
\item
For the signed sequence $+j+i \in \sseq$ consider the
$(H_{{}_{+j+i}\uk},H_{\uk})$-bimodule
\[
 H_{\uk^{+j+i}} = H_{{}_{+i}\uk^{+j}} \otimes_{H_{{}_{+i}\uk}} H_{\ukp}.
\]
As explained in the previous section, the identity elements of the
$(H_{\ukep},H_{\uk})$-bimodule $H_{\ukp}$ and the
$(H_{{}_{+i+j}\uk},H_{\ukep})$-bimodule $H_{\ukep^{j}}$ are depicted as
\begin{eqnarray}
 \begin{pspicture}[.5](3,1.5)
  \rput(1.5,0){\Eline{i}}
  \rput(2.6,1.1){$\lambda$}
  \rput(.4,1.1){$\lambda+i_X$}
\end{pspicture}\qquad \quad {\rm and}
\qquad \qquad
 \begin{pspicture}[.5](2,1.5)
  \rput(2.8,0){\Eline{j}}
  \rput(4.1,1.1){$\lambda+i_X$}
  \rput(.8,1.1){$\lambda+i_X+j_X$}
\end{pspicture}
\end{eqnarray}
respectively.

The identity element of $H_{\uk^{+j+i}}$ is represented by the diagram
\begin{eqnarray}
 \begin{pspicture}[.5](3,1.5)
  \rput(2.7,0){\Eline{j}}
  \rput(5,0){\Eline{i}}
  \rput(6,1.1){$\lambda$}
  \rput(3.9,1.1){$\lambda+i_X$}
  \rput(.8,1.1){$\lambda+i_X+j_X$}
\end{pspicture}
\end{eqnarray}
The region in the middle of the two lines is labelled by the weight $\lambda+i_X$
corresponding to $\ukep$.  The tensor product over the action of $H_{\ukep}$ is
represented diagrammatically by the fact that a labelled bubble in the region
with weight $\lambda+i_X$ can be equivalently regarded as an element of
$H_{\ukep}$ acting on the line corresponding to $H_{\ukp}$, or the line
corresponding to $H_{\ukep^{j}}$.
\[
 \begin{array}{ccc}
   \text{Action of $H_{\uk}$} & \text{Action of $H_{\ukep}$} & \text{Action of $H_{{}_{+j+i}\uk}$} \\
    &  &  \\
    \begin{pspicture}[.5](5,1.5)
  \rput(1,0){\Eline{j}}
  \rput(2.5,0){\Eline{i}}
  \rput(3.5,.8){\chern{\ell,\alpha}}
  \rput(4.5,1.3){$\lambda$}
\end{pspicture} &
\begin{pspicture}[.5](4.3,1.5)
  \rput(1,0){\Eline{j}}
  \rput(3,0){\Eline{i}}
  \rput(2,.8){\chern{\ell,\alpha}}
  \rput(4,1.3){$\lambda$}
\end{pspicture}
 &
\begin{pspicture}[.5](4.6,1.5)
  \rput(2,0){\Eline{j}}
  \rput(3.5,0){\Eline{i}}
  \rput(1,.8){\chern{\ell,\alpha}}
  \rput(4.2,1.3){$\lambda$}
\end{pspicture}
 \end{array}
\]
The weight $\lambda$ on the far right, and the weight of $\lambda+i_X+j_X$ on the
far left, indicate that this diagram is describing an
$(H_{{}_{+j+i}\uk},H_{\uk})$-bimodule where the various actions are depicted as
above.

  \item For $\ii = +{i_1}+{i_2}+{i_3}\cdots +{i_r}$ the identity element of the bimodule
$H_{\uk^{\ii}}$ is depicted by a sequence of upward oriented labelled lines
\begin{equation}
H_{\uk^{\ii}} \ni 1 \quad= \quad
\begin{pspicture}[.5](7,1.7)
  \rput(1,0){\Eline{i_1}}
  \rput(2,0){\Eline{i_{2}}}
  \rput(3,0){\Eline{i_{3}}}
  \rput(4,1.3){$\cdots$}
  \rput(5,0){\Eline{i_{m-1}}}
  \rput(6,0){\Eline{i_m}}
  \rput(6.8,1.3){$\lambda$}
\end{pspicture}
\end{equation}

The canonical generators $\xi_{i_{\alpha}}$ of each term in the tensor product
$\eqref{eq_Hes}$ are represented graphically by a dot on the line labelled
$i_{\alpha}$.
\begin{equation}
H_{\uk^{\ii}} \ni 1 \otimes 1 \otimes \cdots  1 \otimes \xi_{i_{\alpha}} \otimes
1 \cdots \otimes 1 \quad= \quad
\begin{pspicture}[.5](7,1.5)
  \rput(1,0){\Eline{i_1}}
  \rput(2,0){\Eline{i_{2}}}
  \rput(3,1.3){$\cdots$}
  \rput(4,0){\Elinedot{i_{\alpha}}{}}
  \rput(5,1.3){$\cdots$}
  \rput(6,0){\Eline{i_m}}
  \rput(6.8,1.3){$\lambda$}
\end{pspicture} \nn
\end{equation}
Likewise, the tensor product over the rings $H_{{}_{s_{\alpha}s_{\alpha+1}\cdots
s_m}\uk}$ is represented by the regions between lines. Again, the weights on the
far left and right of the diagram indicate the bimodule structure.
\[
 \begin{array}{ccccc}
   \text{Action of $H_{\uk}$} &\quad &
    \text{Action of $H_{{}_{i_{\alpha}\cdots i_m}\uk}$} &\quad&
    \text{Action of $H_{{}_{+\ii}\uk}$} \\
    & & & &  \\
    \begin{pspicture}[.5](5,1.5)
  \rput(.5,0){\Eline{i_1}}
  \rput(1.25,0){\Eline{i_2}}
  \rput(2,1.3){$\cdots$}
  \rput(2.75,0){\Eline{i_m}}
  \rput(3.75,1){\chern{j,\beta}}
  \rput(4.75,1.5){$\lambda$}
\end{pspicture} & &
\begin{pspicture}[.5](6,1.5)
  \rput(.5,0){\Eline{i_1}}
  \rput(1.25,1.3){$\cdots$}
  \rput(2,0){\Eline{}}
  \rput(2.75,1){\chern{j,\beta}}
  \rput(3.5,0){\Eline{i_{\alpha}}}
  \rput(4.25,1.3){$\cdots$}
  \rput(5,0){\Eline{i_m}}
  \rput(5.75,1.5){$\lambda$}
\end{pspicture}
 & &
\quad  \begin{pspicture}[.5](5,1.5)
  \rput(1.25,0){\Eline{i_1}}
  \rput(2,0){\Eline{i_2}}
  \rput(2.75,1.3){$\cdots$}
  \rput(3.5,0){\Eline{i_m}}
  \rput(.4,1){\chern{j,\beta}}
  \rput(4.75,1.5){$\lambda$}
\end{pspicture}
 \end{array}
\]

\item Consider the $(H_{\uk},H_{\uk})$-bimodule corresponding to the tensor product
\begin{equation}
  H_{\ukp} \otimes_{H_{\ukep}} H_{\ukp}
\end{equation}
where in the first factor we are regarding $H_{\ukp}$ as a
$(H_{\uk},H_{\ukep})$-bimodule.  The identity element of this bimodule is
represented by the diagram
\begin{equation}
   \begin{pspicture}[.5](3,1.7)
  \rput(1.1,0){\Fline{i}}
  \rput(3.3,0){\Eline{i}}
  \rput(4,1.1){$\lambda$}
  \rput(2.25,1.1){$\lambda+i_X$}
  \rput(.5,1.1){$\lambda$}
\end{pspicture}
\end{equation}
\end{enumerate}

%
\subsubsection{Identities arising from tensor products}
%

Bubbles with a given label $(j,\alpha)$ can pass from right to left, or left to
right, through a line coloured by $i$ as long as $j\neq i,i+1$.  If $j=i$ or
$j=i+1$ then a bubble can move through a line subject to the rules
\eqref{eq_noncan1}--\eqref{eq_can_to_noncan2}.  Furthermore, dots on a line can
be exchanged for bubbles in the neighboring regions using \eqref{eq_alphadot1}
and \eqref{eq_alphadot2}.  Dual bubbles corresponding to dual generators can be
slid across lines using the rules \eqref{eq_dark_slide1} and
\eqref{eq_dark_slide2}.

The following Lemma is needed for the definition of the 2-representation $\Gamma$
given in the next section.  In particular, parts (i) and (ii) are used to provide
two equivalent definitions of
\[
 \Gamma \left(\vcenter{\xy   (-6,0)*{};(6,2)*{};
  (-4,0)*{}="t1"; (4,0)*{}="t2";
  "t1";"t2" **\crv{(-4,-6) & (4,-6)}; ?(.15)*\dir{>} ?(.9)*\dir{>}
   ?(.5)*\dir{}+(0,-2)*{\scriptstyle{i}};
    (8,-5)*{ \lambda};
    \endxy }\right),
\qquad  \qquad \Gamma \left(\vcenter{\xy(-6,0)*{};(6,2)*{};
  (-4,0)*{}="t1"; (4,0)*{}="t2";
  "t1";"t2" **\crv{(-4,-6) & (4,-6)}; ?(.1)*\dir{<} ?(.85)*\dir{<}
   ?(.5)*\dir{}+(0,-2)*{\scriptstyle{i}};
    (8,-5)*{ \lambda};
    \endxy}\right) ,
\]
and (iii) and (iv) are used to show that
\[
 \Gamma \left(\vcenter{\xy   (-6,0)*{};(6,2)*{};
  (-4,0)*{}="t1"; (4,0)*{}="t2";
  "t1";"t2" **\crv{(-4,-6) & (4,-6)}; ?(.15)*\dir{}+(0,0)*{\bullet} ?(.9)*\dir{>}
   ?(.5)*\dir{}+(0,-2)*{\scriptstyle{i}};
    (8,-5)*{ \lambda};
    \endxy }\right)
\quad = \quad
 \Gamma \left(\vcenter{\xy(-6,0)*{};(6,2)*{};
  (-4,0)*{}="t1"; (4,0)*{}="t2";
  "t1";"t2" **\crv{(-4,-6) & (4,-6)}; ?(.15)*\dir{>} ?(.9)*\dir{}+(0,0)*{\bullet}
   ?(.5)*\dir{}+(0,-2)*{\scriptstyle{i}};
    (8,-5)*{ \lambda};
    \endxy}\right),
\qquad  \Gamma \left(\vcenter{\xy(-6,0)*{};(6,2)*{};
  (-4,0)*{}="t1"; (4,0)*{}="t2";
  "t1";"t2" **\crv{(-4,-6) & (4,-6)}; ?(.15)*\dir{}+(0,0)*{\bullet} ?(.85)*\dir{<}
   ?(.5)*\dir{}+(0,-2)*{\scriptstyle{i}};
    (8,-5)*{ \lambda};
    \endxy}\right)
\quad = \quad
 \Gamma \left(\vcenter{\xy(-6,0)*{};(6,2)*{};
  (-4,0)*{}="t1"; (4,0)*{}="t2";
  "t1";"t2" **\crv{(-4,-6) & (4,-6)}; ?(.1)*\dir{<} ?(.9)*\dir{}+(0,0)*{\bullet}
   ?(.5)*\dir{}+(0,-2)*{\scriptstyle{i}};
    (8,-5)*{ \lambda};
    \endxy}\right).
\]
\begin{lem}\label{lem_relations}
  The following identities hold:
\begin{enumerate}[i)]
\item Equivalent definitions of cups labelled $i$:
 in the ring $H_{\ukp} \otimes_{H_{\ukep}} H_{\ukp}$ we have
\begin{eqnarray}
  \xsum{f=0}{\alpha}(-1)^{\alpha-f} \; \begin{pspicture}[.5](4.5,1.5)
  \rput(.5,0){\Flinedot{i}{f}}
  \rput(1.8,0){\Eline{i}}
  \rput(3,1){\chern{i,\alpha-f}}
  \rput(4.2,1.5){$\lambda$}
\end{pspicture}
&= & \xsum{g=0}{\alpha}(-1)^{\alpha-g} \;
 \begin{pspicture}[.5](4.5,1.5)
  \rput(2,0){\Fline{i}}
  \rput(3,0){\Elinedot{i}{g}}
  \rput(.8,1){\chern{i,\alpha-g}}
  \rput(4.2,1.5){$\lambda$}
\end{pspicture} \nn
\\ \label{eq_lem_cup_du} \\
    \xsum{f=0}{\alpha}(-1)^{\alpha-f} \; \xi_i^{f} \otimes x(\uk)_{i,\alpha-f}
    \quad
  &= &
  \xsum{g=0}{\alpha}(-1)^{\alpha-g} \; x(\uk)_{i,\alpha-g} \otimes
  \xi_i^{g} \nn
\end{eqnarray}
for all $\alpha\in \N$.
\item
In the ring $H_{{}_{-i}\ukp} \otimes_{H_{\ukem}} H_{{}_{-i}\ukp}$  we have
\begin{eqnarray}
  \xsum{f=0}{\alpha}(-1)^{\alpha-f} \; \begin{pspicture}[.5](4.5,1.5)
  \rput(.5,0){\Elinedot{i}{f}}
  \rput(1.7,0){\Fline{i}}
  \rput(3,1){\chern{i+1,\alpha-f}}
  \rput(4.2,1.5){$\lambda$}
\end{pspicture}
&= & \xsum{g=0}{\alpha}(-1)^{\alpha-g} \;
 \begin{pspicture}[.5](4.9,1.5)
  \rput(2.4,0){\Eline{i}}
  \rput(3.6,0){\Flinedot{i}{g}}
  \rput(1,1){\chern{i+1,\alpha-g}}
  \rput(4.6,1.5){$\lambda$}
\end{pspicture} \nn
\\ \label{eq_lem_cup_ud} \\
    \xsum{f=0}{\alpha}(-1)^{\alpha-f} \; \xi_i^f \otimes x(\uk)_{i+1,\alpha-f}
  &=&
  \xsum{g=0}{\alpha}(-1)^{\alpha-g} \; x(\uk)_{i+1,\alpha-g} \otimes \xi_i^{g} \nn
\end{eqnarray}
for all $\alpha\in \N$.
\item Dot slide formulas for cups:
in the ring $H_{\ukp} \otimes_{H_{\ukep}} H_{\ukp}$ we have
\begin{eqnarray}
    \xsum{f=0}{k_i-k_{i-1}}(-1)^{k_i-k_{i-1}-f} \; \xi_i^{f+1} \otimes x(\uk)_{i+1,k_i-k_{i-1}-f}
  = \hspace{2in}\\
  \hspace{1in}
\xsum{g=0}{k_i-k_{i-1}}(-1)^{k_i-k_{i-1}-g} \; \xi_i^{g} \otimes
x(\uk)_{i+1,k_i-k_{i-1}-g} \cdot \xi_i .\nn
\end{eqnarray}
\item Dot slide formulas for cups:
In the ring $H_{{}_{-i}\ukp} \otimes_{H_{\ukem}} H_{{}_{-i}\ukp}$ we have
\begin{eqnarray}
    \xsum{f=0}{k_{i+1}-k_i}(-1)^{k_{i+1}-k_i-f} \; \xi_i^{f+1} \otimes x(\uk)_{i+1,k_{i+1}-k_i-f}
  = \hspace{2in}\\
  \hspace{1in}
\xsum{g=0}{k_{i+1}-k_i}(-1)^{k_{i+1}-k_i-g} \; \xi_i^{g} \otimes
x(\uk)_{i+1,k_{i+1}-k_i-g} \cdot \xi_i .\nn
\end{eqnarray}
\end{enumerate}
\end{lem}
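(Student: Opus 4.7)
\medskip

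\noindent\textbf{Proof plan.} All four identities live in tensor products of the form $H_{\uk^{\pm i}} \otimes_{H_{\uk^{\mp i}}} H_{\uk^{\pm i}}$, and the governing principle is simple: elements of the middle ring ($H_{\ukep}$ for (i), (iii); $H_{\ukem}$ for (ii), (iv)) slide freely through $\otimes$, while $\xi_i$ does not. My strategy is to use the dictionary between canonical and non-canonical generators provided by \eqref{eq_noncan1}, \eqref{eq_noncan2}, \eqref{eq_can_to_noncan1}, \eqref{eq_can_to_noncan2} to trade $\xi_i$ for combinations that can be pushed across the tensor, and then to match the resulting normal forms.

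\medskip

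\noindent For parts (i) and (ii) I would apply \eqref{eq_can_to_noncan2} (respectively its downward-arrow analogue) to expand $x(\uk)_{i,\beta}$ (resp.\ $x(\uk)_{i+1,\beta}$) in the second factor of the LHS as $\sum_{h}(-1)^h \xi_i^h\, x(\ukep)_{i,\beta-h}$, and symmetrically expand the $x(\uk)_{i,\cdot}$ appearing in the first factor of the RHS. After sliding the resulting $x(\ukep)_{i,\cdot}$ (resp.\ $x(\ukem)_{i+1,\cdot}$) factor across the tensor product, both sides become the common double sum
\begin{equation*}
\sum_{a,b\geq 0,\ a+b\leq\alpha}(-1)^{\alpha+a-b}\; x(\ukep)_{i,\alpha-a-b}\,\xi_i^{a}\,\otimes\,\xi_i^{b},
\end{equation*}
the parities agreeing because $(-1)^{-a+b}=(-1)^{a-b}$. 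A straightforward relabeling of summation indices completes (i), and (ii) is proved identically after swapping the roles of $H_{\ukep}$ and $H_{\ukem}$.

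\medskip

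\noindent Parts (iii) and (iv) are more delicate because the specific top-degree index $\alpha=k_i-k_{i-1}$ (resp.\ $k_{i+1}-k_i$) appears. The key observation is that $\xi_i = x(\ukep)_{i,1}-x(\uk)_{i,1}$ by the $\alpha=1$ case of \eqref{eq_noncan2}, so if we denote by $m_1$ and $m_2$ left multiplication on the first and second tensor factor respectively, then for any element $T$ we have $(m_1(\xi_i)-m_2(\xi_i))(T)=(m_2(x(\uk)_{i,1})-m_1(x(\uk)_{i,1}))(T)$, because $x(\ukep)_{i,1}$ slides through the tensor. Applied to $T=\sum_f(-1)^{\alpha-f}\xi_i^f\otimes x(\uk)_{i+1,\alpha-f}$, this reduces (iii) to showing that $x(\uk)_{i,1}$ can be moved across the tensor when acting on $T$. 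I would then rewrite $T$ using \eqref{eq_noncan1} (so that $x(\uk)_{i+1,\beta}=\xi_i\,x(\ukep)_{i+1,\beta-1}+x(\ukep)_{i+1,\beta}$), slide all the $x(\ukep)_{i+1,\cdot}$ factors across, and reorganize into a telescoping sum; the residual boundary term lives in the top Chern-class degree of the bundle $F_{k_i}/F_{k_{i-1}}$, where it is annihilated by the defining relation $\sum_{f=0}^{\alpha}x(\uk)_{i,f}\,\overline{x(\uk)_{i,\alpha-f}}=0$ from \eqref{eq_sum_equal_delta} at $\alpha=k_i-k_{i-1}$. Part (iv) is analogous, using $F_{k_{i+1}}/F_{k_i}$ and its top Chern class.

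\medskip

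\noindent The main obstacle will be bookkeeping the cancellations in (iii) and (iv): one must verify that precisely the cross-terms produced by commuting $\xi_i$ past the tensor coincide with the Chern class relation evaluated at the critical degree, so that the boundary correction matches exactly. The choice of the specific upper bound $k_i-k_{i-1}$ (resp.\ $k_{i+1}-k_i$) in the statement is forced by this matching, which is what distinguishes the "top-degree" dot-slide identities (iii), (iv) from their generic-degree counterparts (i), (ii).
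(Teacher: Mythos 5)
Your proof of parts (i) and (ii) is correct and takes a slightly different (but essentially equivalent) route from the paper. Where the paper twice applies the ``dot equals weighted sum of bubbles and dual bubbles'' identity \eqref{eq_alphadot2}, you instead expand $x(\uk)_{i,\alpha-f}$ via \eqref{eq_can_to_noncan2} into $\xi_i^h\, x(\ukep)_{i,\cdot}$, slide the $H_{\ukep}$-part across the tensor, and match the resulting double-sum normal forms. This works, and it sidesteps the dual generators $\overline{x(\uk)_{i,\alpha}}$ entirely, which is a mild simplification.

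For parts (iii) and (iv), however, there is a genuine gap, and your route is also more circuitous than necessary. First, the reduction via $\xi_i = x(\ukep)_{i,1} - x(\uk)_{i,1}$ does not actually simplify what has to be checked: the identity $(m_1-m_2)(\xi_i)(T)=(m_2-m_1)(x(\uk)_{i,1})(T)$ is a reparametrization of the very quantity one must show vanishes, so you have not traded the computation for an easier one. Second, and more importantly, the vanishing mechanism you cite is wrong. The paper proves (iii) by re-indexing $f\to f+1$, adding and subtracting the $f'=0$ term, applying part (i) at degree $\alpha+1$, peeling off the $g=0$ term, and applying part (i) once more. The residual boundary is $(-1)^{\alpha+1}\bigl(x(\uk)_{i,\alpha+1}\otimes 1 - 1\otimes x(\uk)_{i,\alpha+1}\bigr)$, and it vanishes at $\alpha=k_i-k_{i-1}$ simply because $x(\uk)_{i,\alpha+1}=0$ by \eqref{eq_zero_variables}: the ring $H_{\uk}$ has no generator $x(\uk)_{i,\beta}$ with $\beta>k_i-k_{i-1}$, as the bundle $F_{k_i}/F_{k_{i-1}}$ has rank $k_i-k_{i-1}$. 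This is a structural fact about which generators exist, not the Chern-class relation \eqref{eq_sum_equal_delta}; indeed the analogous dot-slide identities must (and do) hold in the equivariant setting of Section~\ref{subsec_equivariant}, where \eqref{eq_sum_equal_delta} fails but \eqref{eq_zero_variables} still holds. Your geometric instinct --- that the critical index $k_i-k_{i-1}$ is the rank of the relevant bundle --- is exactly right, but the boundary term is killed by \emph{absence} of a generator, not by a relation among generators.
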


\begin{proof}
Part i) follows from the chain of equalities:
\begin{eqnarray}
  \sum_{f=0}^{\alpha}(-1)^{\alpha-f} \;
  \begin{pspicture}[.5](4.5,1.5)
  \rput(.5,0){\Flinedot{i}{f}}
  \rput(1.8,0){\Eline{i}}
  \rput(3,1){\chern{i,\alpha-f}}
  \rput(4.2,1.5){$\lambda$}
 \end{pspicture}
  & \refequal{\eqref{eq_alphadot2}}&
  \sum_{f=0}^{\alpha} \sum_{g=0}^{f}(-1)^{\alpha
  } \;
 \begin{pspicture}[.5](6.5,1.5)
 \rput(.8,1){\chern{i,g}}
  \rput(1.8,0){\Fline{i}}
  \rput(3,1){\dchern{i, f-g}}
  \rput(4,0){\Eline{i}}
  \rput(5.1,1){\chern{i,\alpha-f}}
  \rput(6.2,1.5){$\lambda$}
 \end{pspicture} \nn
\end{eqnarray}
If we re-index by letting $f'=\alpha-f$ and switch the order of summation we have
\begin{eqnarray}
  =
     \sum_{g=0}^{\alpha}\sum_{f'=0}^{\alpha-g}(-1)^{\alpha} \;
 \begin{pspicture}[.5](7,1.5)
 \rput(.5,1){\chern{i,g}}
  \rput(1.4,0){\Fline{i}}
  \rput(3,1){\dchern{i, (\alpha-g)-f'}}
  \rput(4.6,0){\Eline{i}}
  \rput(5.5,1){\chern{i,f'}}
  \rput(6.4,1.5){$\lambda$}
 \end{pspicture}
 \refequal{\eqref{eq_alphadot2}}
     \sum_{g=0}^{\alpha}(-1)^{g} \;
 \begin{pspicture}[.5](4.5,1.5)
  \rput(1.5,0){\Fline{i}}
  \rput(2.5,0){\Elinedot{i}{\quad\alpha-g}}
  \rput(.5,1){\chern{i,g}}
  \rput(4.2,1.5){$\lambda$}
\end{pspicture} \nn
\end{eqnarray}
which after re-indexing completes the proof of Part i).  Part ii) is proven
similarly.  Part iii) follows from Part i) by letting $f'=f+1$ and adding and
subtracting the term $(-1)^{\alpha+1} 1 \otimes x(\uk)_{i, \alpha+1}$, so that
\begin{eqnarray}
\sum_{f=0}^{\alpha}(-1)^{\alpha-f} \; \xi_i^{f+1} \otimes x(\uk)_{i,\alpha-f}
\hspace{3in} \nn \\
=
  \sum_{f'=0}^{\alpha+1}(-1)^{\alpha+1-f'} \; \xi_i^{f'} \otimes
  x(\uk)_{i,\alpha+1-f'} - (-1)^{\alpha+1} 1 \otimes x(\uk)_{i, \alpha+1}
  \nn \\
 \refequal{\text{Part i)}}
 \sum_{g=0}^{\alpha+1}(-1)^{\alpha+1-g} x(\uk)_{i,\alpha+1-g} \otimes
 \xi_i^{g} - (-1)^{\alpha+1} 1 \otimes x(\uk)_{i, \alpha+1}. \nn \\
\end{eqnarray}
Pulling off the $g =0$ term and re-indexing $g'=g-1$ we have
\begin{eqnarray}
\sum_{g'=0}^{\alpha}(-1)^{\alpha-g'} x(\uk)_{i,\alpha-g'} \otimes
 \xi_i^{g'+1} + (-1)^{\alpha+1} \big(x(\uk)_{i, \alpha+1} \otimes 1  -
 1 \otimes x(\uk)_{i, \alpha+1} \big). \nn
\end{eqnarray}
But the term with the summation is equal to
\begin{equation}
 \xsum{g'=0}{\alpha}(-1)^{\alpha-g'} \;
   \xi_i^{g'} \otimes x(\uk)_{i,\alpha-g'} \cdot \xi_i
\end{equation}
by Part i), and the remaining terms $ (-1)^{\alpha+1} \big(x(\uk)_{i, \alpha+1}
\otimes 1  - 1 \otimes x(\uk)_{i, \alpha+1} \big)$, are zero when
$\alpha=k_{i}-k_{i-1}$ since $x(\uk)_{i,\alpha+1}$ is zero for $\alpha \geq
k_i-k_{i+1}$, see \eqref{eq_zero_variables}.  Part iv) is proven similarly using
Part ii).
\end{proof}

\begin{cor} \label{cor_bimodule_maps}
The assignments (see Definition~\ref{def_biadjoint})
\[
\begin{array}{ccl}
    \Gamma\left( \text{$\Ucupr_{i,\lambda}$} \right) \maps H_{\uk}  & \longrightarrow &
    \left(H_{\ukp} \otimes_{H_{\ukep}} H_{\ukp}\right) \{
    1+k_{i-1}-k_{i+1}\}
    \\
     1 &\mapsto&
    \xsum{f=0}{k_i-k_{i-1}}(-1)^{k_i-{k_{i-1}}-f} \; \xi_i^{f}
    \otimes x(\uk)_{i,k_i-k_{i-1}-f}
 \end{array}
 \]
 and
\[
 \begin{array}{ccl}
   \Gamma\left( \text{$\Ucupl_{i,\lambda}$} \right) \maps H_{\uk} \ & \longrightarrow &
    \left(H_{{}_{-i}\ukp} \otimes_{H_{\ukem}} H_{{}_{-i}\ukp}\right)\{
    1+k_{i-1}-k_{i+1}\}
    \\
     1 &\mapsto&
     \xsum{g=0}{k_{i+1}-k_i}(-1)^{k_{i+1}-k_i-g} \; \xi_i^g
     \otimes x(\uk)_{i+1,k_{i+1}-k_i-g}
 \end{array}
 \]
define morphisms of graded bimodules of degree $1+\lambda_i =
1-k_{i-1}+2k_i-k_{i+1}$ and $1-\lambda_i = 1+k_{i-1}-2k_i+k_{i+1}$, respectively.
\end{cor}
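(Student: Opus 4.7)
The statement asserts that two specific assignments define $(H_{\uk}, H_{\uk})$-bimodule homomorphisms of the claimed degrees. My plan is to first verify bimodule-linearity, then compute the degree, and finally mirror the argument for the second map.

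Since the source $H_{\uk}$ is generated by $1$, once the assignment is defined on $1$ there is nothing to check for additivity, and the map is $H_{\uk}$-linear on the left by fiat. The only nontrivial condition is compatibility with the right $H_{\uk}$-action, equivalently, that the image element
\[
\omega := \sum_{f=0}^{k_i-k_{i-1}}(-1)^{k_i-k_{i-1}-f}\,\xi_i^f \otimes x(\uk)_{i,k_i-k_{i-1}-f}
\]
satisfies $a\cdot\omega = \omega\cdot a$ in $H_{\ukp} \otimes_{H_{\ukep}} H_{\ukp}$ for every $a\in H_{\uk}$. I will reduce this to the multiplicative generators $x(\uk)_{j,\alpha}$, $1\leq j\leq n$. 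For $j\neq i,i+1$, the generator commutes with both $\xi_i$ and $x(\uk)_{i,\beta}$, and slides across the tensor by \eqref{eq_graph_easy_slide}, so the identity is immediate.

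The critical cases are $j=i$ and $j=i+1$. For $a = x(\uk)_{i,\alpha}$, the desired equality is obtained by multiplying Lemma~\ref{lem_relations}(i) (with parameter $k_i-k_{i-1}$) on the outside by $x(\uk)_{i,\alpha}$ and invoking commutativity of $H_{\ukp}$; this is exactly the kind of manipulation already exploited in the proof of Part~(i). For $a = x(\uk)_{i+1,\alpha}$ I will decompose using \eqref{eq_noncan1}, writing
\[
x(\uk)_{i+1,\alpha} \;=\; \xi_i\cdot x(\ukep)_{i+1,\alpha-1} + x(\ukep)_{i+1,\alpha},
\]
so that the $x(\ukep)_{i+1,\cdot}$ pieces, being elements of $H_{\ukep}$, slide freely across the tensor; the remaining $\xi_i$ factor is then transported from one side to the other via Lemma~\ref{lem_relations}(iii), whose proof shows precisely that the obstruction to moving $\xi_i$ through the tensor is a boundary term that vanishes at the top index $k_i - k_{i-1}$ because $x(\uk)_{i,k_i-k_{i-1}+1}=0$ by \eqref{eq_zero_variables}. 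This is the step I expect to require the most care: the bookkeeping forces us to combine Lemma~\ref{lem_relations}(i) with the telescoping of Lemma~\ref{lem_relations}(iii), and the cancellation hinges crucially on the vanishing of top Chern classes.

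The degree computation is then a direct check: each summand $\xi_i^f \otimes x(\uk)_{i,k_i-k_{i-1}-f}$ has internal degree $2f + 2(k_i-k_{i-1}-f) = 2(k_i-k_{i-1})$, independent of $f$, so applying the shift $\{1+k_{i-1}-k_{i+1}\}$ puts the image of $1$ in total degree
\[
2(k_i-k_{i-1}) + (1+k_{i-1}-k_{i+1}) \;=\; 1 - k_{i-1} + 2k_i - k_{i+1} \;=\; 1+\lambda_i,
\]
as required. The argument for $\Gamma(\Ucupl_{i,\lambda})$ is entirely symmetric, with the roles of $x(\uk)_{i,\alpha}$ and $x(\uk)_{i+1,\alpha}$ interchanged and with Lemma~\ref{lem_relations}(ii) and (iv) replacing parts (i) and (iii); the internal degree of each summand becomes $2(k_{i+1}-k_i)$ and, after the same shift, produces $1-\lambda_i$.
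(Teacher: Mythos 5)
Your overall architecture matches the paper's proof: reduce the bimodule check to the multiplicative generators of $H_{\uk}$, dispose of $j\neq i,i+1$ by the free bubble slide, handle the remaining two cases by decomposing the generator in terms of $H_{\ukep}$-elements plus powers of $\xi_i$, and invoke Lemma~\ref{lem_relations} to move the $\xi_i$ factors across the tensor. Your degree computation is correct.

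However, your treatment of the case $a = x(\uk)_{i,\alpha}$ has a genuine gap. You claim this case follows from Lemma~\ref{lem_relations}(i) together with commutativity of $H_{\ukp}$, but that does not suffice. The element $x(\uk)_{i,\alpha}$ is \emph{not} in the image of $H_{\ukep}$ in $H_{\ukp}$, so commutativity of $H_{\ukp}$ does not let you move it across the tensor $\otimes_{H_{\ukep}}$. Lemma~(i) merely gives a symmetric re-expression of $\omega$ (equivalently, $\omega$ is fixed under the swap $m\otimes n \mapsto n\otimes m$); it does not by itself imply that $x(\uk)_{i,\alpha}\cdot\omega = \omega\cdot x(\uk)_{i,\alpha}$, since left and right multiplication by $x(\uk)_{i,\alpha}$ are genuinely different endomorphisms of the tensor product. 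Concretely, one must write $x(\uk)_{i,\alpha} = \sum_{g\geq 0}(-1)^{g}\xi_i^{g}\,x(\ukep)_{i,\alpha-g}$ via \eqref{eq_can_to_noncan2}, slide the $H_{\ukep}$-pieces across the tensor, and transfer the leftover $\xi_i^{g}$ from one factor to the other using Lemma~\ref{lem_relations}(iii) — precisely the same mechanism you correctly describe for $j=i+1$. (Your own observation about the boundary term vanishing via the top Chern class is what makes Lemma~(iii) true at the specific parameter $k_i-k_{i-1}$.) So the two ``critical'' cases $j=i$ and $j=i+1$ are in fact parallel and both require Lemma~(iii); there is no shortcut for $j=i$.
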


\begin{proof}
For the first claim it suffices to check that the left action of each generator
of $H_{\uk}$ on $\Gamma\left( \text{$\Ucupr_{i,\lambda}$} \right) (1) \in
\left(H_{\ukp} \otimes_{H_{\ukep}} H_{\ukp}\right)$ is equal to right action of
this generator. The Corollary follows since bubbles can slide across lines by
\eqref{eq_noncan1}--\eqref{eq_can_to_noncan2} at the cost of introducing powers
of $\xi_i$ on one of the tensor factors; by Lemma~\ref{lem_relations} (iii) and
(iv), factors of $\xi_i$ can be slid across tensor factors in the above sums. The
second claim is proven similarly and the degrees of these bimodule maps are
easily computed.
\end{proof}

%
\subsection{The 2-category $\cat{Flag}_{N}$}
%

$\cat{Bim}$ is a 2-category whose objects are graded rings, whose 1-morphisms are
graded bimodules, and 2-morphisms are degree-preserving bimodule homomorphisms.
Idempotent bimodule homomorphisms split in $\cat{Bim}$, so that any
2-representation $\Psi \maps \Ucat \to \cat{Bim}$ extends uniquely (up to
isomorphisms) to a 2-representation $\UcatD \to \cat{Bim}$, see
Section~\ref{subsec_Karoubi}.

Graded 2-homs between graded bimodules $M_1$ and $M_2$ are given by
\begin{equation}
  \HOM_{\cat{Bim}}(M_1,M_2) := \bigoplus_{t\in\Z}\Hom_{\cat{Bim}}(M_1\{t\},M_2).
\end{equation}
Let $\cat{Bim}^*$ be the 2-category with the same objects and 1-morphisms as
$\cat{Bim}$ and 2-morphisms given by
\begin{equation}
  \cat{Bim}^*(M_1,M_2): = \HOM_{\cat{Bim}}(M_1,M_2).
\end{equation}

We now define a sub 2-category $\cat{Flag}_{N}$ of the 2-category $\cat{Bim}$ for
each integer $N\in\Z_+$.

\begin{defn}
The additive $\Bbbk$-linear 2-category $\cat{Flag}_{N}$ is the idempotent
completion inside of $\cat{Bim}$ of the 2-category consisting of
\begin{itemize}
  \item objects: the graded rings $H_{\uk}$ for all $\uk = (k_0, k_1,k_2, \dots, k_n)$ with
$0 \leq k_1 \leq k_2 \dots \leq k_n = N$.

  \item morphisms: generated by the graded ($H_{\uk}$,$H_{\uk}$)-bimodule $H_{\uk}$, the
graded ($H_{\ukep}$,$H_{\uk}$)-bimodules $H_{\ukp}$ and the graded
($H_{\uk}$,$H_{\ukep}$)-bimodule $H_{\ukp}$ for all $i \in I$, together with
their shifts $H_{\uk}\{t\}$, $H_{\ukp}\{t\}$, and $H_{\ukp}\{t\}$ for $t \in \Z$.
The bimodules $H_{\uk}=H_{\uk}\{0\}$ are the identity 1-morphisms.  Thus, a
morphism from $H_{\uk}$ to $H_{{}_{\ii}\uk}$ is a finite direct sum of graded
$(H_{{}_{\jj}\uk},H_{\uk})$-bimodules of the form
\[
   H_{\uk^{\jj}} \{t\}:=
    H_{{}_{s_2s_3 \cdots s_m}\uk^{s_1}}
    \otimes_{H_{{}_{s_2s_3 \cdots s_m}\uk}}
\cdots \otimes_{H_{{}_{s_{m-1}s_m}\uk}} H_{{}_{s_m}\uk^{s_{m-1}}}
\otimes_{H_{{}_{s_m}\uk}} H_{\uk^{{s_m}}}\{t\}
\]
 for signed sequences $\jj=s_1s_2\dots s_m$ with $\ii_X=\jj_X \in X$.
  \item 2-morphisms:  degree-preserving bimodule maps.
\end{itemize}
\end{defn}
There is a graded additive subcategory $\Gr$ of $\cat{Bim}^*$ with the same
objects and 1-morphisms as $\cat{Flag}_{N}$, but with
\begin{equation}
  \Gr(M_1,M_2) := \bigoplus_{t\in \Z} \cat{Flag}_{N}(M_1\{t\},M_2).
\end{equation}
In Section~\ref{sec_rep} we show that $\Gr$ provides a 2-representation of
$\Ucats^*$; using the isomorphism $\Sigma\maps \Ucatq \to \Ucats^*$ and
restricting to degree zero 2-morphisms, the subcategory $\cat{Flag}_{N}$ provides
a 2-representation of $\Ucat$.

%
\section{Representing $\Ucats^*$ on the flag 2-category}\label{sec_rep}
%

In this section we define for each positive integer $N$ a 2-representation
$\Gamma_N\maps \Ucats^* \to \Gr$.  The 2-functor $\Gamma_N$ is degree preserving
so that it restricts to a weak 2-functor $\Gamma\maps \Ucat \to \cat{Flag}_{N}$.
We will sometimes shorten $\Gamma_N$ to $\Gamma$ for simplicity.

%
\subsection{Defining the 2-functor $\Gamma_N$} \label{subsec_define_gamma}
%
On objects the 2-representation $\Gamma_N\maps \Ucats^* \to \Gr$ sends
$\lambda=(\lambda_1, \lambda_2, \ldots, \lambda_{n-1})$ to the ring $H_{\uk}$
when $\lambda=\lambda(\uk)$, \ie, when $\lambda_{\alpha} =
-k_{\alpha+1}+2k_{\alpha}-k_{\alpha-1}$. 

\begin{eqnarray}
 \Gamma_N\maps \Ucats & \to & \Gr \nn \\
  \lambda & \mapsto &
  \left\{\begin{array}{ccl}
    H_{\uk} & & \text{if $\lambda=\lambda(\uk)$} \\
    0  & & \text{otherwise.}
  \end{array} \right. \nn
\end{eqnarray}
Morphisms of $\Ucats$ get mapped by $\Gamma_N$ to graded bimodules:
\begin{eqnarray}
 \Gamma_N\maps \Ucats & \to & \Gr \nn \\
  \onel\{t\} & \mapsto &
  \left\{\begin{array}{ccl}
    H_{\uk}\{t\} & & \text{if $\lambda_{\alpha}
= -k_{\alpha+1}+2k_{\alpha}-k_{\alpha-1}$} \nn \\
    0  & & \text{otherwise.}
  \end{array} \right. \\
  \cal{E}_{+i}\onel\{t\} & \mapsto &
  \left\{\begin{array}{ccl}
    H_{\ukp}\{t+1+k_{i-1}+k_{i}-k_{i+1}\} & & \text{if $\lambda_{\alpha}
= -k_{\alpha+1}+2k_{\alpha}-k_{\alpha-1}$} \\
    0  & & \text{otherwise.}
  \end{array} \right. \nn\\
  \cal{E}_{-i}\onel\{t\} & \mapsto &
  \left\{\begin{array}{ccl}
    H_{\ukm}\{t+1-k_{i}\} & &\text{if $\lambda_{\alpha}
= -k_{\alpha+1}+2k_{\alpha}-k_{\alpha-1}$} \\
    0  & & \text{otherwise.}
  \end{array} \right. \nn
\end{eqnarray}

Here $H_{\ukm}\{t+1-k_{i}\}$ is the bimodule $H_{\ukm}$ with the grading shifted
by $t+1-k_{i}$ so that
$$(H_{\ukp}\{t+1-k_{i}\})_j = (H_{\ukp})_{j+t+1-k_{i}}.$$ More generally,
the 1-morphism
\begin{equation} \label{eq_Es}
  \cal{E}_{\ii}\onel\{t\}\quad  = \quad
  \cal{E}_{s_1}\mathbf{1}_{\lambda+(s_2)_X+\dots+(s_{m-1})_X+(s_{m})_X }
  \circ \cdots\circ \cal{E}_{s_{m-1}}\mathbf{1}_{\lambda+(s_m)_X} \circ
  \cal{E}_{s_m}\onel\{t\}
\end{equation}
is mapped by $\Gamma_N$ to the graded bimodule $H_{e_{\ii \uk}}$ with grading
shift $\{t+t'\}$, where $t'$ is the sum of the grading shifts for each terms of
the composition in \eqref{eq_Es}.  Formal direct sums of morphisms of the above
form are mapped to direct sums of the corresponding bimodules.

%
\subsubsection{Biadjointness}
%

\begin{defn} \label{def_biadjoint}
The 2-morphisms generating biadjointness in $\Ucats^*$ are mapped by $\Gamma$ to
the following bimodule maps.
\begin{eqnarray}
    \Gamma\left(\;\xy
    (0,-3)*{\bbpef{i}};
    (8,-5)*{ \lambda};
    \endxy \; \right)
    & : &
 \left\{
  \begin{array}{ccl}
    H_{\uk} \ & \longrightarrow &
    \left(H_{\ukp} \otimes_{H_{\ukep}} H_{\ukp}\right)\{1+k_{i-1}-k_{i+1}\}
    \\
 \begin{pspicture}[.5](1,1.7)
  \rput(1,1.3){$\lambda$}
\end{pspicture}
 &\mapsto&
  \xsum{f=0}{k_i-k_{i-1}}(-1)^{k_i-{k_{i-1}}-f} \;
  \begin{pspicture}[.5](5.3,1.9)
  \rput(.5,0){\Flinedot{i}{f}}
  \rput(1.5,0){\Eline{i}}
  \rput(3.4,1){\chern{i,k_i-k_{i-1}-f}}
  \rput(5.1,1.5){$\lambda$}
\end{pspicture}
    \\ \\
     1 &\mapsto&
     \xsum{f=0}{k_i-k_{i-1}}(-1)^{k_i-{k_{i-1}}-f} \; \xi_i^{f} \otimes x(\uk)_{i,k_i-k_{i-1}-f}
 \end{array}
 \right.
     \label{def_eq_FE_G}\\
    \Gamma\left(\;\xy
    (0,-3)*{\bbpfe{i}};
    (8,-5)*{ \lambda};
    \endxy\;\right)
     & : &
     \left\{
  \begin{array}{ccl}
    H_{\uk} \ & \longrightarrow &
    \left(H_{{}_{-i}\ukp} \otimes_{H_{\ukem}} H_{{}_{-i}\ukp}\right)\{1+k_{i-1}-k_{i+1}\}
    \\
    \begin{pspicture}[.5](1,1.7)
  \rput(1,1.3){$\lambda$}
\end{pspicture}
 &\mapsto&
  \xsum{f=0}{k_{i+1}-k_i}(-1)^{k_{i+1}-k_i-f} \;
 \begin{pspicture}[.5](5.3,1.9)
  \rput(.5,0){\Elinedot{i}{f}}
  \rput(1.5,0){\Fline{i}}
  \rput(3.4,1){\chern{i+1,k_{i+1}-k_i-f}}
  \rput(5.4,1.5){$\lambda$}
\end{pspicture}
\\ \\
     1 &\mapsto&
    \xsum{f=0}{k_{i+1}-k_i}(-1)^{k_{i+1}-k_i-f} \; \xi_i^f \otimes x(\uk)_{i+1,k_{i+1}-k_i-f}
 \end{array}
 \right.
     \label{def_eq_EF_G}\\
   \Gamma\left(\;\xy
    (0,0)*{\bbcef{i}};
    (8,2)*{ \lambda};
    \endxy\;\right)
& : &
 \left\{
  \begin{array}{ccl}
     \left(H_{\ukp} \otimes_{H_{\ukep}} H_{\ukp}\right)\{1+k_{i-1}-k_{i+1}\}
     & \rightarrow &
     H_{\uk}
    \\
  \begin{pspicture}[.5](4,1.9)
  \rput(2,0){\Elinedot{i}{\alpha_2}}
  \rput(.5,0){\Flinedot{i}{\alpha_1}}
  \rput(3,1.5){$\lambda$}
\end{pspicture}
\mapsto(-1)^{\alpha_1+\alpha_2+1+k_i-k_{i+1}} & \times &
 \begin{pspicture}[.5](4,1.9)
  \rput(2.1,1){\dchern{i+1,\alpha_1+\alpha_2+1+k_i-k_{i+1}}}
  \rput(3.5,1.8){$\lambda$}
\end{pspicture}
    \\ \\
\xi_i^{\alpha_1} \otimes \xi_i^{\alpha_2}
\mapsto(-1)^{\alpha_1+\alpha_2+1+k_i-k_{i+1}}
    & \times &
     \overline{x(\uk)_{i+1,\alpha_1+\alpha_2+1+k_i-k_{i+1}}}
 \end{array}
 \right.
   \label{def_FE_cap} \\
  \Gamma\left(\;\xy
    (0,0)*{\bbcfe{i}};
    (8,2)*{ \lambda};
    \endxy \right)
& : &
 \left\{
  \begin{array}{ccl}
     \left(H_{{}_{-i}\ukp} \otimes_{H_{\ukem}} H_{{}_{-i}\ukp}\right)\{1+k_{i-1}-k_{i+1}\}
     & \rightarrow &
     H_{\uk}
    \\
  \begin{pspicture}[.5](4,1.9)
  \rput(2,0){\Flinedot{i}{\alpha_2}}
  \rput(.5,0){\Elinedot{i}{\alpha_1}}
  \rput(3,1.5){$\lambda$}
\end{pspicture} \mapsto (-1)^{\alpha_1+\alpha_2+1+k_{i-1}-k_{i}}
& \times  &
 \begin{pspicture}[.5](4,1.9)
  \rput(2.1,1){\dchern{i,\alpha_1+\alpha_2+1+k_{i-1}-k_{i}}}
  \rput(3.5,1.8){$\lambda$}
\end{pspicture}
    \\ \\
\xi_i^{\alpha_1} \otimes \xi_i^{\alpha_2}\mapsto
(-1)^{\alpha_1+\alpha_2+1+k_{i-1}-k_{i}}
    & \times &
    \overline{x(\uk)_{i,\alpha_1+\alpha_2+1+k_{i-1}-k_{i}}}
 \end{array}
 \right.
\label{def_EF_cap}
\end{eqnarray}
\end{defn}
Corollary~\ref{cor_bimodule_maps} shows that the  cups above are bimodule maps.
It is clear that the caps are bimodule maps. These definitions preserve the
degree of the 2-morphisms of $\Ucats^*$ defined in Section~\ref{def_Ucat}.  By
Lemma~\ref{lem_relations}, the clockwise oriented cap and cup have degree
$1-\lambda_i$ and the counter-clockwise oriented cap and cup have degree
$1+\lambda_i$ so that these assignments are degree preserving.

%
\subsubsection{$R(\nu)$ generators} \label{subsubsec_Rnu}
%

\begin{defn}
The 2-morphisms $\Uup_{i,\lambda}$ and $\Udown_{i,\lambda}$ in $\Ucats^*$ are
mapped by $\Gamma_N$ to the graded bimodule maps:
\begin{eqnarray} \label{eq_gamma_updot}
  \Gamma\left(
 \xy
  (0,0)*{\lineu{i}};
  (0,4)*{\txt\large{$\bullet$}};
 (6,0)*{ \lambda};
 (-8,0)*{ \lambda+i_X};
 (-10,0)*{};(10,0)*{};
 \endxy\right)
\quad &\maps&
 \left\{
\begin{array}{ccc}
  H_{\ukp}\{1+k_{i-1}-k_{i+1}\} & \to  & H_{\ukp}
  \{1+k_{i-1}-k_{i+1}\} \\ \\
  \xi_i^\alpha & \mapsto &  \quad \xi_i^{\alpha+1}
\end{array}
\right. \\
 \Gamma\left(
 \xy
  (0,0)*{\lined{i}};
  (0,4)*{\txt\large{$\bullet$}};
 (-6,0)*{ \lambda};
 (8,0)*{ \lambda+i_X};
 (-10,0)*{};(10,0)*{};
 \endxy\right)
\quad &\maps&
 \left\{
\begin{array}{ccc}
  H_{\ukp}\{1+k_{i-1}-k_{i+1}\} & \to  & H_{\ukp}\{1+k_{i-1}-k_{i+1}\} \\ \\
  \xi_i^{\alpha} & \mapsto &  \quad \xi_i^{\alpha+1}
\end{array}
\right.
\end{eqnarray}
Note that these assignment are degree preserving since these bimodule maps are
degree 2.

The 2-morphisms $\Ucross_{i,j,\lambda}$ and $\Ucrossd_{i,j,\lambda}$ are mapped
by $\Gamma$ to the graded bimodule maps:
\begin{equation} \label{eq_gamma_dcross}
    \Gamma\left(
 \xy
  (0,0)*{\xybox{
    (-4,-4)*{};(4,4)*{} **\crv{(-4,-1) & (4,1)}?(1)*\dir{>} ;
    (4,-4)*{};(-4,4)*{} **\crv{(4,-1) & (-4,1)}?(1)*\dir{>};
    (-5,-3)*{\scs i};
     (5.1,-3)*{\scs j};
     (8,1)*{ \lambda};
     (-7,0)*{};(9,0)*{};
     }};
  \endxy
  \right)
  \quad \maps \quad
H_{_{+j}\uk^{+i}} \otimes_{H_{_{+j}\uk}} H_{\uk^{+j}}
 \to H_{\ukep^{+j}} \otimes_{H_{\ukep}} H_{\ukp}
     \hspace{0.5in}
\end{equation}
\begin{eqnarray}
    \xi_i^{\alpha_1} \otimes \xi_j^{\alpha_2}& \mapsto &
    \left\{
    \begin{array}{ccl}
      \xi_j^{\alpha_2} \otimes \xi_i^{\alpha_1} &  & \text{if $i \cdot j =0$} \\
      \xsum{f=0}{\alpha_1-1}\xi_i^{\alpha_1+\alpha_2-1-f} \otimes \xi_i^{f} -
      \xsum{g=0}{\alpha_2-1}\xi_i^{\alpha_1+\alpha_2-1-g} \otimes \xi_i^{g}&  &
      \text{if $i =j$}\\
    \left( \xi_j^{\alpha_2} \otimes \xi_i^{\alpha_1+1}-\xi_j^{\alpha_2+1} \otimes
    \xi_i^{\alpha_1} \right) \{ -1\}
       &  &  \text{if $ \xy
  (-5,0)*{\circ};
  (5,0)*{\circ}
  **\dir{-}?(.55)*\dir{>};
  (-5,2.2)*{\scs j};
  (5,2.2)*{\scs i};
  \endxy$} \\
    \left( \xi_j^{\alpha_2} \otimes \xi_i^{\alpha_1}\right)\{1\}  &  &
  \text{if $ \xy
  (-5,0)*{\circ};
  (5,0)*{\circ}
  **\dir{-}?(.55)*\dir{>};
  (-5,2.2)*{\scs i};
  (5,2.2)*{\scs j};
  \endxy$}
    \end{array} \right.
     \nn
\end{eqnarray}
\begin{equation} \label{eq_gamma_dcross_down}
    \Gamma\left(
 \xy
  (0,0)*{\xybox{
    (-4,-4)*{};(4,4)*{} **\crv{(-4,-1) & (4,1)}?(0)*\dir{<} ;
    (4,-4)*{};(-4,4)*{} **\crv{(4,-1) & (-4,1)}?(0)*\dir{<};
    (-6,-3)*{\scs i};
     (6,-3)*{\scs j};
     (8,1)*{ \lambda};
     (-7,0)*{};(9,0)*{};
     }};
  \endxy
  \right)
  \quad \maps \quad
H_{_{-j}\uk^{-i}} \otimes_{H_{_{-j}\uk}} H_{\uk^{-j}}
 \to H_{\ukem^{-j}} \otimes_{H_{\ukem}} H_{\uk^{-i}}
     \hspace{0.5in}
\end{equation}
\begin{eqnarray}
    \xi_i^{\alpha_1} \otimes \xi_j^{\alpha_2}& \mapsto &
    \left\{
    \begin{array}{ccl}
      \xi_j^{\alpha_2} \otimes \xi_i^{\alpha_1} &  & \text{if $i \cdot j =0$} \\
      \xsum{f=0}{\alpha_2-1}\xi_i^{\alpha_1+\alpha_2-1-f} \otimes \xi_i^{f} -
      \xsum{g=0}{\alpha_1-1}\xi_i^{\alpha_1+\alpha_2-1-g} \otimes \xi_i^{g}&  &
      \text{if $i =j$}\\
 \left( \xi_j^{\alpha_2} \otimes \xi_i^{\alpha_1}\right)\{-1\}
       &  &  \text{if $ \xy
  (-5,0)*{\circ};
  (5,0)*{\circ}
  **\dir{-}?(.55)*\dir{>};
  (-5,2.2)*{\scs j};
  (5,2.2)*{\scs i};
  \endxy$} \\
  \left( \xi_j^{\alpha_2+1} \otimes \xi_i^{\alpha_1}-\xi_j^{\alpha_2} \otimes
    \xi_i^{\alpha_1+1} \right) \{ 1\}    &  &
  \text{if $ \xy
  (-5,0)*{\circ};
  (5,0)*{\circ}
  **\dir{-}?(.55)*\dir{>};
  (-5,2.2)*{\scs i};
  (5,2.2)*{\scs j};
  \endxy$}
    \end{array} \right.
     \nn
\end{eqnarray}
It is straightforward to see that these assignments define bimodule maps of
degree $- i \cdot j$.  In the case when $i=j$ the bimodule map is just the
divided difference operator acting on $\xi_i$ and $\xi_j$.
\end{defn}

%
\subsection{Checking the relations of $\Ucats$ } \label{subsec_check_relations}
%

In this section we show that the relations on the 2-morphisms of $\Ucats^*$  are
satisfied in $\Gr$, thus establishing that $\Gamma$ is a graded additive
$\Bbbk$-linear 2-functor. From the definitions in the previous section it is
clear that $\Gamma$ preserves the degrees associated to generators. For this
reason, we often simplify our notation in this section by omitting the grading
shifts $\{t\}$ when no confusion is likely to arise.

\begin{prop}
$\Gamma$ preserves the $\mathfrak{\mf{sl}_2}$-relations of $\Ucats^*$.
\end{prop}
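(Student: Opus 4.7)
The plan is to check that $\Gamma$ sends each $\mathfrak{sl}_2$ relation from Definition~\ref{def_Ucat}(i)--(v), together with the $i=j$ case of \eqref{eq_cyclic_cross-gen}, to a valid identity of graded bimodule maps. Because every one of these relations involves only strands labelled by a single $i\in I$, each calculation is local around positions $k_{i-1},k_i,k_{i+1}$ and reduces to an explicit computation in the $(H_{\uk},H_{\uk})$-bimodule $H_{\ukp}\otimes_{H_{\ukep}}H_{\ukp}$ and its one-sided cousins, with the weight $\la i,\lambda\ra$ playing the role of $-k_{i-1}+2k_i-k_{i+1}$. In spirit this is the $\mathfrak{sl}_2$ computation of \cite{Lau1} adapted from the Grassmannian $\Bbbk[x]$ setup to local partial $n$-step flag cohomology.

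I would first verify the biadjointness zig-zags \eqref{eq_biadjoint1}. Composing $\Gamma(\Ucapl_{i,\lambda+i_X})$ with $\Gamma(\Ucupr_{i,\lambda})$ applied to $1\in H_{\ukp}$ produces, after expanding via Definitions~\eqref{def_eq_FE_G} and \eqref{def_EF_cap} and converting powers of $\xi_i$ into Chern classes using \eqref{eq_alphadot2}, a sum of the form $\pm\sum_{f} x(\uk)_{i,f}\,\overline{x(\uk)_{i,k_i-k_{i-1}-f}}$, which collapses to $1$ by the defining relation \eqref{eq_sum_equal_delta} of $H_{\uk}$. The remaining three zig-zags follow symmetrically using \eqref{def_eq_EF_G} and \eqref{def_EF_cap}. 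Cyclicity of dots \eqref{eq_cyclic_dot} is immediate since $\Gamma$ sends each dotted line to multiplication by the same Chern class $\xi_i$, and the cup/cap formulas in Definition~\ref{def_biadjoint} are visibly invariant under sliding a $\xi_i$ across.

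Next I would address the bubble axioms. A dotted bubble with $\alpha$ dots is sent by \eqref{def_FE_cap} or \eqref{def_EF_cap} to $\pm\overline{x(\uk)_{j,\alpha+c}}$ for an explicit constant $c$ recorded there; the positivity axiom \eqref{eq_positivity_bubbles} matches weight-for-weight the vanishing of $\overline{x(\uk)_{j,\beta}}$ whose internal degree exceeds the polynomial range, while $\overline{x(\uk)_{j,0}}=1$ yields the degree-zero normalization. The infinite Grassmannian relation \eqref{eq_infinite_Grass} is then precisely the generating-function form of \eqref{eq_sum_equal_delta}, with fake bubbles interpreted as the negative-exponent terms of the two formal inverse power series in \eqref{eq_Hk}. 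The nilHecke relations \eqref{eq_nil_rels}--\eqref{eq_nil_dotslide} follow because $\Gamma(\Ucross_{i,i,\lambda})$ is the divided-difference operator $\partial_i$ from \eqref{eq_gamma_dcross}, for which $\partial_i^2=0$, the braid-like triple-crossing, and the dot-slide identity are classical; the $i=j$ case of \eqref{eq_cyclic_cross-gen} is then a formal consequence of the biadjoint cyclicity already verified.

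The main obstacle is the $EF$--$FE$ decomposition \eqref{eq_ident_decomp}, which identifies an endomorphism of $H_{\ukp}\otimes_{H_{\ukep}}H_{\ukp}$ with the cup-then-cap composition plus an explicit double sum of decorated bubbles in the outer regions. Verifying it requires expanding both sides on the polynomial basis $\{\xi_i^{a}\otimes\xi_i^{b}\}$ of $H_{\ukp}\otimes_{H_{\ukep}}H_{\ukp}$ over $H_{\ukep}$, using \eqref{def_FE_cap}--\eqref{def_EF_cap} to evaluate the bubble coefficients into Chern classes, and then matching the two sides via \eqref{eq_sum_equal_delta} and Lemma~\ref{lem_relations}(iii)--(iv) to convert dot slides across tensor factors. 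The combinatorics parallel \cite[Section 6]{Lau1} once $\Bbbk[x]$ is replaced by the local cohomology between positions $k_{i-1}$ and $k_{i+1}$; the main bookkeeping is ensuring that the ranges of the $f,g$ summations on the right side of \eqref{eq_ident_decomp} match the dimensions $|\la i,\lambda\ra|$, which is exactly what the weight formula $\la i,\lambda\ra=-k_{i-1}+2k_i-k_{i+1}$ guarantees.
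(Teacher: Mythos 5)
Your plan is the same as the paper's: the paper simply notes that the $\mathfrak{sl}_2$ computations of \cite{Lau1} carry over, using only \eqref{eq_graph_def_relations} and \eqref{eq_graph_easy_slide}--\eqref{eq_dark_slide2}, and illustrates this by evaluating the biadjointness zig-zag on the elements $\xi_i^{\alpha}\in H_{\ukp}$ via \eqref{eq_alphadot2}; your relation-by-relation outline, ending with \eqref{eq_ident_decomp} checked on $\xi_i^{a}\otimes\xi_i^{b}$ using Lemma~\ref{lem_relations}(iii)--(iv), is that same strategy. (Checking the zig-zags only on $1\in H_{\ukp}$ is admissible, but only because $\xi_i=x(\ukep)_{i,1}-x(\uk)_{i,1}$ by \eqref{eq_noncan} makes $H_{\ukp}$ cyclic as a bimodule; the paper avoids the issue by evaluating on all $\xi_i^{\alpha}$.)

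Two of your reductions are not right as stated, and these are the genuine gaps. First, a dotted bubble is not sent to a single dual class $\pm\overline{x(\uk)_{j,\alpha+c}}$: composing cup, dots and cap yields a convolution, e.g.\ $(-1)^{\alpha}\sum_{f}\overline{x(\uk)_{i,\alpha-f}}\,x(\uk)_{i+1,f}$ for the clockwise bubble --- this is exactly the paper's formula \eqref{eq_gamma_bubbles}. With the correct image, positivity \eqref{eq_positivity_bubbles} holds because every dual index $\alpha-f$ is negative when the bubble has negative degree (not because some ``internal degree exceeds the polynomial range''), the degree-zero bubble gives $\overline{x(\uk)_{i,0}}\,x(\uk)_{i+1,0}=1$, and the infinite Grassmannian relation \eqref{eq_infinite_Grass} follows from the product of \emph{two} instances of \eqref{eq_sum_equal_delta}, one for row $i$ and one for row $i+1$, rather than being ``precisely'' a single generating-function identity. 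Second, the $i=j$ case of \eqref{eq_cyclic_cross-gen} is \emph{not} a formal consequence of the zig-zags and dot cyclicity you have already verified: biadjunction data does not force a given 2-morphism to be cyclic, so cyclicity of the $ii$-crossing is an independent relation that must be checked by a direct computation --- compose $\Gamma(\Ucross_{i,i,\lambda})$ with the images of the cups and caps and compare the two rotations, exactly as is done for $i\neq j$ in Lemma~\ref{lem_other_crossings} (the paper defers the $i=j$ computation to \cite{Lau1}). Both repairs are routine and use only the identities you already cite, so your overall plan does go through, but as written these two steps do not.
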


\begin{proof}
The proof in \cite{Lau1} that the $\mathfrak{\mf{sl}_2}$-relations of $\Ucats^*$
are preserved by $\Gamma$ generalizes immediately with only minor refinements to
grading shifts and summation indices.  The proof only makes use of
\eqref{eq_graph_def_relations} and
\eqref{eq_graph_easy_slide}--\eqref{eq_dark_slide2}. For example, to prove
biadjointness we must show
\[
  \Gamma\left(\;  \xy   0;/r.18pc/:
    (-8,0)*{}="1";
    (0,0)*{}="2";
    (8,0)*{}="3";
    (-8,-10);"1" **\dir{-};
    "1";"2" **\crv{(-8,8) & (0,8)} ?(0)*\dir{>} ?(1)*\dir{>};
    "2";"3" **\crv{(0,-8) & (8,-8)}?(1)*\dir{>};
    "3"; (8,10) **\dir{-};
    (14,9)*{\lambda};
    (-6,9)*{\lambda+i_X};
    \endxy \;\right)
    \; =
    \;
       \Gamma\left(\;\xy   0;/r.18pc/:
    (-8,0)*{}="1";
    (0,0)*{}="2";
    (8,0)*{}="3";
    (0,-10);(0,10)**\dir{-} ?(.5)*\dir{>};
    (5,8)*{\lambda};
    (-9,8)*{\lambda+i_X};
    \endxy\;\right)
\qquad
    \Gamma\left(\;\xy   0;/r.18pc/:
    (-8,0)*{}="1";
    (0,0)*{}="2";
    (8,0)*{}="3";
    (-8,-10);"1" **\dir{-};
    "1";"2" **\crv{(-8,8) & (0,8)} ?(0)*\dir{<} ?(1)*\dir{<};
    "2";"3" **\crv{(0,-8) & (8,-8)}?(1)*\dir{<};
    "3"; (8,10) **\dir{-};
    (12,9)*{\lambda};
    (-6,9)*{ \lambda-i_X};
    \endxy\;\right)
    \; =
    \;
       \Gamma\left(\;\xy   0;/r.18pc/:
    (-8,0)*{}="1";
    (0,0)*{}="2";
    (8,0)*{}="3";
    (0,-10);(0,10)**\dir{-} ?(.5)*\dir{<};
    (6,8)*{\lambda};
    (-9,8)*{ \lambda-i_X};
    \endxy\;\right)
\]
\[
  \Gamma\left(\;  \xy   0;/r.18pc/:
    (8,0)*{}="1";
    (0,0)*{}="2";
    (-8,0)*{}="3";
    (8,-10);"1" **\dir{-};
    "1";"2" **\crv{(8,8) & (0,8)} ?(0)*\dir{>} ?(1)*\dir{>};
    "2";"3" **\crv{(0,-8) & (-8,-8)}?(1)*\dir{>};
    "3"; (-8,10) **\dir{-};
    (14,-9)*{\lambda};
    (-5,-9)*{\lambda+i_X};
    \endxy \;\right)
    \; =
    \;
       \Gamma\left(\;\xy 0;/r.18pc/:
    (8,0)*{}="1";
    (0,0)*{}="2";
    (-8,0)*{}="3";
    (0,-10);(0,10)**\dir{-} ?(.5)*\dir{>};
    (5,-8)*{\lambda};
    (-9,-8)*{\lambda+i_X};
    \endxy\;\right)
\qquad
    \Gamma\left(\;\xy  0;/r.18pc/:
    (8,0)*{}="1";
    (0,0)*{}="2";
    (-8,0)*{}="3";
    (8,-10);"1" **\dir{-};
    "1";"2" **\crv{(8,8) & (0,8)} ?(0)*\dir{<} ?(1)*\dir{<};
    "2";"3" **\crv{(0,-8) & (-8,-8)}?(1)*\dir{<};
    "3"; (-8,10) **\dir{-};
    (12,-9)*{\lambda};
    (-6,-9)*{ \lambda-i_X};
    \endxy\;\right)
    \; =
    \;
       \Gamma\left(\;\xy  0;/r.18pc/:
    (8,0)*{}="1";
    (0,0)*{}="2";
    (-8,0)*{}="3";
    (0,-10);(0,10)**\dir{-} ?(.5)*\dir{<};
    (6,-8)*{\lambda};
    (-9,-8)*{ \lambda-i_X};
    \endxy\;\right)
\]
for all strings labelled by $i$. We will prove the first equality by computing
the bimodules maps on elements $\xi_{i}^{\alpha} \in H_{\ukp}$.
\begin{eqnarray}
 \text{$ \Gamma\left(\;  \xy   0;/r.18pc/:
    (-8,0)*{}="1";
    (0,0)*{}="2";
    (8,0)*{}="3";
    (-8,-10);"1" **\dir{-};
    "1";"2" **\crv{(-8,8) & (0,8)} ?(0)*\dir{>} ?(1)*\dir{>};
    "2";"3" **\crv{(0,-8) & (8,-8)}?(1)*\dir{>};
    "3"; (8,10) **\dir{-};
    (14,9)*{\lambda};
    (-6,9)*{\lambda+i_X};
    \endxy \;\right)$}
    \maps \xi_i^{\alpha} & \mapsto & \xsum{f=0}{k_i-k_{i-1}}(-1)^{\alpha} \;
  \begin{pspicture}[.5](9,1.9)
  \rput(2,1){\dchern{i, \alpha-(k_i-k_{i-1}-f)}}
  \rput(4.5,0){\Eline{i}}
  \rput(6.5,1){\chern{i,k_i-k_{i-1}-f}}
  \rput(8.6,1.5){$\lambda$}
\end{pspicture} \nn
\end{eqnarray}
But,
\begin{equation}
\xsum{f=0}{k_i-k_{i-1}}(-1)^{\alpha} \;
  \begin{pspicture}[.5](9,1.9)
  \rput(2,1){\dchern{i, \alpha-(k_i-k_{i-1}-f)}}
  \rput(4.5,0){\Eline{i}}
  \rput(6.5,1){\chern{i,k_i-k_{i-1}-f}}
  \rput(8.6,1.5){$\lambda$}
\end{pspicture} \refequal{\eqref{eq_alphadot2}} \xi_i^{\alpha}
\end{equation}
since the sum can be taken to $\alpha$ by removing terms that are equal to zero
when $\alpha < k_i-k_{i-1}$, or by adding terms that are equal to zero if $\alpha
> k_i-k_{i-1}$. The others are proven similarly.
\end{proof}

\begin{lem} \label{lem_other_crossings}
\begin{eqnarray}
\Gamma\left( \xy 0;/r.19pc/:
  (0,0)*{\xybox{
    (4,-4)*{};(-4,4)*{} **\crv{(4,-1) & (-4,1)}?(1)*\dir{>};
    (-4,-4)*{};(4,4)*{} **\crv{(-4,-1) & (4,1)};
     (-4,4);(-4,12) **\dir{-};
     (-12,-4);(-12,12) **\dir{-};
     (4,-4);(4,-12) **\dir{-};(12,4);(12,-12) **\dir{-};
     (22,1)*{\lambda+i_X};
     (-10,0)*{};(10,0)*{};
     (-4,-4)*{};(-12,-4)*{} **\crv{(-4,-10) & (-12,-10)}?(1)*\dir{<}?(0)*\dir{<};
      (4,4)*{};(12,4)*{} **\crv{(4,10) & (12,10)}?(1)*\dir{>}?(0)*\dir{>};
      (-14,11)*{\scs i};(-2,11)*{\scs j};
      (14,-11)*{\scs i};(2,-11)*{\scs j};
     }};
  \endxy \right) &=& \left\{
\begin{array}{ccc}
  H_{\uk^{+j}} \otimes_{H_{\uk}}  H_{\ukp}&  \to &
   H_{{}_{+i+j}\ukp} \otimes_{H_{{}_{+i+j}\uk}} H_{{}_{+i}\uk^{+j}} \\
    \xi_j^{\alpha_1} \otimes \xi_i^{\alpha_2} & \mapsto &
   \xi_i^{\alpha_2} \otimes \xi_j^{\alpha_1}
\end{array}
  \right. \nn
\\ \label{eq_lem_crossr}
\\
\Gamma\left(
 \xy 0;/r.19pc/:
  (0,0)*{\xybox{
    (-4,-4)*{};(4,4)*{} **\crv{(-4,-1) & (4,1)}?(1)*\dir{>};
    (4,-4)*{};(-4,4)*{} **\crv{(4,-1) & (-4,1)};
     (4,4);(4,12) **\dir{-};
     (12,-4);(12,12) **\dir{-};
     (-4,-4);(-4,-12) **\dir{-};(-12,4);(-12,-12) **\dir{-};
     (-22,1)*{\lambda+i_X};
     (10,0)*{};(-10,0)*{};
     (4,-4)*{};(12,-4)*{} **\crv{(4,-10) & (12,-10)}?(1)*\dir{<}?(0)*\dir{<};
      (-4,4)*{};(-12,4)*{} **\crv{(-4,10) & (-12,10)}?(1)*\dir{>}?(0)*\dir{>};
      (14,11)*{\scs j};(2,11)*{\scs i};
      (-14,-11)*{\scs j};(-2,-11)*{\scs i};
     }};
  \endxy
\right) &=& \left\{
\begin{array}{ccc}
   H_{_{+j+i}\ukep}\otimes_{H_{_{+j+i}\uk}} H_{_{+j}\ukp}&  \to &
  H_{\ukp} \otimes_{H_{\uk}} H_{\uk^{+j}}  \\
    \xi_j^{\alpha_1} \otimes \xi_i^{\alpha_2} & \mapsto &
   \xi_i^{\alpha_2} \otimes \xi_j^{\alpha_1}
\end{array}
  \right. \nn \\ \label{eq_lem_crossl}
\end{eqnarray}
These bimodule maps have degree zero for all $i,j \in I$ and all weights
$\lambda$.
\end{lem}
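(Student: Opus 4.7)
The plan is to compute $\Gamma$ on the balanced mixed-orientation crossings by unfolding the cyclic definitions \eqref{eq_crossl-gen} and \eqref{eq_crossr-gen}, which express each such crossing as a composite of a cup, a same-orientation crossing on a middle pair of strands, and a cap, together with identity strands. Since $\Gamma$ is defined on each of these generating pieces in Section~\ref{subsec_define_gamma}, and since a 2-functor preserves vertical and horizontal composition of 2-morphisms, the evaluation of $\Gamma$ on the left-hand side of either \eqref{eq_lem_crossr} or \eqref{eq_lem_crossl} reduces to an explicit but bookkeeping-heavy calculation on tensors of the form $\xi_j^{\alpha_1} \otimes \xi_i^{\alpha_2}$.

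Concretely, I would trace an element $\xi_j^{\alpha_1} \otimes \xi_i^{\alpha_2}$ through the composite realizing \eqref{eq_lem_crossr}. First the rightward cap (either \eqref{def_FE_cap} or \eqref{def_EF_cap}, depending on the orientation pattern) pairs one of the factors against a dual Chern class $\overline{x(\uk)_{\ell,\gamma}}$, while the leftward cup (either \eqref{def_eq_FE_G} or \eqref{def_eq_EF_G}) introduces a sum of terms of the shape $\sum_f (-1)^{*} \xi^{f} \otimes x(\uk)_{\ell,\cdot-f}$. In between, the same-orientation crossing from \eqref{eq_gamma_dcross} or \eqref{eq_gamma_dcross_down} acts by one of the four cases: for $i \neq j$ it permutes the two $\xi$-factors (with a possible shift determined by the orientation of the edge in the Dynkin graph \eqref{eq_oriented_graph}). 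After the crossing, the dots and Chern classes are moved past the remaining identity strands using \eqref{eq_graph_easy_slide}--\eqref{eq_can_to_noncan2} and \eqref{eq_alphadot1}--\eqref{eq_alphadot2} to bring everything into a canonical form.

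The main obstacle will be showing that the resulting sum collapses to the clean swap $\xi_i^{\alpha_2} \otimes \xi_j^{\alpha_1}$. This should follow from the defining orthogonality \eqref{eq_sum_equal_delta} between Chern classes and dual Chern classes, combined with the dot-to-bubble conversions \eqref{eq_alphadot1} and \eqref{eq_alphadot2}: the sum coming from the cup, when paired against the dual Chern class coming from the cap, contains a factor $\sum_f x(\uk)_{\ell,f}\, \overline{x(\uk)_{\ell,\beta - f}} = \delta_{\beta,0}$, which forces all but one term in the sum to vanish. The surviving term, after using \eqref{eq_graph_easy_slide} to slide the remaining generator across the unaffected identity strands, yields exactly $\xi_i^{\alpha_2} \otimes \xi_j^{\alpha_1}$. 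Some care will be needed with signs coming from the prefactors $(-1)^{k_i-k_{i-1}-f}$ in \eqref{def_eq_FE_G}--\eqref{def_EF_cap}, and with the signed $R(\nu)$ rescaling from $\Sigma \maps \Ucat \to \Ucats$, but these cancel after reindexing since for $i \neq j$ the signs from cup and cap are independent of the summation variable.

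Finally, I would verify the degree claim. The mixed crossing on the left has degree zero by the "balanced crossing" computation preceding Proposition~\ref{thm_form_formula}, and on the right the swap of tensor factors is manifestly degree zero; the grading shifts $\{1+k_{i-1}-k_{i+1}\}$ on each cup and cap combine with the shifts attached to $\cal{E}_{\pm i}\onel$ under $\Gamma_N$ to give exactly the shifts appearing on the bimodules in the statement of the lemma. The second identity \eqref{eq_lem_crossl} is proved by an entirely analogous argument using the mirror-image cyclic definition, with the roles of upward and downward cups and caps exchanged.
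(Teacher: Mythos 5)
Your overall strategy---unfold the sideways crossing via its cyclic definition into a cup--crossing--cap composite, push $\xi_j^{\alpha_1}\otimes\xi_i^{\alpha_2}$ through using the formulas of Section~\ref{subsec_define_gamma}, and clean up with the slide rules---is exactly the paper's route (the paper additionally inputs the cup in the symmetrized form of Lemma~\ref{lem_relations}(i)). But your key collapsing step is wrong. After the cap is evaluated, the Chern class produced by the cup and the dual Chern class produced by the cap sit in regions on \emph{opposite} sides of the remaining strands; they never multiply inside one ring factor, so the orthogonality relation \eqref{eq_sum_equal_delta} is not available, and no term of the $f$-sum vanishes. Worse, even formally a collapse of the type $\sum_f x(\uk)_{\ell,f}\,\overline{x(\uk)_{\ell,\beta-f}}=\delta_{\beta,0}$ would return an answer supported only at $\alpha_2=0$, which contradicts the target map $\xi_j^{\alpha_1}\otimes\xi_i^{\alpha_2}\mapsto\xi_i^{\alpha_2}\otimes\xi_j^{\alpha_1}$ for $\alpha_2>0$: the $\alpha_2$ dots have to reappear on the output strand, and they cannot come from ``one surviving term.'' The correct mechanism, which is what the paper does, is to slide the dual generators across the $j$-labelled line via \eqref{eq_dark_slide2}, re-index, extend or truncate the summation by terms that are zero, and then recognize the alternating sum of Chern times dual Chern classes \emph{across} the $i$-labelled line as precisely $\xi_i^{\alpha_2}$ by \eqref{eq_alphadot2}; the whole sum, not a single term, reassembles into the dot power.

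Two smaller inaccuracies in the same step: for adjacent $i,j$ the upward crossing \eqref{eq_gamma_dcross} is not a permutation of the $\xi$-factors but a two-term expression (e.g.\ $\xi_j^{\alpha_2}\otimes\xi_i^{\alpha_1+1}-\xi_j^{\alpha_2+1}\otimes\xi_i^{\alpha_1}$ up to shift), so in that case you must carry two sums (the paper's \eqref{lem_eq_ij_minus_one}) and check they merge after re-indexing; and the signs from the cup and cap each do depend on the summation variable $f$---what happens is that their product is $f$-independent, equal to $(-1)^{\alpha_2}$, which is what lets the re-indexed sum match \eqref{eq_alphadot2}. Your degree argument at the end is fine. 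With the collapsing step replaced by the slide-and-reassemble argument above, the proof goes through and coincides with the paper's.
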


\begin{proof}
We compute the bimodule maps directly using the definitions in the previous
section.  The case when $i=j$ appears in \cite{Lau1} so we will omit this case
here. The map in \eqref{eq_lem_crossr}, using \eqref{eq_lem_cup_du} for the cup,
is given by
\begin{equation}
\Gamma\left( \xy 0;/r.19pc/:
  (0,0)*{\xybox{
    (4,-4)*{};(-4,4)*{} **\crv{(4,-1) & (-4,1)}?(1)*\dir{>};
    (-4,-4)*{};(4,4)*{} **\crv{(-4,-1) & (4,1)};
     (-4,4);(-4,12) **\dir{-};
     (-12,-4);(-12,12) **\dir{-};
     (4,-4);(4,-12) **\dir{-};(12,4);(12,-12) **\dir{-};
     (22,1)*{\lambda+i_X};
     (-10,0)*{};(10,0)*{};
     (-4,-4)*{};(-12,-4)*{} **\crv{(-4,-10) & (-12,-10)}?(1)*\dir{<}?(0)*\dir{<};
      (4,4)*{};(12,4)*{} **\crv{(4,10) & (12,10)}?(1)*\dir{>}?(0)*\dir{>};
      (-14,11)*{\scs i};(-2,11)*{\scs j};
      (14,-11)*{\scs i};(2,-11)*{\scs j};
     }};
  \endxy \right) \maps \xi_j^{\alpha_1} \otimes \xi_i^{\alpha_2}\;  \mapsto \hspace{2in}
 \nn
\end{equation}
\begin{eqnarray} \label{lem_eq_ieqj}
 \sum_{f=0}^{k_i-k_{i-1}} (-1)^{\alpha_2}
\begin{pspicture}[.5](10,1.5)
  \rput(1.5,1){\chern{i,k_i-k_{i-1}-f}}
  \rput(3.3,0){\Fline{i}}
  \rput(4,0){\Elinedot{j}{\alpha_1}}
  \rput(6.8,1){\dchern{ i,\alpha_2-(k_i-k_{i+1}-f)}}
  \rput(9.7,1.5){$\lambda+i_X$}
\end{pspicture}
\end{eqnarray}
if $i \cdot j =0$ or $ \xy
  (-5,0)*{\circ};
  (5,0)*{\circ}
  **\dir{-}?(.55)*\dir{>};
  (-5,2.2)*{\scs i};
  (5,2.2)*{\scs j};
  \endxy$, and
\begin{eqnarray}
\sum_{f=0}^{k_i-k_{i-1}-1} (-1)^{\alpha_2}
\begin{pspicture}[.5](12,1.5)
  \rput(1.7,1){\chern{i,k_i-k_{i-1}-1-f}}
  \rput(3.7,0){\Fline{i}}
  \rput(4.4,0){\Elinedot{j}{\alpha_1}}
  \rput(7.3,1){\dchern{ i,\alpha_2-(k_i-k_{i+1}-1-f)}}
  \rput(10.5,1.5){$\lambda+i_X$}
\end{pspicture}  \nn \\
-\; \sum_{f=0}^{k_i-k_{i-1}-1} (-1)^{\alpha_2-1}
\begin{pspicture}[.5](11,1.5)
  \rput(1.7,1){\chern{i,k_i-k_{i-1}-1-f}}
  \rput(3.7,0){\Fline{i}}
  \rput(4.4,0){\Elinedot{j}{\quad \;\;\alpha_1+1}}
  \rput(7.8,1){\dchern{ i,\alpha_2-(k_i-k_{i+1}-f)}}
  \rput(10.7,1.5){$\lambda+i_X$}
\end{pspicture} \nn \\ \label{lem_eq_ij_minus_one}
\end{eqnarray}
if $ \xy
  (-5,0)*{\circ};
  (5,0)*{\circ}
  **\dir{-}?(.55)*\dir{>};
  (-5,2.2)*{\scs j};
  (5,2.2)*{\scs i};
  \endxy$.
Careful calculation, keeping in mind the weights of each region, will show that
these maps have degree zero.  In both cases, the dual generators can slide across
the line labelled $j$ via \eqref{eq_dark_slide2}.  After changing indices,
equations \eqref{lem_eq_ieqj} and \eqref{lem_eq_ij_minus_one} both become
\begin{eqnarray}
 \sum_{f'=0}^{({}_{+j}k)_i-({}_{+j}k)_{i-1}} (-1)^{\alpha_2}
\begin{pspicture}[.5](7,1.5)
  \rput(1,1){\chern{i,f'}}
  \rput(2,0){\Fline{i}}
  \rput(5,0){\Elinedot{j}{\alpha_1}}
  \rput(3.5,1){\dchern{ i,\alpha_2-f'}}
  \rput(7,1.5){$\lambda+i'$}
\end{pspicture}
\end{eqnarray}
where $({}_{+j}k)_i-({}_{+j}k)_{i-1}$ are the $(i-1)$-st and $i$-th terms in
${}_{+j}\uk$. By adding or removing terms that are equal to zero, depending on
whether $\alpha_2$ is greater than or less than $({}_{+j}k)_i-({}_{+j}k)_{i-1}$,
the above summation can be taken to $\alpha_2$, so that \eqref{eq_alphadot2}
completes the proof.
\end{proof}

\begin{prop} The equality
\begin{equation}
\Gamma\left(\xy 0;/r.19pc/:
  (0,0)*{\xybox{
    (-4,-4)*{};(4,4)*{} **\crv{(-4,-1) & (4,1)}?(1)*\dir{>};
    (4,-4)*{};(-4,4)*{} **\crv{(4,-1) & (-4,1)};
     (4,4)*{};(-18,4)*{} **\crv{(4,16) & (-18,16)} ?(1)*\dir{>};
     (-4,-4)*{};(18,-4)*{} **\crv{(-4,-16) & (18,-16)} ?(1)*\dir{<}?(0)*\dir{<};
     (18,-4);(18,12) **\dir{-};(12,-4);(12,12) **\dir{-};
     (-18,4);(-18,-12) **\dir{-};(-12,4);(-12,-12) **\dir{-};
     (8,1)*{ \lambda};
     (-10,0)*{};(10,0)*{};
      (4,-4)*{};(12,-4)*{} **\crv{(4,-10) & (12,-10)}?(1)*\dir{<}?(0)*\dir{<};
      (-4,4)*{};(-12,4)*{} **\crv{(-4,10) & (-12,10)}?(1)*\dir{>}?(0)*\dir{>};
      (20,11)*{\scs i};(10,11)*{\scs j};
      (-20,-11)*{\scs i};(-10,-11)*{\scs j};
     }};
  \endxy \right)
\quad =     \Gamma\left(
 \xy
  (0,0)*{\xybox{
    (-4,-4)*{};(4,4)*{} **\crv{(-4,-1) & (4,1)}?(0)*\dir{<} ;
    (4,-4)*{};(-4,4)*{} **\crv{(4,-1) & (-4,1)}?(0)*\dir{<};
    (-6,-3)*{\scs i};
     (6,-3)*{\scs j};
     (-8,1)*{ \lambda};
     (-7,0)*{};(9,0)*{};
     }};
  \endxy
  \right) \quad =\quad \Gamma\left(
 \xy 0;/r.19pc/:
  (0,0)*{\xybox{
    (4,-4)*{};(-4,4)*{} **\crv{(4,-1) & (-4,1)}?(1)*\dir{>};
    (-4,-4)*{};(4,4)*{} **\crv{(-4,-1) & (4,1)};
     (-4,4)*{};(18,4)*{} **\crv{(-4,16) & (18,16)} ?(1)*\dir{>};
     (4,-4)*{};(-18,-4)*{} **\crv{(4,-16) & (-18,-16)} ?(1)*\dir{<}?(0)*\dir{<};
     (-18,-4);(-18,12) **\dir{-};(-12,-4);(-12,12) **\dir{-};
     (18,4);(18,-12) **\dir{-};(12,4);(12,-12) **\dir{-};
     (8,1)*{ \lambda};
     (-10,0)*{};(10,0)*{};
     (-4,-4)*{};(-12,-4)*{} **\crv{(-4,-10) & (-12,-10)}?(1)*\dir{<}?(0)*\dir{<};
      (4,4)*{};(12,4)*{} **\crv{(4,10) & (12,10)}?(1)*\dir{>}?(0)*\dir{>};
      (-20,11)*{\scs j};(-10,11)*{\scs i};
      (20,-11)*{\scs j};(10,-11)*{\scs i};
     }};
  \endxy \right)
\end{equation}
of graded bimodule maps holds in $\Gr$ for all weights $\lambda$.
\end{prop}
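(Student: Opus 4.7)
The plan is to verify this chain of equalities by computing all three bimodule maps explicitly on a basis element $\xi_i^{\alpha_1} \otimes \xi_j^{\alpha_2}$ of the source bimodule. The strategy parallels the proof of Lemma~\ref{lem_other_crossings} but is one step of rotation further: where that lemma established the cyclic identification of the mixed-orientation crossings \eqref{eq_crossl-gen}--\eqref{eq_crossr-gen} with rotations of the upward crossing, the present proposition requires iterating the rotation once more to identify the direct downward crossing with its doubly rotated image under biadjunction.

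For the left-hand composite, I would take an element $\xi_i^{\alpha_1} \otimes \xi_j^{\alpha_2}$ and apply, in order: the bottom cup from \eqref{def_eq_FE_G} or \eqref{def_eq_EF_G}, which introduces a sum of the form $\sum_f (-1)^{\ast}\, \xi^f \otimes x_{(\cdot),\ast-f}$; horizontal composition with identity strands carrying the initial tensor factors; $\Gamma$ applied to the upward crossing via \eqref{eq_gamma_dcross}, acting on the appropriate factors; and finally the top cap from \eqref{def_FE_cap} or \eqref{def_EF_cap}. The resulting expression is then simplified using the flag-variety relations \eqref{eq_graph_def_relations}, the generator-slide formulas \eqref{eq_noncan1}--\eqref{eq_can_to_noncan2}, the dual-generator slides \eqref{eq_dark_slide1}--\eqref{eq_dark_slide2}, and the tensor-factor identities of Lemma~\ref{lem_relations}. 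A symmetric calculation treats the right-hand composite, and both outputs are then compared with the middle term $\Gamma(\Ucrossd_{i,j,\lambda})$ produced directly by \eqref{eq_gamma_dcross_down}.

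The verification splits into four cases based on the combinatorics of $i$ and $j$ in the oriented Dynkin graph \eqref{eq_oriented_graph}: the diagonal case $i=j$, which reduces to the $\mathfrak{sl}_2$ computation already carried out in \cite{Lau1}; the orthogonal case $i\cdot j = 0$, where the upward crossing simply transposes tensor factors and the cups and caps rearrange variables without introducing further signs; and the two asymmetric cases $i \to j$ and $j \to i$. In the asymmetric cases the upward crossing contributes the difference $\xi_j^{\alpha_2}\otimes \xi_i^{\alpha_1+1} - \xi_j^{\alpha_2+1}\otimes\xi_i^{\alpha_1}$ together with a grading shift $\{\pm 1\}$, and the corresponding asymmetric output from \eqref{eq_gamma_dcross_down} must match after accounting for the orientation reversal induced by the two adjunctions.

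The main obstacle will be the bookkeeping of signs. The cup formulas carry $(-1)^{k_i-k_{i-1}-f}$ and $(-1)^{k_{i+1}-k_i-f}$; the cap formulas carry $(-1)^{\alpha_1+\alpha_2+1+k_{i-1}-k_i}$ and its $(i+1)$-analogue; and the signed $R(\nu)$-relations for $\Ucats$ prescribe the edge sign $\tau_{ij}=\tau_{ji}=-1$ on each edge of \eqref{eq_oriented_graph}. Tracking how these interact while keeping the region weights $\lambda$, $\lambda\pm i_X$, $\lambda\pm j_X$, $\lambda \pm i_X \pm j_X$ straight is delicate. A useful simplification is that biadjointness of $\cal{E}_{+i}\onel$ and $\cal{E}_{-i}\onel$ has already been verified under $\Gamma$ in the preceding proposition, so both outer composites are automatically mates of the upward crossing under well-defined adjunctions; once either outer equality is established by direct computation, the other follows from uniqueness of the mate.
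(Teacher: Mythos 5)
Your core plan is the same as the paper's: compute the bimodule maps on generators $\xi_i^{\alpha_1}\otimes\xi_j^{\alpha_2}$ and compare with the formula \eqref{eq_gamma_dcross_down} for $\Gamma(\Ucrossd_{i,j,\lambda})$, splitting into cases according to $i\cdot j$ and citing \cite{Lau1} for $i=j$. The only practical difference is that the paper does not redo the full cup--crossing--cap sandwich from scratch: it invokes Lemma~\ref{lem_other_crossings}, where the two sideways crossings \eqref{eq_crossl-gen}--\eqref{eq_crossr-gen} were already computed to act by $\xi_j^{\alpha_1}\otimes\xi_i^{\alpha_2}\mapsto\xi_i^{\alpha_2}\otimes\xi_j^{\alpha_1}$, so the outer composites in the proposition reduce to one further short calculation. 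Your from-scratch computation would reach the same answer, just with more bookkeeping of the cup and cap signs that the lemma has already absorbed.

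One claim in your last paragraph is wrong and should not be relied on: the statement that, because biadjointness has been verified under $\Gamma$, ``once either outer equality is established by direct computation, the other follows from uniqueness of the mate.'' The two outer composites are not mates of the upward crossing under the \emph{same} adjunction; one is the mate with respect to $\cal{E}_{+i}\dashv\cal{E}_{-i}$ (and its $j$-analogue), the other with respect to $\cal{E}_{-i}\dashv\cal{E}_{+i}$, i.e.\ they are the right mate and the left mate. Their agreement is exactly the cyclicity relation \eqref{eq_cyclic_cross-gen} that this proposition is verifying for $\Gamma$, and cyclicity is an independent condition imposed on $\Ucat$, not a consequence of biadjointness. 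Uniqueness of mates only says that the mate under a \emph{fixed} adjunction is determined, so knowing the left composite equals $\Gamma(\Ucrossd)$ does not by itself give the right composite. Since you also propose to compute the right-hand composite symmetrically and directly, your proof goes through; just drop the shortcut, or be explicit that both outer composites must be computed (or that cyclicity must be checked by some other computation).
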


\begin{proof}
By direct calculation, using Lemma~\ref{lem_other_crossings}, we have that both
the far left and right bimodule maps are given by
\begin{eqnarray}
    \xi_i^{\alpha_1} \otimes \xi_j^{\alpha_2}& \mapsto &
    \left\{
    \begin{array}{ccl}
      \xi_j^{\alpha_2} \otimes \xi_i^{\alpha_1} &  & \text{if $i \cdot j =0$} \\
      \xsum{f=0}{\alpha_2-1}\xi_i^{\alpha_1+\alpha_2-1-f} \otimes \xi_i^{f} -
      \xsum{g=0}{\alpha_1-1}\xi_i^{\alpha_1+\alpha_2-1-g} \otimes \xi_i^{g}&  &
      \text{if $i =j$}\\
 \left( \xi_j^{\alpha_2} \otimes \xi_i^{\alpha_1}\right)\{-1\}
       &  &  \text{if $ \xy
  (-5,0)*{\circ};
  (5,0)*{\circ}
  **\dir{-}?(.55)*\dir{>};
  (-5,2.2)*{\scs j};
  (5,2.2)*{\scs i};
  \endxy$} \\
  \left( \xi_j^{\alpha_2+1} \otimes \xi_i^{\alpha_1}-\xi_j^{\alpha_2} \otimes
    \xi_i^{\alpha_1+1} \right) \{ 1\}    &  &
  \text{if $ \xy
  (-5,0)*{\circ};
  (5,0)*{\circ}
  **\dir{-}?(.55)*\dir{>};
  (-5,2.2)*{\scs i};
  (5,2.2)*{\scs j};
  \endxy$}
    \end{array} \right.
     \nn
\end{eqnarray}
which agrees with $\Gamma\left(\Ucrossd_{i,j,\lambda+i_X+j_X} \right)$.
\end{proof}

\begin{prop} The equalities
\begin{equation}
   \Gamma\left(\vcenter{   \xy 0;/r.18pc/:
    (-4,-4)*{};(4,4)*{} **\crv{(-4,-1) & (4,1)}?(1)*\dir{>};
    (4,-4)*{};(-4,4)*{} **\crv{(4,-1) & (-4,1)}?(1)*\dir{<};?(0)*\dir{<};
    (-4,4)*{};(4,12)*{} **\crv{(-4,7) & (4,9)};
    (4,4)*{};(-4,12)*{} **\crv{(4,7) & (-4,9)}?(1)*\dir{>};
  (8,8)*{\lambda};(-6,-3)*{\scs i};
     (6,-3)*{\scs j};
 \endxy} \right)
 \;\;= \;\;
   \Gamma\left(
\xy 0;/r.18pc/:
  (3,9);(3,-9) **\dir{-}?(.55)*\dir{>}+(2.3,0)*{};
  (-3,9);(-3,-9) **\dir{-}?(.5)*\dir{<}+(2.3,0)*{};
  (8,2)*{\lambda};(-5,-6)*{\scs i};     (5.1,-6)*{\scs j};
 \endxy \right) , \quad
 \qquad
   \Gamma\left(
    \vcenter{\xy 0;/r.18pc/:
    (-4,-4)*{};(4,4)*{} **\crv{(-4,-1) & (4,1)}?(1)*\dir{<};?(0)*\dir{<};
    (4,-4)*{};(-4,4)*{} **\crv{(4,-1) & (-4,1)}?(1)*\dir{>};
    (-4,4)*{};(4,12)*{} **\crv{(-4,7) & (4,9)}?(1)*\dir{>};
    (4,4)*{};(-4,12)*{} **\crv{(4,7) & (-4,9)};
  (8,8)*{\lambda};(-6,-3)*{\scs i};
     (6,-3)*{\scs j};
 \endxy} \right)
 \;\;=\;\; \Gamma\left(
\xy 0;/r.18pc/:
  (3,9);(3,-9) **\dir{-}?(.5)*\dir{<}+(2.3,0)*{};
  (-3,9);(-3,-9) **\dir{-}?(.55)*\dir{>}+(2.3,0)*{};
  (8,2)*{\lambda};(-5,-6)*{\scs i};     (5.1,-6)*{\scs j};
 \endxy \right)
\end{equation}
of graded bimodule maps hold in $\Gr$ for all weights $\lambda$.

\begin{proof}
This follows immediately from Lemma~\ref{lem_other_crossings}.
\end{proof}
\end{prop}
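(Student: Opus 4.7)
The plan is to prove both equalities by reducing them to the identity bimodule map via the definition of sideways crossings in terms of biadjoint structure. Unpacking the inner sideways crossing on the left-hand side of each equality using \eqref{eq_crossl-gen} and \eqref{eq_crossr-gen} expresses it as a composite of a cup (of type $\Ucupr$ or $\Ucupl$), an upward $ji$-crossing, and a cap (of type $\Ucapl$ or $\Ucapr$); similarly for the outer sideways crossing. Thus each left-hand side in the proposition becomes, after applying $\Gamma$, a composite of the explicit bimodule maps given in \eqref{def_eq_FE_G}--\eqref{def_EF_cap}, \eqref{eq_gamma_dcross}, and \eqref{eq_gamma_dcross_down}.

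I would then rearrange this composite so that the cups and caps coming from the two adjacent sideways crossings meet pairwise. Because the biadjointness zigzag relations are preserved by $\Gamma$ (this was already verified in the first proposition of Section~\ref{subsec_check_relations}), each such cap-cup pair on an outer $i$- or $j$-labeled strand can be straightened into an identity strand. What survives is a composition of an upward $ij$-crossing with an upward $ji$-crossing acting on the appropriate bimodule $H_{\uk^{+j+i}}$ or $H_{\uk^{-i-j}}$.

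For $i \neq j$, Lemma~\ref{lem_other_crossings} applies and identifies $\Gamma$ of each of these upward crossings with the swap $\xi_i^{\alpha_1} \otimes \xi_j^{\alpha_2} \leftrightarrow \xi_j^{\alpha_2} \otimes \xi_i^{\alpha_1}$, up to a grading shift (and, when the vertices $i,j$ are adjacent in \eqref{eq_oriented_graph}, an additional shift of $\pm 1$ depending on the orientation). Composing two such swaps gives the identity on the appropriate tensor product, which under the identification of generators is exactly $\Gamma$ of the identity $1$-morphism on $\cal{E}_{+i-j}\mathbf{1}_\lambda$ (respectively $\cal{E}_{-j+i}\mathbf{1}_\lambda$). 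The second equality is handled by the same argument after swapping the roles of upward and downward crossings, using \eqref{eq_gamma_dcross_down} in place of \eqref{eq_gamma_dcross}.

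The main obstacle is purely bookkeeping: the cups and caps each carry a grading shift of the form $\pm(1+k_{i-1}-k_{i+1})$ from Definition~\ref{def_biadjoint}, and the adjacency-dependent shifts $\{\pm 1\}$ from \eqref{eq_gamma_dcross}--\eqref{eq_gamma_dcross_down} appear twice in opposite senses. One must verify that all these shifts cancel, producing a degree-zero identity map; this follows from the symmetry between the two sideways crossings (one going from $+i,-j$ to $-j,+i$ and the other going back), since their shifts are opposite. No genuinely new graphical identity beyond those in Lemma~\ref{lem_relations}, equations \eqref{eq_graph_def_relations}--\eqref{eq_dark_slide2}, and Lemma~\ref{lem_other_crossings} is required.
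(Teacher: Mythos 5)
Your proposal contains a genuine gap, traceable to a mis-citation of Lemma~\ref{lem_other_crossings}. That lemma does \emph{not} compute $\Gamma$ of the upward $ij$-crossings $\Ucross_{i,j,\lambda}$; those are the generating 2-morphisms, whose images under $\Gamma$ are the bimodule maps in \eqref{eq_gamma_dcross}, and for adjacent $i,j$ these are \emph{not} swaps (they carry an extra shift and the two-term expression $\xi_j^{\alpha_2}\otimes\xi_i^{\alpha_1+1}-\xi_j^{\alpha_2+1}\otimes\xi_i^{\alpha_1}$). Lemma~\ref{lem_other_crossings} instead computes $\Gamma$ of the \emph{sideways} crossings \eqref{eq_crossl-gen}, \eqref{eq_crossr-gen}, i.e. the mixed-orientation crossings that are already exactly the two factors of the diagram on each left-hand side of the proposition, and shows that each of those is literally the swap $\xi_j^{\alpha_1}\otimes\xi_i^{\alpha_2}\mapsto\xi_i^{\alpha_2}\otimes\xi_j^{\alpha_1}$. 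That is the entire content of the paper's one-line proof: the LHS is a composite of two such sideways crossings, each a swap, so the composite is the identity.

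Your detour — unpacking each sideways crossing into cup, upward crossing, cap, then cancelling cup--cap pairs across the two sideways crossings via the zigzag relations — is not only unnecessary (Lemma~\ref{lem_other_crossings} has already evaluated precisely that cup--crossing--cap composite), it also does not lead where you expect. Even granting the isotopy that you claim leaves only ``a composition of an upward $ij$-crossing with an upward $ji$-crossing,'' this composite is governed by the signed $R(\nu)$-relation \eqref{eq_r2_ij}, whose right-hand side for $i\cdot j=-1$ involves dot terms and is \emph{not} the identity. So even after fixing the citation, the argument as written would show the wrong thing. The fix is to drop the unpacking entirely and apply Lemma~\ref{lem_other_crossings} to the sideways crossings themselves, as the paper does.
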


\begin{prop}
The equalities
\begin{eqnarray}
  \Gamma\left( \xy
  (0,0)*{\xybox{
    (-4,-4)*{};(4,4)*{} **\crv{(-4,-1) & (4,1)}?(1)*\dir{>}?(.75)*{\bullet};
    (4,-4)*{};(-4,4)*{} **\crv{(4,-1) & (-4,1)}?(1)*\dir{>};
    (-5,-3)*{\scs i};
     (5.1,-3)*{\scs j};
     (8,1)*{ \lambda};
     (-10,0)*{};(10,0)*{};
     }};
  \endxy\right)
 \;\; =
   \Gamma\left(
\xy
  (0,0)*{\xybox{
    (-4,-4)*{};(4,4)*{} **\crv{(-4,-1) & (4,1)}?(1)*\dir{>}?(.25)*{\bullet};
    (4,-4)*{};(-4,4)*{} **\crv{(4,-1) & (-4,1)}?(1)*\dir{>};
    (-5,-3)*{\scs i};
     (5.1,-3)*{\scs j};
     (8,1)*{ \lambda};
     (-10,0)*{};(10,0)*{};
     }};
  \endxy\right),
\qquad    \Gamma\left(\xy
  (0,0)*{\xybox{
    (-4,-4)*{};(4,4)*{} **\crv{(-4,-1) & (4,1)}?(1)*\dir{>};
    (4,-4)*{};(-4,4)*{} **\crv{(4,-1) & (-4,1)}?(1)*\dir{>}?(.75)*{\bullet};
    (-5,-3)*{\scs i};
     (5.1,-3)*{\scs j};
     (8,1)*{ \lambda};
     (-10,0)*{};(10,0)*{};
     }};
  \endxy\right)
\;\;  =
  \Gamma\left(\xy
  (0,0)*{\xybox{
    (-4,-4)*{};(4,4)*{} **\crv{(-4,-1) & (4,1)}?(1)*\dir{>} ;
    (4,-4)*{};(-4,4)*{} **\crv{(4,-1) & (-4,1)}?(1)*\dir{>}?(.25)*{\bullet};
    (-5,-3)*{\scs i};
     (5.1,-3)*{\scs j};
     (8,1)*{ \lambda};
     (-10,0)*{};(12,0)*{};
     }};
  \endxy \right)
\end{eqnarray}
hold in $\Gr$ for all weights $\lambda$.
\end{prop}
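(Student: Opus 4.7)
The plan is to verify both displayed equalities of graded bimodule maps by evaluating them on a generating element $\xi_i^{\alpha_1} \otimes \xi_j^{\alpha_2}$ of the source and comparing the outputs. By the definition of $\Gamma$ on dots in \eqref{eq_gamma_updot}, placing a dot on an $i$-colored upward strand corresponds on the bimodule side to multiplication by $\xi_i$ in the appropriate tensor factor. Thus the first equation reduces to checking that the crossing bimodule map $\Gamma(\Ucross_{i,j,\lambda})$ from \eqref{eq_gamma_dcross} intertwines multiplication by $\xi_i$ on the \emph{left} tensor factor of the source with multiplication by $\xi_i$ on the \emph{right} tensor factor of the target; the second equation is the analogous statement for $\xi_j$, but reversing left/right.

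First I would dispose of the two easy cases in the case split from \eqref{eq_gamma_dcross}. When $i \cdot j = 0$ or when $\xy(-5,0)*{\circ};(5,0)*{\circ}**\dir{-}?(.55)*\dir{>};(-5,2.2)*{\scs i};(5,2.2)*{\scs j};\endxy$, the crossing map is simply the swap $\xi_i^{\alpha_1}\otimes\xi_j^{\alpha_2}\mapsto \xi_j^{\alpha_2}\otimes\xi_i^{\alpha_1}$ (with a grading shift in the second case). Here both dot-slide identities are immediate, since raising $\alpha_1$ or $\alpha_2$ by $1$ on the source commutes with the swap.

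The only case requiring care is $\xy(-5,0)*{\circ};(5,0)*{\circ}**\dir{-}?(.55)*\dir{>};(-5,2.2)*{\scs j};(5,2.2)*{\scs i};\endxy$, where $\Gamma(\Ucross_{i,j,\lambda})$ sends $\xi_i^{\alpha_1}\otimes \xi_j^{\alpha_2}$ to $\xi_j^{\alpha_2}\otimes \xi_i^{\alpha_1+1} - \xi_j^{\alpha_2+1}\otimes \xi_i^{\alpha_1}$. For the first equation, replacing $\alpha_1$ by $\alpha_1+1$ on the source side yields $\xi_j^{\alpha_2}\otimes \xi_i^{\alpha_1+2} - \xi_j^{\alpha_2+1}\otimes \xi_i^{\alpha_1+1}$, which is precisely multiplication by $\xi_i$ in the right tensor factor of the target, term by term. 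For the second equation, replacing $\alpha_2$ by $\alpha_2+1$ on the source yields $\xi_j^{\alpha_2+1}\otimes \xi_i^{\alpha_1+1} - \xi_j^{\alpha_2+2}\otimes \xi_i^{\alpha_1}$, matching multiplication by $\xi_j$ on the left factor of the target.

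The main obstacle, such as it is, is simply bookkeeping: keeping track of which tensor factor each $\xi$ acts on after the swap, and confirming that the two summands in the $j\to i$ case both behave linearly under the dot insertion. No new identities such as those from Lemma~\ref{lem_relations} are needed — the verification is purely a matter of unwinding the definitions in Section~\ref{subsec_define_gamma}. (A parallel check using \eqref{eq_gamma_dcross_down} would handle downward crossings, which are not part of this proposition but would be needed for the corresponding relations with opposite orientation.)
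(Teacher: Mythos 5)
Your proposal is correct and follows the same route as the paper: the paper's proof is simply the one-line observation that the claim is immediate from the definition of $\Gamma(\Ucross_{i,j,\lambda})$ in \eqref{eq_gamma_dcross}, and you have carried out exactly that verification, correctly identifying which tensor factor each dot multiplies before and after the crossing and checking all three cases of the case split (the $i=j$ case does not arise since the cited dot-slide relation \eqref{eq_dot_slide_ij} is stated for $i\neq j$).
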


\begin{proof}
Using the definition of the bimodule map $\Gamma(\Ucross_{i,j,\lambda})$ in
\eqref{eq_gamma_dcross} the proposition is easily verified.
\end{proof}

\begin{prop} The equalities
\begin{equation} \label{eq_prop_gamma1}
    \Gamma\left(\vcenter{\xy 0;/r.18pc/:
    (-4,-4)*{};(4,4)*{} **\crv{(-4,-1) & (4,1)}?(1)*\dir{>};
    (4,-4)*{};(-4,4)*{} **\crv{(4,-1) & (-4,1)}?(1)*\dir{>};
    (-4,4)*{};(4,12)*{} **\crv{(-4,7) & (4,9)}?(1)*\dir{>};
    (4,4)*{};(-4,12)*{} **\crv{(4,7) & (-4,9)}?(1)*\dir{>};
  (8,8)*{\lambda};(-5,-3)*{\scs i};
     (5.1,-3)*{\scs j};
 \endxy} \right)
 \qquad = \qquad
 \left\{
 \begin{array}{ccc}
       \Gamma\left(\xy 0;/r.18pc/:
  (3,9);(3,-9) **\dir{-}?(.5)*\dir{<}+(2.3,0)*{};
  (-3,9);(-3,-9) **\dir{-}?(.5)*\dir{<}+(2.3,0)*{};
  (8,2)*{\lambda};(-5,-6)*{\scs i};     (5.1,-6)*{\scs j};
 \endxy\right) &  &  \text{if $i \cdot j=0$}\\ \\
 (i-j)\Gamma\left(\;\; \xy 0;/r.18pc/:
  (3,9);(3,-9) **\dir{-}?(.5)*\dir{<}+(2.3,0)*{};
  (-3,9);(-3,-9) **\dir{-}?(.5)*\dir{<}+(2.3,0)*{};
  (8,2)*{\lambda}; (-3,4)*{\bullet};
  (-5,-6)*{\scs i};     (5.1,-6)*{\scs j};
 \endxy \quad
 - \quad
 \xy 0;/r.18pc/:
  (3,9);(3,-9) **\dir{-}?(.5)*\dir{<}+(2.3,0)*{};
  (-3,9);(-3,-9) **\dir{-}?(.5)*\dir{<}+(2.3,0)*{};
  (8,2)*{\lambda}; (3,4)*{\bullet};
  (-5,-6)*{\scs i};     (5.1,-6)*{\scs j};
 \endxy\;\;\right)
   &  & \text{if $i \cdot j =-1$}
 \end{array}
 \right.
\end{equation}
\begin{equation} \label{eq_prop_gamma2}
\Gamma \left( \vcenter{
 \xy 0;/r.18pc/:
    (-4,-4)*{};(4,4)*{} **\crv{(-4,-1) & (4,1)}?(1)*\dir{>};
    (4,-4)*{};(-4,4)*{} **\crv{(4,-1) & (-4,1)}?(1)*\dir{>};
    (4,4)*{};(12,12)*{} **\crv{(4,7) & (12,9)}?(1)*\dir{>};
    (12,4)*{};(4,12)*{} **\crv{(12,7) & (4,9)}?(1)*\dir{>};
    (-4,12)*{};(4,20)*{} **\crv{(-4,15) & (4,17)}?(1)*\dir{>};
    (4,12)*{};(-4,20)*{} **\crv{(4,15) & (-4,17)}?(1)*\dir{>};
    (-4,4)*{}; (-4,12) **\dir{-};
    (12,-4)*{}; (12,4) **\dir{-};
    (12,12)*{}; (12,20) **\dir{-};
  (18,8)*{\lambda};
  (-6,-3)*{\scs i};
  (6,-3)*{\scs j};
  (15,-3)*{\scs k};
\endxy} \right)
 \;\; =\;\;
\Gamma\left( \vcenter{
 \xy 0;/r.18pc/:
    (4,-4)*{};(-4,4)*{} **\crv{(4,-1) & (-4,1)}?(1)*\dir{>};
    (-4,-4)*{};(4,4)*{} **\crv{(-4,-1) & (4,1)}?(1)*\dir{>};
    (-4,4)*{};(-12,12)*{} **\crv{(-4,7) & (-12,9)}?(1)*\dir{>};
    (-12,4)*{};(-4,12)*{} **\crv{(-12,7) & (-4,9)}?(1)*\dir{>};
    (4,12)*{};(-4,20)*{} **\crv{(4,15) & (-4,17)}?(1)*\dir{>};
    (-4,12)*{};(4,20)*{} **\crv{(-4,15) & (4,17)}?(1)*\dir{>};
    (4,4)*{}; (4,12) **\dir{-};
    (-12,-4)*{}; (-12,4) **\dir{-};
    (-12,12)*{}; (-12,20) **\dir{-};
  (10,8)*{\lambda};
  (7,-3)*{\scs k};
  (-6,-3)*{\scs j};
  (-14,-3)*{\scs i};
\endxy} \right) \qquad \text{unless $i=k$ and $i \cdot j \neq 0$}
\end{equation}
\begin{equation} \label{eq_prop_gamma3}
 \Gamma\left(\xy 0;/r.18pc/:
  (4,12);(4,-12) **\dir{-}?(.5)*\dir{<}+(2.3,0)*{};
  (-4,12);(-4,-12) **\dir{-}?(.5)*\dir{<}+(2.3,0)*{};
  (12,12);(12,-12) **\dir{-}?(.5)*\dir{<}+(2.3,0)*{};
  (17,2)*{\lambda}; (-6,-9)*{\scs i};     (6.1,-9)*{\scs j};
  (14,-9)*{\scs i};
 \endxy\right)
 \;\; =\;\;
(i-j) \Gamma \left( \vcenter{
 \xy 0;/r.18pc/:
    (-4,-4)*{};(4,4)*{} **\crv{(-4,-1) & (4,1)}?(1)*\dir{>};
    (4,-4)*{};(-4,4)*{} **\crv{(4,-1) & (-4,1)}?(1)*\dir{>};
    (4,4)*{};(12,12)*{} **\crv{(4,7) & (12,9)}?(1)*\dir{>};
    (12,4)*{};(4,12)*{} **\crv{(12,7) & (4,9)}?(1)*\dir{>};
    (-4,12)*{};(4,20)*{} **\crv{(-4,15) & (4,17)}?(1)*\dir{>};
    (4,12)*{};(-4,20)*{} **\crv{(4,15) & (-4,17)}?(1)*\dir{>};
    (-4,4)*{}; (-4,12) **\dir{-};
    (12,-4)*{}; (12,4) **\dir{-};
    (12,12)*{}; (12,20) **\dir{-};
  (18,8)*{\lambda};
  (-6,-3)*{\scs i};
  (6,-3)*{\scs j};
  (14,-3)*{\scs i};
\endxy}
\quad - \quad
 \vcenter{
 \xy 0;/r.18pc/:
    (4,-4)*{};(-4,4)*{} **\crv{(4,-1) & (-4,1)}?(1)*\dir{>};
    (-4,-4)*{};(4,4)*{} **\crv{(-4,-1) & (4,1)}?(1)*\dir{>};
    (-4,4)*{};(-12,12)*{} **\crv{(-4,7) & (-12,9)}?(1)*\dir{>};
    (-12,4)*{};(-4,12)*{} **\crv{(-12,7) & (-4,9)}?(1)*\dir{>};
    (4,12)*{};(-4,20)*{} **\crv{(4,15) & (-4,17)}?(1)*\dir{>};
    (-4,12)*{};(4,20)*{} **\crv{(-4,15) & (4,17)}?(1)*\dir{>};
    (4,4)*{}; (4,12) **\dir{-};
    (-12,-4)*{}; (-12,4) **\dir{-};
    (-12,12)*{}; (-12,20) **\dir{-};
  (10,8)*{\lambda};
  (6,-3)*{\scs i};
  (-6,-3)*{\scs j};
  (-14,-3)*{\scs i};
\endxy}
\right) \qquad \text{if $i \cdot j =-1$}
\end{equation}
of bimodule maps hold in $\Gr$ for all weights $\lambda$.
\end{prop}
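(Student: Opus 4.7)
The plan is to verify each of the three displayed equations by evaluating both sides as graded $(H_{{}_{\ii}\uk},H_{\uk})$-bimodule maps on monomials $\xi_i^{\alpha_1}\otimes\xi_j^{\alpha_2}$ or $\xi_i^{\alpha_1}\otimes\xi_j^{\alpha_2}\otimes\xi_k^{\alpha_3}$ in the appropriate iterated tensor product of flag bimodules, using the explicit formula for $\Gamma(\Ucross_{i,j,\lambda})$ given in \eqref{eq_gamma_dcross}. Since these tensor products are generated over their base rings by such monomials, and the grading shifts built into Section~\ref{subsec_define_gamma} are exactly calibrated so that the $\{\pm 1\}$-shifted subcases of \eqref{eq_gamma_dcross} match the required degree $-i\cdot j$ on crossings, agreement on a single monomial in each case suffices.

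For \eqref{eq_prop_gamma1} I would split by cases. When $i\cdot j=0$ the formula \eqref{eq_gamma_dcross} reduces to a plain swap of tensor factors, so its square is the identity, matching the right-hand side. When $i\cdot j=-1$ I would treat the two Dynkin orientations separately. In the subcase $j=i+1$ (Dynkin arrow $i\to j$), the first crossing applies the $\{1\}$-shifted simple-swap rule, while the second crossing, now with bottom labels $(j,i)$, falls into the opposite $\{-1\}$-shifted rule, producing a signed two-term expression. A direct calculation yields
\[
\xi_i^{\alpha_1}\otimes\xi_j^{\alpha_2}\;\longmapsto\;
\xi_i^{\alpha_1}\otimes\xi_j^{\alpha_2+1}\;-\;\xi_i^{\alpha_1+1}\otimes\xi_j^{\alpha_2},
\]
which, since $(i-j)=-1$ in this subcase, agrees with the right-hand side of \eqref{eq_prop_gamma1}. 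The subcase $j=i-1$ is symmetric, giving the same two-term expression with overall sign $(i-j)=+1$.

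For \eqref{eq_prop_gamma2} the exclusion ``$i=k$ and $i\cdot j\neq 0$'' is precisely the configuration in which non-trivial correction terms appear. In the allowed cases I would break into subcases by which of $i,j,k$ coincide and which pairs are Dynkin-adjacent, and apply \eqref{eq_gamma_dcross} iteratively to each of the three crossings on both sides. In every allowed subcase both compositions send $\xi_i^{\alpha_1}\otimes\xi_j^{\alpha_2}\otimes\xi_k^{\alpha_3}$ to $\xi_k^{\alpha_3}\otimes\xi_j^{\alpha_2}\otimes\xi_i^{\alpha_1}$ with matching grading-shift contributions, because the signed factors associated to the two orders of crossing-application combine identically. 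For \eqref{eq_prop_gamma3}, with $i=k$ and $i\cdot j=-1$, I would expand each of the two three-crossing compositions on the right using \eqref{eq_gamma_dcross} three times; each collapses to a short sum of terms whose signed difference telescopes to $(i-j)$ times the identity map on $\xi_i^{\alpha_1}\otimes\xi_j^{\alpha_2}\otimes\xi_i^{\alpha_3}$, matching the left-hand side.

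The main obstacle is the bookkeeping in \eqref{eq_prop_gamma3}: the signs, grading shifts, and dot-counts must align exactly, and doing so by hand is tedious. A cleaner route is to observe that the subcategory of $\Gr$ generated by images of upward-oriented braid-like diagrams (no cups, caps, or bubble decorations) factors through the functor $\iota_\lambda\maps R(\nu)\to\Ucats^*(\lambda,\lambda+\nu_X)$ of \eqref{eq_inclusion_iota}, composed with a representation of the signed nilHecke-type ring $R_\tau(\nu)$ on flag bimodules. The signed $R(\nu)$-identities in \eqref{eq_prop_gamma1}--\eqref{eq_prop_gamma3} then reduce to already-established signed-nilHecke relations of \cite{KL,KL2}, and explicit verification on a single monomial in each case is enough to pin down the overall sign and shift, completing the proof.
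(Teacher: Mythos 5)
Your proposal is correct and follows essentially the same route as the paper: the paper likewise verifies \eqref{eq_prop_gamma1}--\eqref{eq_prop_gamma3} by evaluating both sides on the elements $\xi_i^{\alpha_1}\otimes\xi_j^{\alpha_2}$ (respectively the triple tensors), extending by the bimodule-map property, and then notes that the remaining bookkeeping is identical to the verification in \cite{KL,KL2} that the (signed) rings $R(\nu)$ act on the polynomial representation, with the polynomial variables replaced by the Chern classes $\xi_k$ --- which is precisely your ``cleaner route.'' One small caution: it is not a single monomial but the whole family $\xi_i^{\alpha_1}\otimes\xi_j^{\alpha_2}$ for arbitrary exponents that generates these tensor products over the left and right ring actions (the $\xi$'s lie in neither base ring), though your explicit computations are in any case carried out for arbitrary exponents, so no gap results.
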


\begin{proof}
To prove the proposition we compute the bimodule maps in \eqref{eq_prop_gamma1}
on the elements of the form $\xi_i^{\alpha_1} \otimes \xi_j^{\alpha_2}$. Bimodule
maps in \eqref{eq_prop_gamma2} and \eqref{eq_prop_gamma3} are computed on
elements $\xi_i^{\alpha_1} \otimes \xi_j^{\alpha_2} \otimes \xi_k^{\alpha_3}$ and
$\xi_i^{\alpha_1} \otimes \xi_j^{\alpha_2} \otimes \xi_i^{\alpha_3}$,
respectively.  The action on other elements in the cohomology rings is determined
by the fact that the maps are bimodules morphisms.

The proof of these remaining relations is the same as the proof that rings
$R(\nu)$ act on $\Pol_{\nu}$ (notation as in \cite{KL}). Replacing the variables
$x_{k}(\ii) \in \Pol_{\ii}$ with the Chern classes of line bundles $\xi_k$ in the
corresponding cohomology rings turns formulas in \cite[Section 2.3]{KL} for the
action of dots and crossings into formulas \eqref{eq_gamma_updot} and
\eqref{eq_gamma_dcross}, with the signs taken into account.
\end{proof}

Thus, we have proven the following Theorem:

\begin{thm} \label{thm_2rep}
$\Gamma_N \maps \Ucats^* \to \Gr$ is a 2-functor and a 2-representation.
\end{thm}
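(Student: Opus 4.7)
The plan is to verify, relation by relation, that the assignment $\Gamma_N$ defined in Section~\ref{subsec_define_gamma} respects all of the defining relations of $\Ucats^*$. The definitions on objects, $1$-morphisms, and generating $2$-morphisms are built into $\Gamma_N$; the content of the theorem is that these extend consistently to a $2$-functor. First I would observe that the degree shifts in the definition have been chosen precisely so that each generating $2$-morphism goes to a bimodule map of the prescribed degree. This is routine: for the dot $\Uup_{i,\lambda}$ the map is multiplication by $\xi_i$, which has degree $2 = i\cdot i$; for the caps and cups the degrees $c_{\pm i,\lambda}=1\pm\la i,\lambda\ra$ match the shifts computed via $k_{i-1},k_i,k_{i+1}$; and for the crossings one reads off $-i\cdot j$ directly from Section~\ref{subsubsec_Rnu}.

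Next I would organize the relations into four groups and verify each in turn. Most of these checks are already carried out explicitly in Section~\ref{subsec_check_relations}, so the proof essentially amounts to assembling them. The four groups are: (i) the $\mf{sl}_2$ biadjointness, cyclicity, nilHecke, positivity-of-bubbles, and infinite Grassmannian identities; (ii) the degree zero crossing identifications \eqref{eq_crossl-gen}--\eqref{eq_crossr-gen} and the mixed $EF$/$FE$ relations \eqref{eq_downup_ij-gen}; (iii) the signed $R(\nu)$ relations \eqref{eq_r2_ij}--\eqref{eq_r3_hard}; and (iv) the dot-slide relations \eqref{eq_dot_slide_ij}. Groups (i) and (ii) have been treated in the propositions immediately preceding the theorem statement, and the $\mf{sl}_2$ relations follow from the arguments in \cite{Lau1} generalized using \eqref{eq_graph_def_relations}--\eqref{eq_dark_slide2}. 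For group (iii), the computation reduces, after translating via the canonical generators of the bimodules $H_{\ukp}$, to the action of rings $R(\nu)$ on polynomial representations $\Pol_{\nu}$ from \cite{KL}, with the variables $x_k(\ii)$ replaced by the Chern classes $\xi_k$; here the signs $\tau_{ij}=-1$ encoded in the definitions of $\Ucats$ match the signs introduced in $\Gamma_N$ on the crossing $\Ucross_{i,j,\lambda}$.

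The main obstacle will be checking the bubble relations and the cyclicity condition \eqref{eq_cyclic_dot}, \eqref{eq_cyclic_cross-gen}. Verifying cyclicity requires showing that any isotopic diagram produces the same bimodule map, and this is really a check that the cap/cup definitions in \eqref{def_eq_FE_G}--\eqref{def_EF_cap} are genuinely compatible with the dot action \eqref{eq_gamma_updot} so that, for instance, pushing a dot through a U-turn using \eqref{eq_cyclic_dot} corresponds on the bimodule side to an identity between sums involving $\xi_i^\alpha$ and the generators $\overline{x(\uk)_{i,\cdot}}$. Lemma~\ref{lem_relations} and Corollary~\ref{cor_bimodule_maps} are exactly the technical tools designed to handle these manipulations, and the infinite Grassmannian identity \eqref{eq_infinite_Grass} corresponds under $\Gamma_N$ to equation \eqref{eq_graph_def_relations} in the cohomology rings $H_{\uk}$.

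Finally, having checked all relations, I would invoke the universal property of $\Ucats^*$ as the $2$-category presented by these generators and relations to conclude that $\Gamma_N$ extends to a well-defined graded additive $\Bbbk$-linear $2$-functor, and hence a $2$-representation. The remark that $\Gamma_N$ preserves degree means it restricts to a $2$-functor $\Ucats\to\cat{Flag}_N$, which combined with the isomorphism $\Sigma\maps\Ucat\to\Ucats$ of Section~\ref{subsubsec_isom} gives the desired $2$-representation of $\Ucat$.
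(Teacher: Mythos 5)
Your proposal is correct and follows essentially the same strategy as the paper: the theorem is proven by checking each defining relation of $\Ucats^*$ in a sequence of propositions (the $\mf{sl}_2$ relations by generalizing \cite{Lau1}, the sideways crossings via Lemma~\ref{lem_other_crossings}, the mixed $EF/FE$ and dot-slide relations by direct computation, and the signed $R(\nu)$ relations by reduction to the $\Pol_\nu$ action from \cite{KL}), with Lemma~\ref{lem_relations} and Corollary~\ref{cor_bimodule_maps} doing exactly the technical work you identify for cups, caps, and cyclicity. Your grouping of the relations and explicit appeal to the presentation-by-generators-and-relations universal property are cosmetic reorganizations of the same argument.
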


%
\subsection{Equivariant representation} \label{subsec_equivariant}
%

%
\subsubsection{Reminders on equivariant cohomology}
%

The $GL(N)$-equivariant cohomology of a point \cite{Fulton2} is given by
\begin{eqnarray}
  H_{GL(N)}^*(pt) = H^*\big(Gr(N, \infty)\big) = \Bbbk[x_1,x_2,\ldots,x_N,y_1,y_2,
  \ldots ]/I_{N,\infty}
\end{eqnarray}
where $I_{N,\infty}$ is the ideal generated by the homogeneous terms in
\[
 \left(
 1 + x_1 t + x_2 t^2 + \cdots x_N t^N
 \right)
 \left(
 1 + y_1 t + y_2 t^2 + \cdots y_j t^j + \cdots
 \right)
 =1.
\]
Thus, $H_{GL(N)}^*(pt)$ is isomorphic to the polynomial ring
\begin{eqnarray} \label{eq_EQpoint}
   H_{GL(N)}^*(pt) \cong \Bbbk[x_1, x_2, \ldots, x_{N-1},x_N]
\end{eqnarray}
with $x_i$ in degree $2i$.

Given a sequence $\uk = (k_0, k_1,k_2, \dots, k_n)$ with $0 \leq k_1 \leq k_2
\leq \dots \leq k_n =N$, $GL(N)$ acts transitively on $Fl(\uk)$, so the
equivariant cohomology of $Fl(\uk)$ is
\begin{eqnarray}
  H^*_{GL(N)}(Fl(\uk)) = H^*_{{\rm Stab}(pt)}\big(pt \in Fl(\uk)\big)
\end{eqnarray}
where the stabilizer of a point $\left( 0 \subset \C^{k_1} \subset \dots \subset
\C^{k_n} =C^N\right)$ in $FL(\uk)$ is the group of invertible block $\big(k_1
\times (k_2-k_1) \times \dots \times (N-k_{n-1})\big)$ upper--triangular
matrices. This group is contractible onto its subgroup $GL(k_1) \times
GL(k_2-k_1) \times \dots \times GL(N-k_{n-1})$. Hence,
\begin{eqnarray}
 \HG_{\uk} & \cong & H^*_{GL(k_1) \times GL(k_2-k_1) \times \dots
\times GL(N-k_{n-1})}(pt) \nn \\
& \cong & \bigotimes_{j=1}^{n} H^*_{GL(k_j-k_{j-1})}(pt)  \nn \\
 & \cong & \bigotimes_{j=1}^{n} \Bbbk[x(\uk)_{j,1}, x(\uk)_{j,2}, \dots, x(\uk)_{j,k_{j}-k_{j-1}}]
\end{eqnarray}
with $\deg x(\uk)_{j,\alpha}=2\alpha$.  Thus, the equivariant cohomology of
$Fl(\uk)$ has the same generators as the ordinary cohomology ring, but we do not
mod out by the ideal $I_{\uk,N}$.

The equivariant cohomology rings $\HG_{\ukep}$ and $\HG_{\ukem}$ of $Fl(\ukep)$
and $Fl(\ukem)$ can be similarly computed. They have the same generators as the
ordinary cohomology rings $H_{\ukep}$ and $H_{\ukem}$, but no relations.

The equivariant cohomology $\HG_{\ukp}$ of $Fl(\ukp)$ also has the same
generators as the ordinary cohomology ring, with no relations.  Using the
forgetful maps $Fl(\uk) \leftarrow Fl(\ukp) \rightarrow Fl(\ukep)$ we get
inclusions
\begin{eqnarray}\label{eq_Ginclusion1}
 \HG_{\uk} & \xymatrix@1{\ar@{^{(}->}[r] & } & \HG_{\ukp} \\
 x(\uk)_{j,\alpha} & \mapsto & x(\ukp)_{j,\alpha} \qquad \text{for $j\neq i+1$}~,\nn \\
 x(\uk)_{i+1,\alpha} & \mapsto & \xi_i \cdot x(\ukp)_{i+1,\alpha-1}+x(\ukp)_{i+1,\alpha}~,
 \nn
\end{eqnarray}
and
\begin{eqnarray} \label{eq_Ginclusion2}
 \HG_{\ukep} & \xymatrix@1{\ar@{^{(}->}[r] & } & \HG_{\ukp} \\
 x(\ukep)_{j,\alpha} & \mapsto & x(\ukp)_{j,\alpha} \qquad \text{for $j\neq i$}~, \nn \\
 x(\ukep)_{i,\alpha} & \mapsto & \xi_i \cdot x(\ukp)_{i,\alpha-1}+x(\ukp)_{i,\alpha}~.
 \nn
\end{eqnarray}
making $\HG_{\ukp}$ a graded $(\HG_{\ukep},H_{\uk})$-bimodule, just as in the
non-equivariant case. Using these inclusions we introduce canonical generators of
$\HG_{\ukp}$ given by identifying certain generators of $\HG_{\uk}$ and
$\HG_{\ukep}$ with their images in $\HG_{\ukp}$. Thus, we can identify
$x(\uk)_{j,\alpha}$ and $x(\ukep)_{j,\alpha}$ in $\HG_{\ukp}$ when $j\neq i,
i+1$, and
\begin{eqnarray} \nn
 \HG_{\ukp} = \bigotimes_{j\neq i+1}^{} \Bbbk\left[ x(\uk)_{j,1} \dots x(\uk)_{j,k_{j}-k_{j-1}}  \right]
 \otimes \Bbbk[x(\ukep)_{i+1,1}, \dots, x(\ukep)_{i+1,k_{i+1}-k_{i}-1}] \otimes \Bbbk[\xi_i]
\end{eqnarray}
or equivalently,
\begin{eqnarray} \nn
 \HG_{\ukp} = \bigotimes_{j\neq i}^{} \Bbbk\left[ x(\ukep)_{j,1} \dots x(\ukep)_{j,k_{j}-k_{j-1}}  \right]
 \otimes \Bbbk[x(\uk)_{i,1}, \dots, x(\uk)_{i,k_{i}-k_{i-1}}] \otimes \Bbbk[\xi_i].
\end{eqnarray}

The generators of $\HG_{\uk}$ and $\HG_{\ukep}$ that are not mapped to canonical
generators in $\HG_{\ukp}$ can be expressed in terms of canonical generators as
follows:
\begin{eqnarray}
  x(\uk)_{i+1,\alpha} & =&  \xi_i \cdot x(\ukep)_{i+1,\alpha-1} + x(\ukep)_{i+1,\alpha} \nn\\
  x(\ukep)_{i,\alpha} & = & \xi_i \cdot x(\uk)_{i,\alpha-1}  +x(\uk)_{i,\alpha} \label{eq_Gnoncan}
\end{eqnarray}
for all values of $\alpha$.

%
\subsubsection{The 2-category $\cat{EqFlag}_N$}
%

The 2-category $\cat{EqFlag}_N$ is the equivariant analog of $\cat{Flag}_N$.

\begin{defn}
The additive $\Bbbk$-linear 2-category $\cat{EqFlag}_N$ is the idempotent
completion inside of $\cat{Bim}$ of the 2-category consisting of
\begin{itemize}
  \item objects: the graded rings $\HG_{\uk}$ for all $\uk = (k_0, k_1,k_2, \dots, k_n)$ with
$0 \leq k_1 \leq k_2 \dots \leq k_n = N$.

  \item morphisms: generated by the graded ($\HG_{\uk}$,$\HG_{\uk}$)-bimodule $\HG_{\uk}$, the
graded ($\HG_{\ukep}$,$\HG_{\uk}$)-bimodules $\HG_{\ukp}$ and the graded
($\HG_{\uk}$,$\HG_{\ukep}$)-bimodule $\HG_{\ukp}$ for all $i \in I$, together
with their shifts $\HG_{\uk}\{t\}$, $\HG_{\ukp}\{t\}$, and $\HG_{\ukp}\{t\}$ for
$t \in \Z$. The bimodules $\HG_{\uk}=\HG_{\uk}\{0\}$ are the identity
1-morphisms. Thus, a morphism from $\HG_{\uk}$ to $\HG_{{}_{\ii}\uk}$ is a finite
direct sum of graded $(\HG_{{}_{\jj}\uk},\HG_{\uk})$-bimodules of the form
\[
  \HG_{\uk^{\jj}} \{t\}:=
   \HG_{{}_{s_2s_3 \cdots s_m}\uk^{s_1}}
    \otimes_{\HG_{{}_{s_2s_3 \cdots s_m}\uk}}
\cdots \otimes_{\HG_{{}_{s_{m-1}s_m}\uk}} \HG_{{}_{s_m}\uk^{s_{m-1}}}
\otimes_{\HG_{{}_{s_m}\uk}} \HG_{\uk^{{s_m}}} \{t\}
\]
 for signed sequences $\jj=s_1s_2\dots s_m$ with $\ii_X = \jj_X \in X$.
  \item 2-morphisms: degree-preserving bimodule maps.
\end{itemize}
\end{defn}
There is a graded additive subcategory $\GrG$ of $\cat{Bim}^*$ with the same
objects and 1-morphisms as $\cat{EqFlag}_{N}$, and
\begin{equation}
  \GrG(M_1,M_2) := \bigoplus_{t\in \Z} \cat{EqFlag}_{N}(M_1\{t\},M_2).
\end{equation}

%
\subsubsection{Equivariant representation $\Gamma^G_N$}
%

A 2-representation of the 2-category $\Ucatq(\mf{sl}_2)=\Ucats^*(\mf{sl}_2)$ was
constructed in \cite{Lau2} using equivariant cohomology of partial flag
varieties.  Here we extend that construction to the $\mathfrak{sl}_n$-case and
define a 2-representation $\Gamma_N^{G} \maps \Ucats^* \to \GrG$.

The verification that $\Gamma_N$ is 2-representation with the assignments given
in Section~\ref{subsec_define_gamma} used only the relations
\eqref{eq_graph_def_relations} and
\eqref{eq_graph_easy_slide}--\eqref{eq_dark_slide2}, together with
Lemma~\ref{lem_relations} and Corollary~\ref{cor_bimodule_maps} which both follow
from these relations.

To define the equivariant 2-representation $\Gamma_N^G$ ordinary cohomology rings
are replaced by equivariant cohomology rings and bimodule maps associated to
2-morphisms are defined the same way, except that the dual generators
$\overline{x(\uk)_{j,\alpha}}$ must be redefined.   Set
$\overline{x(\uk)_{j,0}}=1$ and inductively define
\begin{equation}
\label{eq_new_overline_a} \overline{x(\uk)_{j,\alpha}} = -\sum_{f=1}^{\alpha}
x(\uk)_{j,f} \overline{x(\uk)_{j,\alpha-f}}.
\end{equation}
For example,
\begin{eqnarray}
\overline{x(\uk)_{j,1}} &=& - x(\uk)_{j,1} \nn \\
\overline{x(\uk)_{j,2}} &=& - x(\uk)_{j,1} \overline{x(\uk)_{j,1}} - x(\uk)_{j,2}
= x(\uk)_{j,1}^2- x(\uk)_{j,2} \nn \\
 \overline{x(\uk)_{j,3}} &=&
 -x(\uk)_{j,1}^3+2 \;x(\uk)_{j,1}\;x(\uk)_{j,2}-x(\uk)_{j,3} \nn
\end{eqnarray}
In the nonequivariant cohomology rings the two definitions \eqref{eq_overlinea}
and \eqref{eq_new_overline_a} of dual generators agree, but not in the
equivariant cohomology ring.

\begin{rem}
We could have started with the definition of $\overline{x(\uk)_{j,\alpha}} $
given in \eqref{eq_new_overline_a} and used this definition to construct the
non-equivariant representation in the previous section.  After all, the two
definitions \eqref{eq_overlinea} and \eqref{eq_new_overline_a} are equivalent in
the non-equivariant cohomology ring.  However, the recursive definition
\eqref{eq_new_overline_a} makes it more cumbersome to calculate with in practice
and that is why we used \eqref{eq_overlinea} to construct the non-equivariant
representation.
\end{rem}

\begin{lem} \label{lem_equivariant_rel}
With $\overline{x(\uk)_{j,\alpha}}$ redefined as in \eqref{eq_new_overline_a} the
relations \eqref{eq_graph_def_relations} and
\eqref{eq_graph_easy_slide}--\eqref{eq_dark_slide2} hold in the equivariant
cohomology rings:
\begin{eqnarray}
\sum_{f=0}^{\alpha} x(\uk)_{j,f} \overline{x(\uk)_{j,\alpha-f}} &=& \delta_{\alpha,0} \label{eq_G_1}\\
 x(\uk)_{j,\alpha} &=& x(\ukep)_{j,\alpha} \qquad \text{for $j \neq i, i+1$} \label{eq_G_2}\\
 x(\uk)_{i+1,\alpha}  &=&  \xi_i \cdot x(\ukep)_{i+1,\alpha-1} + x(\ukep)_{i+1,\alpha} \label{eq_G_3}\\
  x(\ukep)_{i,\alpha}  &= & \xi_i \cdot x(\uk)_{i,\alpha-1}  +x(\uk)_{i,\alpha} \label{eq_G_4}\\
  x(\ukep)_{i+1,\alpha} &=& \sum_{f=0}^{\alpha} (-1)^{f} \xi_i^f
  x(\uk)_{i+1,\alpha-f} \label{eq_G_5}\\
  x(\uk)_{i,\alpha} &= &\sum_{f=0}^{\alpha} x(\ukep)_{i,\alpha-f} \xi_i^f \label{eq_G_6}\\
  (-1)^{\alpha}\xi_i^{\alpha} &=&  \sum_{f=0}^{\alpha} x(\ukep)_{i+1,\alpha-f} \;
    \overline{x(\uk)_{i+1,f}}
  =
  \sum_{g=0}^{\alpha} \overline{x(\ukep)_{i\alpha-g}} \;
  x(\uk)_{i,g}   \nn \\\label{eq_G_7}\\
  \overline{x(\uk)_{j, \alpha}} &=&
    \overline{x(\ukep)_{j, \alpha}} \qquad \text{if $j \neq i, i+1$}
    \label{eq_G_8}\\ \nn \\
   \overline{x(\uk)_{j, \alpha}} &=& \left\{
  \begin{array}{cl}
    \overline{x(\ukep)_{i, \alpha-1}}\; \xi_i + \overline{x(\ukep)_{i,\alpha}}   & \text{if $j =i$}\\ \label{eq_G_9}\\
     \xsum{f=0}{\alpha}(-1)^{\alpha} \overline{x(\ukep)_{i+1,\alpha-f}}\; \xi_i^f&  \text{if $j =i+1$}
  \end{array}
   \right.\\
   \overline{x(\ukep)_{j, \alpha}} &=& \left\{
  \begin{array}{cl}
   \xsum{f=0}{\alpha}(-1)^{\alpha} \overline{x(\uk)_{i,\alpha-f}} \;\xi_i^f   & \text{if $j =i$} \\ \label{eq_G_10}\\
    \overline{x(\uk)_{i+1, \alpha-1}}\; \xi_i + \overline{x(\uk)_{i+1,\alpha}}  &  \text{if $j =i+1$}
  \end{array}
   \right.
\end{eqnarray}
\end{lem}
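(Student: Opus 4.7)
The entire lemma is essentially bookkeeping for the recurrence that defines $\overline{x(\uk)_{j,\alpha}}$, combined with the inclusions~\eqref{eq_Ginclusion1}--\eqref{eq_Ginclusion2} into $\HG_{\ukp}$. The strategy is to dispatch the relations in the order presented, each one either unwinding a definition or using the previous relations together with a single uniqueness-of-dual-generators argument.

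First, \eqref{eq_G_1} is immediate: separating the $f=0$ term in $\sum_{f=0}^{\alpha}x(\uk)_{j,f}\overline{x(\uk)_{j,\alpha-f}}$ and using $x(\uk)_{j,0}=1$ together with the defining recurrence~\eqref{eq_new_overline_a} collapses the sum to $\delta_{\alpha,0}$. Relations \eqref{eq_G_2}, \eqref{eq_G_3}, \eqref{eq_G_4} are literally the inclusion formulas~\eqref{eq_Ginclusion1}--\eqref{eq_Ginclusion2} recalled earlier in the section, so they require no argument. Relations \eqref{eq_G_5} and \eqref{eq_G_6} follow from \eqref{eq_G_3} and \eqref{eq_G_4} respectively by solving the recursion: for instance, rewriting \eqref{eq_G_3} as $x(\ukep)_{i+1,\alpha}=x(\uk)_{i+1,\alpha}-\xi_i\,x(\ukep)_{i+1,\alpha-1}$ and iterating produces the alternating sum in \eqref{eq_G_5}.

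Next I would handle \eqref{eq_G_7}. Substituting \eqref{eq_G_5} into the right-hand side $\sum_{f=0}^{\alpha}x(\ukep)_{i+1,\alpha-f}\,\overline{x(\uk)_{i+1,f}}$ and interchanging the order of summation yields
\[
\sum_{g=0}^{\alpha}(-1)^{g}\xi_i^{g}\sum_{f=0}^{\alpha-g}x(\uk)_{i+1,\alpha-g-f}\,\overline{x(\uk)_{i+1,f}},
\]
and by \eqref{eq_G_1} the inner sum is $\delta_{\alpha-g,0}$, so only $g=\alpha$ contributes, giving $(-1)^{\alpha}\xi_i^{\alpha}$. The second equality in \eqref{eq_G_7} is proved identically using \eqref{eq_G_6}.

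For the dual-generator relations \eqref{eq_G_8}--\eqref{eq_G_10}, the uniform method is uniqueness of solutions to the recurrence \eqref{eq_G_1}, applied as follows. If $\{Y_{\alpha}\}_{\alpha\ge 0}$ is any sequence with $Y_0=1$ satisfying $\sum_{f=0}^{\alpha}x(\uk)_{j,f}Y_{\alpha-f}=\delta_{\alpha,0}$, then $Y_{\alpha}=\overline{x(\uk)_{j,\alpha}}$ by induction. For \eqref{eq_G_8}, \eqref{eq_G_2} forces the generators $x(\uk)_{j,\bullet}$ and $x(\ukep)_{j,\bullet}$ to coincide when $j\ne i,i+1$, so the recurrences are identical and $Y_{\alpha}=\overline{x(\ukep)_{j,\alpha}}$ works. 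For the $j=i$ part of \eqref{eq_G_9}, set $Y_{\alpha}:=\xi_i\,\overline{x(\ukep)_{i,\alpha-1}}+\overline{x(\ukep)_{i,\alpha}}$; substituting \eqref{eq_G_4} into $\sum_{f=0}^{\alpha}x(\ukep)_{i,f}\overline{x(\ukep)_{i,\alpha-f}}=\delta_{\alpha,0}$, re-indexing the $\xi_i x(\uk)_{i,f-1}$ contributions, and collecting produces exactly $\sum_{f=0}^{\alpha}x(\uk)_{i,f}Y_{\alpha-f}=\delta_{\alpha,0}$, whence $Y_{\alpha}=\overline{x(\uk)_{i,\alpha}}$. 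The $j=i+1$ case of \eqref{eq_G_9} uses \eqref{eq_G_5} in place of \eqref{eq_G_4}, and \eqref{eq_G_10} is obtained by swapping the roles of $\uk$ and $\ukep$ and repeating the argument using \eqref{eq_G_3} and \eqref{eq_G_6}. No real obstacle appears; the only point that requires attention is the bookkeeping of boundary terms (the vanishing of $\overline{x(\cdot)_{j,-1}}$ and of $x(\cdot)_{j,-1}$) in the index shifts, which one simply tracks by convention.
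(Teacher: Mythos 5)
Your proposal is correct and follows essentially the same route as the paper: relations \eqref{eq_G_1}--\eqref{eq_G_6} are unwound exactly as you do, the paper's proof of the first identity in \eqref{eq_G_7} is precisely your substitution of \eqref{eq_G_5} followed by interchanging the summations and invoking \eqref{eq_G_1}, and the paper disposes of \eqref{eq_G_8}--\eqref{eq_G_10} by the same appeal to the recursive definition that your uniqueness-of-solution formulation merely makes explicit. No gaps beyond the routine boundary/sign bookkeeping you already flag.
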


\begin{proof}
Equation \eqref{eq_G_1} follows from the definition \eqref{eq_new_overline_a} of
the dual elements $\overline{x(\uk)_{j,\alpha}}$. Equations
\eqref{eq_G_2}--\eqref{eq_G_4} equate two images of noncanonical generators under
the inclusions \eqref{eq_Ginclusion1} and \eqref{eq_Ginclusion2}. Equations
\eqref{eq_G_5} and \eqref{eq_G_6} follow from \eqref{eq_G_3} and \eqref{eq_G_4}.
The first equality in \eqref{eq_G_7} is proven as follows:
\begin{eqnarray}
  (-1)^{\alpha} \sum_{f=0}^{\alpha} x(\ukep)_{i+1,\alpha-f} \;
  \overline{x(\uk)_{i+1,f}} &\refequal{\eqref{eq_G_5}}&
   \sum_{f=0}^{\alpha} \sum_{g=0}^{\alpha-f}(-1)^{\alpha+g} \xi_i^{g} x(\uk)_{i+1,\alpha-f-g} \;
  \overline{x(\uk)_{i+1,f}} \nn \\
  &=& \sum_{g=0}^{\alpha}(-1)^{\alpha+g} \xi_i^{g} \sum_{f=0}^{\alpha-g} x(\uk)_{i+1,(\alpha-g)-f} \;
  \overline{x(\uk)_{i+1,f}} \nn \\
  &\refequal{\eqref{eq_G_1}}&
  \sum_{g=0}^{\alpha}(-1)^{\alpha+g} \xi_i^{g} \delta_{\alpha-g,0} =
  \xi_i^{\alpha}.
\end{eqnarray}
The second equality above is just a re-indexing of the summation. The second
equation in \eqref{eq_G_7} is proven similarly.

Equation \eqref{eq_G_8} follows from \eqref{eq_G_2} and the definition of
$\overline{x(\uk)_{j,\alpha}}$.  Equations \eqref{eq_G_9} and \eqref{eq_G_10}
follow from \eqref{eq_G_3} and \eqref{eq_G_4} and the definition of
$\overline{x(\uk)_{j,\alpha}}$.
\end{proof}

\begin{thm}
$\Gamma_N^G \maps \Ucats^* \to \GrG$ is a 2-representation.
\end{thm}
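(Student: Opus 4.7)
The plan is to mirror the proof of Theorem~\ref{thm_2rep} for $\Gamma_N$, exploiting the observation that the non-equivariant verification relied only on formal identities among the generators---identities whose equivariant counterparts have been packaged into Lemma~\ref{lem_equivariant_rel}. First I would define $\Gamma_N^G$ on objects, 1-morphisms, and generating 2-morphisms by repeating the formulas of Section~\ref{subsec_define_gamma} verbatim, replacing each $H_{\uk}$ with $\HG_{\uk}$ and interpreting $\overline{x(\uk)_{j,\alpha}}$ via the recursive formula \eqref{eq_new_overline_a}. The degree shifts, signs, and combinatorial formulas are unchanged, since $\HG_{\uk}$ has the same generators in the same degrees as $H_{\uk}$; only the ideal of relations differs.

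Next I would run through the axioms checked in Section~\ref{subsec_check_relations}: biadjointness, the $\mathfrak{sl}_2$ relations, cyclicity, the signed $R(\nu)$ relations, and the Reidemeister-3 type moves. The key point is that every calculation in that section used only (i) the orthogonality identity \eqref{eq_graph_def_relations} between dual and ordinary generators, (ii) the slide formulas \eqref{eq_graph_easy_slide}--\eqref{eq_dark_slide2} that move generators across lines and convert between canonical and non-canonical generators, and (iii) Lemma~\ref{lem_relations} together with Corollary~\ref{cor_bimodule_maps}, which are themselves formal consequences of (i) and (ii). Lemma~\ref{lem_equivariant_rel} exhibits precisely the equivariant analogs of all these identities, so each verification in Section~\ref{subsec_check_relations} transports line-by-line to the equivariant setting. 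The Reidemeister-3 and $R(\nu)$ relations themselves reduce, as in the non-equivariant case, to the standard action of $R(\nu)$ on a polynomial ring in the variables $\xi_i$, and the crossing maps \eqref{eq_gamma_dcross} and \eqref{eq_gamma_dcross_down} are unchanged.

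The one genuine subtlety---and the only place where the proof needs to be revisited rather than copied---is the well-definedness of the cap maps \eqref{def_FE_cap} and \eqref{def_EF_cap} and the cup maps \eqref{def_eq_FE_G} and \eqref{def_eq_EF_G} as bimodule homomorphisms in the equivariant setting. In the non-equivariant case the dual generators were defined by the product formula \eqref{eq_overlinea}, which coincides with \eqref{eq_new_overline_a} in $H_{\uk}$ but not in $\HG_{\uk}$; so one must confirm that with the recursive definition the cup images still commute with both actions. This reduces to checking the analog of Corollary~\ref{cor_bimodule_maps} equivariantly, and that in turn follows from parts (iii) and (iv) of Lemma~\ref{lem_relations} applied to the equivariant slide rules \eqref{eq_G_9} and \eqref{eq_G_10}. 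I expect this step---reconciling the recursive definition of $\overline{x(\uk)_{j,\alpha}}$ with the compatibility checks for cups and caps---to be the main obstacle, though Lemma~\ref{lem_equivariant_rel} has been carefully written to supply exactly the identities needed to close the argument.
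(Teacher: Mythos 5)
Your proposal is correct and follows essentially the same route as the paper: define $\Gamma_N^G$ by the same formulas with $\HG_{\uk}$ in place of $H_{\uk}$ and the dual generators redefined by \eqref{eq_new_overline_a}, then observe that the verification for $\Gamma_N$ used only \eqref{eq_graph_def_relations}, \eqref{eq_graph_easy_slide}--\eqref{eq_dark_slide2}, Lemma~\ref{lem_relations} and Corollary~\ref{cor_bimodule_maps}, whose equivariant analogs are exactly what Lemma~\ref{lem_equivariant_rel} supplies, so the check transfers line by line. The ``subtlety'' you flag about the cup and cap maps being bimodule homomorphisms is resolved precisely as you suggest, by the same lemma together with the argument of Lemma~\ref{lem_relations}(iii),(iv), which still applies since $x(\uk)_{j,\alpha}=0$ for $\alpha>k_j-k_{j-1}$ in the equivariant ring as well.
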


\begin{proof}
It is clear that $\Gamma_N^G$ preserves the degree associated to 2-morphisms
since the 2-representation $\Gamma_N$ does.  The proof that $\Gamma_N^G$
preserves the relations in $\Ucats^*$ can be copied line by line from the proof
the $\Gamma_N$ preserves the relations. By Lemma~\ref{lem_equivariant_rel} all
identities used in the proof of Theorem~\ref{thm_2rep} hold in $\GrG$ with the
dual elements redefined according to \eqref{eq_new_overline_a}.
\end{proof}

\begin{thm}
The 2-representations $\Gamma_N$ and $\Gamma_N^G$ categorify the irreducible
representation $V_N$ of $\U(\mf{sl}_{n})$ with highest weight $(N,0,\dots,0)$.
\end{thm}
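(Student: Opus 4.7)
The plan is to pass to Grothendieck groups and identify the resulting module with $V_N$. Define $V(\Gamma_N) := \bigoplus_{\uk} K_0(H_{\uk}\text{-pmod})$, summed over admissible sequences $\uk=(0,k_1,\ldots,k_{n-1},N)$, and similarly $V(\Gamma^G_N)$ using finitely generated graded projective $\HG_{\uk}$-modules. The composition of $\gamma \colon \UA(\mf{sl}_n) \to K_0(\UcatD)$ from Section~\ref{subsec_KzeroU} with the Grothendieck-level action of the 2-functors $\Gamma_N$, $\Gamma^G_N$ endows $V(\Gamma_N)$ and $V(\Gamma^G_N)$ with the structure of $\UA(\mf{sl}_n)$-modules. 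The goal is to exhibit a $\UA$-isomorphism with $V_N$.

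First I would identify the weight-space decomposition. Each $H_{\uk}$ is a graded local commutative $\Bbbk$-algebra (augmentation ideal generated by all $x(\uk)_{j,\alpha}$ with $\alpha\geq 1$), so by the graded version of Nakayama its $K_0$ is a free $\Z[q,q^{-1}]$-module of rank one, generated by $[H_{\uk}]$; in the equivariant case $\HG_{\uk}$ is a graded polynomial algebra and the same conclusion holds. Since $\lambda(\uk)_\alpha = -k_{\alpha+1}+2k_\alpha-k_{\alpha-1}$ gives a bijection between admissible $\uk$ and the weights occurring in $S^N(\C^n)$ (each with multiplicity one), we obtain a canonical $\Z[q,q^{-1}]$-linear isomorphism $V(\Gamma_N) \cong V_N$ carrying $[H_{\uk}]$ to the canonical weight vector $v_{\uk}$.

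Next I would compute the action of the categorified generators on these basis elements. By definition, $E_i \cdot [H_{\uk}] = [H_{\ukp}]\{1+k_{i-1}+k_i-k_{i+1}\}$ viewed as an $H_{\ukep}$-module, and $F_i$ acts similarly. Using the description of $H_{\ukp}$ as a module over $H_{\ukep}$ (the fiber of $Fl(\ukp) \to Fl(\ukep)$ is $\mathbb{P}^{k_{i}-k_{i-1}}$), it is free of graded rank $[k_i-k_{i-1}+1]$, so that after accounting for the degree shift the class $E_i\cdot[H_{\uk}]$ reduces to $q^{a}[k_i-k_{i-1}]_q \cdot [H_{\ukep}]$ for an explicitly computable integer $a$. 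An analogous computation for $F_i$ yields a quantum-integer scalar multiple of $[H_{\ukem}]$. Comparing with the standard action of $E_i, F_i$ on the symmetric power representation $V_N = S^N(\C_q^n)$ in its weight basis gives that the two actions match, and the commutator $[E_i, F_j] = \delta_{ij}\frac{K_i - K_i^{-1}}{q-q^{-1}}$ is automatically satisfied because it follows from the 2-isomorphisms in Proposition~\ref{pmii-isoms} applied to $\onel$.

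Finally, to conclude that this action identifies $V(\Gamma_N)$ with $V_N$ (as opposed to merely having matching characters), I would verify the universal property. The class $[H_{\uk_0}]$ with $\uk_0 = (0,N,N,\ldots,N)$ lives at weight $(N,0,\ldots,0)$ and is annihilated by every $E_i$ since $({}_{+i}\uk_0) = \emptyset$, so it is a highest weight vector; every other $[H_{\uk}]$ is reached from it by an explicit product of $F_i$'s, so the submodule it generates is all of $V(\Gamma_N)$. Since $V_N$ is irreducible of highest weight $(N,0,\ldots,0)$, the $\UA$-linear surjection $V_N \twoheadrightarrow V(\Gamma_N)$ sending highest-weight vector to $[H_{\uk_0}]$ is necessarily an isomorphism. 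The same argument applies verbatim to $V(\Gamma^G_N)$. The main obstacle is the bookkeeping of grading shifts and signs in step two — the shifts $\{1+k_{i-1}+k_i-k_{i+1}\}$ on $\cal{E}_{+i}\onel$ and the analogous shifts on caps, cups, and crossings must combine with the ranks computed from the projective bundle structure to produce exactly the quantum integers $[k_i-k_{i-1}]$ and $[k_{i+1}-k_i]$ appearing in the symmetric-power action, and the signs arising from the oriented Dynkin graph in $\Ucats$ must be absorbed consistently.
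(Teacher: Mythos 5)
Your proof is correct and reaches the same conclusion, but emphasizes a somewhat more computational route than the paper's. The paper's argument, after establishing the same decomposition $K_0(\bigoplus_\uk H_\uk\text{-gmod}) \cong {}_{\cal{A}}V_N$ as $\Z[q,q^{-1}]$-modules via the fact that each $H_\uk$ (resp. $\HG_\uk$) is a graded local ring with a unique indecomposable graded projective, stops short of explicitly computing how $E_i$ and $F_i$ act on the basis $[H_\uk]$. Instead it observes that tensoring with $\Gamma(\cal{E}_{\pm i}\onel)$ gives exact functors with biadjoints commuting with the shift, hence $\Z[q,q^{-1}]$-linear maps on $K_0$, and that since $\Gamma_N$ is a 2-functor on $\UcatD$, the isomorphisms of Propositions~\ref{serre-isoms}--\ref{pmij-isoms} automatically force the $\UA$-relations to hold on $K_0$ --- the $\U$-module structure is free once you know you have a 2-representation. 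Your proposal replaces this abstract step with an explicit computation of $E_i\cdot[H_\uk]$ via the $\mathbb{P}^{k_i-k_{i-1}}$-bundle structure of $Fl(\ukp)\to Fl(\ukep)$, obtaining a quantum-integer multiple of $[H_\ukep]$, then compares with the symmetric-power action and closes with the highest-weight/irreducibility argument. Both are sound; the paper's version is shorter because it leans on the 2-representation machinery already built, while yours buys a concrete matching of the basis vectors and scalars but requires the shift-bookkeeping you flag at the end. One small point worth making precise in your version: constructing the map $V_N \to V(\Gamma_N)$ needs $V(\Gamma_N)\otimes\Q(q)$ to be integrable (so the Verma map factors through $V_N$); this follows from the explicit $E_i$, $F_i$ formulas (they are locally nilpotent) but deserves a sentence.
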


\begin{proof}
Idempotent bimodule maps split in $\cat{Bim}$, so by the universal property of
the Karoubi envelope the additive 2-functors $\Gamma_N \maps \Ucat \to
\cat{Flag}_N$ and $\Gamma_N^G \maps \Ucat \to \cat{EqFlag}_N$, obtained using the
isomorphism $\Sigma\maps \Ucatq \to \Ucats^*$ and restricting to degree
preserving 2-morphisms, extend to 2-representations of $\UcatD$.

The rings $H_{\uk}$ and $\HG_{\uk}$  are graded local rings so that every
projective module is free, and they both have (up to isomorphism and grading
shift) a unique graded indecomposable projective module.  The Grothendieck group
of the category $\bigoplus_{\uk}H_{\uk} {\rm -gmod}$ (respectively
$\bigoplus_{\uk}\HG_{\uk} {\rm -gmod}$) is a free $\Z[q,q^{-1}]$-module with
basis elements $[H_{\uk}]$ (respectively $[\HG_{\uk}]$) over all $\uk$, where
$q^i$ acts by shifting the grading degree by $i$. Thus, we have
\begin{eqnarray}
K_0\big(\bigoplus_{\uk}\HG_{\uk}{\rm -gmod}\big) \cong
K_0\big(\bigoplus_{\uk}H_{\uk}{\rm -gmod}\big) \cong  {}_{\cal{A}}V_N
\end{eqnarray}
as $\Z[q,q^{-1}]$-modules, where the sums are over all sequences $0 \leq k_1 \leq
\dots \leq k_n =N$, and ${}_{\cal{A}}V_N$ is a representation of
$\UA(\mf{sl}_{n})$, an integral form of the representation $V_N$ of
$\U(\mf{sl}_{n})$.

The bimodules $\Gamma(\onel)$, $\Gamma(\cal{E}_{+i}\onel)$ and
$\Gamma(\cal{E}_{-i}\onel)$ (or equivalently $\Gamma^G(\onel)$,
$\Gamma^G(\cal{E}_{+i}\onel)$ and $\Gamma^G(\cal{E}_{-i}\onel)$) induce functors
on the graded module categories given by tensoring with these bimodules. The
functors
\begin{eqnarray}
 \onel &:=& H_k \otimes_{H_{\uk}}- \maps H_{\uk} {\rm -gmod} \to H_{\uk}{\rm -gmod} \\
 \cal{E}_i\onel &:=& H_{\ukp} \otimes_{H_{\uk}} - \;\{1+k_{i-1}-k_{i+1}\} \maps
 H_{\uk}{\rm -gmod} \to H_{\ukep}{\rm -gmod} \\
 \cal{F}_i\mathbf{1}_{\lambda+i_X} &:=&  H_{\ukp} \otimes_{H_{\ukep}} -\; \{-k_i\} \maps
H_{\ukep} {\rm -gmod} \to H_{\uk} {\rm -gmod}
\end{eqnarray}
have both left and right adjoints and commute with the shift functor, so they
induce $\Z[q,q^{-1}]$-module maps on Grothendieck groups. Furthermore, the
2-functor $\Gamma_N$ respects the relations of $\UcatD$, so by
Propositions~\ref{serre-isoms}--\ref{pmij-isoms} these functors satisfy relations
lifting those of $\U$.
\end{proof}

%
\subsection{Nondegeneracy of $\Ucats(\mf{sl}_n)$}
%

\begin{lem} \label{lem_bubbles_same_orient_I}
The surjective graded $\Bbbk$-algebra homomorphism
\begin{equation}
\Pi_{\lambda} \to \Ucatq(\onel,\onel) \cong \Ucats^*(\onel,\onel)
\end{equation}
of Proposition~\ref{prop_bubbles_same_orient} is an isomorphism.
\end{lem}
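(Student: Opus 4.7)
The plan is to use the equivariant 2-representation $\Gamma^G_N\maps \Ucats^*\to \GrG$ of Section~\ref{subsec_equivariant} to detect linear independence of bubble monomials. Since the homomorphism $\Pi_\lambda\twoheadrightarrow \Ucats^*(\onel,\onel)$ is graded with finite-dimensional source pieces, it suffices to establish injectivity in each graded degree $d$. Given $\lambda\in X$ and $d\geq 0$, I would first realize $\lambda$ as $\lambda(\uk)$ by solving the linear recursion $k_{j+1}=2k_j-k_{j-1}-\lambda_j$ with $k_0=0$ and $k_1$ taken so large that every gap $k_j-k_{j-1}$ exceeds $d$. The 2-functor $\Gamma^G_N$ then produces a composite
\[
\Pi_\lambda \twoheadrightarrow \Ucats^*(\onel,\onel) \longrightarrow \End_{\GrG}(\HG_\uk) \cong \HG_\uk
\]
into the graded polynomial ring $\HG_\uk$, where the last isomorphism uses commutativity of $\HG_\uk$ (bimodule endomorphisms of a commutative ring are just multiplication by its elements). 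It is enough to show this composite is injective in degrees $\leq d$.

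The core step is to compute the image of each generator of $\Pi_\lambda$ under $\Gamma_N^G$. Composing the bimodule maps for cup, dots, and cap recorded in Section~\ref{subsec_define_gamma} and using the equivariant dual-generator identities in Lemma~\ref{lem_equivariant_rel}, one finds that $\ccbub{-\langle i,\lambda\rangle-1+\beta}{i}$ (when $\langle i,\lambda\rangle<0$) or $\cbub{\langle i,\lambda\rangle-1+\beta}{i}$ (when $\langle i,\lambda\rangle\geq 0$) is sent to a nonzero scalar multiple of a dual Chern class $\overline{x(\uk)_{j,\beta}}$ for a suitable $j\in\{i,i+1\}$. The recursive definition~\eqref{eq_new_overline_a} of dual generators is upper-triangular in the standard polynomial generators $x(\uk)_{j,\alpha}$, so $\{\overline{x(\uk)_{j,\alpha}}\}_{1\leq \alpha\leq k_j-k_{j-1}}$ is itself a system of algebraically independent polynomial generators of the tensor factor $\Bbbk[x(\uk)_{j,1},\dots,x(\uk)_{j,k_j-k_{j-1}}]$ of $\HG_\uk$. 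Since we chose all gaps $k_j-k_{j-1}$ to exceed $d$, every bubble generator of degree $\leq 2d$ maps to a genuine polynomial generator of $\HG_\uk$, so bubble monomials of degree $\leq 2d$ map to linearly independent elements of $\HG_\uk$; this forces injectivity in every fixed degree and completes the argument.

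The hardest step will be the explicit identification of each bubble's image in $\HG_\uk$, demanding careful sign and grading bookkeeping inherited from the rescaling 2-isomorphism $\Sigma\maps \Ucat\to \Ucats$ and from the biadjoint cup/cap structure. A delicate point is that when adjacent labels $i,i+1$ satisfy $\langle i,\lambda\rangle<0$ and $\langle i+1,\lambda\rangle\geq 0$, the counterclockwise bubble at $i$ and the clockwise bubble at $i+1$ may both land in the same tensor factor $\Bbbk[x(\uk)_{i+1,\cdot}]$; one must then verify that they hit distinct dual Chern classes, distinguished by their degree parameter $\beta$, and so remain algebraically independent. The $\mf{sl}_2$ version of this computation was carried out in~\cite{Lau2}, and the $\mf{sl}_n$ case is a careful but direct extension of that argument.
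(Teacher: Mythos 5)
There is a genuine gap, and it sits at the computational core of your argument. You claim that under $\Gamma^G_N$ each dotted bubble at vertex $i$ is sent to a nonzero scalar multiple of a single dual Chern class $\overline{x(\uk)_{j,\beta}}$ with $j\in\{i,i+1\}$, and you then conclude independence of bubble monomials from the fact that these images would be polynomial generators sitting (essentially) in single tensor factors of $\HG_{\uk}$. That identification is false. Composing the cup, dot, and cap assignments of Section~\ref{subsec_define_gamma} (e.g.\ \eqref{def_eq_FE_G} and \eqref{def_FE_cap}) one finds that the clockwise $i$-bubble with $\la i,\lambda\ra-1+\alpha$ dots acts on $1\in\HG_{\uk}$ by
\[
(-1)^{\alpha}\sum_{f=0}^{\min(\alpha,\,k_{i+1}-k_i)}\overline{x(\uk)_{i,\alpha-f}}\;x(\uk)_{i+1,f}
\;=\;(-1)^{\alpha}\bigl(x(\uk)_{i+1,\alpha}-x(\uk)_{i,\alpha}\bigr)+(\text{products of lower order terms}),
\]
and similarly with the roles of $i$ and $i+1$ exchanged for the counterclockwise bubble. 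Already for $\alpha=1$ the image is $x(\uk)_{i,1}-x(\uk)_{i+1,1}$, which is not proportional to any $\overline{x(\uk)_{j,1}}=-x(\uk)_{j,1}$. So every bubble genuinely mixes the adjacent tensor factors $i$ and $i+1$; your "delicate point" about occasionally sharing a factor is based on the wrong picture (sharing is automatic for adjacent vertices), and the step "every bubble generator maps to a genuine polynomial generator of a single factor, hence bubble monomials are independent" does not go through as written.

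The surrounding strategy is sound and is in fact the paper's: use $\Gamma^G_N$, realize $\lambda=\lambda(\uk)$ with all gaps $k_j-k_{j-1}$ large compared to the degree under consideration (your recursion argument for choosing $\uk$ is fine), and work degreewise. What has to replace your key step is the correct independence argument: after expanding the equivariant dual classes via \eqref{eq_new_overline_a}, the image of each allowed bubble generator has leading \emph{linear} part $\pm\bigl(x(\uk)_{i+1,\alpha}-x(\uk)_{i,\alpha}\bigr)$, and for fixed degree these linear parts are linearly independent in $\HG_{\uk}$ once both relevant gaps are large enough that the variables involved are present and nonzero. Linear independence of the leading linear parts gives algebraic independence of the bubble images (only one orientation per vertex occurs in $\Pi_{\lambda}$), hence linear independence of the images of bubble monomials of bounded degree, which is what the injectivity argument needs.
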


\begin{proof}
Injectivity is established by showing that for each $M \in \N$ there exists some
large $N$ such that the images of bubble monomials in $\Ucats^*(\onel,\onel)$ in
variables of degree less than $M$ act by linearly independent operators on
$\HG_{\uk}$ for $\lambda=\lambda(\uk)$, under the 2-functor $\Gamma^G_N$.  By a
direct calculation
\begin{eqnarray} \label{eq_gamma_bubbles}
    \Gamma^G\left(\;\xy
  (4,8)*{\lambda};
  (0,-2)*{\cbub{\lambda_i-1+\alpha}{i}};
 \endxy \; \right) & \maps & 1 \to
 (-1)^{\alpha} \sum_{f=0}^{\min(\alpha,k_{i+1}-k_i)}
  \overline{x(\uk)_{i,\alpha-f}}\;x(\uk)_{i+1,f}, \nn \\ \\
     \Gamma^G\left(\;\xy
  (4,8)*{\lambda};
  (0,-2)*{\ccbub{-\lambda_i-1+\beta}{i}};
 \endxy \; \right) & \maps & 1 \to
 (-1)^{\beta} \sum_{g=0}^{\min(\beta,k_{i}-k_{i-1})}
  \overline{x(\uk)_{i+1,\alpha-g}}\;x(\uk)_{i,g}, \nn
\end{eqnarray}
as bimodule endomorphisms of $\HG_{\uk}$. After expanding the
$\overline{x(\uk)_{j,\alpha}}$ using \eqref{eq_new_overline_a} we have
\begin{eqnarray}
    \Gamma^G\left(\;\xy
  (4,8)*{\lambda};
  (0,-2)*{\cbub{\lambda_i-1+\alpha}{i}};
 \endxy \; \right) & \maps & 1 \to
 (-1)^{\alpha} (x(\uk)_{i+1,\alpha} - x(\uk)_{i,\alpha}) +
  \left(\txt{products of lower \\order terms} \right) ,\nn \\
      \Gamma^G\left(\;\xy
  (4,8)*{\lambda};
  (0,-2)*{\ccbub{-\lambda_i-1+\beta}{i}};
 \endxy \; \right) & \maps & 1 \to
 (-1)^{\alpha} (x(\uk)_{i,\beta} - x(\uk)_{i+1,\beta}) +
  \left(\txt{products of lower \\order terms} \right). \nn
\end{eqnarray}
Only one orientation for dotted bubbles labelled by vertex $i$ is allowed in a
bubble monomial in $\Pi_{\lambda}$, see \eqref{eq_bub_rules}. The image under
$\Gamma^G$ of a bubble monomial is composed of products of bimodule maps of the
above form where, in the equivariant cohomology ring, sums of  products of
element of the form $(x(\uk)_{i+1,\alpha} - x(\uk)_{i,\alpha})$, respectively
$(x(\uk)_{i,\beta} - x(\uk)_{i+1,\beta})$ are independent provided $\alpha,\beta<
k_{i}-k_{i-1}$ and $\alpha,\beta\leq k_{i+1}-k_{i}$ so that $(x(\uk)_{i+1,\alpha}
-x(\uk)_{i+1,\alpha})$, respectively $(x(\uk)_{i,\beta} - x(\uk)_{i+1,\beta})$,
is nonzero.  By taking $N$ large, we can ensure this condition is satisfied for
any fixed $M$. Hence, images of bubble monomials in $\Ucats^*(\onel,\onel)$ are
independent.
\end{proof}


\begin{lem}
There is an isomorphism of graded $\Bbbk$-algebras
\begin{eqnarray}
  \iota'_{\lambda} \maps R(\nu) \otimes
  \Pi_{\lambda}
 &\longrightarrow& \Ucats^*(\cal{E}_{\nu}\onel, \cal{E}_{\nu}\onel) \\
 D \otimes D_{\pi} &\mapsto& \iota_{\lambda}(D) \; . \; D_{\pi} , \nn
\end{eqnarray}
with $\iota_{\lambda}$ given by \eqref{eq_inclusion_iota} and $D_{\pi}$ a bubble
monomial in $\Pi_{\lambda}$.
\end{lem}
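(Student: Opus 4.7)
The plan is to establish three things about $\iota'_\lambda$: that it is a well-defined graded $\Bbbk$-algebra homomorphism, that it is surjective, and that it is injective. The first two are essentially transcriptions of results already in the paper, while injectivity is where the real content lies.

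For well-definedness and multiplicativity, I would rely on the interchange law in the 2-category $\Ucats^*$. A bubble monomial $D_\pi \in \Pi_\lambda$ is naturally a 2-endomorphism of $\onel$, and $\iota'_\lambda(D \otimes D_\pi)$ is realized as the horizontal composite $\iota_\lambda(D) \ast D_\pi$, placing $D_\pi$ in the rightmost region of weight $\lambda$. Interchange then gives
\[
\bigl(\iota_\lambda(D_1)\ast D_{\pi,1}\bigr) \circ \bigl(\iota_\lambda(D_2)\ast D_{\pi,2}\bigr)
 = \bigl(\iota_\lambda(D_1)\circ \iota_\lambda(D_2)\bigr)\ast\bigl(D_{\pi,1}\circ D_{\pi,2}\bigr),
\]
matching the algebra structure on $R(\nu)\otimes \Pi_\lambda$; grading is preserved since $\iota_\lambda$ is graded and bubbles are placed with their native degree. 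Surjectivity is then immediate from Lemma~\ref{lem_surjective} (equivalently, from the signed version of $\phi_{\nu,\lambda}$ transported through the isomorphism $\Sigma \maps \Ucat \to \Ucats$ of Section~\ref{subsubsec_isom}): summing over $\ii,\jj\in\seq(\nu)$, the spanning set $B_{\ii,\jj,\lambda}$ of each graded hom is exactly the image under $\iota'_\lambda$ of the product of the Khovanov--Lauda basis ${}_\jj B_\ii$ with monomials in the generators \eqref{eq_bub_rules} of $\Pi_\lambda$.

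The main obstacle is injectivity. The strategy is to combine the upper bound from the spanning set argument with a matching lower bound coming from the equivariant 2-representation $\Gamma^G_N \maps \Ucats^* \to \GrG$ for $N$ chosen large depending on the degree under consideration. Concretely, the graded dimension of the homogeneous component of degree $t$ of $\Ucats^*(\cal{E}_\nu\onel,\cal{E}_\nu\onel)$ is bounded above, by surjectivity, by the corresponding graded dimension of $R(\nu)\otimes \Pi_\lambda$. To promote this to an equality, I would show that $\Gamma^G_N \circ \iota'_\lambda$ is injective in each graded piece once $N$ is sufficiently large.

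The verification of this injectivity has two compatible parts. The $R(\nu)$ factor acts through strands, dots, and crossings on the equivariant cohomology $\HG_{\uk^\ii}$ by a deformation of the nilHecke-type polynomial representation of \cite{KL,KL2}, which is faithful; and the $\Pi_\lambda$ factor acts by multiplication by the polynomials $(-1)^\alpha \sum_f \overline{x(\uk)_{i,\alpha-f}}x(\uk)_{i+1,f}$ and their conjugates, whose leading terms $(-1)^\alpha(x(\uk)_{i+1,\alpha} - x(\uk)_{i,\alpha})$ were used in Lemma~\ref{lem_bubbles_same_orient_I} to exhibit algebraic independence for $N\gg 0$. The hard point is to separate these contributions: any relation in the image can be expanded in the Khovanov--Lauda basis of $R(\nu)$ with coefficients in the polynomial algebra generated by bubble generators, and the faithfulness of the polynomial representation of $R(\nu)$ reduces the claim to the algebraic independence of those bubble polynomials, established by the computation in Lemma~\ref{lem_bubbles_same_orient_I}. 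Matching upper and lower bounds in each graded piece then forces $\iota'_\lambda$ to be an isomorphism.
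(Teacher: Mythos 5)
Your proposal is correct and follows essentially the same route as the paper: surjectivity via Lemma~\ref{lem_surjective} transported through $\Sigma$, and injectivity by showing that $\Gamma^G_N\circ\iota'_\lambda$ is injective on each graded piece for $N$ large, using the faithfulness of the $R(\nu)$ polynomial representation on the $\xi$-variables, the algebraic independence of the bubble images established in Lemma~\ref{lem_bubbles_same_orient_I}, and the fact that the $f_D$ and $g_{D_\pi}$ act on algebraically independent families of generators of $\HG_{\uk^{\ii}}$. The only superfluous wrinkle is the framing as ``matching upper and lower bounds on graded dimensions''; once $\Gamma^G_N\circ\iota'_\lambda$ is shown injective in a given degree, injectivity of $\iota'_\lambda$ in that degree follows directly, with no dimension comparison needed.
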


\begin{proof}
Surjectivity of $\iota'_{\lambda}$ follows from Lemma~\ref{lem_surjective} and
the isomorphism $\Sigma$ from Section~\ref{subsubsec_isom}. Injectivity of $
\iota'_{\lambda}$ is established by showing that for each $M \in \N$ there exists
some large $N$ such that degree $M$ elements $
  \iota_{\lambda}(D) \; . \; D_{\pi},
$, as $D$ and $D_{\pi}$ run over a basis of $R(\nu)$, respectively
$\Pi_{\lambda}$, act by linear independent operators under the 2-representation
$\Gamma^G_N$.  The 2-functor $\Gamma^G$ must map horizontal composites to
horizontal composites (given by tensor products) in $\GrG$, so that
\begin{equation}
  \Gamma^G\left(\iota_{\lambda}(D) \; . \;D_{\pi} \right) \;\; = \;\;
 \Gamma^G\big(\iota_{\lambda}(D) \big) \otimes_{\HG_{\uk}}\Gamma^G\left( D_{\pi} \right)
 \;\; := \;\; f_{D} \otimes_{\HG_{\uk}} g_{D_{\pi}}
\end{equation}
for bimodule maps $g_{D_{\pi}} \maps \HG_{\uk} \to \HG_{\uk}$ and $f_D \maps
\HG_{\uk^{\ii}} \to \HG_{\uk^{\jj}}$ for some $\ii,\jj \in \seq(\nu)$.

Let
\begin{equation}
  \Pol_{\nu}(\xi) := \bigoplus_{\ii \in \seq(\nu)} \Pol_{\ii}, \qquad
  \Pol_{\ii} = \Bbbk[\xi_{\ii_1},\xi_{\ii_2}, \dots, \xi_{\ii_m}], \qquad   m=|\nu| \nn.
\end{equation}
For large enough $N$, the bimodule $\Gamma^G(\cal{E}_{\nu}\onel)$ contains
$\Pol_{\nu}(\xi)$ as a subspace.  From the definition of $\iota_{\lambda}$ (see
\eqref{eq_inclusion_iota}), together with the definitions of the bimodule maps
associated to $R(\nu)$ generators (see Section~\ref{subsubsec_Rnu}), it is clear
that the action on $\Pol_{\nu}(\xi)$ given by the 2-functor $\Gamma^G$ coincides
with the action of $R(\nu)$ on $\Pol_{\nu}$ defined in \cite{KL}.  In particular,
bimodule maps $f_D$ corresponding to basis elements of $R(\nu)$ must act by
linear independent operators (see  \cite[proof of Theorem 2.5]{KL}) on the
subspace $\Pol_{\nu}(\xi)$ of $\Gamma^G(\cal{E}_{\nu}\onel)$.  Furthermore, from
the definitions of bimodule maps associated to $R(\nu)$ generators it is also
clear that $f_D$ fixes all other generators of $\Gamma^G(\cal{E}_{\nu}\onel)$.

By Lemma~\ref{lem_bubbles_same_orient_I}, for large $N$ the bimodule maps
$g_{D_{\pi}}$ act by linearly independent operators on $\HG_{\uk}$.  Write
\begin{equation}
\pi_{i,\alpha} \quad :=\quad \left\{
\begin{array}{ccl}
  \xy 0;/r.18pc/:
 (-12,0)*{\cbub{\la i, \lambda\ra-1+\alpha}{i}};
 (-8,8)*{\lambda};
 \endxy & \quad & \text{for $\la i, \lambda\ra\geq 0$,} \\
   \xy 0;/r.18pc/:
 (-12,0)*{\ccbub{-\la i, \lambda\ra-1+\alpha}{i}};
 (-8,8)*{\lambda};
 \endxy & \quad & \text{for $\la i, \lambda\ra< 0$.}
\end{array}\right.
\end{equation}
and let
$D_{\pi}=\pi_{\ell_1,\alpha_1}\pi_{\ell_2,\alpha_2}\cdots\pi_{\ell_r,\alpha_r}$.
Then $D_{\pi}$ acts on $\HG_{\uk^{\ii}} \cong \HG_{\uk^{\ii}} \otimes_{\HG_{\uk}}
\HG_{\uk}$ via $\Gamma^G(\Id_{\onel}\; . \;D_{\pi})=1 \otimes_{\HG_{\uk}}
g_{D_{\pi}}$.  From \eqref{eq_gamma_bubbles} we have
\begin{equation} \label{eq_one_otimes_gDpi}
  1 \otimes_{\HG_{\uk}} g_{D_{\pi}} \maps 1 \otimes 1 \mapsto \left(\sum_{f_1=0}^{\alpha_1}
  x(\uk)_{\ell_1+1,f_1} \overline{x(\uk)_{\ell_1,\alpha_1-f_1}}\right) \cdots \left(\sum_{f_r=0}^{\alpha_r}
  x(\uk)_{\ell_1+1,f_r} \overline{x(\uk)_{\ell_r,\alpha_r-f_r}}\right)
\end{equation}
in $\HG_{\uk^{\ii}}$. After expanding the $\overline{x(\uk)_{\ell,\alpha}}$ using
their definition \eqref{eq_new_overline_a}, we have an expression for the action
of $g_{D_{\pi}}$ on $\HG_{\uk^{\ii}}$ strictly in terms of variables
$x(\uk)_{\ell,\alpha}$.

The bimodule maps $f_D \otimes_{\HG_{\uk}} g_{D_{\pi}}$ are linearly independent
operators since the bimodule maps $f_D$ and $g_{D_{\pi}}$ are separately
independent and act on algebraically independent generators of $\HG_{\uk^{\ii}}$:
\[
\begin{pspicture}[.5](6,1.5)
  \rput(.8,0){\Elinedot{\;\;i_1}{\;\alpha_1}}
  \rput(2,0){\Elinedot{\;\;i_2}{\;\alpha_2}}
  \rput(3,.8){$\cdots$}
  \rput(4,0){\Elinedot{\;\;i_m}{\;\alpha_m}}
  \rput(5.5,1.3){$\lambda$}
\end{pspicture}
\quad \longmapsto\quad \sum_{\beta_1,\dots,\beta_m, \gamma_1, \dots,\gamma_k}
\begin{pspicture}[.5](8,1.5)
  \rput(.8,0){\Elinedot{\;\;j_1}{\;\beta_1}}
  \rput(2,0){\Elinedot{\;\;j_2}{\;\beta_2}}
  \rput(3,.8){$\cdots$}
  \rput(4,0){\Elinedot{\;\;j_m}{\;\beta_m}}
  \rput(5.5,.8){\chern{\ell_1,\gamma_1}}
  \rput(8.5,.8){\chern{\ell_k,\gamma_k}}
  \rput(7,1.5){$\lambda$}\rput(7,.8){$\cdots$}
\end{pspicture}
\]
where the sum over the $\beta_{a}$'s is determined by the action of $f_D$ on
$\xi_1^{\alpha_1} \otimes \cdots \otimes \xi_{m}^{\alpha_m}$, and the sum over
the $x(\uk)_{\ell_b,\gamma_b}$, represented by labelled boxes on the far right,
is determined from \eqref{eq_one_otimes_gDpi} by expanding the
$\overline{x(\uk)}_{j,\alpha}$.   In particular, $f_D$ fixes variables
$x(\uk)_{\ell,\alpha}$ represented by labelled boxes on the far right and $g_D$
acts only on such boxes.
\end{proof}

\bigskip

This concludes the proof of Theorem~\ref{thm-nondegenerate} stated in the
introduction, so that
\begin{equation}
  \gamma \maps \UA(\mf{sl}_n) \longrightarrow K_0(\UcatD(\mf{sl}_n))
\end{equation}
is an isomorphism.   Proposition~\ref{prop_tilde_lifts} allows us to view
2-functors $\tilde{\psi}$, $\tilde{\omega}$, $\tilde{\sigma}$, and $\tilde{\tau}$
as categorifications of symmetries $\psi$, $\omega$, $\sigma$, and $\tau$ of
$\UA(\mf{sl}_n)$.  The graded homs between 1-morphisms in $\UcatD(\mf{sl}_n)$
categorify the semilinear form $\sla,\sra_{\pi}$ given by
\eqref{eq_semilinear_form_pi}.

\vspace{0.1in}

\paragraph{Simple nondegeneracy observations.}

We can assume that $\Bbbk$ is only a commutative ring, work over this ring from
the start, and say that the calculus is nondegenerate over $\Bbbk$ if $\Ucatq(
\mc{E}_{\ii}{\mathbf 1}_{\lambda}, \mc{E}_{\jj} {\mathbf 1}_{\lambda})$ is a free
$\Bbbk$-module with a basis $B_{\ii,\jj,\lambda}$ for all $\jj,\ii$ and
$\lambda$. We don't know any examples of a root datum and ring $\Bbbk$ when the
calculus is degenerate. Over a field the nondegeneracy of the calculus depends
only on the root datum and the characteristic of $\Bbbk$. If the calculus is
nondegenerate over $\Q$, it is nondegenerate over $\Z$ and over any commutative
ring $\Bbbk$.

\vspace{0.1in}

\paragraph{Multigrading.}

The multigrading introduced at the end of \cite{KL2} on rings $R(\nu)$ extends to
a multigrading on each hom space $\Ucatq(\cal{E}_{\ii}\onel,
\cal{E}_{\jj}\onel)$.  The Karoubian envelope of the corresponding multigraded
2-category should categorify the multi-parameter deformation of $\UA$.

\addcontentsline{toc}{section}{References}


\bibliographystyle{hplain}

%

\vspace{0.1in}

\noindent M.K.: { \sl \small Department of Mathematics, Columbia University, New
York, NY 10027} \newline \noindent {\tt \small email: khovanov@math.columbia.edu}

\vspace{0.1in}

\noindent A.L:  { \sl \small Department of Mathematics, Columbia University, New
York, NY 10027} \newline \noindent
  {\tt \small email: lauda@math.columbia.edu}

%
\end{document}